\definecolor{arrowblue}{RGB}{0,0,0}  
\newcommand\ImageNode[3][]{
  \node[draw=arrowblue!80!black,line width=1pt,#1] (#2) {\includegraphics[width=3.1cm,height=3.1cm]{#3}};
}
\newtheorem{thm}{Theorem}[section]
\newtheorem{df}[thm]{Definition}
\newtheorem{lem}[thm]{Lemma}
\newtheorem{prop}[thm]{Proposition}
\newtheorem{cor}[thm]{Corollary}
\newtheorem{rmq}{Remark}
\newcommand{\bs}[1]{\ensuremath{\boldsymbol{#1}}}
\newcommand{\Supp}{\mathtt{Supp}}
\newcommand{\R}{\mathbb R}
\newcommand{\N}{\mathbb N}
\newcommand{\E}[2][]{\ensuremath{\mathbb{E}_{#1} \left[#2 \right]}}
\newcommand{\Ex}[2]{\ensuremath{\mathbb{E} \left[#2 \right]}}
\newcommand{\Prob}[2][]{\ensuremath{\mathbb{P}_{#1} \left(#2 \right)}}
\newcommand{\PIdx}[2]{\ensuremath{\mathbb{P} \left(#2 \right)}}
\newcommand{\eps}{\varepsilon}
\newcommand{\D}[2]{N_{#1} (#2)}
\newcommand{\dgl}[2]{\hat{d}_{#2}(#1)}
\newcommand{\Ind}{\mathcal{I}}
\newcommand{\Jind}{\mathcal{J}}
\newcommand{\St}[2]{\mathrm{st}_{#1}(#2)}
\newcommand{\cI}{\mathcal {I}}
\newcommand{\cE}{\mathcal {E}}
\newcommand{\cD}{\mathcal {D}}
\newcommand{\bS}{\mathbb {S}}
\newcommand{\cS}{\mathcal {S}}
\newcommand{\Dtwo}{\mathcal{B}}
\newcommand{\Dtwoind}{B}
\newcommand{\Xtwo}{\mathcal X}
\newcommand{\w}{\omega}
\newcommand{\lf}{\leftarrow}
\newcommand{\pb}{\mathbb{P}}
\newcommand{\Pxx}[2]{\ensuremath{\mathbb{P}_{#1}\left(#2 \right)}}
\newcommand{\m}{\setminus}
\newcommand{\tns}{\varotimes}
\newcommand{\cC}{\mathcal {C}}
\newcommand{\cCd}{\mathcal {C}_{d-1}}
\newcommand{\cCdd}{\mathcal {C}_{d-2}}
\newcommand{\dd}{\mathrm d}
\newcommand{\sss}{\scriptscriptstyle}
\newcommand{\Gamm}[1]{\ensuremath{\Gamma\left(#1 \right)}}
\newcommand{\K}{\mathcal{K}}
\newcommand{\p}{p}
\newcommand{\wmax}{w^*}
\newcommand{\J}{J}
\newcommand{\Jo}{J'}
\newcommand{\s}{\varsigma}
\newcommand{\me}{m}
\newcommand{\C}[3]{
\ifthenelse{\isempty{#3}}
{\ifthenelse{\isempty{#2}}
{{\mathcal K}_{#1}} 
{{\mathcal K}_{#1} \left(#2 \right)}
} 
{\ifthenelse{\isempty{#2}}
{{\mathcal K}_{#1}^{\sss (#3)}} 
{{\mathcal K}^{(\sss #3)} \left(#2 \right)}
}
}
\newcommand{\ti}[1]{{\color{blue} TI Comments: #1}}
\newcommand{\meaneree}[1]{{\color{black} #1}}
\newcommand{\miscchange}[1]{{\color{black} #1}}
\begin{document}
\renewcommand{\abstractname}{} 
\begin{frontmatter}
\title{Dynamical Models for Random Simplicial Complexes}
\runtitle{Dynamical Models for Random Simplicial Complexes}

\begin{aug}
 \author{\fnms{Nikolaos} \snm{Fountoulakis},
 \thanksref{t1, m1}}
 \author{\fnms{Tejas} \snm{Iyer,}
\thanksref{m1}}
\author{\fnms{C\'{e}cile} \snm{Mailler}
\thanksref{t2, m2}}
\\
\and 
\author{\fnms{Henning} \snm{Sulzbach}. \thanksref{m1}}
\thankstext{t1}{Research supported by the EPSRC, grant 
EP/P026729/1, and the Alan Turing Institute, grant EP/N510129/1} 
\thankstext{t2}{Research supported by the EPSRC fellowship EP/R022186/1.}
 \affiliation{University of Birmingham \thanksmark{m1} and University of Bath. \thanksmark{m2}}
\end{aug}
\runauthor{Fountoulakis, Iyer, Mailler and Sulzbach.}

\begin{abstract} We study a general model of random dynamical simplicial complexes and derive a formula for the asymptotic degree distribution. This asymptotic formula generalises results for a number of existing models, including random Apollonian networks and the weighted random recursive tree. It also confirms results on the scale-free nature of Complex Quantum Network Manifolds in dimensions $d > 2$, and special types of Network Geometry with Flavour models studied in the physics literature by Bianconi and Rahmede [\textit{Sci. Rep.} \textbf{5}, 13979 (2015) and \textit{Phys. Rev. E} \textbf{93}, 032315 (2016)].
\end{abstract}

\begin{keyword}[class=MSC]
\kwd[Primary ]{90B15}
\kwd{60J20}
\kwd[; secondary ]{05C80}
\end{keyword}

\begin{keyword}
\kwd{complex networks}
\kwd{random simplicial complexes}
\kwd{preferential attachment}
\kwd{random recursive trees}
\kwd{measure valued P\'{o}lya processes}
\kwd{P\'{o}lya urns}
\kwd{scale-free}
\end{keyword}

\end{frontmatter}

\section{Introduction}
\normalem
Complex networks are well known for their non-trivial features, such as being scale-free, (having degree distribution whose tail follows a power law), and forming \emph{small} 
or~\emph{ultra-small} worlds  (meaning that the diameter or typical distances between two random 
vertices is logarithmic or doubly logarithmic, respectively). As a result, numerous models have been developed to describe these networks, including the \emph{preferential attachment model} introduced in this context
 by Barab\'asi and Albert~\cite{Barabasi509} and defined and studied rigorously by Bollob\'as, Riordan, Spencer and 
Tusn\'ady~\cite{bollobaspreferential}. \miscchange{This model describes a mechanism for the growth of a complex network which realises the rich-get-richer postulate: when a new vertex joins the network it is more likely to attach to vertices that are popular, that is, having high degree.}

\miscchange{Preferential attachment models had also been considered earlier within the context of \emph{random evolving recursive trees}; which may be described as growing labelled trees where vertices arrive one at a time and connect to an existing vertex chosen randomly according to a certain probability distribution. 
In the \emph{ordered recursive tree}, introduced by
Prodinger and Urbanek in \cite{prodingerurbanek} and studied and rediscovered under various guises (under the name \emph{nonuniform recursive trees} by Szyma\'{n}ski in \cite{szymanski}, random \emph{plane oriented recursive trees} in \cite{mahmoud92,mahmoudetal93}, random \emph{heap ordered recursive trees} \cite{chen1994} and \emph{scale-free trees} \cite{vanderhofstad2016}), existing vertices are chosen with probability proportional to their degree, and thus according to a the preferential attachment mechanism. Another type of randomly evolving recursive tree is the \emph{uniform recursive tree}, introduced by Na and Rapoport in \cite{narapoport}; here existing vertices are chosen uniformly at random. 
In \cite{AllPan2007}, Kuba and Panholzer derive the degree distribution in both these trees and another type of recursive tree known as a \emph{binary increasing tree}.

These models were extended by Bianconi and Barab\'asi~\cite{bianconibarabasi2001} who proposed an inhomogeneous recursive tree model in which each vertex has its own \emph{fitness}. In their model, a newly arrived vertex attaches to an existing vertex selected with probability proportional to the product of its fitness and its degree (so that the popularity of a vertex is moderated by its fitness). 
The most significant difference between the Bianconi-Barab\'asi model and the Barab\'asi-Albert model 
is the emergence of \emph{condensation} (observed in~\cite{bianconibarabasi2001} and proved rigorously by 
Borgs et al.~\cite{Borgs2007} and also later, in a more general context, by Dereich and Ortgiese~\cite{dereich_ortgiese_2014}). 
This means, under certain conditions on the distribution of fitnesses, a small (sub-linear) number 
of vertices with `high' fitness accumulate a positive fraction of the total number of edges in the graph. A number of other variations of the preferential attachment model have been proposed and studied, and for a more comprehensive overview see~\cite{vanderhofstad2016} and~\cite{bhamidi}.

The above models create random trees. However, they may  be extended so that newly arriving vertices make $m\geq 1$ new connections. One way of doing this is to consider $m$ copies of the new vertex each throwing one new connection to the existing network and then identifying them as one vertex (hence forming a multigraph). See Chapter 8 in~\cite{vanderhofstad2016} for a detailed description. 

\subsubsection*{Higher dimensional preferential attachment mechanisms}
\miscchange{All these models are 1-dimensional in the sense that newly arriving vertices are attached to single vertices. Our motivation is to consider attachment mechanisms in which newly arriving vertices join \emph{groups} of vertices, where the attachment takes into account intrinsic features of a group of vertices, and thus encodes more complexity.}
\emph{Simplicial complexes} are a natural choice for incorporating this \emph{higher dimensional} complexity at a local level. Furthermore, complex networks appearing in applications are typically \emph{locally dense}: that is, although they form sparse graphs, the neighbourhood of a typical vertex is dense. This is usually measured by the \emph{clustering coefficient}. The classic preferential attachment models do not satisfy this, as the graph that is formed is tree-like within a short distance from a randomly chosen vertex. However, this `local density' arises naturally from the fact that simplicial complexes are \emph{downwards closed}.
Hence, a preferential attachment model which involves higher order interactions encapsulates these features naturally.
Additionally, (random) simplicial complexes have already been used in applications such as topological data analysis (see, for example, \cite{gunnarcarlsson}), and recent theories of quantum gravity (see, for example,~\cite{agistein}).}

\begin{df} \label{df:simp-comp}
An \emph{(abstract) simplicial complex} $\K$ is a family of sets that is \emph{downwards
 closed}: for any set $\sigma \in \K$, if $\sigma' \subseteq \sigma$, then $\sigma' \in \K$. Any family of sets may be turned into a simplicial complex in the natural way by taking the \emph{downwards closure}, that is, by adding the minimum number of subsets to make the family downwards closed. 
\end{df}
An element $\sigma \in \K$ is called a \emph{face}, and we say that $\sigma$ has \emph{dimension} $s$ if it has cardinality $s+1$ (we also call it an $s$-\emph{face} or an $s$-\emph{simplex}). For $s \in \mathbb{N} \cup \{0, -1\}$, we denote by $\K^{(s)}$ the subset of $\K$ consisting of all its $s$-faces. The \emph{dimension} of $\K$ is defined to be the maximum $s$ such that $\K^{(s)}$ is non-empty (if $\K = \varnothing$ we say it has dimension $-1$). We call the $0$-faces of $\K$ its \emph{vertices}, and $\K^{(0)}$ its \emph{vertex set}. Finally, for a vertex $v \in \K^{(0)}$ we define its \emph{degree} by $\deg{(v)} : = \left|\left\{\sigma \in \K^{(1)}: v \in \sigma\right\} \right|$ (the degree in the usual sense with regards to the simple graph underlying the complex).  

One model that realises higher order interactions is the Random Apollonian Network. It was first introduced
in \cite{apollonianinitial} and independently in \cite{DoyeMassen} as a model for complex networks and was subsequently extended by Zhang et al.~\cite{ar:ZRC2006, ar:ZRZ2006}.
Here, in dimension $d$, we begin with a $d$-simplex, 
all of whose $(d-1)$-dimensional faces are \emph{active}.
In each step, an active $(d-1)$-dimensional face is selected 
uniformly at random and $d$ new $(d-1)$-faces are formed by the union of a newcoming vertex and each subset of the selected face of size $d-1$. Subsequently, the selected $(d-1)$-dimensional face is \emph{deactivated}, so that the number of active $(d-1)$-faces in the complex increases by $d-1$ at each step.
As each of the $d$ new $(d-1)$-faces, together with the selected face~$\sigma$ form a $d$-face, we can interpret this step geometrically as a $d$-face being `glued' onto the face~$\sigma$, with the set of active faces being the boundary of the complex (see Figure~\ref{fig:modelbdim3} below).
Note that, when a node $v$ enters the network, its degree is equal to $d$ and the number of active faces containing it is equal to $d$. Moreover, every time an active face containing $v$ is selected, the degree of $v$ increases by one and the number of active faces containing $v$ increases by $d-2$.
Therefore, the number of active faces containing a given vertex $v$ is $(d - 2) \deg(v)-d(d-3)$.  Thus, if $d > 2$ the number of active faces containing a vertex is proportional to its degree, and hence this model gives rise to a \emph{preferential attachment mechanism}. In~\cite{apollonian_hungarians_1} and independently in \cite{apollonian_frieze_2}, the authors determined that the degree distribution of this model for $d > 2$, gives rise to a power law with exponent $\tau = \frac{2d-3}{d-2} = 2 + \frac{1}{d-2}$.\footnote{Note that often in the literature surrounding Apollonian networks, rather than using the dimension of the initial simplex, authors use the number of vertices in an `active' face as the parameter of the model. Thus the Apollonian network with parameter $d$ is the same as the Apollonian network in dimension $d - 1$.}
For $d = 3$ the same model has been studied under the name \emph{random stack-triangulations} by Albenque and Marckert in \cite{albenque2008}, where they proved that the sequence of complexes with graph distance metric rescaled by $\sqrt{n}$ considered as a compact metric space converges in the Gromov-Hausdorff topology to the continuum  random tree of Aldous \cite{aldous1993}. 
\\\\
\miscchange{In the Apollonian network the choice among the active $(d-1)$-faces is uniform. In particular, there is no preferential attachment mechanism directly associated with the evolution of the vertices. This motivates us to define and study mechanisms in which these high-dimensional sub-structures are \emph{inhomogeneous} and have some intrinsic fitness which is a function of the fitness of their members.} 

\miscchange{Specific implementations of this idea were introduced by Bianconi, Rahmede, and other co-authors motivated by applications in physics (\cite{bianconi2015ComplexQN_1,bianconirahmedezhihao_2, bianconicourtney_3,bianconi2016NetworkGW_4, bianconirahmede2017_5,bianconi_selection_mechanisms}). 
(For example, random triangulations have been considered in the context of quantum gravity~\cite{agistein}}.)
The model of \emph{Complex Quantum Network Manifolds (CQNMs)} described in \cite{bianconi2015ComplexQN_1} in dimension $d > 1$ can be viewed as a generalisation of the Random Apollonian Network, where vertices are equipped with independent, identically distributed (i.i.d.) weights (called \emph{energies} in this context) and each $(d-1)$-face $\sigma$ of the evolving $d$-dimensional 
simplicial complex has energy $\epsilon_{\sigma}$ given by the sum of the energies of its vertices. The simplicial complex evolves in the same way as the Random Apollonian network, with the only difference being that at each time-step, a new vertex selects an active $(d-1)$-face $\sigma$ with probability proportional to $e^{-\beta \epsilon_\sigma}$ (where $\beta \geq 0$ is a fixed constant, usually interpreted as the ``inverse temperature'') instead of uniformly at random. 
In \cite{bianconi2015ComplexQN_1}, the authors argue that when $d=2$ the underlying graph
has degree distribution with exponential tail whilst, when $d\geq 3$ the degree distribution follows a power law with exponent that depends on $d, \beta$ and the 
distribution of the weights. In this paper, we verify a rigorous version of this result when the energies are bounded (see Subsection \ref{sec:tails}).

\miscchange{In \cite{bianconi2016NetworkGW_4}, Bianconi and Rahmede introduce a more general model called the \emph{network geometry with flavour (NGFs)}. The network geometry with flavour, in dimension $d$ and \emph{flavour} $s \in \{-1, 0, 1\}$ proceeds as follows. As before, vertices are equipped with i.i.d. \emph{energies} and each $(d-1)$-face $\sigma$ of the evolving $d$-dimensional 
simplicial complex has energy $\epsilon_{\sigma}$ which is equal to the sum of the energies of its vertices. At each time-step, a new vertex selects a $(d-1)$-face $\sigma$ with probability proportional to $e^{-\beta \epsilon_\sigma} \left(1 + s\deg_{d}{(\sigma)} - s\right)$, where $\beta \geq 0$ is a fixed constant. 
In the case $s = -1$, Bianconi and Rahmede~\cite{bianconi2015ComplexQN_1} argue that when $d=2$ the underlying skeleton graph
has degree distribution with exponential tail, whilst when $d\geq 3$ the degree distribution obeys a power law, with an exponent that depends on $d$ as well as on $\beta$ and the 
distribution of the weights. 
Moreover, in \cite{bianconirahmedezhihao_2}, Bianconi, Rahmede and Wu argue that for $d=2$, if $s = -1$ the underlying skeleton graph 
has degree distribution with exponential tail, whilst if $s = 0$, the underlying skeleton graph has power law tails. We will prove weaker versions of both these results rigorously in this paper, in the sense that the degree distribution has a tail bounded from above and below by a power law. 
See Subsection \ref{sec:tails} for more details. }

There are many other models of random simplicial complexes, and for more details see the review articles by Kahle \cite{kahle2013} and Bobrowski and Kahle \cite{bobrowskikahle}.

\subsection{Definition of the model: the inhomogeneous dynamic simplicial complex}
In this paper, we consider a sequence of simplicial complexes $\left(\C{n}{}{}\right)_{n \geq 0}$ of fixed dimension $d \geq 0$.
The distribution of $\left(\C{n}{}{}\right)_{n \geq 0}$ depends on two parameters:
a symmetric \emph{fitness function} $f: [0,1]^d \to \R$, and a probability measure $\mu$ whose support is a subset of $[0,1]$ (in fact, we only require that $\mu$ only takes positive values and has bounded support; the assumption that the essential supremum is equal to $1$ can be made without loss of generality).

For all $n\geq 0$, $\mathcal K_{n+1}$ is obtained by adding one vertex labelled $n+1$ to $\mathcal K_n$ and assigned random weight sampled independently according to $\mu$. Using the weights of the vertices, we define the {\it fitness} of a face $\sigma$ as the image by $f$ of the vector $\omega(\sigma)$ of the weights of the vertices that belong to that face.
Abusing notation slightly, we sometimes write $f(\sigma)$ instead of $f(\omega(\sigma))$.
Since $f$ is assumed to be symmetric, the order of the coordinates of $\omega(\sigma)$ is not relevant. 

Motivated by this symmetry, for all $s\geq 0$, we view the {\it type} $\omega(\sigma)$ of an $s$-dimensional face $\sigma$ as an element of  $\cC_{s}:=[0,1]^{s+1}/\sim$, where $\sim$ denotes the equivalence relation where vectors are the same under permutation of their entries. 
Unless otherwise stated, we identify entries of $\cC_{s}$ with
the set $\{(x_0, \ldots, x_s) \in [0,1]^{s+1}: x_0 \leq \ldots \leq x_s\}$ and equip $\cC_s$ with the max-norm inherited from $[0,1]^{s+1}$.

We consider two versions of the model: Model \textbf{A} and Model \textbf{B}. These models are defined as follows: 
 first, let $\C{0}{}{}$ be an arbitrary $(d-1)$-dimensional simplicial complex, with finite vertex set $V_0 \subseteq -\mathbb N_0$ and each vertex assigned a fixed weight chosen from $\Supp(\mu)$ (in fact, we show that our limiting results do not depend on this choice of weights).
 Then, recursively for all $n \geq 0$:
 \begin{itemize}
 
 \item [(i)] Define the random empirical measure 
 \[
 \Pi_{n} = \sum_{\sigma \in \C{n}{}{d-1}}\delta_{\omega (\sigma)} 
 \] 
 on $\cCd$ and the associated probability measure on the set $\C{n}{}{d-1}$ of $(d-1)$-dimensional faces:  
 \begin{equation} \label{eq:partition-def}
 \hat{\Pi}_n = 
 \frac{1}{Z_n}\sum_{\sigma \in \C{n}{}{d-1}} f(\sigma) \delta_{\sigma}, 
 \quad\text{ where }
 Z_n:=\int_{\cCd} f (x) \dd \Pi_n (x).
 \end{equation}
 We call $Z_n$ the \emph{partition function} associated with the process $(\K_{n})_{n \geq 0}$ at time~$n$. 
\item [(ii)] Select a face $\sigma' = (\sigma'_{0}, \ldots, \sigma'_{d-1}) \in \C{n}{}{d-1}$ according to the measure $\hat{\Pi}_n$. 
 \item [(iii)] In both Models \textbf{A} and \textbf{B}, for each $\sigma'' \in \C{n}{}{d-2}$ such that $\sigma'' \subset \sigma'$, add the face $\sigma'' \cup \{n+1\}$ to $\C{n}{}{}$ (recall that $\C{n}{}{-1} = \varnothing$). Moreover, in Model \textbf{B} remove the set $\sigma'$ from $\C{n}{}{}$.  Then, take the downwards closure (recall Definition~\ref{df:simp-comp}) to form $\C{n+1}{}{}$. 
 \end{itemize}
 Note that, in Model \textbf{A} the existing faces always remain in the complex, whilst in Model \textbf{B} the selected face is removed at every step. We call step (iii) applied to a chosen face $\sigma'$ a \emph{subdivision} of $\sigma'$ by vertex $n + 1$ (equivalently we say $\sigma'$ has been \emph{subdivided} by vertex $n+1$).
\begin{figure}[H]
\captionsetup{width=.8\linewidth}
\begin{center} 
 \begin{tikzpicture}[scale=0.9, node distance=1cm]
\ImageNode[label={0:}]{A}{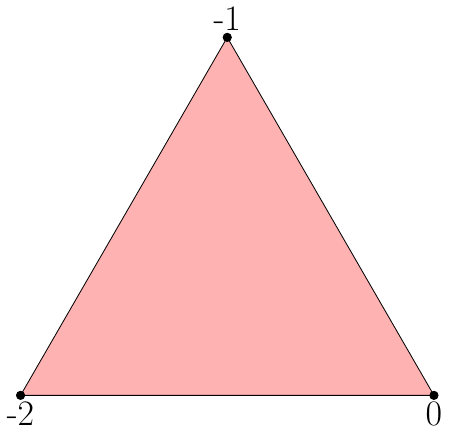}
\ImageNode[label={180:},right=of A]{B}{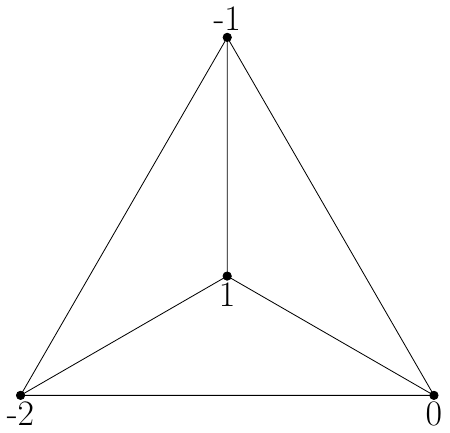}
\ImageNode[label={180:},right=of B]{C}{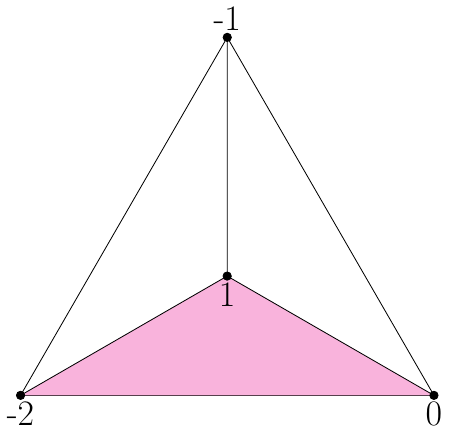}
\ImageNode[right=of C]{D}{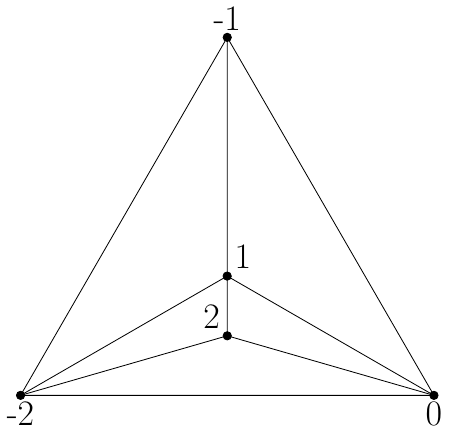}
\ImageNode[below=of D]{E}{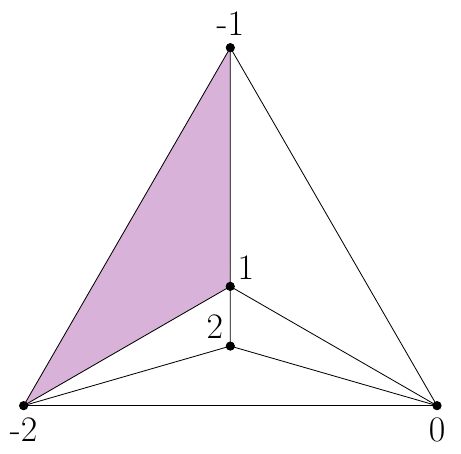}
\ImageNode[left=of E]{F}{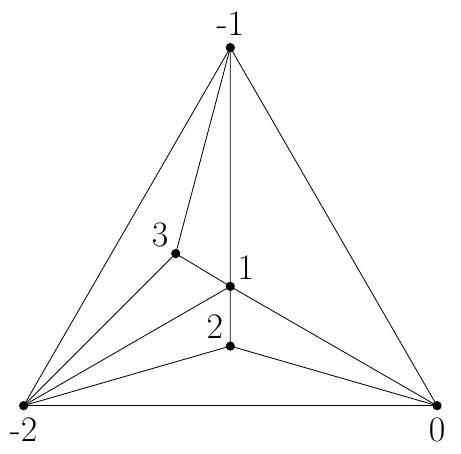}
\ImageNode[left=of F]{G}{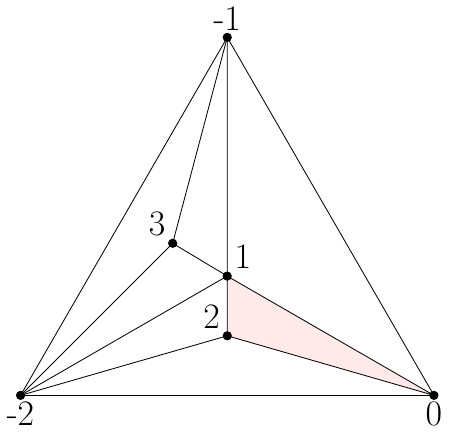}
\ImageNode[left=of G]{H}{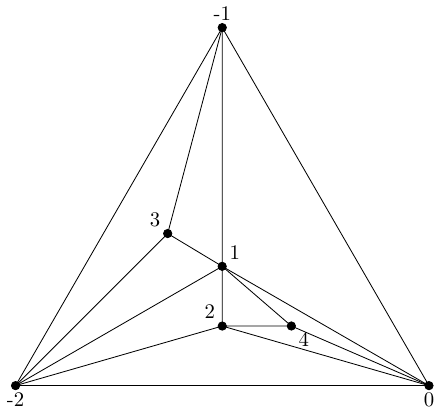}
\draw[-{Latex[length=5mm, width = 5mm]},
    arrowblue,
    line width=5pt
  ]
  (A) -- (B);
  \draw[-{Latex[length=5mm, width = 5mm]},
    arrowblue,
    line width=5pt
  ]
  (B) -- (C);
  \draw[-{Latex[length=5mm, width = 5mm]},
    arrowblue,
    line width=5pt
  ]
  (C) -- (D);
  \draw[-{Latex[length=5mm, width = 5mm]},
    arrowblue,
    line width=5pt
  ]
  (D) -- (E);
  \draw[-{Latex[length=5mm, width = 5mm]},
    arrowblue,
    line width=5pt
  ]
  (E) -- (F);
    \draw[-{Latex[length=5mm, width = 5mm]},
    arrowblue,
    line width=5pt
  ]
  (F) -- (G);
  \draw[-{Latex[length=5mm, width = 5mm]},
    arrowblue,
    line width=5pt
  ]
  (G) -- (H);
 \end{tikzpicture}
\end{center}
\centering
\caption{The evolution of the model in dimension 3. At each step, a $2$-face (triangle) is chosen randomly according to step (i), and subdivided. In Model \textbf{B}, the chosen face is then removed from the complex}
\label{fig:modelbdim3}
\end{figure}

\begin{rmq}
 For general $d$, Model \textbf{A} may be considered as a generalisation of the aforementioned NGF (see \cite{bianconi2016NetworkGW_4})
 with flavour $s = 0$, and bounded energies. We recall that when $s=0$, each face $\sigma$ is selected with probability proportional to $\mathrm e^{-\beta\epsilon_\sigma}$, where $\epsilon_\sigma$ is the (random) energy of face $\sigma$. Model \textbf{B} may be considered as a generalisation of CQNMs with bounded energies. However, note that for brevity, rather than `deactivating' selected faces, we simply remove them from the complex as this does not affect any of the results regarding degree distributions.
\end{rmq}
\begin{rmq}
The methods in this paper also allow us to study the case where the fitnesses associated with a $(d-1)$-face do not depend on the type, but are chosen independently from an underlying distribution. For brevity, we omit formulating explicit results for this model. 
\end{rmq}
\begin{rmq}
The models we introduced can be further generalised. For example, instead of selecting a $(d-1)$-face to subdivide, one may consider a setting where a face of dimension $s$ may be selected and subsequently subdivided, with the addition of an $(s+1)$-dimensional face. 
\end{rmq}
\subsubsection*{Some more notation}

Recall that for all $s\geq 0$, $\mathcal C_s=\{(x_0, \ldots, x_s) \in [0,1]^{s+1}: x_0 \leq \ldots \leq x_s\}$.
For all $x = (x_0,\ldots, x_s) \in \cC_s$ and $i \in \{0, \ldots, s\}$, we set 
$\tilde{x}_i := (x_0, \ldots, x_{i-1}, x_{i+1}, \ldots, x_{s}) \in {\cC_{s-1}}$ and define the empirical measure $\nu_x = \sum_{i=0}^s \delta_{\tilde{x}_i}$ on $\cC_{s-1}$.
For $w \geq 0$ and $y \in \cC_s$, let $y \cup w \in \cC_{s+1}$ denote the vector obtained by adding a coordinate equal to $w$ to the vector $y$ and reordering the coordinates of this $(s+1)$-dimensional vector in non-decreasing order. For $i\in \{0, \ldots, s\}$, we write  $x_{i\lf w} := \tilde x_i \cup w$. 
With this notation, when a face of type $x$ is subdivided by a vertex of weight $w$, we add to the complex $d$ new $(d-1)$-faces of respective types $x_{i \lf w}$ for $i \in \{0, \ldots, d-1\}$. In addition, for a vector $x= (x_0,\ldots,x_{j}, w,x_{j+1} \ldots, x_s) \in \cC_s$, we denote by $x\setminus \{w\}$ the element $(x_0, \ldots,x_{j},x_{j+1}, \ldots, x_s) \in \mathcal{C}_{s-1}$. 
For a vertex $v$ in a $d$-dimensional simplicial complex $\C{}{}{}$, we define the \emph{star} of $v$ in $\C{}{}{}$, which we denote by $\St{v}{\C{}{}{}}$, to 
be the subset of $\C{}{}{d-1}$ consisting of those $(d-1)$-faces which contain $v$. 
Finally, we write $\mathbf{0}$ and $\mathbf{1}$ for the vectors $(0, \ldots, 0)$ and $(1, \ldots, 1)$ respectively, in any dimension. 
\ifthenelse{\boolean{graphs}}{ 
\begin{figure}[H]
\begin{center}
\includegraphics[width=8cm]{Model2}
\end{center}
 \caption{A realisation of $\C{n}{}{}$ in dimension $d=2$, until time $n=3$. The initial complex is a triangle. The bold edges are the faces that are randomly selected at each step.}
\label{fig:model}
\end{figure}
}

\subsection{Main results, Part I: Convergence of the Partition Function} 
We will refer to the following hypotheses throughout the text: 
\begin{itemize}
    \item [\textbf{H1.}] The measure $\mu$ is finitely supported, the fitness function $f$ is positive and $\left|\C{n}{}{d-1}\right| \to \infty$ as $n \to \infty$ (where we recall that $\mathcal K^{\sss (d-1)}_n$ is the set of all $(d-1)$-faces in the random simplicial complex $\mathcal K_n$ at time $n$). 
    \item [\textbf{H2.}] The process $(\K_n)_{n \geq 0}$ evolves according to Model $\textbf{A}$ and $\mu(\{1\})=0$. 
    Moreover, the fitness function $f$ is continuous, monotonically increasing in each argument, positive and such that, for a random variable $W$ with distribution $\mu$, 
    \begin{equation}\label{eq:extra_cond}
    \mathbb E[f(\bs 1_{0\leftarrow W})]< (1+\nicefrac1d)\mathbb E[f(\bs 0_{0\leftarrow W})].
    \end{equation}
\end{itemize}

\begin{rmq}\label{rk:H2}
We do not believe that Assumption {\bf H2}, and in particular Equation~\eqref{eq:extra_cond} which ensures that the function $f$ is not ``too steep'' on its domain of definition, is necessary for our 
results to hold true. 
Our main result on the asymptotic degree distribution holds under Assumptions (a-d) of Remark~\ref{rk:(a-d)} below.
We use Assumption {\bf H2} to show that Assumptions (c-d ) hold: this done in Propositions~\ref{prop:MC} and~\ref{prop:partition}. Their proofs, in the case of $\mu$ having infinite support, rely on recent results of~\cite{maivil} on the convergence of infinitely-many colour P\'olya urns; more precisely, Assumption {\bf H2} ensures that the assumptions of~\cite[Theorem~1]{maivil} hold.

The case when $\mu$ has continuous support is expected to be more difficult to treat; as  
illustrated, for example, in~\cite{Borgs2007} where the Bianconi and Barab\'asi preferential attachment tree with fitness is studied in both the finite support and continuous support case. Borgs et al.~\cite{Borgs2007} treat the continuous support case by coupling it with a finitely-many colour P\'olya urn, but this method does not seem to work in {\color{red}this} case because of  the added complexity introduced by the dependencies in the model (in particular because several vertices belong to one face).
\end{rmq}

Note that $\left| \C{n}{}{d-1} \right| \to \infty$ as long as $d > 1$ in Model \textbf{B}, and for all $d \geq 1$ in Model \textbf{A}. 

\begin{prop} \label{prop:MC}
Assume {\rm\textbf{H1}} or {\rm\textbf{H2}}, and let $Y_n, n \geq 1$ be the $\cCd$-valued random variable that equals the type of the face chosen to be subdivided in the $n$-th step.  
Then, $Y_n$ converges to a $\cCd$-valued random variable~$Y_\infty$ in distribution when $n$ tends to infinity.
\end{prop}

Given any sub-complex $\tilde{\C{}{}{}} \subseteq \C{n}{}{}$ define \begin{equation}\label{eq:def_F}
    F(\tilde{\C{}{}{}}) : = \sum_{\sigma \in \tilde{\K}^{(d-1)} } f(\sigma).
\end{equation}
and note that $F(\C{n}{}{}) = Z_n$ (the partition function defined in \eqref{eq:partition-def}). 

\begin{prop} \label{prop:partition}
Assume {\rm\textbf{H1}} or {\rm\textbf{H2}}. Then, there exists $\lambda > 0$ such that, almost surely,
\[\frac{Z_n}{n} = \frac{F(\C{n}{}{})}{n} \longrightarrow \lambda, \quad \text{as $n \to \infty$}.\]
\end{prop}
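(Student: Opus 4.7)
The plan is to identify the expected one-step increment of $F(\K_n)$, use Proposition~\ref{prop:MC} to pin down its limit $\lambda$, and then convert convergence in expectation into almost-sure convergence via a martingale/concentration argument. Write the one-step increment as
$$\Delta_n := F(\K_n) - F(\K_{n-1}) = \sum_{i=0}^{d-1} f\bigl((Y_n)_{i\lf W_n}\bigr) - \varepsilon\, f(Y_n),$$
where $W_n \sim \mu$ is the weight of the newly added vertex (independent of $\mathcal{F}_{n-1}$ given $Y_n$), and $\varepsilon = 0$ in Model~\textbf{A}, $\varepsilon = 1$ in Model~\textbf{B}; in particular $|\Delta_n| \leq (d+\varepsilon) f_{\max}$. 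Introduce the bounded continuous function
$$g(y) := \sum_{i=0}^{d-1} \int_{0}^{1} f(y_{i\lf w})\, d\mu(w) - \varepsilon f(y),\qquad y \in \cCd.$$
Conditioning on $Y_n$ gives $\mathbb{E}[\Delta_n \mid Y_n] = g(Y_n)$; by Proposition~\ref{prop:MC} and the portmanteau theorem, $\mathbb{E}[\Delta_n] = \mathbb{E}[g(Y_n)] \to \mathbb{E}[g(Y_\infty)] =: \lambda$, and Ces\`aro averaging gives $\mathbb{E}[F(\K_n)]/n \to \lambda$.

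Positivity of $\lambda$ is handled separately in the two models. In Model~\textbf{A} one has $g(y) \geq d f_{\min}$ pointwise, so $\lambda \geq d f_{\min} > 0$. In Model~\textbf{B}, \textbf{H1} forces $d>1$, and the deterministic identity $|\C{n}{}{d-1}| = |\C{0}{}{d-1}| + (d-1)n$ combined with $F(\K_n) \geq f_{\min}\,|\C{n}{}{d-1}|$ yields $\liminf_n F(\K_n)/n \geq (d-1) f_{\min} > 0$, so $\lambda > 0$ as well.

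For almost-sure convergence, decompose
$$F(\K_n) - F(\K_0) = M_n + \sum_{k=1}^n \mathbb{E}[\Delta_k \mid \mathcal{F}_{k-1}],$$
where $M_n := \sum_{k=1}^n \bigl(\Delta_k - \mathbb{E}[\Delta_k \mid \mathcal{F}_{k-1}]\bigr)$ is a martingale with uniformly bounded differences. Azuma--Hoeffding applied with threshold $t_n = n^{3/4}$, together with Borel--Cantelli, immediately gives $M_n/n \to 0$ almost surely.

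The main obstacle is to show that $n^{-1}\sum_{k=1}^n \int g\, d\hat{\Pi}_{k-1} \to \lambda$ almost surely, since $\mathbb{E}[\Delta_k \mid \mathcal{F}_{k-1}] = \int g\, d\hat{\Pi}_{k-1}$ is a genuinely random quantity. Proposition~\ref{prop:MC} is not by itself strong enough, as it only delivers convergence in distribution of $Y_n$. The natural route is to upgrade it to an almost-sure statement on the random empirical measure $\hat{\Pi}_n$: namely that $\hat{\Pi}_n$ converges weakly, almost surely, to the deterministic law of $Y_\infty$. This is the conclusion one expects from the measure-valued P\'olya-process viewpoint on the model, and can be obtained either via an Athreya--Karlin continuous-time embedding of the dynamics as a multitype branching process followed by the associated ergodic theorems, or directly by bounding the variance of $\int g\, d\hat{\Pi}_n$ and running an $L^2$/subsequence argument (in Model~\textbf{A} the monotonicity of $F(\K_n)$ makes the interpolation between subsequence times trivial; in Model~\textbf{B} the near-linear growth of $|\C{n}{}{d-1}|$ plays the same role). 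Once almost-sure weak convergence of $\hat{\Pi}_n$ is in hand, continuity and boundedness of $g$ together with dominated convergence yield $\int g\, d\hat{\Pi}_n \to \lambda$ almost surely, and a final Ces\`aro average finishes the proof.
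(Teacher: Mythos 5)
There is a genuine gap, and you have located it yourself: the argument rests entirely on the claim that $\hat{\Pi}_n \to \hat{\pi}$ almost surely in the weak topology, and you defer it to a sentence of the form ``can be obtained either via an Athreya--Karlin embedding \ldots\ or directly by bounding the variance.'' That is not a proof, and what it gestures at is precisely the hard part. The paper phrases the needed result as Theorem \ref{empir_limit} --- almost-sure weak convergence of the \emph{unnormalised} empirical measure $\Pi_n/n$ to a deterministic $\pi$ --- and devotes Section \ref{sec:emp} to proving it: under \textbf{H1} via Janson's P\'olya urn theorem (Theorem \ref{thm:polyurnconvergence}); under \textbf{H2} via the Mailler--Villemonais MVPP theorem (Theorem \ref{th:MV}), whose conditions \textbf{B1}--\textbf{B3} require the coupling constructions of Lemmas \ref{lem:couplingXN} and \ref{lem:couplingb3} and lean crucially on the monotonicity of $f$, on $\mu(\{1\})=0$, and on \eqref{eq:extra_cond}. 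A bare ``bound the variance'' argument would not deliver this; the extra hypotheses in \textbf{H2} exist exactly because the convergence is delicate.

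Once Theorem \ref{empir_limit} is available, your martingale scaffolding is also a detour. From $\Pi_n/n\to\pi$ a.s.\ and boundedness and continuity of $f$ (automatic under \textbf{H1} since the type space is finite, assumed under \textbf{H2}), one has directly
\[
\frac{F(\K_n)}{n}=\int_{\cCd} f\,\dd\Bigl(\frac{\Pi_n}{n}\Bigr)\longrightarrow\int_{\cCd} f\,\dd\pi=:\lambda,
\]
which is the paper's one-line derivation. Your decomposition $F(\K_n)=F(\K_0)+M_n+\sum_{k\le n}\int g\,\dd\hat{\Pi}_{k-1}$, with Azuma--Hoeffding for $M_n$ and Ces\`aro for the compensator, is correct as written and would be exactly the right tool if one only knew the \emph{normalised} convergence $\hat{\Pi}_n\to\hat{\pi}$; but the machinery you cite gives the unnormalised convergence anyway, making the compensator argument superfluous. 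The positivity argument and the identification of your $\lambda=\mathbb E[g(Y_\infty)]$ with $\int f\,\dd\pi$ are both fine. The one thing that is missing is the one thing that is hard.
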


\begin{rmq}
The distribution of the limiting random variable $Y_\infty$ and the value of $\lambda$ do not depend on the choice of the initial complex $\C{0}{}{}$. 
\end{rmq}

\begin{rmq}
  Because under {\rm\bf H1} or {\rm\bf H2}, the function $f$ is bounded, we have trivial deterministic bounds on $Z_n = F(\C{n}{}{})$, and therefore on $\lambda$: If we let \begin{equation}\label{eq:def_fmin_fmax}
  f_{\min} = \min \{f(x) : x \in \cCd\}\quad\text{ and }\quad 
  f_{\max} = \max \{f(x) : x \in \cCd\}
  \end{equation}be the minimum and the maximum respectively of the fitness function on its domain of definition, then $\lambda \in [df_{\min},df_{\max}]$ in Model \textbf{A}, whereas $\lambda \in [(d-1)f_{\min},(d-1)f_{\max}]$ in Model \textbf{B}.
\end{rmq}

\begin{rmq}
The monotonicity requirement and \eqref{eq:extra_cond}  in {\rm\textbf{H2}} may be used to cover a particular case of the NGF in \cite{bianconi2016NetworkGW_4} (namely the case with `flavour' $s=0$, in which each face $\sigma$ is selected with probability proportional to $\mathrm e^{-\beta \epsilon_{\sigma}}$, where $\epsilon_\sigma$ is the energy of face $\sigma$, and the selected faces remain in the complex) by setting the weights $w_i = (1-\epsilon_i)$ where $\epsilon_i$ are the energies assigned to the vertices. We therefore assume that the distribution of $\epsilon_i$ does not have an atom at $0$, the energies are bounded, and \eqref{eq:extra_cond} is satisfied, that is, the ``inverse temperature'' $\beta$ satisfies $\beta < \frac{1}{d-1} \log \left(1 + \frac{1}{d} \right)$.  
\end{rmq}

Both propositions are corollaries of a more general almost sure limit theorem for the empirical measure $\Pi_n, n \geq 0$ proved in Section \ref{sec:emp}. While this result (and therefore the two propositions) follows from standard P\'olya urn theory
under \textbf{H1}, for \textbf{H2} we need to make use of general results for measure-valued P\'olya urn processes  recently established in~\cite{maivil} to cover the general case. See, in particular, Section \ref{sec:emp} in this work. 

 \subsection{The companion star process}\label{sub:star_proc_def}
 We state our other main results in terms of a companion 
 process $(S^*_{n})_{n \geq 0}$. Informally, this process approximates the evolution of the star of a fixed vertex $i$ in $(\C{n}{}{})_{n \geq 0}$, assuming that $i$ is sufficiently large
 (namely large enough for the distribution of $Y_i$, the type of the face selected by node $i$ when it enters the network, to be close enough to the distribution of $Y_\infty$ - see Proposition~\ref{prop:MC}).
 Let $\pi_{\infty}$ denote the distribution of the random variable $Y_{\infty}$ from Proposition \ref{prop:MC}. Sample a face type from a measure $\pi_{\infty}$, and form a $(d-1)$-simplex (on vertex set $\{1-d, \ldots, 0\}$) with weights corresponding to this type. Subdivide this face (using the mechanisms of Model \textbf{A} or \textbf{B}) by a new vertex labelled $r$ with weight $W$ sampled from $\mu$, and form the simplicial complex $S^*_0$ consisting of the $(d-1)$-faces containing $r$. 
 We call $r$ the \emph{centre} of  $S^*_0$.
 Then, recursively: 
\begin{itemize}
  \item[(i)] Select a face $\sigma$ from $(S^{*}_{n})^{(d-1)}$ with probability proportional to its fitness, and subdivide it by a new vertex $n+1$ obeying the subdivision rules of Model \textbf{A} or Model \textbf{B} respectively. 
  \item[(ii)] Form the simplicial complex $S^*_{n+1}$ consisting only of the $(d-1)$-faces containing $r$ (essentially this means removing all the $(d-1)$-faces formed during the subdivision step not containing~$r$).
  \end{itemize}
  
\begin{figure}
\captionsetup{width=.8\linewidth}
\begin{tikzpicture}[scale=0.9]
\ImageNode[label={0:}]{A}{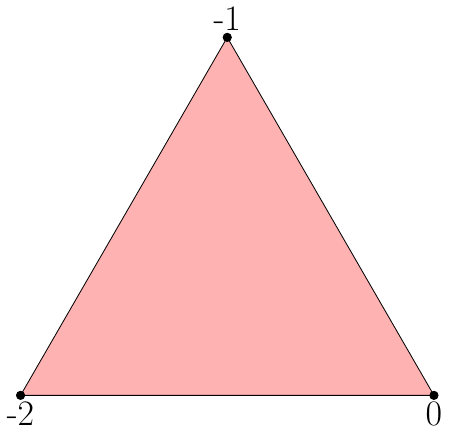}
\ImageNode[label={180:},right=of A]{B}{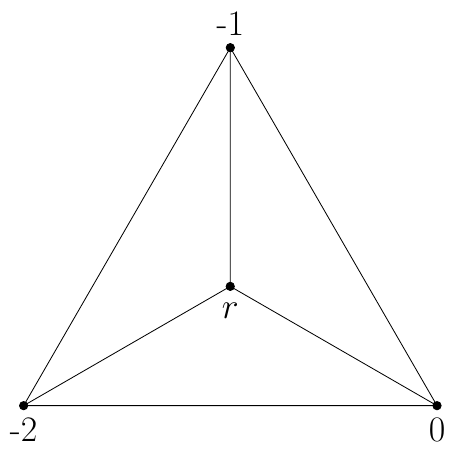}
\ImageNode[label={180:},right=of B]{C}{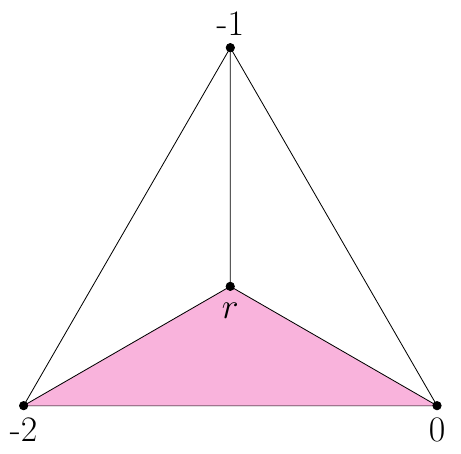}
\ImageNode[right=of C]{D}{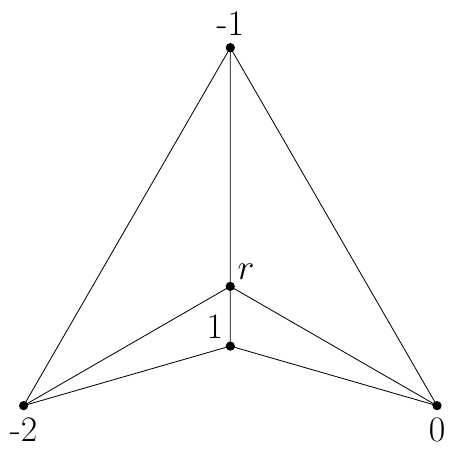}
\ImageNode[below=of D]{E}{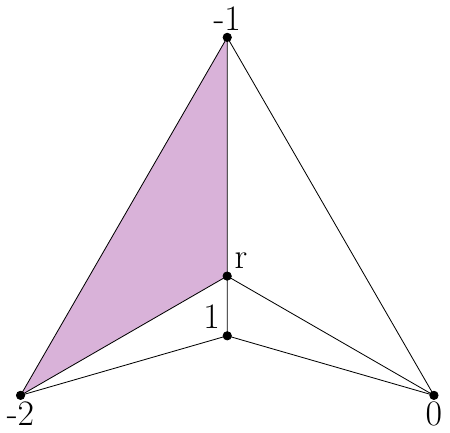}
\ImageNode[left=of E]{F}{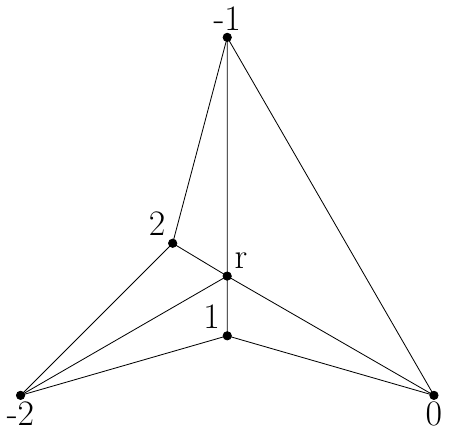}
\ImageNode[left=of F]{G}{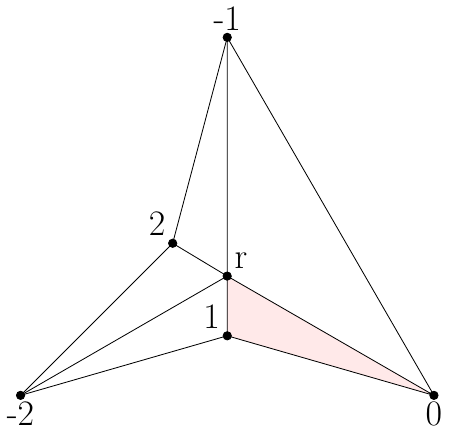}
\ImageNode[left=of G]{H}{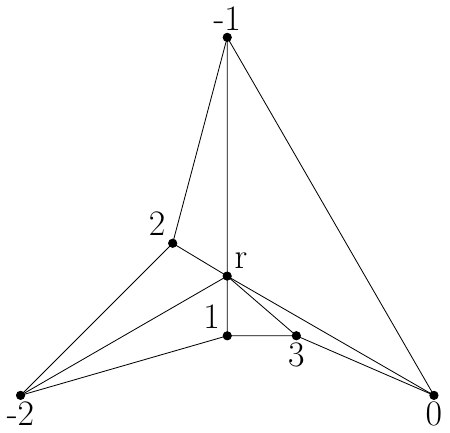}
\draw[-{Latex[length=5mm, width = 5mm]},
    arrowblue,
    line width=5pt
  ]
  (A) -- (B);
  \draw[-{Latex[length=5mm, width = 5mm]},
    arrowblue,
    line width=5pt
  ]
  (B) -- (C);
    \draw[-{Latex[length=5mm, width = 5mm]},
    arrowblue,
    line width=5pt
  ]
  (C) -- (D);
  \draw[-{Latex[length=5mm, width = 5mm]},
    arrowblue,
    line width=5pt
  ]
  (D) -- (E);
    \draw[-{Latex[length=5mm, width = 5mm]},
    arrowblue,
    line width=5pt
  ]
  (E) -- (F);
    \draw[-{Latex[length=5mm, width = 5mm]},
    arrowblue,
    line width=5pt
  ]
  (F) -- (G);
    \draw[-{Latex[length=5mm, width = 5mm]},
    arrowblue,
    line width=5pt
  ]
  (G) -- (H);
 \end{tikzpicture}
\centering
\caption{The evolution of the companion process in Model \textbf{B} and dimension 3. A face with type selected from $\pi_{\infty}$ is formed on vertices $\{-2,-1,0\}$, and subdivided with a vertex labelled $r$. Subsequently, a face is chosen randomly and subdivided according to step (i), and then faces not containing $r$ are deleted.  Since this is Model \textbf{B}, the chosen face is also removed from the complex.}
\label{fig:modelbcompdim3}
\end{figure}
  
A more formal construction of this process is provided in Subsection \ref{subsec:starprocess}. 
We set
\begin{equation}\label{eq:defF*}
    F(S^*_n) := \sum_{\sigma \in (S^{*}_{n})^{(d-1)}} f(\sigma).
\end{equation}
 \subsection{Main results, Part II: Convergence of the Degree Distribution}
\begin{thm} \label{thm:small_degrees_vague}
Assume {\rm\textbf{H1}} or {\rm\textbf{H2}}
and for all $n\geq 1$, $k\geq 0$, let $N_k(n)$ denote the number of nodes of degree~$k+d$ in the random simplicial complex $\mathcal K_n$ at time~$n$. Then, for all $k\geq 0$, we have, with convergence in probability,
\[\lim_{n \to \infty} \frac{1}{n} \D{k}{n} = \E{\frac{\lambda}{F(S^*_k)+\lambda}\prod_{j=0}^{k-1} \frac{F(S^*_j)}{F(S^*_j)+\lambda}} =: \p_k,\]
where the star process $S^*$ and its fitness function $F$ are defined respectively in Subsection~\ref{sub:star_proc_def} and Equation~\eqref{eq:defF*}.
\end{thm}

In fact, we have the more general result: suppose that  $N^{(s)}_{k}(n)$ denotes the number of vertices of $s$-degree ${d \choose s} + {d-1 \choose s-1}k$, for $1 \leq s < d$ (the $s$-degree of a face is the number of distinct $s$-faces that contain it).

\begin{cor} \label{thm:higher_dimensional_degrees}
Assume {\rm\textbf{H1}} or {\rm\textbf{H2}}. For all $k\geq 0$, we have, independent of the initial complex $\C{0}{}{}$, with convergence in probability, \[\lim_{n\to \infty} \frac{1}{n} N^{(s)}_{k}(n) = \p_k.\]
\end{cor}

\begin{rmq}\label{rk:(a-d)}
In fact, in {\color{red}the} proof of Theorem~\ref{thm:small_degrees_vague}, we show that the conclusion of the theorem holds if one assumes the following instead of {\bf H1} or {\bf H2}:
\begin{enumerate}[{\rm (a)}]
\item The measure $\mu$ is an arbitrary probability measure on $[0, \infty)$.
\item The fitness function $f$ is non-negative, symmetric, bounded and continuous.
\item If for all $n\geq 1$, $Y_n$ is the type of face that is subdivided at time $n$, then $(Y_n)_{n\geq 1}$ converges in distribution when $n\to+\infty$.
\item There exists $\lambda>0$ such that, almost surely when $n\to+\infty$, $F(\mathcal K_n)/n \to \lambda$.
\end{enumerate}
Note that {\rm (a-b)} above is much weaker than assuming {\bf H1} or {\bf H2} as we do in Theorem~\ref{thm:small_degrees_vague}, but {\bf H1} or {\bf H2} gives a sufficient condition for {\rm (c-d)} above to hold (as seen in Propositions~\ref{prop:MC} and~\ref{prop:partition}).
\end{rmq}

\begin{rmq}
Note that the boundedness of $f$ implies that
\begin{equation} \label{eq:trivboundsS}
\begin{cases}
(d + (d-1)n)f_{\min}  \leq F(S^*_n) \leq (d + (d-1)n)f_{\max}, & \text{in Model \textbf{A}}; \\
(d + (d-2)n)f_{\min}  \leq F(S^*_n) \leq (d + (d-2)n)f_{\max}, & \text{in Model \textbf{B}},
\end{cases}
\end{equation}
where we recall that $f_{\min}$ and $f_{\max}$ are the minimum and the maximum of the fitness function $f$ (see Equation~\eqref{eq:def_fmin_fmax}).
\end{rmq}

\begin{rmq}
Although Theorem~\ref{thm:small_degrees_vague} is about degrees of vertices, 
our approach is not restricted only to the graph that underlies the simplicial complex but it can be used in order to study degrees of higher order faces or degrees defined in terms of lower order faces. In the latter 
direction is Corollary~\ref{thm:higher_dimensional_degrees}.

For an $r$-face $\sigma$ with $r <d-1$, the \emph{degree} of $\sigma$ is the number of $(d-1)$-faces which contain $\sigma$. 
One can derive the analogue of Theorem~\ref{thm:small_degrees_vague}
for the degree distribution of $r$-faces by considering a star companion process for an $r$-face. 
Here, the star of an $r$-face will simply consist of the $(d-1)$-faces that contain it. 
As long as the process is such that a.s. the total
weight of the star tends to infinity, then one could
derive a formula as in Theorem~\ref{thm:small_degrees_vague}.
\end{rmq}

\subsubsection*{Outline of the paper}
In Section \ref{sec:disc} we discuss the connection of our main results to existing models. This will include classifying the values of $d$ that ensure that the degree distributions follows a power law, which are consistent with analysis from \cite{bianconi2015ComplexQN_1} and \cite{bianconi2016NetworkGW_4}.

Section \ref{sec:emp} is dedicated to the study of the empirical measure $\Pi_{n}$, $n \geq 0$, and in particular, to the proofs of Propositions \ref{prop:MC} and \ref{prop:partition}. 
\miscchange{As we remarked earlier (see Remark~\ref{rk:H2}), 
these propositions make use of the recent theory of measure-valued P\'olya processes. 
To our knowledge this is the first application of this theory in the context of evolving networks.}

In Section \ref{sec:deg-prof} we apply the results of Section \ref{sec:emp} to prove Theorem~\ref{thm:small_degrees_vague}. 
\miscchange{The proof relies on the simple idea of keeping track of the degree of a single typical vertex. 
Suppose (informally) that the partition function of the process as well as that of the star companion process were to both evolve deterministically,  
and were equal to $F(n) = \lambda (n+1)$ and $F^*(n) = \lambda^* (n+1)$, respectively. 
Then, the probability that the star of vertex $i$ is subdivided precisely at times $i<i_1 < \cdots < i_k<n$ would be
\begin{eqnarray*}
&&\prod_{j=1}^{i_1-i-1} \left(1- \frac{\lambda^*}{\lambda(i+j)}\right) \frac{\lambda^*}{\lambda i_1} \cdot \prod_{j=1}^{i_2-i_1-1} \left(1- \frac{2 \lambda^*}{\lambda(i_1+j)}\right) \frac{2\lambda^*}{\lambda i_2} \cdots \\
&&\hspace{2cm}\cdots\prod_{j=1}^{i_k-i_{k-1}-1} \left(1- \frac{\lambda^* (k-1)}{\lambda(i_{k-1}+j)}\right) \frac{\lambda^* (k-1)}{\lambda i_k} \cdot
\prod_{j=1}^{n-i_{k}} \left(1- \frac{\lambda^* k}{\lambda(i_{k}+j)}\right).
\end{eqnarray*}
If $i > \eps n$ and the $i_j$s are well-spaced (as most such $k$-tuples are), then the above products 
can be written as ratios of factorials (or Gamma functions) which, in turn, can be approximated with the use of Stirling's formula. Then the argument 
could be completed by computing the sum over the choices of $k$-tuples (by applying Lemma~\ref{lem:prob-sum}).

However, the difficulty is that 
the partition functions are not exactly of this form but only in the limit (by Propositions~\ref{prop:partition} and~\ref{prop:fitness-star-process}). Nevertheless, the almost sure convergence of $Z_n = F(\mathcal{K}_n)$ implies (by Egorov's theorem) that when $n$ is large, for `most' evolution paths of the process, the partition function $F(\K_j)$ is about $\lambda j$ for all $\eps n < j \leq n$. The crux of our analysis is to replace the linear functions in the above expression by the `almost' linear functions which occur on a typical evolution path. This is done in Subsections~\ref{subsec:upper}
and~\ref{subsec:lower}, where upper and lower bounds are obtained. 
We believe that the conceptual simplicity of this approach makes it applicable to other evolving random systems. 
}

We defer the proofs of some technical probabilistic lemmas to the appendix, so as to not interrupt the general flow of the paper. 

\section{Discussion and Examples}
\label{sec:disc}
\subsection{Constant fitness function}
In the case that the fitness functions are constant, so that $f(x) = f_0$, we have deterministic formulas for $F(S^*_n)$ and $\lambda$. These cases correspond to models where the face chosen to be subdivided at time $n+1$ is chosen uniformly at random from the set $\K_n^{(d-1)}$. Here we use the asymptotic approximation of the ratio of two gamma functions: for fixed $a \in \mathbb{R}$ as $t\to \infty$
\begin{equation} \label{eq:stirling_gamma_approx} \frac{\Gamm{t+a}}{{\Gamm{t}}} = (1+ O(1/t)) t^{a}.\end{equation} This is a straightforward result of Stirling's formula and will be used often throughout this paper.

\begin{enumerate}
\item In Model \textbf{A} we have $F(S^{*}_n) = ((d-1)n + d)f_0$, and $\lambda = df_0$. Theorem \ref{thm:small_degrees_vague} implies that \[\p_k = \frac{d}{(d-1)k+2d}\prod_{j=0}^{k-1} \frac{(d-1)j + d}{(d-1)j+2d}.\]
If $d > 1$, using~\eqref{eq:stirling_gamma_approx}
\[\p_k = \left(1 + \frac{1}{d-1}\right)\frac{\Gamm{k + \frac{d}{d-1}}\Gamm{\frac{2d}{d-1}}}{\Gamm{k + 1 + \frac{2d}{d-1}}\Gamm{\frac{d}{d-1}}} \sim k^{- \frac{2d-1}{d-1}}.\] 
This is a new result. For $d=1$ we obtain $\p_{k} = 2^{-k}$, which is an old result of Na and Rapoport for the random recursive tree \cite{na_rapo_70}. 
\item Model \textbf{B} with constant fitness function (with $\C{0}{}{}$ given by a $d$-simplex) is the same as the Random Apollonian Network. In this case, if $d \geq 2$,  $F(S^{*}_n) = ((d-2)n + d)f_{0}$ and $\lambda = (d-1)f_0$. Applying Theorem \ref{thm:small_degrees_vague} we get, \[\p_k =  \frac{d-1}{(d-2)k+2d-1}\prod_{j=0}^{k-1} \frac{(d-2)j + d}{(d-2)j+2d-1}.\]
Note that if $d = 1$, $\Pi_{n}(\cCd) = |V_0|$ (where $V_0$ is the set of vertices of the initial complex $\K_0$), so Theorem \ref{thm:small_degrees_vague} does not apply. However, in this case it is easy to see that $\p_1 = 1$. In the case $d = 2$, we have $\p_{k} = \frac{2^{k-1}}{3^{k}}$.  For $d \geq 3$, using~\eqref{eq:stirling_gamma_approx}, we get
\[\p_k = \left(1 + \frac{1}{d-2}\right)\frac{\Gamm{k + \frac{d}{d-2}}\Gamm{\frac{2d-1}{d-2}}}{\Gamm{k + 1 + \frac{2d-1}{d-2}}\Gamm{\frac{d}{d-2}}} \sim k^{- \frac{2d-3}{d-2}}.\] 
 This is the same exponent proved in \cite{apollonian_hungarians_1} and \cite{apollonian_frieze_2}. 
\end{enumerate}

\subsection{Weighted Recursive Trees}
The one-dimensional case in Model \textbf{A} and initial simplicial complex given by a node, is a type of the 
\emph{weighted recursive tree}, introduced in \cite{wrt1} (see also \cite{wrt2delphin} for some more general results).\footnote{Note that Model \textbf{B} is trivial for $d=1$ as the tree is a single path.} In this case, the fitness of the new vertex arriving at each time is independent of the rest of the complex, so the strong law of large numbers implies that $\lambda$ in Proposition \ref{prop:partition} is given by $\E{f(W)}$. Moreover, the simplicial complex $(S^{*}_{j})_{j \geq 0}$ is a fixed vertex, so that
$F(S^*_j) = f(W)$ for all $j \geq 0$, where $W$ is the weight of the vertex. Thus, Theorem \ref{thm:small_degrees_vague} implies that 
\begin{prop} \label{prop:WRRTweak}
As $n\to+\infty$, 
 we have
\[\frac{N_k(n)}{n} \to \E{\frac{ \lambda f(W)^k}{(f(W) + \lambda)^{k+1}}}, \quad \text{in probability}.\]
\end{prop}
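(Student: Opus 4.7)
The plan is to derive Proposition \ref{prop:WRRTweak} as a direct specialisation of Theorem \ref{thm:small_degrees_vague} to dimension $d=1$ in Model \textbf{A}. By the Remark following Theorem \ref{thm:small_degrees_vague}, hypotheses \textbf{H1} and \textbf{H2} are not needed here; it suffices to establish the convergences of Propositions \ref{prop:MC} and \ref{prop:partition} directly in this setting, and then identify the limits $\lambda$ and $F(S^*_j)$ explicitly for the weighted recursive tree.

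For Proposition \ref{prop:partition}, I would observe that in dimension $d=1$ the $(d-1)$-faces are singletons, so $F(\K_n) = \sum_{v \in \K_n^{(0)}} f(w_v)$. Since each new vertex arrives with an independent weight drawn from $\mu$, the summands are i.i.d.\ copies of $f(W)$, and the strong law of large numbers yields $F(\K_n)/n \to \E{f(W)}$ almost surely, identifying $\lambda = \E{f(W)}$. An analogous SLLN argument for the empirical measure $\Pi_n/(n+1)$ shows that the biased measure $\hat{\Pi}_n$ converges weakly to $f(w)\,\dd\mu(w)/\E{f(W)}$, which gives Proposition \ref{prop:MC} in this setting.

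Next, I would analyse the companion process $(S^*_n)_{n \geq 0}$ with centre $r$ of weight $W$. In dimension one, the only $(d-1)$-face containing $r$ is $\{r\}$ itself, so step (i) always selects $\{r\}$, and step (ii) immediately deletes the newly created 0-face $\{n+1\}$, which does not contain $r$. Consequently $(S^*_n)^{(d-1)} = \{\{r\}\}$ for every $n \geq 0$, and $F(S^*_n) = f(W)$ is the same random variable for all $n$. Substituting this constant value into the formula of Theorem \ref{thm:small_degrees_vague} collapses the product to a geometric term:
\[p_k = \E{\frac{\lambda}{f(W) + \lambda} \prod_{j=0}^{k-1}\frac{f(W)}{f(W)+\lambda}} = \E{\frac{\lambda f(W)^k}{(f(W) + \lambda)^{k+1}}},\]
which is precisely the stated formula.

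I do not anticipate a real obstacle: the bulk of the work is verifying the two input propositions, and in dimension one these reduce to elementary applications of the SLLN, because the weight of each newly arriving vertex is independent of the current state of the process. The only conceptual subtlety is recognising that the companion process degenerates to a single vertex, so that the otherwise random sequence $(F(S^*_j))_{j\geq 0}$ becomes deterministic conditional on $W$, which is exactly what produces the geometric structure inside the expectation.
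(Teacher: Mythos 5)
Your proposal is correct and takes essentially the same approach as the paper: specialise Theorem \ref{thm:small_degrees_vague} to $d=1$ in Model \textbf{A}, use the strong law of large numbers to verify the two input propositions directly (since each newly arriving weight is independent of the current state), identify $\lambda = \E{f(W)}$, and observe that the companion star process degenerates to a single vertex so that $F(S^*_j) = f(W)$ is constant. You are slightly more explicit than the paper in spelling out the SLLN argument for Proposition \ref{prop:MC} and the degeneration of the companion process, but the route is the same.
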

This result can be improved significantly: the convergence holds in an almost sure sense under the much weaker assumptions that $\mu$ is a probability measure on $[0, \infty)$ and $f: \R \to \R$ is measurable such that $0 < \E{f(W)} < \infty$.
This strengthening uses the theory of \emph{Crump-Mode-Jagers} (C-M-J) processes introduced by Crump and Mode \cite{crump_mode_68} and studied by, among others, Jagers \cite{jagers_74}, Nerman \cite{nerman_81} and Jagers and Nerman \cite{jag_ner_84}. Here, $\lambda$ plays the role of the so-called \emph{Malthusian parameter} crucial to the study of C-M-J processes. We omit the details of this proof, as they detract from the main ideas in this paper. 

\subsection{Tails of the Distribution}
\label{sec:tails}
In this subsection, we will require the additional assumption that
\begin{equation}
\label{eq:tailsequation}
\left|\K_{n}^{(d-2)} \right| \stackrel{n \to \infty}{\longrightarrow} \infty.
\end{equation}  
Note that this assumption is satisfied as long as $d > 1$ in Model \textbf{A} and $d > 2$ in Model \textbf{B}. It is this assumption that leads to the emergence of scale-free behaviour for $d > 2$ in CQNMs observed by Bianconi and Rahmede in \cite{bianconi2015ComplexQN_1}, and the scale-free behaviour for all $d > 1$ in NGFs in \cite{bianconi2016NetworkGW_4}. In the case $\mu$ is not finitely supported, we will require an analogue of \eqref{eq:extra_cond}. For brevity, we define the following additional hypotheses:
\begin{itemize}
    \item [\textbf{H1*.}] Assume \textbf{H1} and \eqref{eq:tailsequation} holds.
    \item [\textbf{H2*.}] Assume \textbf{H2} and \eqref{eq:tailsequation} holds. 
    Moreover, for all $w \in \Supp{(\mu)}$, the function $\tilde{f}_x: \cC_{d-2} \to \mathbb R, \tilde{f}_x(v) = f(v\cup x)$ satisfies 
    \[
        \mathbb E[\tilde{f}_x(\bs 1_{0\leftarrow W})]< (1+\nicefrac1{(d-1)})\mathbb E[\tilde{f}_x(\bs 0_{0\leftarrow W})].
    \]
  (We recall that $\bs 1$ is the vector of $\mathcal C_{d-2}$ whose coordinates are all equal to~1. Therefore, $\bs 1_{0\leftarrow W} = (W, 1, \ldots, 1)$ and $\tilde{f}_x(\bs 1_{0\leftarrow W})$ equals $f((x,W,1, \ldots, 1))$ if $x<W$ and $f((W,x,1, \ldots, 1))$ otherwise.)
\end{itemize}

\begin{rmq}
Similarly to {\bf H2}, we do not believe that Assumption {\bf H2*} is necessary for our results to hold. We use it to apply~\cite[Theorem~1]{maivil} in the proof of Proposition~\ref{prop:fitness-star-process}.
\end{rmq}

In order to analyse the tails of the distribution from Theorem \ref{thm:small_degrees_vague}, we require the following proposition, similar to Proposition \ref{prop:partition}. In the statement of the following proposition, we allow $S^*_0$ to have a centre with a fixed weight $w$ instead of a random weight $W$ with distribution $\mu$. In the construction of $S^*_0$, however, we still choose the face according to  $\pi_{\infty}$. We use $\mathbb P_w$ and $\mathbb E_w$ for probabilities and expectations, respectively with regards to this initial state. 

\begin{prop} \label{prop:fitness-star-process}
Assume {\rm\textbf{H1*}} or {\rm\textbf{H2*}}. Then, if
the centre of $S^{*}_0$ has weight $w \in \Supp(\mu)$,  there exists $\lambda_{w}^{*}$ such that, $\mathbb P_w$-almost surely
\[\frac{F(S^{*}_n)}{n} \rightarrow \lambda_{w}^{*}.\]
\end{prop}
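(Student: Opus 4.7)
The plan is to recognise the companion process $(S^*_n)_{n\ge 0}$ itself as an instance of the dynamic simplicial complex model of the introduction, but of reduced effective dimension $d-1$, and to then invoke Proposition~\ref{prop:partition} (or rather, its underlying almost-sure empirical-measure theorem from Section~\ref{sec:emp}) directly for this reduced process.

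First I would reparametrise the companion process. Because the centre $r$ carries the fixed weight $w$, every $(d-1)$-face of $S^*_n$ has at least one coordinate equal to $w$, and can be encoded by its \emph{reduced type} $y \in \cC_{d-2}$: the sorted $(d-1)$-tuple of weights of its non-centre vertices. The fitness of such a face equals $f(y\cup w) = \tilde{f}_w(y)$, so the face-selection step of the companion process is exactly the selection of a reduced type proportionally to $\tilde{f}_w$. When a face of reduced type $y$ is subdivided by a new vertex of weight $W$, the $d$ newly produced $(d-1)$-faces have full types $(y\cup w)_{i\leftarrow W}$, $i \in [0\, . \, . \, d-1]$; exactly one of these (the one whose $w$-coordinate is replaced) no longer contains $r$ and is therefore absent from $S^*_{n+1}$, while the remaining $d-1$ have reduced types $y_{j\leftarrow W}$, $j \in [0\, . \, . \, d-2]$. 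In Model~\textbf{A} the original face is kept and in Model~\textbf{B} it is removed. After this identification, $(S^*_n)_{n\ge 0}$ is a realisation of the inhomogeneous dynamic simplicial complex model in dimension $d-1$, driven by fitness $\tilde{f}_w$ on $\cC_{d-2}$ and weight distribution $\mu$.

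Next, I would verify that the hypotheses needed to apply Proposition~\ref{prop:partition} to this reduced process follow from \textbf{H1*} or \textbf{H2*}. The condition \eqref{eq:tailsequation} translates to $|(S^*_n)^{(d-1)}| \to \infty$, which is the analogue of $|\K_n^{(d-1)}| \to \infty$ required by Proposition~\ref{prop:partition}. Under \textbf{H1*}, $\mu$ is finitely supported and $\tilde{f}_w$ inherits positivity from $f$, giving \textbf{H1} for the reduced process. Under \textbf{H2*}, the reduced process runs under Model~\textbf{A}, $\mu(\{1\})=0$, $\tilde{f}_w$ is continuous and monotonically increasing in each of its $d-1$ arguments (inherited from $f$), and the inequality assumed on $\tilde{f}_w$ in \textbf{H2*} is precisely \eqref{eq:extra_cond} with $d$ replaced by the effective dimension $d-1$. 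Proposition~\ref{prop:partition} applied to the reduced process therefore yields the claimed $\mathbb{P}_w$-almost sure convergence $F(S^*_n)/n \to \lambda^*_w$, with the $w$-dependence of the limit entering only through $\tilde{f}_w$.

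I expect the chief obstacle to be inherited rather than new: under \textbf{H2*} the reduced process has continuous weight support and non-constant fitness, so the argument must go through the measure-valued P\'olya urn theory of~\cite{maivil} invoked in Section~\ref{sec:emp} rather than classical P\'olya urn theory. Two smaller bookkeeping points to handle are (a) that the initial state of the companion process is random (face type drawn from $\hat\pi$ and then subdivided), but the remark following Proposition~\ref{prop:partition} guarantees that the limit is independent of the initial complex; and (b) that when several coordinates of a face coincide with $w$ the choice of which coordinate to delete in the reduction is ambiguous, although the symmetry of $f$ makes the reduced dynamics well defined on the multiset of non-centre weights regardless.
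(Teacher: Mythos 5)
Your proposal is correct and takes essentially the same approach as the paper: Subsection~\ref{subsec:starprocess} encodes the star process by its measure component $\bS_n$ on $\cC_{d-2}$ with fitness $\tilde f_w$ and invokes the empirical-measure convergence (Theorem~\ref{empir_limit}) for this reduced process, which is exactly the dimension-$(d-1)$ reduction you describe, before passing to $F(S^*_n)/n$ via continuity and boundedness of $f$. You spell out the reduction and the hypothesis bookkeeping (including the Model~\textbf{A}/\textbf{B} inheritance and the role of \eqref{eq:tailsequation}) more explicitly than the paper, but the argument is the same.
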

We postpone the proof of  Proposition~\ref{prop:fitness-star-process} to Subsection~\ref{subsec:starprocess}. The following proposition holds under {\bf H1*}: Under Assumption {\bf H1*}, $\mu$ has finite support and thus $\max\{\lambda^*_w : w \in \Supp{(\mu)}\}$ exists and is attained at some value $w^*\in\Supp(\mu)$; we set $\lambda_{\wmax}^* = \max \{\lambda^*_w : w \in \Supp{(\mu)}\}$.
\begin{prop}\label{Power_law}
Assume {\rm\textbf{H1*}}. 
With $\p_k$ as defined in Theorem \ref{thm:small_degrees_vague}, we have
\begin{equation} \label{eq:PL_liminf}
\liminf_{k\to \infty} \log_{k}{\p_k} \geq -\left(1 + \frac{\lambda}{\lambda^*_{\wmax}}\right). 
\end{equation}
\end{prop}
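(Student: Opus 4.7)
The plan is to lower-bound $\p_k$ by conditioning on the centre of $S^*_0$ having the maximising weight $\wmax$, and then to exploit the almost sure convergence $F(S^*_n)/n \to \lambda^*_{\wmax}$ supplied by Proposition~\ref{prop:fitness-star-process}. Under \textbf{H1*}, the measure $\mu$ is finitely supported, so the supremum $\lambda^*_{\wmax}$ is attained at some $\wmax \in \Supp(\mu)$ with $\mu(\{\wmax\}) > 0$; moreover positivity and boundedness of $f$ yield linear deterministic bounds on $F(S^*_j)$ via~\eqref{eq:trivboundsS}. Letting $W$ denote the (random) weight of the centre of $S^*_0$, conditioning gives
\[\p_k \;\geq\; \mu(\{\wmax\})\;\mathbb{E}_{\wmax}\!\left[\frac{\lambda}{F(S^*_k)+\lambda}\prod_{j=0}^{k-1} \frac{F(S^*_j)}{F(S^*_j)+\lambda}\right].\]

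For any $\varepsilon \in (0, \lambda^*_{\wmax})$, Proposition~\ref{prop:fitness-star-process} lets me pick $j_0 = j_0(\varepsilon)$ so large that
\[B_\varepsilon \;:=\; \{F(S^*_j) \geq (\lambda^*_{\wmax}-\varepsilon)\,j \text{ for all } j \geq j_0\}\]
has $\mathbb{P}_{\wmax}(B_\varepsilon) \geq \nicefrac{1}{2}$. On $B_\varepsilon$, the monotonicity of $x\mapsto x/(x+\lambda)$, combined with the deterministic lower bounds on the finitely many early terms $j < j_0$, yields
\[\prod_{j=0}^{k-1} \frac{F(S^*_j)}{F(S^*_j)+\lambda} \;\geq\; c_0(\varepsilon)\prod_{j=j_0}^{k-1} \frac{j}{j + a_\varepsilon}, \qquad a_\varepsilon := \frac{\lambda}{\lambda^*_{\wmax}-\varepsilon},\]
with $c_0(\varepsilon) > 0$ independent of $k$. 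By the gamma-ratio asymptotics~\eqref{eq:stirling_gamma_approx}, the remaining product is of order $k^{-a_\varepsilon}$. For the terminal factor, the deterministic upper bound in~\eqref{eq:trivboundsS} gives $\lambda/(F(S^*_k)+\lambda) \geq c_2/k$ for $k$ large. Combining these estimates yields $\p_k \geq c_3(\varepsilon)\,k^{-1-a_\varepsilon}$ for all sufficiently large $k$, so that $\liminf_{k\to\infty}\log_k(\p_k) \geq -1 - \lambda/(\lambda^*_{\wmax}-\varepsilon)$. Letting $\varepsilon \downarrow 0$ then completes the proof.

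The main obstacle is translating the a.s.\ statement ``$F(S^*_j)/j \to \lambda^*_{\wmax}$'' into a lower bound valid simultaneously across all $j \geq j_0$; this is precisely what the tail event $B_\varepsilon$ encodes (obtained by choosing $j_0$ large enough that the exceptional set has probability at most $\nicefrac12$). Monotonicity of $x\mapsto x/(x+\lambda)$ is essential here, as it allows the random inequality $F(S^*_j)\geq (\lambda^*_{\wmax}-\varepsilon)j$ to be inserted term-by-term into the product while preserving the direction of the estimate, and reducing it to a deterministic product that is tractable by Stirling's formula. The deterministic upper bound in~\eqref{eq:trivboundsS} complements this by controlling the single leading factor $\lambda/(F(S^*_k)+\lambda)$ from below.
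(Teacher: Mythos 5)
Your proposal is correct and follows essentially the same route as the paper: condition on the centre having weight $\wmax$ (contributing a positive atom $\mu(\{\wmax\})=\kappa$), restrict to a tail event on which $F(S^*_j)\geq(\lambda^*_{\wmax}-\varepsilon)j$ holds simultaneously for all $j\geq j_0$ via Proposition~\ref{prop:fitness-star-process}, handle the finitely many early factors by \eqref{eq:trivboundsS}, evaluate the remaining product by the gamma-ratio asymptotics, and then let $\varepsilon\downarrow 0$. The only cosmetic difference is that you fix the probability of the good tail event at $1/2$ where the paper keeps a parameter $\delta$, which is immaterial for the $\liminf$.
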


\begin{proof}
Suppose $\Prob{W = \wmax} = \kappa$ (recall that under \textbf{H1*} $\mu$ is finitely supported). Then, by the definition of $\p_k$, we have
\begin{linenomath}
\begin{align*} 
\p_{k} &=  
  \E{\frac{\lambda}{F(S^{*}_k)+\lambda} 
\prod_{j=0}^{k-1} \frac{F(S^{*}_j)}{F(S^{*}_j) + \lambda}}  \geq \E[\wmax]{\frac{\lambda}{F(S^{*}_k)+\lambda} 
\prod_{j=0}^{k-1} \frac{F(S^{*}_j)}{F(S^{*}_j) + \lambda}} \kappa.
\end{align*}
\end{linenomath}
Fix $\delta, \varepsilon' >0$.
By Proposition~\ref{prop:fitness-star-process} (and Egorov's theorem), 
there exists $k_0 = k_0(\eps, \delta)$ such that for all $k \geq k_0$
\[\mathbb P_{w^*}\left(\left|\frac{F(S^{*}_{k})}{k} - \lambda_{\wmax}^{*} \right| < \eps\right) > 1- \delta.\]
Let $\mathcal{G}^{*}_{\eps,\delta}$ be the associated event in the previous display. We may bound the product $\prod_{j=0}^{k_0 - 1} \frac{F(S_j^*)}{F(S_j^*) + \lambda}$ below by a constant by applying \eqref{eq:trivboundsS}. Moreover, for all $k > k_0$, on $\mathcal{G}^{*}_{\eps,\delta}$, we have 

\begin{linenomath}
\begin{align*}
\frac{\lambda}{F(S^{*}_k) + \lambda}
\prod_{\ell=k_0}^{k-1} \frac{F(S^{*}_{\ell})}{F(S^{*}_{\ell}) + \lambda} 
&> \frac{\lambda\left(k(\lambda^{*}_{w^{*}}- \eps) + \lambda\right)}{k(\lambda^{*}_{w^{*}} + \eps) + \lambda} \cdot \frac{1}{k(\lambda^{*}_{w^{*}} - \eps) + \lambda} \prod_{\ell=k_0}^{k-1} \frac{\ell(\lambda_{\wmax}^* - \eps)}{\ell (\lambda_{\wmax}^* - \eps) + \lambda}\\ 
&=  \frac{k(\lambda^{*}_{w^{*}} - \eps) + \lambda}{k(\lambda^{*}_{w^{*}} + \eps) + \lambda}\cdot \frac{\lambda}{\lambda^{*}_{w^{*}} -\eps} \cdot \frac{\Gamma (k_0 + \frac{\lambda}{\lambda_{\wmax}^*-\eps})}{\Gamma (k_0-1)} 
\frac{\Gamma (k)}{\Gamma (k+1 + \frac{\lambda}{\lambda_{\wmax}^* - \eps})}.
\end{align*}
\end{linenomath}
 Therefore, by applying \eqref{eq:stirling_gamma_approx}, we find that there exists a constant $c = c(k_0,\delta,\eps, \kappa)$ such that 
\[ \log_{k}\p_k \geq \log_{k} c - \left(1 + \frac{\lambda}{\lambda^*_{\wmax} - \eps}\right).\]
Equation~\eqref{eq:PL_liminf} follows from taking limits as $k \to \infty$, and sending $\eps$ to $0$. 
\end{proof}

\subsubsection*{Further Discussion}

Applying \eqref{eq:trivboundsS}, it is easy to show that, whenever \eqref{eq:tailsequation} holds,  
\[
\liminf_{k\to \infty} \log_{k}\p_k \geq 
\begin{cases}
- \left(1 + \frac{\lambda}{(d-1)f_{\min}}\right), &  \text{in Model \textbf{A};} \\
- \left(1 + \frac{\lambda}{(d-2)f_{\min}}\right), &  \text{in Model \textbf{B},}
\end{cases} 
\]
and likewise, 
\[ 
\limsup_{k\to \infty} \log_{k}\p_k \leq 
\begin{cases}
- \left(1 + \frac{\lambda}{(d-1)f_{\max}}\right), &  \text{in Model \textbf{A};} \\
- \left(1 + \frac{\lambda}{(d-2)f_{\max}}\right), &  \text{in Model \textbf{B}.}
\end{cases} 
\]
Thus, when $d > 1$ in Model \textbf{A} and $d > 2$ in Model \textbf{B}, the degree distribution is bounded above and below by a power law. This leads to the scale-free behaviour observed in \cite{bianconi2015ComplexQN_1} and \cite{bianconi2016NetworkGW_4}. 

In general, by counting the edges in the complex in two different ways, we find that 
$\sum_{k=0}^{\infty} k\p_{k} \leq d,$
so that $\p_k$ cannot obey a power law with a fixed exponent less than $2$ (otherwise the sum would diverge). However, we cannot deduce from these methods that the degree distribution in each case follows a power law with a fixed exponent. 

\ifthenelse{\boolean{wrrt}}{ 
\section{The weighted recursive tree} \label{sec:wrrt}

The one dimensional case in Model \textbf{A} and initial simplicial complex given by a node, is a type of \emph{weighted recursive tree}, introduced in \cite{wrt1} (see also \cite{wrt2delphin}) for some more general results).\footnote{Note that Model \textbf{B} is trivial for $d=1$ as the tree is a single path.}
At time $n=1$, we start with a single node labelled $1$ to which we associate a 
random variable $W_1$ with distribution $\mu$. Then, given the tree with $n$ vertices carrying labels from $1$ to $n$ and weights $W_1, \ldots, W_n$, we add a new node labeled $n+1$ and connect it to a randomly selected node $1 \leq I \leq n$ where $\Prob{I=k}$ is proportional to $W_k$. Node $n+1$ is then assigned  weight $W_{n+1}$ distributed according to $\mu$ independently of all previously defined quantities. We write $T_n$ for the tree of size $n$.

For $k \geq 0$, recall that $N_{k}(n)$ counts the number of vertices in $T_n$ with $k$ children. In the standard random recursive tree (that is, $\mu = \delta_1$), it is well-known that  $N_k(n)/n \to 2^{-k-1}$ as $n \to \infty$ \cite{na_rapo_70}. The following proposition generalizes this classical result.

\begin{prop} \label{prop:WRRT}
Assume that $m = \E{W_1} < \infty$. We have
\[\frac{N_k(n)}{n} \to p_k := \E{\frac{ m W_1^k}{(W_1+m)^{k+1}}}\]
almost surely, as $n\to\infty$.
\end{prop}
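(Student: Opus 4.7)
The plan is to embed the weighted recursive tree in continuous time and invoke Nerman's almost-sure ratio theorem for Crump-Mode-Jagers (C-M-J) branching processes. Attach to every vertex $v$, carrying an independent weight $W_v\sim\mu$, an independent Poisson point process $\xi_v$ of rate $W_v$; the points of $\xi_v$ are the birth times (after its own birth) of the children of $v$, each of whom draws a fresh weight from $\mu$ and repeats the procedure. By the standard competing-exponentials argument, sampling this continuous-time genealogical tree at its $n$-th birth event produces a tree distributed as $T_n$, so the claim will follow from the corresponding asymptotic statement for the continuous-time process.

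First I would identify the Malthusian parameter $\alpha$. Conditional on $W$, the reproduction measure has intensity $W\cdot\text{Leb}$, so unconditionally it is $m\cdot\text{Leb}$, and the Malthusian equation $\E{\int_0^{\infty}e^{-\alpha t}\,\dd\xi(t)}=1$ reduces to $m/\alpha=1$, i.e.\ $\alpha=m$. For each $k\geq 0$ I would then introduce the random characteristic
\[
\phi^{(k)}_v(t):=\mathbf 1_{\{v\text{ has exactly }k\text{ children by time }t\}},
\]
and let $Z^{\phi^{(k)}}(t):=\sum_v\phi^{(k)}_v(t-\sigma_v)\mathbf 1_{\{\sigma_v\le t\}}$, so that $Z^{\phi^{(k)}}(t)$ counts the vertices with exactly $k$ children. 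Conditional on $W_v$, $\phi^{(k)}_v(t)$ equals $\mathbf 1_{\{N(t)=k\}}$ for a Poisson process $N$ of rate $W_v$, and a direct integration gives
\[
\hat\phi^{(k)}(m):=\E{\int_0^{\infty}e^{-mt}\phi^{(k)}(t)\,\dd t}=\E{\int_0^{\infty}\frac{(Wt)^k}{k!}e^{-(m+W)t}\,\dd t}=\E{\frac{W^k}{(m+W)^{k+1}}}.
\]
For the constant characteristic $\phi\equiv 1$, which corresponds to the total population $Z(t)$, one has $\hat\phi(m)=1/m$.

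Then I would apply Nerman's strong law: under an $x\log x$-type moment condition on the reproduction process, $e^{-mt}Z^{\psi}(t)\to W_\infty\,\hat\psi(m)/\beta$ almost surely for any bounded c\`adl\`ag characteristic $\psi$, where $W_\infty$ is the Malthusian martingale limit, strictly positive on the (almost sure) survival event, and $\beta=\int_0^\infty t e^{-mt}m\,\dd t$. Taking the ratio $Z^{\phi^{(k)}}(t)/Z(t)$ cancels both $W_\infty$ and $\beta$:
\[
\frac{Z^{\phi^{(k)}}(t)}{Z(t)}\;\longrightarrow\;m\,\hat\phi^{(k)}(m)=\E{\frac{mW^k}{(m+W)^{k+1}}}=p_k\qquad\text{a.s.}
\]
Evaluating along the (a.s.\ increasing) birth times $T_n$, where $Z(T_n)=n$ and $Z^{\phi^{(k)}}(T_n)=N_k(n)$, yields the proposition.

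The main technical obstacle is verifying the hypotheses of Nerman's theorem in this weighted setting: upgrading from convergence in probability to almost-sure convergence of $e^{-mt}Z^{\psi}(t)$ requires an $x\log x$-type moment bound on $\int_0^{\infty}e^{-mt}\,\dd\xi(t)$, along with a.s.\ positivity of $W_\infty$. Because $\xi$ is Poisson of rate $W$ with $\E{W}=m<\infty$, and because each vertex has infinitely many descendants almost surely (so the process survives and $W_\infty>0$), both checks reduce to elementary moment computations. These routine but bulky technicalities are what the authors have chosen to omit.
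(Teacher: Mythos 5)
Your proposal is correct and follows essentially the same route as the paper: embedding $T_n$ into a Crump--Mode--Jagers process by giving vertex $v$ a Poisson offspring process of random rate $W_v$, identifying the Malthusian parameter $\lambda^\star=m$, and applying Nerman's ratio theorem to the characteristics counting all individuals and individuals with exactly $k$ children born so far, with the same explicit computation of the Laplace transforms. The only cosmetic difference is that the paper states Nerman's theorem in the form requiring a $\beta<\lambda^\star$ with $\mathbb E[\sup_t e^{-\beta t}\phi^{(i)}(t)]<\infty$ (trivially satisfied here since the characteristics are bounded) rather than invoking the $x\log x$ condition you mention.
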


The proof of the proposition relies on an embedding of $T_n$ into a continuous-time particle process. To this end, we make use of the so-called~\emph{Crump-Mode-Jagers} (C-M-J) processes introduced by Crump and Mode \cite{crump_mode_68} and studied by, among others, Jagers \cite{jagers_74}, Nerman \cite{nerman_81} and Jagers and Nerman \cite{jag_ner_84}. See also Holmgren and Janson \cite{sur_crump} recent elaborate survey for deep connections between random trees and C-M-J processes and further references.

Let $\xi$ be a point process on $[0, \infty)$. In the C-M-J process, we associate particles with a subset of the infinite \emph{Ulam-Harris tree} $\mathbb U := \bigcup_{n \geq 0} \mathbb N^n$, where $\mathbb{N}^0=\varnothing$. 
Let $\{\xi_x : x \in \mathbb U\}$ be a family of independent random variables, where each $\xi_x$ is a copy of $\xi$. Write $0 < \sigma_1^{(x)} < \sigma_2^{(x)} < \ldots$ for the points in the process $\xi_x$. For simplicity of presentation, we assume that both $\xi(\{0\}) = 0$ and $\xi([0, \infty)) = \infty$ almost surely. Now, set $\sigma_\emptyset := 0$ and, 
for a node $x = x_1\dots x_k$ with $x_1, \ldots, x_k \in \mathbb N$, recursively, $\sigma_{xi} = \sigma_x + \sigma^{(x)}_i, i \geq 1$. Finally, for $t \geq 0$, let $\mathbb T_t = \{ x \in \mathbb U : \sigma_x \leq t\}$. We write $N(t)$ for the cardinality of $\mathbb T_t$. Let $\nu$ be the intensity measure of $\xi$, that is,  $\nu(A) := \E{\xi(A)}$ for Borel sets $A \subseteq [0, \infty)$. A standard condition under which $\E{N(t)} < \infty$ and therefore $N(t) < \infty$ almost surely for all $t \geq 0$ is that $\nu([0, t]) < \infty$ for all $t \geq 0$. We assume this from now on.

Informally, the dynamics of the  tree-valued c\`adl\`ag process $\mathbb T_t$  can be described as follows: at time zero, only one particle denoted by $\emptyset$ is alive. Then, for $i \geq 1$, at time $\sigma_i^{(\emptyset)}$, particle $i$ is born and assigned point process $\xi_i$. New particles behave independently from each other and reproduce according to the same law as the initial particle.
Let $x \preceq y$ mean that $x$ is a prefix of $y$.
 By construction, setting $\mathbb T^{(x)}_t = \{ y \in \mathbb T_{t+ \sigma_x}: x \preceq y\}$, the random processes $(\mathbb T^{(x)}_t)_{t \geq 0}$ are identically distributed. These processes are independent for particles $x_1, \ldots, x_k \in \mathbb U$, if there do not exist $1 \leq i \neq j \leq k$ satisfying $x_i \preceq x_j$. (In other words, the $k$ subtrees of $\mathbb U$ rooted at $x_1, \ldots, x_k$ are pairwise node-disjoint.)

We can enrich the model by equipping each particle with a score function $\phi$: formally, we let
$\phi_x$ be a deterministic real-valued function of $\xi_x$ (the same for all $x$) such that $\phi_x(t), t \geq 0$ is c\`adl\`ag.
We then define the score function of the process $\mathbb T_t$ as
\[Z^{\phi}(t) := \sum_{x \in \mathbb U: \sigma_x \leq t} \phi_x(t-\sigma_x).\]
A crucial parameter in the study of C-M-J processes is the Malthusian parameter $\lambda^\star$ defined as the solution (if it exists) of
\[1=\int_0^{\infty} \mathrm e^{-\lambda^\star s} \nu(ds).\]
Nerman \cite{nerman_81} proved the following remarkable result: for two score functions $\phi^{(1)}, \phi^{(2)}$ for which there exists $\beta < \lambda^*$ satisfying
$$\int e^{-\beta s} \nu(ds) < \infty, \quad \E{\sup_{t \geq 0} e^{-\beta t} \phi^{(i)}(t)} < \infty, \quad i=1, 2,$$
one has, almost surely,

\begin{align} \label{eq:Nerman} \lim_{t \to \infty} \frac{Z^{\phi^{(1)}}(t)}{Z^{\phi^{(2)}}(t)} = \frac{\int_0^\infty \mathrm e^{-\lambda^\star s}\E{\phi	^{(1)}(s)} ds}{\int_0^\infty \mathrm e^{-\lambda^\star s}\E{\phi^{(2)}(s)} ds}. \end{align}
\begin{proof}[Proof of Proposition \ref{prop:WRRT}]
A moment of thought shows that, with a homogeneous Poisson process $\xi$ on $[0, \infty]$ with (random) intensity $W$ distributed according to $\mu$, the
particles in the C-M-J process at time $\tau_n := \inf \{ t \geq 0: N(t) = n\}$ can be identified with the vertices of $T_n$. More precisely, the two sequences
$(T_n)_{n \geq 1}$ and $(\mathbb T_{\tau_n})_{n \geq 1}$ are identically distributed.  Note that the intensity measure $\nu$ of the point process $\xi$ is given by 
$\nu(ds) = m ds$. Hence, the Malthusian parameter $\lambda^*$ of the corresponding C-M-J process is equal to $m = \E{W_1}$. 

We choose the two following score functions:
\[\phi^{\sss (1)}(t)=\begin{cases}
1 & \text{ if }t\geq 0\\
0 & \text{ otherwise,}
\end{cases} \quad 
\phi^{\sss (2)}(t) =\begin{cases}
1 & \text{ if }t\geq 0 \text{ and }\xi(0,t)=k\\
0 & \text{ otherwise,}
\end{cases}\]
where $k$ is a fixed integer.
The two score functions associated with $\phi^{\sss (1)}$ and $\phi^{\sss (2)}$ are
\[Z^{\sss (1)}(t) = X(t) \quad \text{ and }\quad
Z^{\sss (2)}(t) = X_k(t),\]
where $X(t)$ is the total number of particles in $\mathbb T_t$, and 
$X_k(t)$ is the number of particles in $\mathbb T_t$ which have produced offspring exactly $k$ times by time $t$. These particles correspond to vertices having out-degree $k$ in the tree.
As $\tau_n \to \infty$, using \eqref{eq:Nerman}, we have, almost surely,
\[\lim_{n \to \infty} \frac{N_k(n)}{n} = \lim_{t\to\infty} \frac{X_k(t)}{X(t)}
= \frac{\int_0^{\infty} \mathrm e^{-\lambda^\star s} \mathbb E[\phi^{\sss(2)}(s)]\mathrm ds}
{\int_0^{\infty} \mathrm e^{-\lambda^\star s} \mathbb E[\phi^{\sss(1)}(s)]\mathrm ds}.\]
Note that
\[\int_0^{\infty} \mathrm e^{-\lambda^\star s} \mathbb E[\phi^{\sss(1)}(s)]\mathrm ds
=\int_0^{\infty} \mathrm e^{-\lambda^\star s} \mathrm ds = 1/\lambda^\star= 1/m,\]
and
\begin{linenomath}
\begin{align*}
\int_0^{\infty} \mathrm e^{-\lambda^\star s} \mathbb E[\phi^{\sss(2)}(s)]\mathrm ds
&= \int_0^{\infty} \mathrm e^{-\lambda^\star s} \mathbb P(\xi(0,s)=k) \mathrm ds \\
&= \int_0^{\infty} \E{\mathrm e^{-(\lambda^\star+ W_1)s} \frac{(W_1s)^k}{k!}} \mathrm ds\\
& =  \E{\frac{ W_1^k}{(W_1+m)^{k+1}}}.
\end{align*}
\end{linenomath}
Combining the last three displays yields the result.
\end{proof}

\begin{rmq}
For the random recursive tree, (Janson \cite{sva_rec}) showed multivariate central limit theorems for $(N_{k_1}(n), \ldots, N_{k_m}(n))$ under an appropriate re-scaling based on his general limit theorems for asymptotic proportions of balls of different colors in {P}\'olya urn models \cite{janson_urns}. If $W$ has finite support, (hypothesis \textbf{H1}) his results extend to this model, i.e.\ there exist positive constants $\gamma_k, k \geq 0$ such that
 \[\frac{N_k(n) - n p_k}{\sqrt{n}} \stackrel{d}{\longrightarrow} \mathcal N(0, \gamma_k).\]
It is plausible that such statements hold for a general distribution $\mu$.
 \end{rmq}
 \medskip 
 
\begin{rmq}
Let $A \subseteq [0, \infty)$ be a measurable set and $N_k^A(n)$ be the number of vertices with exactly $k$ children that have weights in the set $A$. Then, the above proof can easily be adapted to show that, almost surely, 
$$\frac{N^A_k(n)}{n} \to \E{\frac{m W_1^k}{(W_1+m)^{k+1}} \mathbf 1_{A}(W_1)}.$$
\end{rmq}
 }
 
 \section{Convergence of the empirical distribution} \label{sec:emp}
The aim of this section is to prove the following almost sure limit theorem for the empirical distribution $\Pi_n$. 

\begin{thm} \label{empir_limit}
Assume {\rm\textbf{H1}} or {\rm\textbf{H2}}. Then, there exists a deterministic, positive, finite measure $\pi$ on $\cCd$, which  does not depend on the choice of $\C{0}{}{}$ such that, almost surely,  
\[\frac{\Pi_n}{n} \to \pi\]
with respect to the weak topology.
\end{thm}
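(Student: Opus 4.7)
The plan is to realise $(\Pi_n)_{n\geq 0}$ as a measure-valued P\'olya urn on the compact metric space $\cCd$, and then appeal to different convergence theorems depending on which hypothesis is in force. I would first write the one-step dynamics explicitly: given $\Pi_n$, a type $x \in \cCd$ is drawn with probability $f(x)\,\dd \Pi_n(x)/Z_n$; conditionally on $x$, a fresh weight $w$ is drawn from $\mu$, and the increment to the empirical measure is
\[
\Pi_{n+1} - \Pi_n \;=\; \sum_{i=0}^{d-1} \delta_{x_{i \lf w}} \;-\; c\,\delta_x,
\]
where $c=0$ in Model \textbf{A} and $c=1$ in Model \textbf{B}. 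Averaging over $w\sim \mu$ gives a deterministic replacement kernel $K(x,\cdot)$ on $\cCd$, and the entire process is driven by the linear operator on measures $\eta \mapsto \int K(x,\cdot) f(x)\,\dd\eta(x)$ normalised by $Z_n$.

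Under \textbf{H1}, the types $\omega(\sigma)$ that can ever appear lie in the finite set $\Supp(\mu)^d \cap \cCd$, so the process degenerates to a finite-dimensional generalised P\'olya urn with a non-negative mean replacement matrix. I would check that this matrix is irreducible: positivity of $f$ on $\Supp(\mu)^d$ together with the assumption $0,1 \in \Supp(\mu)$ and the subdivision rule allows any admissible type to be reached from any other after finitely many steps. Standard P\'olya urn theory (in the spirit of Janson's almost sure limit theorems) then gives $\Pi_n/n \to \pi$ almost surely in total variation, where $\pi$ is the unique positive left-eigenvector (normalised to be the correct total mass) of the mean replacement operator for its Perron--Frobenius eigenvalue; irreducibility forces independence from $\C{0}{}{}$.

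Under \textbf{H2}, the state space is genuinely continuous and I would invoke the general framework for measure-valued P\'olya urns developed in~\cite{maivil}. The steps are: (i) verify that $K$ is a Feller kernel on the compact space $\cCd$, using continuity of $f$ and of $x\mapsto K(x,\cdot)$; (ii) identify the candidate limit $\pi$ as the unique fixed point (up to scaling) of the normalised mean-operator
\[
T\eta \;=\; \frac{1}{\int f\,\dd\eta}\int K(x,\cdot)\,f(x)\,\dd\eta(x),
\]
on probability measures on $\cCd$, with $\lambda = \int f\,\dd\pi$ being the associated eigenvalue (here $\pi$ is normalised to be the limit of $\Pi_n/n$, not of $\Pi_n/\Pi_n(\cCd)$); (iii) check the ergodicity/contraction hypotheses of~\cite{maivil} so that their main theorem applies and yields almost sure convergence in the weak topology. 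Steps (ii) and (iii) together give uniqueness of $\pi$ and hence independence from $\C{0}{}{}$.

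The main obstacle is clearly \textbf{H2}: matching the hypotheses of~\cite{maivil}. This is where the monotonicity of $f$, the condition $\mu(\{1\})=0$, and~\eqref{eq:extra_cond} all come into play. The inequality~\eqref{eq:extra_cond} is a quantitative bound on how much $f$ can ``spread'' mass towards the degenerate type $\mathbf{1}$; it is designed precisely to ensure the spectral gap / strict contraction condition in~\cite{maivil}, preventing pathological accumulation at $\mathbf{1}$ and forcing uniqueness of the invariant measure. I would carry out the verification by writing the linearisation of $T$ around a candidate $\pi$, bounding its dominant non-principal mode using monotonicity coordinate-by-coordinate, and reducing the required contraction estimate to an expectation comparison of the form $\mathbb{E}[f(\bs 1_{0\lf W})]$ vs.\ $\mathbb{E}[f(\bs 0_{0\lf W})]$, which is exactly what~\eqref{eq:extra_cond} controls. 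Once the theorem of~\cite{maivil} applies, the almost sure weak convergence $\Pi_n/n \to \pi$ follows immediately.
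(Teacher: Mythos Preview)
Your high-level plan is exactly the paper's: under \textbf{H1} reduce to a finite-colour generalised P\'olya urn and apply the Athreya--Karlin/Janson theorem (irreducibility following from positivity of $f$), and under \textbf{H2} cast $(\Pi_n)$ as a measure-valued P\'olya process and invoke~\cite{maivil}. That part is fine.

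The gap is in how you propose to verify the hypotheses of~\cite{maivil} under \textbf{H2}. The relevant condition there is not a ``spectral gap / strict contraction'' for a linearisation of your operator $T$; it is a quasi-stationary condition (the paper's {\bf A3}): one must show that the continuous-time Markov process $X$ on $\cCd\cup\{\varnothing\}$ with generator $\bar Q_x-\delta_x+(1-\bar Q_x(\cCd))\delta_\varnothing$, conditioned on non-absorption, converges in total variation to a limit $\nu$ \emph{uniformly} in the starting point. Your proposal to ``linearise $T$ around a candidate $\pi$'' and bound a non-principal mode does not address this, and it is not clear how it would.

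What the paper actually does for {\bf A3} is apply the Champagnat--Villemonais criteria~\cite{CV17} and check three concrete conditions {\bf B1}--{\bf B3} on the set $L=\{x:x\preccurlyeq(1-\varepsilon)\mathbf 1\}$. The substantive step is {\bf B2}: one couples the number of coordinates of $X_t$ exceeding $1-\varepsilon$ with a birth--death chain $N^\varepsilon$ on $[0\,.\,.\,d]$ whose generator converges, as $\varepsilon\to 0$, to a triangular matrix with spectral gap $\varpi_0=\mathbb E[f(\mathbf 0_{0\lf W})]/M$. Condition~\eqref{eq:extra_cond} enters precisely here, as the inequality $\varpi_0>1-d\varpi_0$ needed to choose $\gamma_1<\gamma_2$ with $1-\chi_\varepsilon<\log(1/\gamma_2)<\log(1/\gamma_1)<\rho(\varepsilon)$. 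Monotonicity of $f$ is used both for this coupling and for the Harnack-type bound {\bf B3} (via a separate monotone coupling of two copies of $X$), and $\mu(\{1\})=0$ is what makes $\vartheta_\varepsilon\to 0$. None of this is captured by a linearisation argument; if you want the proof to go through, you need to replace your step (iii) with this quasi-stationary analysis.
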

\medskip 

Proposition \ref{prop:partition} follows from the theorem above where $\lambda=\int_{\cCd}f(x)\,\mathrm d\pi(x).$
Likewise, Proposition \ref{prop:MC} follows immediately where $Y_\infty$ has law $\pi_{\infty}$ defined by
\[\pi_{\infty} (A)= \frac{\int_{A}f(x) \dd \pi (x)}{\int_{\cCd}f(x) \dd \pi(x)}, 
\]
for any measurable set $A \subseteq \cCd$.

\subsection{Hypothesis {\rm\textbf{H1}}}
To prove Theorem~\ref{empir_limit} assuming \textbf{H1}, we view the collection of faces as balls in a \emph{generalised P\'olya urn process}. In this set-up, one considers an \textit{urn} consisting of \textit{balls} with a finite number of possible \textit{colours}. A ball of colour $j$ is sampled at random from the urn with probability proportional to its \textit{activity} $a_j$, and replaced with a number of different coloured balls according to a (possibly random) \textit{replacement rule}. In the common set-up, the configuration of the urn after $n$ replacements is represented as a \textit{composition vector} $X_{n}$ with entries labelled by colour, and the activities of colours are encoded in an \textit{activity vector} $\mathbf{a}$. In this vector, the $i$th entry corresponds to the number of balls with a colour $i$. Let $(\xi_{ij})$ be the matrix whose $ij$th component denotes the random number of balls of colour $j$ added, if a ball of colour $i$ is drawn. 
The following is a well known result by Athreya and Karlin, implied by Proposition~2 in \cite{at_embedding} and Theorem~5 of \cite{split_times}. We state a version implied by a result of Janson \cite{janson_urns}. 
\begin{thm}[\cite{janson_urns}] \label{thm:polyurnconvergence}
Assume $\xi_{ii} \geq -1$, $\xi_{ij} \geq 0$ for $i \neq j$, and the matrix $A_{ij} : = a_j \E{\xi_{ji}}$ is \textit{irreducible}. Moreover, denote by $\lambda_1$ the principal eigenvalue of $A$, and $v_1$ the corresponding right-eigenvector normalised so that $\mathbf{a}^{T} v_1 = 1$. For any non-empty initial configuration of the urn, we have
\[\frac{X_n}{n} \xrightarrow{n \rightarrow \infty} \lambda_1 v_1, \]
almost surely, and independently of the initial configuration of the urn. 
\end{thm}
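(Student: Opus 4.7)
The plan is to use the Athreya--Karlin embedding of the urn into a continuous-time multi-type Markov branching process, then transfer almost-sure asymptotics of the branching process back to the urn by inverting at the jump-time sequence. Concretely, I would equip every ball with an independent exponential clock whose rate equals its activity, so that each color-$i$ ball has rate $a_i$. When a clock rings, remove the corresponding ball and insert the replacement vector $(\xi_{i1},\ldots,\xi_{id})$, where $i$ is the color of the ringing ball. Write $Z(t)$ for the composition vector at time $t$ and $\tau_n$ for the $n$-th jump time. By the competing-clocks property of exponentials, at each jump the color drawn equals $i$ with probability $a_i Z_i(\tau_n^-)/\sum_j a_j Z_j(\tau_n^-)$, exactly matching the discrete urn's selection rule; hence $(Z(\tau_n))_{n\ge 0}$ has the same law as $(X_n)_{n\ge 0}$, and it suffices to establish the claim for the continuous-time process.

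Next, I would analyse the asymptotics of $Z$. A short generator calculation yields $\tfrac{d}{dt}\mathbb{E}[Z(t)] = A\,\mathbb{E}[Z(t)]$ with $A$ as in the statement. Since $A$ has non-negative off-diagonal entries (because $a_j > 0$ and $\mathbb{E}[\xi_{ji}] \ge 0$ for $j \neq i$) and is irreducible, Perron--Frobenius for Metzler matrices yields a simple real eigenvalue $\lambda_1$, strictly exceeding the real parts of all other eigenvalues, with strictly positive right and left eigenvectors $v_1$ and $u_1$. Standard theory for supercritical multi-type Markov branching processes (see, e.g., Athreya and Ney, Chapter V) then delivers
\[
e^{-\lambda_1 t}\,Z(t) \longrightarrow W\, v_1 \qquad \text{almost surely},
\]
for a nonnegative random variable $W$. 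The engine is that $M(t) := e^{-\lambda_1 t}\,u_1^{T} Z(t)$ is a nonnegative martingale whose convergence, combined with a spectral-gap argument showing that components along non-dominant eigendirections are $o(e^{\lambda_1 t})$, upgrades the scalar convergence to a vectorial one. Irreducibility, non-triviality of the initial configuration, and a standard $L\log L$ moment check on the $\xi_{ij}$ then yield $W > 0$ almost surely.

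Finally, I would invert at the jump times. The counting process $N(t)$ has compensator $\int_0^t \mathbf{a}^{T}Z(s)\,ds$; combining $Z(s) \sim W e^{\lambda_1 s} v_1$ with the normalization $\mathbf{a}^{T} v_1 = 1$ and integrating gives $N(t) \sim \lambda_1^{-1} W e^{\lambda_1 t}$ almost surely, the martingale fluctuations being of lower order by the strong law for counting processes. Inverting at $\tau_n$ via $N(\tau_n)=n$ gives $W e^{\lambda_1 \tau_n} \sim \lambda_1 n$, and substitution into $Z(\tau_n) \sim W e^{\lambda_1 \tau_n} v_1$ produces
\[
\frac{X_n}{n} \;=\; \frac{Z(\tau_n)}{n} \;\longrightarrow\; \lambda_1\, v_1 \qquad \text{almost surely}.
\]
The random factor $W$ cancels, so the limit is independent of the initial configuration, as claimed. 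The hardest part is the second step: showing $W > 0$ almost surely (via non-extinction plus a moment argument) and ensuring that the strict spectral gap lifts the scalar martingale convergence to an almost-sure limit with the correct exponential rate in every eigendirection.
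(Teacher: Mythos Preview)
The paper does not prove this theorem at all: it is stated as a known result, attributed to Athreya and Karlin (via \cite{at_embedding} and \cite{split_times}) and quoted in the form given by Janson \cite{janson_urns}. Your proposal is precisely the classical Athreya--Karlin embedding argument that underlies those references, so your approach is the standard one and aligns with what the paper cites; there is simply nothing in the paper to compare it against beyond the citation itself.
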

 
Note that when $\mu$ is finitely supported  (so that, for some integer $M > 0$, $\mu := \sum_{i=1}^{M} \mu(w_i) \delta_{w_i}$) the number of possible face types in the complex is finite. We denote the (finite) set of possible face types by $\cCd^{f} \subseteq \cCd$. 
Moreover, the empirical distribution of face types corresponds to the distribution of balls in a generalised P\'olya urn; where the colours correspond to the types of the $(d-1)$-faces, and the activities are the fitnesses.
In each step, we draw a ball of type $x$ in the urn with probability proportional to its fitness $f(x)$, choose a weight $W$ independently according to $\mu$, and 
add $d$ new balls of respective types $x_{i \leftarrow W}$, for $i\in \{0, \ldots, d-1\}$. In Model \textbf{B} we also remove the ball we drew from the urn. 

Let $X_n = (X_{x}(n)), \, {x \in \cCd^{f}}$ denote the  vector  whose coordinate $X_{x} (n)$ counts the number of balls of type $x$ in the urn after $n$ steps. For $x \in \cCd^{f}$ and $k \in \{1, \ldots, M\}$, let $n_x(k)$ be the number of entries in $x$ equal to $w_k$. We call $x \neq x'$ neighbours if $x'$ can be obtained from $x$ by changing exactly one entry $\ell_1 = \ell_1(x, x')$ into $\ell_2 = \ell_2(x, x')$.

In Model \textbf{A}, this urn has the following replacement rule:
\[
\xi_{x x'} = \begin{cases} \sum_{k=1}^M n_x(k) \mathbf{1}_{\{w_k\}}(W) & x = x', \\
  n_{x}(\ell_1) \mathbf{1}_{\left\{w_{\ell_2(x, x')}\right\}}(W) & \text{if } x, x' \text{ are neighbours},\\
 0 & \text{otherwise}; \end{cases}
\]
whilst in Model \textbf{B} the replacement rule is
\[
\xi_{x x'} = \begin{cases} \sum_{k=1}^M n_x(k) \mathbf{1}_{\{w_k\}}(W) - 1 & x = x', \\
  n_{x}(\ell_1) \mathbf{1}_{\left\{w_{\ell_2(x, x')}\right\}}(W) & \text{if } x, x' \text{ are neighbours},\\
 0 & \text{otherwise}. \end{cases}
\]
If we define the matrix $A_{x x'} = f(x') \E{\xi_{x'x}}$, since $f>0$ it is easy to see that $A$ is irreducible. Thus we may deduce Theorem \ref{empir_limit} by applying Theorem \ref{thm:polyurnconvergence}.

\subsection{Hypothesis {\rm\textbf{H2}}}
In order to prove Theorem \ref{empir_limit} assuming \textbf{H2}, we show that $\Pi_n, n \geq 0$ is a measure-valued P\'olya process (MVPP), a concept recently introduced in~\cite{bantha} and~\cite{maimar}. We then apply results from~\cite{maivil}.
Let $\mathcal S$ be a locally compact Polish space and $\mathcal M(\mathcal S)$ be the set of finite, non-negative measures on~$\mathcal S$. Recall that $\mathcal M(\mathcal S)$ is also Polish when equipped with the Prokhorov metric (which metrises the weak topology when we view $\mathcal M(\mathcal S)$ as the dual of the space of bounded continuous functions from $\mathcal S$ to $\mathbb{R}$). For a given kernel $P$ on $\mathcal S$ and $\mu \in \mathcal M(\mathcal S)$, we define the  measure
\[ 
(\mu \otimes P)(\cdot) := \int_{\cS} P_x(\cdot) \,\mathrm d \mu (x).
\]
Thanks to, e.g., \cite[Section~4.1]{Kallenberg17} (and because of the local compactness) a random function $R$ with values in $\mathcal M(\cS)$ is a random variable (that is, measurable) if and only if, for all Borel sets $B \subseteq \cS$, $R(B)$ is a real-valued random variable.  
We call a family $R_x, x \in \cS$ of random variables with values in $\mathcal M(\cS)$ a \emph{random kernel} if, almost surely, $x \mapsto R_x$ is continuous. 
Note that, for a random kernel $R_x, \, x \in \cS$, the annealed quantity $\bar R_x(\cdot) = \E{R_x(\cdot)}$ is a kernel on $\cS$ (and the map $x \mapsto \bar R_x$ is continuous). We call two random kernels $R_x, R_x'$ for $x \in \cS$ independent if, for all $x \in \cS$, the random measures $R_x, R'_x$ are independent. 

\begin{df} Let $(R_x^{\sss (n)}, x \in \cS)_{n \geq 1}$ be a sequence of i.i.d.\ random kernels.
The 
measure-valued P\'olya process with  $m_0 \in \mathcal M(\mathcal S)$ satisfying $m_0(\mathcal S) > 0$, replacement kernels  $(R_x^{\sss (n)}, x \in \cS)_{n \geq 1}$ 
 and non-negative weight kernel $P$ is the sequence of random non-negative measures $(m_n)_{n \geq 0}$ defined recursively as follows:
 given $m_{n-1}, n \geq 1$:
\begin{itemize}
    \item [(i)] Sample a random variable $\xi$ from $\cS$ according to the probability measure
    \[\frac{(m_{n-1}\otimes P)(\,\cdot\,)}{(m_{n-1}\otimes P)(\cS)}.\]   
    \item [(ii)] Set $m_n = m_{n-1} + R_\xi^{\sss (n)}$.
\end{itemize}
\end{df}

The next lemma allows us to express the empirical distribution of the $(d-1)$-faces in Model \textbf{A} as an MVPP.
\begin{lem}\label{lem:mvpp}
For all $n\geq 1$ and $x\in \cCd$ let 
\[R_{x}^{\sss (n)}
= \sum_{i=0}^{d-1} \delta_{x_{i\lf W_n}}.
\] 
The sequence $\Pi_n, n\geq 0$ is the MVPP with initial composition $\Pi_0$, replacement kernel $(R_x^{\sss (n)}, x \in \cCd)_{n\geq 1}$ and weight kernel $P_x = f(x)\delta_x$, $x\in \cCd$.
\end{lem}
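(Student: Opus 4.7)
The plan is to verify directly that the sequence $(\Pi_n)_{n\geq 0}$ satisfies the two defining properties of an MVPP for the data given in the statement: the sampling law in step (i) and the additive update in step (ii). This is essentially a bookkeeping exercise once the definitions are unpacked.

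First, I would compute the weighted measure driving the MVPP sampling. Since $\Pi_{n-1} = \sum_{\sigma \in \C{n-1}{}{d-1}} \delta_{\omega(\sigma)}$ and $P_x = f(x)\delta_x$, the definition of $\otimes$ gives
\[
(\Pi_{n-1}\otimes P)(A) \;=\; \int_{\cCd} P_x(A) \,\dd\Pi_{n-1}(x) \;=\; \sum_{\sigma \in \C{n-1}{}{d-1}} f(\omega(\sigma))\,\delta_{\omega(\sigma)}(A),
\]
for any Borel $A\subseteq \cCd$. Normalising by $(\Pi_{n-1}\otimes P)(\cCd) = Z_{n-1}$ yields exactly the push-forward under the type map $\sigma \mapsto \omega(\sigma)$ of the measure $\hat\Pi_{n-1}$ defined in step (i) of the model. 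Consequently, sampling $\xi$ from $\cCd$ via the MVPP rule is in distribution identical to selecting a face $\sigma'$ according to $\hat\Pi_{n-1}$ in step (ii) of the model and then reading off its type $\omega(\sigma')$.

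Second, I would identify the increment $\Pi_n - \Pi_{n-1}$ with $R^{\sss(n)}_\xi$ conditionally on the sampled type $\xi = x$. In Model \textbf{A}, step (iii) adds, for each of the $d$ subsets $\sigma''\subset \sigma'$ of size $d-1$, one new $(d-1)$-face $\sigma''\cup\{n\}$. If $\sigma'$ has type $x=(x_0,\dots,x_{d-1})$, then the subset obtained by removing the $i$-th vertex has type $\tilde x_i$, and unioning with the new vertex of weight $W_n$ gives a face of type $\tilde x_i \cup W_n = x_{i\lf W_n}$. Hence
\[
\Pi_n - \Pi_{n-1} \;=\; \sum_{i=0}^{d-1} \delta_{x_{i\lf W_n}} \;=\; R^{\sss(n)}_x,
\]
which is precisely the update rule of the MVPP.

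Third, I would verify that $(R^{\sss(n)}_x, x\in\cCd)_{n\geq 1}$ is a sequence of i.i.d.\ random kernels. Independence across $n$ follows from the fact that the weights $(W_n)_{n\geq 1}$ of arriving vertices are i.i.d.\ draws from $\mu$, independent of the history. For each fixed $n$, the continuity required of a random kernel reduces to the continuity of $x \mapsto \delta_{x_{i\lf W_n}}$ from $\cCd$ to $\mathcal M(\cCd)$ equipped with the Prokhorov topology; this in turn follows from the continuity of the merge-and-sort operation $(x,w)\mapsto \tilde x_i \cup w$ with respect to the max-norm, which is clear. Combining these three points identifies $(\Pi_n)_{n\geq 0}$ with the MVPP described in the statement.

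The only conceptual subtlety is the distinction between sampling a face of $\C{n-1}{}{d-1}$ (as in step (ii) of the model) and sampling a type in $\cCd$ (as in the MVPP): since faces of identical type are indistinguishable at the level of $\Pi_n$, the pushforward computation above shows that no information relevant to the dynamics is lost, and the two sampling mechanisms agree in law on $\cCd$. Beyond this, the proof is a direct translation of the model's definition.
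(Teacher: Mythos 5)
Your proof is correct and takes essentially the same route as the paper: both verify directly that the MVPP sampling law reproduces the pushforward of $\hat\Pi_{n-1}$ under the type map and that the increment $\Pi_n-\Pi_{n-1}$ equals $R^{\sss(n)}_\xi$. Your third step, checking that $(R^{\sss(n)}_x, x\in\cCd)_{n\geq 1}$ is genuinely an i.i.d.\ sequence of random kernels (independence from the i.i.d.\ weights, continuity of $x\mapsto R^{\sss(n)}_x$ from continuity of $(x,w)\mapsto x_{i\lf w}$), is a useful explicit check that the paper leaves implicit.
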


\begin{proof}
Let $\sigma$ be the face chosen and subdivided at step $n$ and $\xi$ be its type. By construction, 
\[\Pi_n = \Pi_{n-1} + \sum_{i=0}^{d-1} \delta_{\xi_{i \lf W_n}} = \Pi_{n-1} + R^{\sss (n)}_\xi,\]
and, for all Borel sets $B\subseteq \cCd$,
\[\Prob{\xi \in B| \Pi_{n-1}} = \frac{\sum_{\sigma \in \C{n}{}{d-1}} f(\sigma)  \delta_{\omega(\sigma)}(B)}{\sum_{\sigma \in\C{n}{}{d-1}} f(\sigma)}=\frac{(\Pi_{n-1} \tns P)(B)}{(\Pi_{n-1}\tns P)(\cCd)}. 
\]
This concludes the proof. 
\end{proof}

We now state ~\cite[Theorem~1]{maivil}. We will apply this theorem to  the MVPP $\Pi_n, n \geq 0$ to deduce Theorem~\ref{empir_limit}. We require the following definitions. 
For an i.i.d.\ sequence of random kernels
$(R^{\sss (n)}_x, x \in \cS)_{n \geq 1}$ and a weight kernel $P$, let $\bar R_x(\cdot) = \E{R^{\sss (1)}_x(\cdot)}$ and
\[Q_x^{(n)}(\cdot) := (R_{x}^{\sss (n)}\tns P)(\cdot) = \int_{\mathcal S} P_y(\cdot) \,\dd R_{x}^{\sss (n)}(y)
\quad\text{ and }
\bar Q_x(\cdot)
:=(\bar R_x \tns P) (\cdot)
= \int_{\mathcal S} P_y(\cdot) \,\mathrm d \bar R_{x}(y).
\]
\begin{thm}[Mailler \& Villemonais~\cite{maivil}]\label{th:MV}
Let $(m_n)_{n\geq 0}$ be the MVPP on $\mathcal S$ with initial composition $m_0$, replacement kernel $(R_x^{\sss (n)}, x \in \cS)_{n\geq 1}$ and weight kernel~$P$.
Assume that:
\begin{itemize}
\item[\emph{\bf A1}] For all $x\in\mathcal S$, $\bar {Q}_x(\mathcal S)\leq 1$, and there exists a probability distribution $\eta \neq \delta_0$ on $[0,\infty)$ such that, for all $x \in \mathcal S$, the law of $Q_x^{\sss (1)}(\cS)$ stochastically dominates $\eta$. 
\item[\emph{\bf A2}] The space $\cS$ is compact.
\item[\emph{\bf A3}] Denote by $(X_t)_{t\geq 0}$ the continuous-time Markov process defined on $\cS \cup \{\varnothing\}$ absorbed at $\varnothing$ with infinitesimal generator 
 given by $\bar Q_x - \delta_x + (1-\bar Q_x(\cS))\delta_{\varnothing}$.
There exists a probability distribution $\nu$ such that
\[
\mathbb P_x(X_t\in  \cdot|X_t\neq \varnothing)\to \nu(\cdot),
\]
 with respect to the total variation distance on $\cCd$ uniformly over $x \in \cCd$.
 \item[\emph{\bf A4}] For all bounded and continuous functions $g : \cS \to \mathbb R$, the functions $x \mapsto \int_{\cS} g(y) d \bar R_x(y)$ and $x \mapsto \int_{\cS} g(y) d \bar Q_x(y)$ are continuous.
\end{itemize}
Then, almost surely as $n\to\infty$, $m_n/n$ converges to~$\nu\otimes \bar R$ with respect to the weak topology on $\mathcal M(\cS)$. 
\end{thm}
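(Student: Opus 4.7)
The plan is to prove Theorem~\ref{th:MV} by viewing $\mu_n := m_n/n$ as a stochastic approximation on the measure space $\mathcal M(\cS)$ with step size $1/n$, in the spirit of the Robbins--Monro / ODE method but adapted to measure-valued iterates. The basic recursion
\[\mu_n \;=\; \mu_{n-1} \;+\; \tfrac{1}{n}\bigl(R^{(n)}_{\xi_n} - \mu_{n-1}\bigr)\]
splits into drift plus martingale noise: testing against a bounded continuous $g:\cS\to\R$ and conditioning on $\mathcal F_{n-1}$, the conditional drift of $\langle \mu_n - \mu_{n-1}, g\rangle$ equals $\tfrac{1}{n}F_g(\mu_{n-1})$ with
\[F_g(\mu) \;:=\; \frac{\int_{\cS} \langle \bar R_\xi, g\rangle\,\dd(\mu\otimes P)(\xi)}{(\mu\otimes P)(\cS)} \;-\; \langle \mu, g\rangle.\]
Assumption~\textbf{A4} makes $\mu\mapsto F_g(\mu)$ weakly continuous, and~\textbf{A1} controls both the growth and the lower bound of $(m_n\otimes P)(\cS)$, so that the denominator stays bounded away from zero asymptotically.

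Next I would establish tightness and kill the noise. The bound $\bar Q_x(\cS)\leq 1$ in~\textbf{A1} gives linear-in-$n$ growth of $(m_n\otimes P)(\cS)$, which together with compactness of $\cS$ (\textbf{A2}) yields almost-sure tightness of $(\mu_n)$ in $\mathcal M(\cS)$. The martingale increments
\[M_n^g \;:=\; \langle \mu_n - \mu_{n-1}, g\rangle - \tfrac{1}{n}F_g(\mu_{n-1})\]
satisfy $\E{(M_n^g)^2}=O(1/n^2)$ (after a truncation to handle potentially heavy tails of $R^{(n)}_x(\cS)$), so $\sum_n M_n^g$ converges almost surely by Doob. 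Consequently every subsequential weak limit $\mu_\infty$ of $(\mu_n)$ satisfies $F_g(\mu_\infty)=0$ for every bounded continuous $g$; separability of $C(\cS)$ (from~\textbf{A2}) lets us work on a countable determining family and intersect the resulting exceptional null sets.

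Finally I would identify the fixed point via~\textbf{A3}. Rewriting $F_g(\mu_\infty)\equiv 0$ at the measure level gives the eigenmeasure equation $c\,\mu_\infty = \mu_\infty K$, where $K(y,B) := (P_y\otimes\bar R)(B)$ and $c := (\mu_\infty\otimes P)(\cS)$. A direct computation shows $\mu_\infty K = (\mu_\infty\otimes P)\otimes\bar R$, so $\mu_\infty$ is necessarily of the form $\rho\otimes \bar R$ for some probability measure $\rho$ on~$\cS$. Substituting back and using $\bar Q = \bar R\otimes P$ yields $\rho\otimes\bar Q = c\,\rho$, i.e., $\rho$ is a probability left-eigenmeasure of the sub-Markov kernel $\bar Q$. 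Assumption~\textbf{A3} asserts uniform-in-starting-point convergence of the killed chain on $\cS\cup\{\varnothing\}$ to a unique quasi-stationary distribution~$\nu$, and standard QSD / Perron--Frobenius theory identifies $\nu$ as the \emph{unique} probability left-eigenmeasure of $\bar Q$; therefore $\rho=\nu$ and $\mu_\infty$ is a scalar multiple of $\nu\otimes\bar R$. Matching total masses $\mu_\infty(\cS) = \lim_n m_n(\cS)/n$ (obtained from the dynamics by a direct SLLN-type argument applied to $R^{(k)}_{\xi_k}(\cS)$) pins the scalar to one, yielding $\mu_\infty = \nu\otimes\bar R$. The main obstacle is Step~2: one must make the martingale/noise analysis uniform enough over a separating family of test functions to upgrade pointwise-in-$g$ convergence of $\langle \mu_n,g\rangle$ to weak convergence of the random measures $(\mu_n)$ themselves; compactness of $\cS$ (\textbf{A2}) and the uniform form of the QSD convergence (\textbf{A3}) are precisely what allow this to go through, the latter also guaranteeing independence of the limit from the initial composition $m_0$.
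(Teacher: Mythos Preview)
The paper does not prove Theorem~\ref{th:MV}: it is quoted verbatim as \cite[Theorem~1]{maivil} and then \emph{applied} to the particular MVPP $(\Pi_n)_{n\geq 0}$ of Lemma~\ref{lem:mvpp} in order to deduce Theorem~\ref{empir_limit} under~\textbf{H2}. All of the work in Section~\ref{sec:emp} under~\textbf{H2} consists of verifying the hypotheses \textbf{A1}--\textbf{A4} for that specific process (the bulk being the verification of~\textbf{A3} via~\cite{CV17}), not of proving Theorem~\ref{th:MV} itself. So there is no ``paper's own proof'' of this statement to compare your sketch against.

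That said, your outline is broadly the correct strategy for the Mailler--Villemonais result: the stochastic-approximation recursion $\mu_n=\mu_{n-1}+\tfrac1n(R^{(n)}_{\xi_n}-\mu_{n-1})$, splitting into drift plus martingale noise, tightness from compactness, and identification of limit points as eigenmeasures of the sub-Markov kernel $\bar Q$ via the quasi-stationary distribution in~\textbf{A3} is exactly the architecture of~\cite{maivil}. Two points where your sketch is loose: the $O(1/n^2)$ variance bound on the martingale increments does not follow from~\textbf{A1} alone (which only gives a stochastic \emph{lower} bound on $Q_x^{(1)}(\cS)$, not an upper bound on $R_x^{(1)}(\cS)$), so the truncation you allude to genuinely has to be carried out; and the final ``matching total masses pins the scalar to one'' step hides a nontrivial SLLN for $m_n(\cS)/n$ that in~\cite{maivil} is obtained as part of the same stochastic-approximation analysis rather than separately. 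But these are refinements of a sound plan, not gaps in the approach.
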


\begin{proof}[Proof of Theorem~\ref{empir_limit}, assuming \textbf{H2}]
The idea of the proof is to apply Theorem~\ref{th:MV} to the MVPP $(\Pi_n)_{n\geq 0}$ (see Lemma~\ref{lem:mvpp}).
In this set-up, we have, for all $x\in\cCd$,
\[Q_x^{\sss (n)}(\cdot) 
=(R_x^{\sss (n)} \otimes P)(\cdot)
= \sum_{i=0}^{d-1} f(x_{i\lf W_n})\,\delta_{x_{i\lf W_n}}(\cdot),
\]
and \[ \bar Q_x(\cdot)
= (\bar R_x \otimes P)(\cdot)
= \mathbb E\left[\sum_{i=0}^{d-1} f(x_{i\lf W})\,\delta_{x_{i\lf W}}(\cdot) \right].
\]

In order to satisfy the normalization requirements in Theorem \ref{th:MV}, we consider a suitable rescaling.
We define 
\begin{equation} 
\label{eq:em-def}
M = d \cdot \mathbb E [f(\mathbf{1}_{0 \lf W})],
\end{equation} 
and for all $n\geq 0$, set 
${\Pi}_n' 
= {\Pi_n}/{M}$.
It is immediate (using Lemma~\ref{lem:mvpp}) that $({\Pi}_n')_{n\geq 0}$ is a MVPP with weight kernel 
$P$ whose replacement kernel and
associated $Q$-kernel 
are given by
\[\mathcal R_x^{\sss (n)} = \frac{R_x^{\sss (n)}}{M}, \quad 
\mathcal Q_x^{\sss (n)}= \frac{Q_x^{\sss (n)}}{M}.
\]
The corresponding annealed kernels are defined analogously by
$\bar {\mathcal R}_x(\cdot) = \E{\mathcal R_x^{\sss (1)} (\cdot)}$ and $\bar {\mathcal Q}_x(\cdot) = 
\E{\mathcal Q^{\sss (1)}_x(\cdot)}$.
Note that, by monotonicity of $f$ in all its coordinates, and symmetry,
\[\sup_{x\in\cCd} \mathbb E\left[\sum_{i=0}^{d-1} f(x_{i\lf W})\right]
\leq d  \cdot \mathbb E\big[f(\mathbf{1}_{0 \lf W})\big],\]
implying that, for all $x \in\cCd$, $\bar {\mathcal Q}_x (\cCd)\leq 1$. 
We also have that, for all $x \in\cCd$, by monotonicity of $f$
\[\mathcal Q^{\sss (1)}_x(\cCd)\geq \frac{d \cdot f(\bs 0)}M
\stackrel{\eqref{eq:em-def}}{=} \frac{d \cdot f(\bs 0)}{d \cdot \mathbb E [f(\mathbf{1}_{0 \lf W})]}
\geq \frac{f(\bs 0)}{f(\bs 1)}>0,\]
implying that Assumption \textbf{A1} of Theorem~\ref{th:MV} is satisfied with $\eta = \delta_{f(\bs 0)/f(\bs 1)}$. 
Assumption \textbf{A2} is immediately satisfied since $\cCd$ is compact. Next, 
as $\int_{\cCd} g(y) \mathrm d \bar R_x(y) = \sum_{i=0}^{d-1} \E{g(x_{i \lf W})}$, continuity of $x \mapsto \int_{\cCd}
  g(y) \mathrm d   \bar R_x(y)$ for a bounded and continuous function $g : \cCd \to \R$ is immediate. 
	Analogously, one can prove the statement for the $Q$-kernel and establish Assumption \textbf{A4} as the rescaling leaves continuity properties unaltered.

\noindent It thus remains to check that the rescaled P\'olya process $({\Pi}_n')_{n\geq 0}$ satisfies Assumption \textbf{A3}. 
Let $(X_t)_{t\geq 0}$ be the jump-process with infinitesimal generator $\bar {\mathcal Q}_x - \delta_x + (1-\bar{\mathcal Q}_x(\cCd)) \delta_{\varnothing}$, for all $x\in\cCd$. By definition, when $X_t$ sits at $x$, it jumps to $\varnothing$ at rate
\[
1 - \frac1M\sum_{i=0}^{d-1} \mathbb E[f(x_{i\lf W})],
\]
and, at rate $\frac1M\sum_{i=0}^{d-1} \mathbb E[f(x_{i\lf W})]$, it jumps to a random position chosen according to the probability distribution
\[\frac{\sum_{i=0}^{d-1} \mathbb E[f(x_{i\lf W})\delta_{x_{i\lf W}}(\cdot)]}{\sum_{i=0}^{d-1} \mathbb E[f(x_{i\lf W})]}.\]
Thus, in total, $X$ jumps at rate $1$ at all times. In particular, discrete skeleton and jump times of the process are independent.

To prove {\bf A3}, we apply~\cite[Theorem~3.5 and Lemma~3.6]{CV17} (where we take $t_1=t_2=1$ - note that, although this is not clear in the current version of \cite{CV17}, $t_1$ and $t_2$ need to be positive) to the jump process $(X_t)_{t\geq 0}$. Since $X$ is a pure jump process and satisfies the strong Markov property, condition (F0) in~\cite[Theorem~3.5]{CV17} is satisfied. It is therefore enough to prove that
there exist a set $L\subseteq \cCd$ 
and a probability measure $\varrho$ on $L$ such that:
\begin{itemize}
\item[{\bf B1}] There exist $c_1>0$ such that, for all $x\in L$, $\mathbb P_x(X_1\in\cdot)\geq c_1\varrho(\cdot \cap L)$, where $\mathbb P_x(\cdot)$ denotes the probability measure associated with the Markov process $X$
initiated by $x$. 
\item[{\bf B2}] There exist $0<\gamma_1<\gamma_2$ such that 
\[\sup_{x\in \cCd}\mathbb E_x[\gamma_1^{-\tau_L\land \tau_\varnothing}]<+\infty,
\text{ and }
\gamma_2^{-t}\mathbb P_x(X_t\in L)\to+\infty \text{ when }t\to+\infty \,(\forall x\in L),
\]
where $\tau_\varnothing$ and $\tau_L$ stand for the respective hitting times of $\varnothing$ and $L$.
\item[{\bf B3}] There exists $c_2>0$ such that
\[\sup_{t\geq 0}\frac{\sup_{y\in L}\mathbb P_y(t<\tau_{\varnothing})}{\inf_{y\in L}\mathbb P_y(t<\tau_{\varnothing})}\leq c_2.\]
\end{itemize}

In order to prove the above, we define the partial order `$\preccurlyeq$' on $\cCd$ such that for $x, y \in \cCd$, $x\preccurlyeq y$ if and only if, for all $i \in \{0, \ldots, d-1\}$, $x_i\leq y_i$ 
 (recall that the coordinates of $x$ and $y$ are ordered in increasing order). We then define $L=L(\varepsilon) = \{x\in \cCd \colon x\preccurlyeq (1-\varepsilon)\boldsymbol 1\}$. 
 
\medskip 

\noindent
{\bf Proof of {\bf B1}:} We denote by $(\sigma_i)_{i\geq 1}$ the random jump-times of $X$. In order for these times to be well-defined for all $n \geq 1$, we let the process jump from $\varnothing$ to $\varnothing$ at rate one. Fix a Borel set $B\subseteq \cCd$.
Then, by monotonicity and symmetry, we have 
\[\mathbb P_x(X_{\sigma_1} \in B) 
= \frac1M \sum_{i=0}^{d-1} \mathbb E[f(x_{i\leftarrow W}) {\boldsymbol 1}_{B}(x_{i\leftarrow W})]
\geq \frac{f(\boldsymbol 0)}M \sum_{i=0}^{d-1} \mathbb P(x_{i\leftarrow W}\in B).\]
 
By the strong Markov property, we have 
\[\Prob[x]{X_{\sigma_2} \in B | X_{\sigma_1} = x'} 
= \frac1M \sum_{i=0}^{d-1} \mathbb E[f(x'_{i\leftarrow W}) {\boldsymbol 1}_{B}(x'_{i\leftarrow W})]
\geq \frac{f(\boldsymbol 0)}M \sum_{i=0}^{d-1}\Prob{x'_{i\leftarrow W}\in B},\]

so that,
\begin{linenomath}
\begin{align*}
\int_{\cCd} \Prob[x]{X_{\sigma_2} \in B | X_{\sigma_1} = x'} \mathbb P_x(X_{\sigma_1} \in \dd x') &\geq \int_{\cCd} \frac{f(\boldsymbol 0)}M \sum_{i=0}^{d-1}\Prob{x'_{i\leftarrow W'}\in B} \Prob[x]{X_{\sigma_1} \in \dd x'}
\\ &\geq \left(\frac{f(\boldsymbol 0)}M\right)^2 \sum_{0 \leq i, j \leq d-1} \Prob{(x_{j \lf W})_{i \lf W'} \in B}
\end{align*}
\end{linenomath}
for i.i.d copies $W, W'$.
Iterating this argument,  we obtain
\[ 
\mathbb P_x(X_{\sigma_d} \in B) \geq \left(\frac{f(\boldsymbol 0)}{M}\right)^d \sum_{i_0, \ldots, i_{d-1} \in \{0, \ldots, d-1\}^d} \Prob{\left(\left(\left(x_{i_0 \lf W_0}\right)_{i_1 \lf W_1}\right) \ldots\right)_{i_{d-1} \lf W_{d-1}} \in B},
\]
where $W_0, \ldots, W_{d-1}$ are i.i.d.\ random variables with law $\mu$. Let $W_{(0)} \leq W_{(1)} \leq \ldots \leq W_{(n)}$ denote the order statistics of $W_0, \ldots, W_{d-1}$. Then, for an appropriate (random) choice of $i_0, \ldots, i_{d-1}$ we have 
$\left(\left(\left(x_{i_0 \lf W_0}\right)_{i_1 \lf W_1}\right) \ldots\right)_{i_{d-1} \lf W_{d-1}} = (W_{(0)}, \ldots, W_{(d-1)})$. Therefore 
\begin{linenomath}
\begin{align*} 
\mathbb P_x(X_{\sigma_d} \in B) & 
\geq \left(\frac{f(\boldsymbol 0)}{M}\right)^d \E { \sum_{i_0, \ldots, i_{d-1} \in \left\{0, \ldots, d-1\right\}^d}  {\boldsymbol 1}_B \left(\left(\left(\left(x_{i_0 \lf W_0}\right)_{i_1 \lf W_1}\right) \ldots\right)_{i_{d-1} \lf W_{d-1}}\right)    } \\
& \geq \left(\frac{f(\boldsymbol 0)}{M}\right)^d
\Prob{(W_{(0)}, \ldots, W_{(d-1)})\in B}.
\end{align*}
\end{linenomath}
As the probability that $X$ jumps exactly $d$ times before time~$1$ is positive and skeleton and jump times are independent (since $X$ always jumps with rate $1$), {\bf B1} is satisfied with $\varrho$ being the probability distribution induced by $\mu^{\otimes d}$ restricted to $L$ in the natural way.

\medskip

\noindent
{\bf Proof of {\bf B2}:} 
For $x \in \cCd$, let $n_{x}(x_i)$ denotes the number of co-ordinates of $x$ equal to $x_i$. $X$ jumps from a position $x$ such that $x_i> 1-\varepsilon$ to a position $x_{i\leftarrow v}$ for some $v\leq 1-\varepsilon$ at rate 
\[\frac{n_{x}(x_i)\mathbb  E\big[f(x_{i\leftarrow W})\boldsymbol 1_{W\leq 1-\varepsilon}\big]}M
\geq 
\frac{n_{x}(x_i)\mathbb E[f(\mathbf{0}_{0 \lf W})\boldsymbol 1_{W\leq 1-\varepsilon}]}{M}
=:n_{x}(x_i)\varpi_\varepsilon,\]
for all $i \in \{0, \ldots, d-1\}$ (where we have applied the symmetry and monotonicity of $f$).
Similarly, the walk jumps from a position $x$ such that $x_i\leq 1-\varepsilon$ to a position $x_{i\leftarrow v}$ for some $v> 1-\varepsilon$ at rate
\[\frac{n_{x}(x_i)\mathbb  E\big[f(x_{i\leftarrow W})\boldsymbol 1_{W> 1-\varepsilon}\big]}M
\leq \frac{n_{x}(x_i)\mathbb E[f(\bs 1_{0\leftarrow W})\bs 1_{W> 1-\varepsilon}]}{M}=: n_{x}(x_i)\vartheta_\varepsilon,\]
for all $i \in \{0, \ldots, d-1\}$.
Let $\mathscr{C}(X_t)$ denote the number of coordinates of $X_t$ that are larger than $1-\eps$, where we set $\mathscr{C}(\varnothing) = 0$. 
Consider a pure jump Markov process with rates given in 
Figure \ref{fig:jump-rates}.
\begin{figure}[H]
\begin{center}
\begin{tikzpicture}[->,>=stealth',shorten >=1pt,auto,node distance=2.3cm,semithick]
  \tikzstyle{every state}=[fill=white,draw=black,text=black,minimum size=0.9cm]
  \node[state] (A) {0};
  \node[state] (B) [right of=A] {$1$};
  \node[state] (C) [right of = B] {$2$};
  \node[state] (D) [right of=C]{$3$};
  \node[minimum size=1cm] (E) [right of=D] {$\cdots \cdots$};
  \node[state] (F) [right of=E] {$d-2$};
  \node[state] (G) [right of=F] {$d-1$};
  \node[state] (H) [right of=G] {$d$};
  \path (B) edge [bend left] node {$\varpi_\varepsilon$} (A);
  \path (C) edge [bend left] node {$2\varpi_\varepsilon$} (B);
  \path (B) edge [bend left] node {$(d-1) \vartheta_\varepsilon$} (C);
  \path (D) edge [bend left] node {$3 \varpi_\varepsilon$} (C);
 \path (C) edge [bend left] node {$(d-2) \vartheta_\varepsilon$} (D);
  \path (F) edge [bend left] node {$2\vartheta_\varepsilon$} (G);
  \path (G) edge [bend left] node {$(d-1)\varpi_\varepsilon$} (F);
  \path (G) edge [bend left] node {$\vartheta_\varepsilon$} (H);
  \path (H) edge [bend left] node {$d\varpi_\varepsilon$} (G);
\end{tikzpicture}
\end{center}
\caption{Jump rates of the associated Markov chain $N^\eps$.}
\label{fig:jump-rates}
\end{figure}
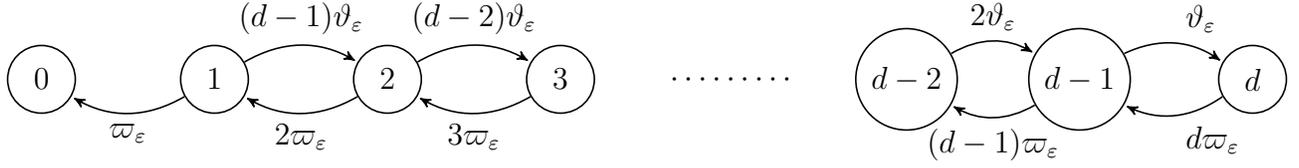
If, for some $t \geq 0$, this Markov chain has the same non-zero value as $\mathscr{C}(X_t)$, then it jumps upwards (resp.\ downwards) at a faster (resp.\ lower) rate than $\mathscr{C}(X_t)$. 
This observation motivates the following lemma whose proof is given in
Appendix \ref{ap:couplingb2}. 
Note that $\tau_L \wedge \tau_\varnothing$ is the first time $t$ when $\mathscr{C}(X_t) = 0$. \begin{lem} \label{lem:couplingXN}
For all sufficiently small $\varepsilon > 0$, there exists a coupling of the process $X$ with a realisation $N^\varepsilon$ of the Markov process with jump rates given in Figure \ref{fig:jump-rates} and $N^\varepsilon_0 = \mathscr C(X_0)$ such that, $\mathscr{C}(X_t) \leq  N^{\varepsilon}_t$ for all $t \leq \tau_L \wedge \tau_\varnothing$.
\end{lem}

(This lemma is where we use the assumption $\mu(\{1\}) =0$.) By Lemma \ref{lem:couplingXN}, we deduce that 
\begin{equation} \label{eq:b2couplingmono}
\Prob[x]{\tau_L \wedge \tau_\varnothing \geq t} \leq 
\Prob[\mathscr C(x)]{N^{\eps}_t \neq 0}.
\end{equation}
Here, we use the notation $\mathbb P_\ell$, $\ell \in \{0, \ldots, d\}$ to indicate that the Markov process $N^\eps_t, t \geq 0$ is initiated at position $\ell$.
Note that, since $\mu$ does not contain an atom at $1$, we have $\vartheta_\varepsilon\to 0$ and  $\varpi_\varepsilon \to \mathbb E[ f(\bs 0_{0\leftarrow W})]/M=:\varpi_0 \in (0,1]$ as $\varepsilon \to 0$. Therefore,  as $\varepsilon\to 0$ the generator $\mathcal L_{\varepsilon}$ of the Markov chain $N^\eps$ converges to the generator 
\[\mathcal L= \begin{pmatrix}
0 & 0 & \ldots & &  & 0\\
\varpi_0 & -\varpi_0 & 0 & \ldots & & 0\\
0 & 2\varpi_0 & -2\varpi_0 & 0 & \ldots & 0\\
 & & \ddots & \ddots & &  \\
  & & & \ddots & \ddots &  \\
 0 & \ldots &  & 0 & d\varpi_0 & -d\varpi_0
\end{pmatrix}\] 
whose eigenvalues are $0, -\varpi_0, \ldots, -d\varpi_0$ (and thus whose spectral gap is $\varpi_0$), and whose stationary distribution on $\{0, \ldots, d\}$ is given by $\delta_0$ as $0$ is an absorbing state.  

Since $\mathcal L_\varepsilon$ converges entry-wise to $\mathcal L$ when $\varepsilon \to 0$, their respective characteristic polynomials converge, and thus the eigenvalues of $\mathcal L_\varepsilon$ converge to the eigenvalues of $\mathcal L$. Since the eigenvalues of  $\mathcal L$ are all distinct it follows that for $\varepsilon$ sufficiently small all eigenvalues of $\mathcal L_\varepsilon$ are simple. Thus, $\mathcal L_\varepsilon$ is diagonalisable, and may be written as $\mathcal L_\varepsilon = V_\varepsilon^{-1} D_{\varepsilon} V_\varepsilon$, where $D_{\varepsilon}$ is a diagonal matrix consisting of the eigenvalues of $\mathcal L_\varepsilon$, and the rows of $V_\varepsilon^{-1}$ are the corresponding unit-norm (left) eigenvectors. This condition allows us to apply \cite[Theorem~3.1]{mitrophanov}. Since, for each $\eps > 0$, the stationary distribution of $N^{\eps}$ is $\delta_{0}$, for all $\ell \in \{0, \ldots, d\}$ and for all $t\geq 0$, 
\begin{equation}
\label{eq:conc-mark-proc}
|\mathbb P_\ell(N^\eps_t=0) - 1|
\leq C(\varepsilon) 
\mathrm e^{-\rho(\varepsilon) t},
\end{equation}
where $\rho(\varepsilon)$ is the spectral gap of the generator of $N^{\varepsilon}$, and  $C(\varepsilon)=\|V_\varepsilon\|_{\infty}\|V_\varepsilon^{-1}\|_{\infty}$ (where $\| \cdot \|_{\infty}$ denotes the $\infty$-norm, i.e.\ maximum absolute row sum). 
Note that as $\eps \to 0$,
$\rho(\varepsilon) \to \varpi_0$.
Moreover, using the basis of unit-norm (left) eigenvectors introduced above, we have $C(\varepsilon) = \|V_\eps\|_{\infty}\|V^{-1}_\eps\|_{\infty} \to C:=\|V\|_{\infty}\|V^{-1}\|_{\infty}$, as $\eps \to 0$, where the rows of $V^{-1}$ are a basis of unit-norm (left) eigenvectors of $\mathcal L$.
Now, by Equation~\eqref{eq:b2couplingmono} and \eqref{eq:conc-mark-proc}, we have
\begin{equation} \label{eq:prob-neq}
\mathbb P_x(\tau_L \wedge \tau_\emptyset \geq t) \leq \mathbb P_{\mathscr C(x)}(N^\eps_t \neq 0) = 1- \mathbb P_{\mathscr C(x)}(N^\eps_t = 0) \leq C(\varepsilon) \exp(-\rho(\varepsilon)t).
\end{equation}

Therefore, for all $\gamma_1 < 1$ and $x\in \cCd$, using the fact that $\log{\gamma_1} < 0$ in the second equality,
\begin{linenomath}
\begin{align*}
\mathbb E_x[\gamma_1^{-\tau_L\wedge \tau_{\varnothing}}]
&= 1 + \int_1^{\infty} \mathbb P_x(\gamma_1^{-\tau_L\wedge \tau_{\varnothing}}\geq u)\mathrm du
= 1 + \int_1^{\infty} \mathbb P_x\left(\tau_L\wedge \tau_{\varnothing}\geq \frac{\log u}{\log(\nicefrac1{\gamma_1})}\right)\mathrm du\\
&\stackrel{\eqref{eq:prob-neq}}{\leq} 1 + \int_1^{\infty} C(\varepsilon) u^{-\rho(\varepsilon)/\log (\nicefrac1{\gamma_1})}\mathrm du
<+\infty 
\end{align*}
\end{linenomath}
as long as $\log (\nicefrac1{\gamma_1})< \rho(\eps)$.
Also note that, for all $x\in L$,
\[
\mathbb P_x(X_t\in L)
\geq \mathbb P_x(X_{\sigma_i}\in L \text{ for all }0\leq i\leq N(t)),
\]
where $N(t)$ is the number of jumps of $X$ by time $t$, and
\begin{linenomath}
\begin{align*}
\mathbb P_x(X_{\sigma_1}\in L)
&=\frac1M\sum_{i=0}^{d-1} \mathbb E[f(x_{i\leftarrow W})\boldsymbol 1_{x_{i\leftarrow W}\in L}]
=\frac1M\sum_{i=0}^{d-1} \mathbb E[f(x_{i\leftarrow W})\boldsymbol 1_{W\leq 1-\varepsilon}]\\
&\stackrel{\eqref{eq:em-def}}{\geq} \frac{\mathbb E[f(\bs 0_{0\leftarrow W})\bs 1_{W\leq 1-\eps}]}{\mathbb E[f(\bs 1_{0\leftarrow W})]}
=:\chi_\varepsilon.
\end{align*}
\end{linenomath}
Since the walk jumps at rate one, we have that the number of jumps before time $t$ is Poisson distributed with parameter $t$. As skeleton and jump times are independent, it follows that, for all $x\in L$,
\[
\mathbb P_x(X_t\in L)
\geq \mathbb P_x(X_{\sigma_i}\in L \text{ for all }0\leq i\leq N(t))
\geq \mathbb E[\chi_\eps^{N(t)}]
= \mathrm e^{-(1-\chi_\varepsilon) t}.
\]
If $1-\chi_\varepsilon<\log(\nicefrac1{\gamma_2})$, then $\gamma_2^{-t}\mathbb P_x(X_t\in L)\to+\infty$ as required.
In other words, {\bf B2} is satisfied if we can choose $\gamma_1<\gamma_2<1$ such that
\[1-\chi_\varepsilon<\log(\nicefrac1{\gamma_2})< \log(\nicefrac1{\gamma_1})
< \rho(\eps).
\]
As $\eps\to 0$, we have $\chi_\varepsilon \to \mathbb E[f(\bs 0_{0\leftarrow W})]/\mathbb E[f(\bs 1_{0\leftarrow W})] = d\varpi_0$ while $\rho(\eps) \to \varpi_{0} > 1- d\varpi_0$
by Equation~\eqref{eq:extra_cond}.
It is thus possible to choose $\eps$ small enough such that $1-\chi_\varepsilon < \rho(\eps)$.
For this value of $\varepsilon$, a choice of $\gamma_1$ and $\gamma_2$ is possible, which concludes the proof of {\bf B2}.

\medskip

\noindent
{\bf Proof of {\bf B3}:}  
We require the following coupling lemma, where we adopt the convention that $\varnothing \preccurlyeq x$ for all $x \in \cCd$ and $\varnothing \preccurlyeq \varnothing$. We defer the proof of this lemma to Appendix \ref{ap:couplingb3}
\begin{lem}\label{lem:couplingb3}
Let $x, y \in \cCd$ with $x \preccurlyeq y$. There exist processes $X^{(x)}, X^{(y)}$ such that $X^{(x)}$ is distributed as $X$ with respect to $\mathbb P_x$ and
$X^{(y)}$ is distributed as $X$ with respect to $\mathbb P_y$ satisfying that, almost surely, $X^{(x)}_t \preccurlyeq X^{(y)}_t$ for all $t \geq 0$.
\end{lem}

Thanks to Lemma~\ref{lem:couplingb3}, we have that, if $x\preccurlyeq y\in \cCd$, then
\begin{equation} \label{eq:coupling-order}
\mathbb P_x(t<\tau_\varnothing)\leq
\mathbb P_y(t<\tau_\varnothing).
\end{equation}
In particular, this implies that
\[\inf_{y\in L} \mathbb P_y(t<\tau_\varnothing) = \mathbb P_{\boldsymbol 0}(t<\tau_\varnothing), \text{ and } \sup_{y\in L} \mathbb P_y(t<\tau_\varnothing) = \mathbb P_{(1-\varepsilon)\boldsymbol 1}(t<\tau_\varnothing).\]
Also, since $1 \in \Supp{(\mu)}$, with positive probability, every coordinate of $(X_t)_{t\geq 0}$ is at least $1-\varepsilon$ after $d$ jumps.
 If we denote this probability by $\kappa_1 = \kappa_1(\varepsilon)$, we obtain
\[\mathbb{P}_{\boldsymbol 0}(t<\tau_\varnothing)
\geq \mathbb{P}_{\boldsymbol 0}(\sigma_d < t <\tau_\varnothing)
\geq\kappa_1 \mathbb{P}_{\boldsymbol 0}(\sigma_d < t <\tau_\varnothing | (1-\varepsilon)\boldsymbol 1\preccurlyeq X_{\sigma_d}),\]
where $(1-\varepsilon)\boldsymbol 1\preccurlyeq X_{\tau_d}$ denotes the event that all coordinates of $X_{\tau_d}$ are at least $1-\varepsilon$. Next, observe that for all $t \leq 1$, 
\[\frac{\Prob[(1-\varepsilon)\boldsymbol 1]{t < \tau_\varnothing}}{\Prob[\boldsymbol 0]{t < \tau_\varnothing}} \leq \frac{1}{\mathrm{e}^{-1}} = \mathrm e,\]
since the probability the process has not jumped by time $t$ is $\mathrm e^{-t}$. Now, by Equation~\eqref{eq:coupling-order} and the strong Markov property, for Lebesgue almost all $0 \leq u \leq 1 < t$, 
\begin{linenomath}
\begin{align*}\Prob[\boldsymbol{0}]{t < \tau_{\varnothing}|(1- \eps) \boldsymbol{1} \preccurlyeq X_{\sigma_d}, \, \sigma_d = u} 
&= \E[\bs 0]{\Prob[X_{\sigma_d}]{t-u < \tau_{\varnothing}}|(1- \eps) \boldsymbol{1} \preccurlyeq X_{\sigma_d}, \, \sigma_d = u} \\ 
& \geq \Prob[(1-\varepsilon)\boldsymbol 1]{t-u < \tau_{\varnothing}} \geq \Prob[(1-\varepsilon)\boldsymbol 1]{t < \tau_{\varnothing}}.\end{align*}
\end{linenomath}
Thus, for $t > 1$, since jump times and skeleton are independent 
\begin{linenomath}
\begin{align*}
\mathbb{P}_{\boldsymbol 0}(t<\tau_\varnothing)
 & \geq \kappa_1 \mathbb{P}_{\boldsymbol 0}(\sigma_d \leq 1 \leq t <\tau_\varnothing | (1-\varepsilon)\boldsymbol 1\preccurlyeq X_{\sigma_d}) \\
 & \geq \kappa_1 \int_{0}^{1} \Prob[0]{t < \tau_{\varnothing}|(1- \eps) \boldsymbol{1} \preccurlyeq X_{\sigma_d}, \, \sigma_d = u} \Prob[\boldsymbol{0}]{\sigma_d \in \dd u \, | \, (1- \eps) \boldsymbol{1} \preccurlyeq X_{\sigma_d}} \\ &
 = \kappa_1 \int_{0}^{1} \Prob[0]{t < \tau_{\varnothing}|(1- \eps) \boldsymbol{1} \preccurlyeq X_{\sigma_d}, \, \sigma_d = u} \Prob[\boldsymbol{0}]{\sigma_d \in \dd u}
 \\ &= \kappa_1 \Prob[\boldsymbol{0}]{\sigma_d < 1} \Prob[(1- \eps) \boldsymbol{1}]{t-u < \tau_{\varnothing}} \geq \kappa_1 \Prob[\boldsymbol{0}]{\sigma_d < 1} \Prob[(1- \eps) \boldsymbol{1}]{t-u < \tau_{\varnothing}}.
\end{align*}
\end{linenomath}
Thus, if we set $\Prob[\boldsymbol{0}]{\sigma_d < 1} := \kappa_2$, taking $c_2 = \max{\left\{\frac{1}{\kappa_1\kappa_2}, \mathrm e\right\}}$ completes the proof. 
\end{proof} 
\subsection{The Star Process}
\label{subsec:starprocess}
In the remainder of this section, we revisit the companion Markov process $(S^{*}_n)_{n\geq 0}$ defined in Subsection~\ref{sub:star_proc_def}.
We wish to apply the same theory of P\'olya processes to study the distribution of $(d-1)$-faces in $(S^{*}_n)_{n \geq 0}$. Note, however, that by definition, in this process every face contains the central vertex of $S^{*}_0$. Therefore, if the central vertex has weight $x$, we may view the empirical distribution of $(d-1)$-faces as a measure on $\cCdd$, which represents the weights of the other vertices in the $(d-1)$-faces in $S^*_n$.  

Thus, we can interpret the evolving empirical measure as a homogeneous Markov process $(S_n)_{n \geq 0}$ on $\cC' := [0, \infty) \times \mathcal M(\cCdd)$ (recall that $\mathcal M (\cCdd)$ is the space of non-negative, finite measures on $\cCdd$). 

Given $S_n = (x, \nu) \in \cC'$ for some $n\geq 0$: 
\begin{itemize}
    \item [(i)] Set $c^* = \int_{\cCdd} f((x, y)) \dd \nu(y)$ and sample $z \in \cCdd$ according to the distribution admitting density $f((x, y))/c^*$ with respect to $\nu$.
    \item [(ii)] Let $W$ be a random variable with distribution $\mu$ which is independent of the past of the process. Then, set 
    \[S_{n+1} = \begin{cases} (x, \nu+ \sum_{i=0}^{d-2} \delta_{z_i \lf W}), & \text{in Model }\textbf{A}, \\  (x, \nu+ \sum_{i=0}^{d-2} \delta_{z_i \lf W} - 
    \delta_z), & \text{in Model }\textbf{B}.\end{cases}\]
\end{itemize}
For a completely rigorous definition, we also set $S_{n+1} = S_n$ if the measure component of $S_n$ is the zero measure and step (i) cannot be executed.
We write $\mathbb P^*_{(x, \nu)}, \mathbb E^*_{(x, \nu)}$ for probabilities and expectations, respectively with respect to this process when the initial state $S_0$ satisfies $S_0 = (x, \nu)$. Note that this implies that the first component of $S_n$ remains equal to $x$ for all $n \geq 0$. Let us write $\bS_n$ for the measure component of $S_n$. Then, provided that
$\bS_0$ is a non-trivial sum of Dirac measures, we have 
\[\bS_n(\cCdd) = \begin{cases} (d-1)n+\bS_0(\cCdd), & \text{in Model }\textbf{A}, \\  (d-2)n+ \bS_0(\cCdd), & \text{in Model }\textbf{B}.\end{cases}\]
Upon identifying faces with their types, we may consider $\St{i}{\C{n}{}{}}$  as a $\cC'$-valued random variable by separating the weight of vertex $i$ from the remaining vertices.  Let $\tau_0 = i$ and, for $n \geq 1$, let $\tau_n$ be the $n$-th time, the randomly chosen face in the construction of  $(\C{m}{}{})_{m \geq 0}$ contains vertex $i$. Formally, letting $\sigma_n$ denote the face chosen and subdivided in step $n$, we have 
\[\tau_n := \inf \{m > \tau_{n-1}: i \in \sigma_m\}, \quad n \geq 1.\]
It is easy to see that $\tau_n < \infty$ almost surely for all $n \geq 1$. Indeed, under either Hypothesis \textbf{H1} or \textbf{H2}, we have $Z_n = F(\mathcal{K}_{n}) \leq f_{\max} (n + |\mathcal{K}_{0}^{\sss (d-1)}|)$, and if $\tau_{k-1} \leq n < \tau_{k}$,  $F(\St{i}{\C{n}{}{}}) \geq f_{\min}(d-1)(k-1)$. Therefore, (analogous to proof of the Borel-Cantelli lemma) one can bound the probability
\[
    \Prob{\tau_{k} = \infty| \tau_{k-1} = N} \leq \prod_{j=N+1}^{\infty} \left(1 - \frac{f_{\min}(d-1)(k-1) }{f_{\max} (j + |\mathcal{K}_{0}^{\sss (d-1)}|)}\right) \leq \mathrm e^{-\sum_{j=N+1}^{\infty} \frac{f_{\min}(d-1)(k-1) }{f_{\max} (j + |\mathcal{K}_{0}^{\sss (d-1)}|)}} = 0;
\]
and the result follows by induction on $k$. 
\\
Furthermore, the sequence of random variables
\[\left(W_i, \sum_{\sigma \in \St{i}{\C{\tau_n}{}{}}} \delta_{\omega(\sigma) \setminus \{W_i\}}\right)_{n \geq 0}\] is equal in distribution to $S_n, n \geq 0$ with respect to $\mathbb P^*_{(x, \nu)}$, when the configuration $(x, \nu)$ is chosen with respect to the law of $(W_i, \sum_{\sigma \in \St{i}{\C{i}{}{}}} \delta_{\omega(\sigma) \setminus \{W_i\}})$.

 Let $\varphi : [0, \infty) \times \cCd \to \cC'=[0, \infty)\times \mathcal M(\mathcal C_{d-2})$ be the map 
 \begin{equation}\label{eq:def_phi}
 \varphi(w, x) = \left(w, \sum_{i=0}^{d-1} \delta_{\tilde x_i}\right),
 \end{equation}
 where we recall that for all $x\in\mathcal C_{d-1}$,  $\tilde x_i\in\mathcal C_{d-2}$ is the vector $x$ from which we have removed the $i$-th coordinate. 
 We also let $\psi : [0,\infty) \times \cCdd \rightarrow \cCd$ be such that 
 \begin{equation} \label{eq:def-psi}
 \psi(w,x) = w \cup x,
 \end{equation} 
 where we recall that $w\cup x$ is obtained by adding a coordinate equal to $w$ to the vector~$x$, and reordering the coordinates of the obtained vector in non-decreasing order.
 For $(w,\nu)\in \cC'$, we define the fitness 
 
 \begin{equation} \label{def:weight_s} F(w,\nu) = \int_{\cCd} f\, \dd\psi_{*}(\delta_w \otimes \nu),\end{equation}
 where $\psi_{*}(\delta_w \otimes \nu)$ is the pushforward of $\delta_w \otimes \nu$ under $\psi$
 (in other words, $\psi_{*}(\delta_w \otimes \nu)$ is the distribution of $\psi(w,X)$ where $X\in \mathcal C_{d-2}$ is a $\nu$-distributed random variable).
Note that, when $S_0$ is chosen according to the law of $(W, Y_{\infty})$, we have $(F(S_n))_{n \geq 0} = (F(S^*_n))_{n \geq 0}$ in distribution.
Moreover, for any $x \in \Supp{(\mu)}$, 
assuming \textbf{H1*} or \textbf{H2*}, Theorem \ref{empir_limit} implies almost sure convergence of the rescaled measure valued process $(\frac{1}{n}\bS_n)_{n > 0}$ on $\cCdd$ to a positive limiting measure depending on $x$. Thus, we get the following:
\begin{thm}\label{th:star}
Assume {\rm\textbf{H1*}} or {\rm\textbf{H2*}} and recall the definition of $\psi$ in Equation~\eqref{eq:def-psi}, and that $\bS_n$ denotes the measure-valued component of the star process $S_n\in \cC'$. Then, for any $x \in \Supp{(\mu)}$, there exists a positive measure $\me_x^*$ on $\cCd$, such that, 
for any positive non-zero measure $\nu \in \mathcal M (\cCdd)$, we have
\[\frac1n\psi_{*}(\delta_x \otimes \bS_n) \to \me_x^*, \quad \mathbb P^*_{(x, \nu)} \text{-almost surely as }n\to\infty,\]
with respect to the weak topology. 
\end{thm}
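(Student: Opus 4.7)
The strategy is a \emph{dimension reduction}: with the center's weight fixed at $x \in \Supp(\mu)$, the process $(\bS_n)_{n\geq 0}$ on $\mathcal M(\cCdd)$ is recognized as the empirical measure of $(d-2)$-faces of an inhomogeneous dynamic simplicial complex of dimension $d-1$, to which Theorem~\ref{empir_limit} may then be applied.

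Concretely, one identifies a face $\sigma \in (S^*_n)^{(d-1)}$ containing the center $r$ with the element $y \in \cCdd$ obtained by listing the weights of the non-central vertices in non-decreasing order. Under this identification, selecting $\sigma$ proportionally to $f(\omega(\sigma)) = f(x\cup y)$ is the same as selecting $y$ proportionally to $\tilde f_x(y) := f(x\cup y)$. When the subdivision by a new vertex of weight $W$ is performed, exactly $d-1$ of the $d$ resulting $(d-1)$-faces contain $r$, and their projections onto $\cCdd$ are the vectors $y_{i\lf W}$, $i \in [0\, . \, . \, d-2]$; in Model~\textbf{B}, the original face is additionally removed. These transitions are precisely those of the Model~\textbf{A} (respectively Model~\textbf{B}) dynamics of Section~1 in dimension $d-1$, with the symmetric fitness function $\tilde f_x$ on $\cCdd$.

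It then remains to translate the hypotheses. Under \textbf{H1*}, the measure $\mu$ is finitely supported, $\tilde f_x$ is positive, and the deterministic identities $\bS_n(\cCdd)=(d-1)n+\bS_0(\cCdd)$ in Model~\textbf{A} and $\bS_n(\cCdd)=(d-2)n+\bS_0(\cCdd)$ in Model~\textbf{B} ensure that the face count in the reduced process diverges whenever \eqref{eq:tailsequation} holds; hence \textbf{H1} is satisfied for the dimension-$(d-1)$ process. Under \textbf{H2*}, continuity, positivity and the condition $\mu(\{1\})=0$ are inherited, monotonicity of $\tilde f_x$ in each coordinate follows from the monotonicity and symmetry of $f$ (inserting a fixed coordinate $x$ into a sorted vector preserves monotonicity in the remaining coordinates), and the inequality built into \textbf{H2*} is exactly \eqref{eq:extra_cond} applied to $\tilde f_x$ with $d$ replaced by $d-1$. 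Thus \textbf{H2} holds for the reduced-dimension process.

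Applying Theorem~\ref{empir_limit} yields a deterministic positive finite measure $\pi_x$ on $\cCdd$, independent of the initial measure $\nu$, such that $\bS_n/n\to \pi_x$ almost surely in the weak topology. Since $\psi:[0,\infty)\times\cCdd\to\cCd$, $\psi(x,y)=x\cup y$, is continuous, the continuous mapping theorem (applied coordinatewise against bounded continuous test functions) gives
\[
\frac1n\psi_*(\delta_x\otimes \bS_n) = \psi_*\!\left(\delta_x\otimes \tfrac{\bS_n}{n}\right) \xrightarrow[n\to\infty]{} \psi_*(\delta_x\otimes \pi_x) =: \me_x^*,
\]
$\mathbb P^*_{(x,\nu)}$-almost surely in the weak topology on $\mathcal M(\cCd)$, which is the required statement. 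The only substantive (but essentially bookkeeping) point is the identification of the star dynamics as a genuine dimension-$(d-1)$ inhomogeneous complex process; once this is established, the result reduces to a direct application of Theorem~\ref{empir_limit}.
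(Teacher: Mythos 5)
Your proof is correct and takes the same approach as the paper: the paper also argues (without spelling out the bookkeeping) that $\bS_n/n$ converges by applying Theorem~\ref{empir_limit} to the star process viewed as a dimension-$(d-1)$ instance of the dynamics with fitness $\tilde f_x$, exactly the dimension reduction you carry out, with H1* or H2* tailored precisely so that H1 or H2 hold for this reduced process. Your verification of the translated hypotheses and the continuous-mapping step for the pushforward under $\psi$ accurately supply the details the paper leaves implicit.
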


By continuity and boundedness of $f$, this implies that
\[\frac{F(S_n)}{n} \to \lambda_x^* := \int_{\cCd}  f(y) \,\mathrm d m_x^*(y)>0, \quad \mathbb  P^*_{(x, \nu)} \text{-almost surely when }n\to\infty.\]
This yields Proposition \ref{prop:fitness-star-process} by setting the initial state to be $S_0 = \varphi(w, Y_\infty)$, where $Y_{\infty}$ is defined in Proposition \ref{prop:MC} and $\varphi$ in Equation~\eqref{eq:def_phi}.

\section{The degree profile} 
\label{sec:deg-prof}

In this section, we determine the degree profile associated with the sequence of simplicial complexes 
$(\C{n}{}{})_{n \geq 0}$. 
Throughout this section we assume that the conclusion of Theorem \ref{empir_limit} holds, and that $f: [0,1]^d \to (0, \infty)$ is continuous and symmetric. Recall that $f_{\max} = \sup \{f(x) : x \in \cCd\}$.

Let $\pi^*$ be the distribution of the random variable $\varphi(W, Y_\infty)$, where $W$ and $Y_\infty$ are independent, $W$ is $\mu$-distributed and 
$Y_\infty$ is as in Proposition \ref{prop:MC}.
We prove the following equivalent of Theorem \ref{thm:small_degrees_vague} 
the only difference in the two statements comes from the fact that we now use the notation of the previous section; in particular the process~$S$ with initial distribution $\pi^*$ is equal in distribution to the process $S^*$ from Theorem~\ref{thm:small_degrees_vague}): 

\begin{thm} \label{thm:small_degrees}
Denote by $\D{k}{n}$ the (random) number of vertices of degree $d+k$ in $\C{n}{}{}$. 
For all $k\geq 0$, we have, in probability,
\[\lim_{n\to \infty} \frac{1}{n} \D{k}{n} = 
\mathbb E^*_{\pi^*} \left[\frac{\lambda}{F(S_k)+\lambda}\prod_{\ell=0}^{k-1} \frac{F(S_\ell)}{F(S_\ell)+\lambda}\right] = p_k\]
with $\lambda$ as in Proposition \ref{prop:partition}. \end{thm}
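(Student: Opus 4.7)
The plan is to reduce the statement to computing $\mathbb{E}[\D{k}{n}]/n$ and then to concentrate $\D{k}{n}/n$ around its mean. A single subdivision step modifies the degrees of only the $d+1$ vertices in the newly formed $d$-simplex, so $|\D{k}{n+1}-\D{k}{n}|\le d+1$. Concentration can be established either by a coupling-based Azuma--Hoeffding argument on the Doob martingale of $\D{k}{n}$ or, more directly, by a second-moment calculation in which pairwise probabilities $\Pr(\deg_n(i)=d+k,\,\deg_n(j)=d+k)$ with $i\neq j$ are shown to factor asymptotically (the stars of two well-separated vertices evolve essentially independently). In either case, it suffices to show $\mathbb{E}[\D{k}{n}]/n \to p_k$.

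Fix a vertex $i$ with $i/n\to u\in (0,1)$. Then $\deg_n(i)=d+k$ holds exactly when the star $\St{i}{\C{m}{}{}}$ is subdivided precisely $k$ times during $m\in\{i+1,\ldots,n\}$, and at each such step the subdivision occurs with conditional probability $F(\St{i}{\C{m-1}{}{}})/F(\C{m-1}{}{})$. By Proposition~\ref{prop:partition}, $F(\C{m-1}{}{})=(\lambda+o(1))m$ almost surely; moreover, by construction the process $(\St{i}{\C{i+\cdot}{}{}})$ is an instance of the companion star process with initial datum $\varphi(W_i,Y_i)$, where $Y_i$ is the type of the face subdivided at step~$i$. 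By Proposition~\ref{prop:MC} the law of this initial datum converges to $\pi^{*}$, so the finite-dimensional distributions of $(F(\St{i}{\C{i+\cdot}{}{}}))$ converge to those of $(F(S_\ell))_{\ell\ge 0}$ with $S_0\sim\pi^{*}$. Under the logarithmic time change $t=\log(m/i)$, the subdivision times of the star thus converge to the jump times of the continuous-time pure-jump Markov chain on $\mathbb{N}_0$ that, from state $\ell$, jumps to $\ell+1$ at rate $F(S_\ell)/\lambda$.

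Let $q_k(t)$ denote the probability that this chain is in state $k$ at time $t$. Then $\Pr(\deg_n(i)=d+k)=\mathbb{E}^{*}_{\pi^{*}}[q_k(\log(n/i))]+o(1)$ as $i,n\to\infty$ with $i/n\to u$, and writing $\mathbb{E}[\D{k}{n}]/n=\frac{1}{n}\sum_{i=1}^n\Pr(\deg_n(i)=d+k)$ and passing to a Riemann integral via $u=i/n$ yields
\[
\frac{\mathbb{E}[\D{k}{n}]}{n}\longrightarrow \int_0^1\mathbb{E}^{*}_{\pi^{*}}[q_k(-\log u)]\,du=\int_0^{\infty}\mathbb{E}^{*}_{\pi^{*}}[q_k(t)]\,e^{-t}\,dt.
\]
Conditioning on $(S_0,\ldots,S_k)$, the $\ell$-th jump time is a convolution of independent exponentials with rates $F(S_\ell)/\lambda$, and a direct Laplace-transform computation (convolving each waiting-time density with the unit exponential $e^{-t}$) converts each waiting period into a factor $F(S_\ell)/(F(S_\ell)+\lambda)$, with a terminal factor $\lambda/(F(S_k)+\lambda)$ from the tail exponential corresponding to no further jump. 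This recovers precisely $p_k$.

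The principal technical obstacle is uniformising the above approximations in $i$. Proposition~\ref{prop:partition} provides only almost-sure convergence without an explicit rate, so the replacement $F(\C{m-1}{}{})\approx\lambda m$ must be made uniform in $m\ge\varepsilon n$; similarly, the convergence in distribution of the initial datum together with Theorem~\ref{th:star} must be applied with error uniform on $\varepsilon n\le i\le(1-\varepsilon)n$. Vertices with $i<\varepsilon n$ or $i>(1-\varepsilon)n$ contribute $O(\varepsilon n)$ to $\mathbb{E}[\D{k}{n}]$ (the former because their number is small, the latter because only finitely many subdivisions of their stars can have taken place before time~$n$) and are negligible upon sending $\varepsilon\to 0$ at the end. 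Once these uniform bounds are in place, the continuous-time jump-chain description and the explicit Laplace integration above are routine.
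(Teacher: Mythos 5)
Your overall strategy (reduce to the mean, identify the star of a late vertex with the companion process, and appeal to the limiting empirical data for both the global partition function and the initial face type) matches the skeleton of the paper's argument. But the packaging is genuinely different: you pass to a continuous-time description via the logarithmic time change $t=\log(m/i)$, obtaining a pure-jump chain with rate $F(S_\ell)/\lambda$, and then recover $p_k$ by a Laplace-transform identity in which the factor $e^{-t}\,dt$ comes from integrating over $u=i/n$. The paper works entirely in discrete time: it expands $\Pr(\hat d_n(i)=k)$ as a sum over $i<i_1<\cdots<i_k\le n$ of probabilities of the events $\mathcal E_i(\mathcal I_k)$, establishes matching upper and lower bounds on these probabilities via backward induction (Propositions \ref{prop:u} and \ref{prop:be_f}), and evaluates the resulting sums through the combinatorial Lemma \ref{lem:prob-sum} and Corollary \ref{cor:summation}, whose proof is itself a probabilistic rewriting using order statistics and an implicit logarithmic substitution. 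So the two arguments perform essentially the same computation; yours is cleaner to state, while the paper's keeps everything discrete so that the error control can be done line by line.

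Where your proposal has genuine gaps is exactly in what you defer to the last paragraph. First, the Azuma--Hoeffding route does not work as stated: the bound $|\D{k}{n+1}-\D{k}{n}|\le d+1$ controls one-step differences of the process, not the increments of the Doob martingale $M_m=\mathbb E[\D{k}{n}\mid\mathscr G_m]$, and in a preferential-attachment-type model changing the outcome of one early step has cascading effects, so the martingale increments are not uniformly $O(1)$. The paper uses a second-moment computation (Section \ref{sec:second}), and your alternative second-moment sketch is the viable one. Second, and more seriously, you treat the replacement $F(\mathcal K_{m-1})\approx\lambda m$ and the approximation of $(\mathrm{st}_i(\mathcal K_{i+\cdot}))$ by the companion process as a single uniformisation issue. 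For the upper bound one can simply intersect with the concentration event $\mathcal G_\varepsilon(i,n)$ and drop it where convenient, but the lower bound cannot discard the concentration event, and this forces the paper to track the partition function $Z_{m\setminus i}$ of the complement of the star, handle the subtle coupling between the star's evolution and the migration of faces into its complement (Lemmas \ref{lem:help1} and \ref{lem:help2}, Corollary \ref{cor:help2}), and run a layered backward induction with shrinking tolerances $\varepsilon_\ell$ (Lemmas \ref{lem:down} and \ref{prop:d}, Proposition \ref{prop:be_f}). That machinery is the bulk of the proof and is not merely a routine uniformisation; you would need to supply an argument of comparable depth before the Laplace identity can be invoked.
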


Note that $(\p_k)_{k \geq 0}$ is a probability distribution on the set of non-negative integers. Indeed, given $F(S_0), F(S_1), \ldots$ consider a sequence of independent events, where, for $i \geq 0$,  the $i$-th event occurs with probability $\lambda / (F(S_i)+\lambda)$. Then, the integrand is the probability that the $k$-th event is the first to occur. (The fact that, almost surely, 
some event in the sequence occurs follows from  boundedness of $f$, which implies that $F(S_{\ell})$ grows at most linearly.) The probability distribution $(\p_k)_{k \geq 0}$ may thus be regarded as a \emph{generalised geometric distribution}.

\medskip 
The proof of Theorem \ref{thm:small_degrees} consists of two steps. First, we show convergence of the corresponding mean, and then we study
the variance of $\D{k}{n}$ to show convergence in probability by an application of Chebychev's inequality.

To prove convergence of the mean, it is convenient to consider only vertices that arrive after a certain time  $\eta n$ where 
$\eta > 0$ is a small constant; 
this allows us to work in the asymptotic regime of the sequence of simplicial complexes. 
Hence,  let $N_k^\eta(n)$ be the number of vertices 
of degree $k+d$ in $\C{n}{}{}$ which arrived after time $\eta n$. Obviously, 
\[N_k^{\eta}(n) \leq N_k(n) \leq \eta n + N_k^{\eta}(n),\]
and therefore, 
\[  
\lim_{\eta \to 0}\limsup_{n \to \infty} \frac{1}{n} \left| \Ex{\C{0}{}{}}{{\D{k}{n}}}-  \Ex{\C{0}{}{}}{N_k^{\eta}(n)} \right| =0.
\]

Most of this section is thus devoted to proving that, for all $k\geq 0$,
\[\lim_{\eta \to 0} \lim_{n \to \infty} \frac{1}{n}  \Ex{\C{0}{}{}}{ N_k^\eta(n)}
= \p_k.\]
Let $\dgl{i}{n}$ be the number of vertices which are neighbors of node $i$ and arrived after node $i$. 
By construction, we have that
\begin{equation} \label{sum_s_up} \Ex{\C{0}{}{}}{N^\eta_k(n)} = \sum_{\eta n< i\leq n-k} \PIdx{\C{0}{}{}}{\dgl{i}{n} = k}. \end{equation}
Henceforth, we use the simplified notation $\cI_k = \{i_1, \ldots, i_k\}$ for a collection of natural numbers $i < i_1 < \ldots < i_k \leq n$. Let $\cE_i(\cI_k)$ denote the event that $i \sim \ell$ for all $\ell \in \cI_k$ and
$i \not\sim \ell$ for all $\ell \notin \cI_k$ with $\ell \in \{i+1, \ldots, n\}$. We have 
\begin{equation} \label{eq:summation}
\PIdx{\C{0}{}{}}{\dgl{i}{n}=k } = \sum_{\cI_k \in {\{i+1, \ldots, n\} \choose k}} \PIdx{\C{0}{}{}}{\cE_i(\Ind_k)},
\end{equation}
where ${\{i+1, \ldots, n\} \choose k}$ denotes the set of all subsets of $\{i+1, \ldots, n\}$ of size $k$. (For $k=0$, the sum consists only of the term $\cI_0 = \varnothing$.)

\subsubsection*{Proof Overview}

The proof now consists of three steps. First, we provide sufficient upper and lower bounds 
for $\mathbb P(\dgl{i}{n}=k)$ using the fact that, for 
$i \geq \eta n$, with high probability, for all $i \leq j \leq n$, the partition function 
$Z_j$ is concentrated around $\lambda j$ - see Proposition \ref{prop:partition}.
On the event of concentration, we can estimate the probability that insertions in the star of vertex $i$ or its complement occur. 
Second, we use Proposition  \ref{prop:MC} to incorporate the stationary distribution of the Markov chain $Y_n$ when passing to the limit as $n \to \infty$. Third, we apply
a probabilistic argument to evaluate the sums in \eqref{sum_s_up} and \eqref{eq:summation}. In the following section, we state the necessary tools to work out the second and third step. The corresponding proofs are deferred to the appendix in order not to disrupt the flow of the main arguments.
\\\\
The main part of the work
involves exploiting the concentration of the partition function to derive 
upper and lower bounds on (a variant of) $\Prob{\cE_i(\Ind_k)}$ and are proved in Subsections~\ref{subsec:upper} and \ref{subsec:lower}, respectively. Note that the proof of the upper bound in Subsection \ref{subsec:upper} is significantly less technical,  as we can `drop' the event of concentration from probability computations.

We recommend the reader to study this case first. Second moment calculations which allow one to deduce stochastic convergence from convergence of the mean in Theorem \ref{thm:small_degrees} are presented in Subsection~\ref{subsec:second} and follow the arguments developed in Subsection~\ref{subsec:upper} closely.
The proof of the lower bound in Subsection~\ref{subsec:lower} requires additional work, due, in part, to the `migration' of faces into the complement on the event of an insertion into the star of vertex $i$ (see Figure~\ref{fig:modelbcompdim3}). We deal with this technical challenge by bounding the total number of `descendants' of a small number of faces 
by the sum of geometrically distributed random variables with sufficiently small success probability (Lemmas~\ref{lem:help1} and \ref{lem:help2}). The rest of the proof then involves some lengthy computations to control error terms.

\subsection{Technical Lemmas}
This subsection is dedicated to the statements of some technical lemmas that will be important in the sequel. We defer the proofs of these lemmas to the appendix.

\subsubsection{A continuity statement for the star Markov chain}

The following result concerns continuity of the $k$-step transition kernel of the star Markov chain with respect to its starting point. Recall that the function~$F$ is defined in Equation~\eqref{eq:def_F}, and the process $(S_n)_{n \geq 0}$ has been defined in Subsection~\ref{subsec:starprocess}.

\begin{prop} \label{help1}
Let $k \geq 0, w \in [0, \infty)$ and $x, x_1, x_2, \ldots \in \cCd$ with 
$x_n \to x$. Then, in the sense of weak convergence on $[0, \infty)^{k+1}$, we have, as $n \to \infty$, 
\[\mathbb P^*_{\varphi(w, x_n)}((F(S_0), F(S_1), \ldots, F(S_k)) \in \cdot) \to \mathbb P^*_{\varphi(w,x)}((F(S_0), F(S_1), \ldots, F(S_k)) \in \cdot).\]
\end{prop}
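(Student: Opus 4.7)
The plan is to build an explicit coupling of the two star processes $(S_n)_{n\geq 0}$ initiated at $\varphi(w,x_n)$ and $\varphi(w,x)$ using shared randomness, and to prove by induction on $k$ that $(F(S_0),\ldots,F(S_k))$ in the first chain converges to its analogue in the second chain in probability, which is strictly stronger than the claimed weak convergence. I would use i.i.d.\ pairs $(U_m,W_m)_{m\geq 1}$ with $U_m$ uniform on $[0,1]$ and $W_m\sim\mu$, driving the face-selection and vertex-weight updates in both chains simultaneously, and label faces by their order of creation so that the two chains start with matching labels $1,\ldots,d$ and I can track each face's type in $\cCdd$ as a function of $n$.

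For the base case $k=0$, the quantity $F(S_0)=\sum_{i=0}^{d-1}f(\bar{x}_{n,i}\cup w)$ is a deterministic function of the initial state. Since removing the $i$-th coordinate is a continuous map $\cCd\to\cCdd$ and $f$ is continuous on the compact set $\cCd$, $x_n\to x$ immediately yields $F(S_0^{(n)})\to F(S_0^{(\infty)})$. For the inductive step, suppose that after step $m-1$ both coupled chains share the same label set $\{1,\ldots,L_{m-1}\}$ and, for each $\ell$, the type $z_\ell^{(n)}$ in chain $n$ converges to $z_\ell$ in chain $\infty$ as $n\to\infty$. I would then realise the selection step by partitioning $[0,1]$ into consecutive sub-intervals of lengths proportional to $f(z_\ell^{(n)}\cup w)$ (respectively $f(z_\ell\cup w)$), ordered by label, and picking the one containing $U_m$. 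By continuity of $f$ and the inductive hypothesis, the two partitions converge uniformly in label, so outside the finite set of boundary points of the limit partition (a $U_m$-null set), both chains eventually select the face with the same label. Conditionally on this event, the new types $\bar{z}_{j,i}\cup W_m$ appended in both chains (and the removal of $z_j^{(n)}$, $z_j$ in Model \textbf{B}) are jointly continuous in $(z_j,W_m)$, so the inductive hypothesis on label set and closeness of types passes to step $m$.

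A union bound over the finitely many steps $1,\ldots,k$ then shows that the event where the two coupled chains diverge (either by selecting different labels or by $U_m$ landing on a limiting partition boundary) has probability tending to $0$ as $n\to\infty$. On the complementary event, labels match and face types converge, so by continuity of $f$ we obtain $F(S_j^{(n)})\to F(S_j^{(\infty)})$ for every $j\leq k$, which gives convergence in probability of the joint vector and thus weak convergence. The main obstacle I anticipate is the careful bookkeeping under Model \textbf{B} (where the chosen face is deleted at every step, so label sets shrink and grow) together with the verification that the inductive closeness of types holds \emph{uniformly} in label as the number of faces $L_m$ evolves; both are ultimately consequences of uniform continuity of $f$ on the compact space $\cCd$, but the induction needs to be set up so that the errors at each of the finitely many steps up to $k$ remain controlled.
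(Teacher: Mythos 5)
Your proposal is correct and follows essentially the same route as the paper's proof: couple the two star chains via a shared driving sequence $(U_m,W_m)_{m\geq 1}$, realise the face selection by partitioning $[0,1]$ into sub-intervals of length proportional to fitness, and observe that, by continuity of $f$, the only $u$-values for which the selected index fails to stabilise are the finitely many endpoints of the limiting partition (a Lebesgue-null set), so an induction over $j\in[0\,.\,.\,k]$ goes through. The one cosmetic difference is that the paper orders the faces lexicographically by their types in $\cCdd$ (so the transition map $h$ is an intrinsic function of the current state alone), whereas you track faces by order of creation; both are valid, but the lexicographic convention avoids carrying a label assignment through the induction and makes the Model \textbf{B} bookkeeping you flag entirely automatic. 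Also note the paper's coupled chains in fact converge almost surely, not just in probability, though either suffices for the weak convergence being claimed.
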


\subsubsection{Evaluating sums} 
For all $\alpha_0, \ldots, \alpha_k \geq 0$, and $0 \leq \eta < 1$, let
\[
\Gamma_n(\alpha_0, \ldots, \alpha_k, \eta) := \frac{1}{n} \sum_{\eta n < i_0 < \cdots < i_k \leq n} \prod_{\ell=0}^{k-1}\left( \left(\frac{i_\ell}{i_{\ell+1}} \right)^{\alpha_\ell} \cdot \frac{1}{i_{\ell+1} -1} \right) \left(\frac{i_k}{n} \right)^{\alpha_k}.  
\]

\begin{lem} \label{lem:prob-sum}
Uniformly in $\alpha_0, \ldots, \alpha_k \geq 0$, $0 \leq \eta \leq 1/2$, asymptotically in $n$ we have
\[
\Gamma_n(\alpha_0, \ldots, \alpha_k, \eta) = \prod_{\ell=0}^{k} \frac{1}{\alpha_\ell + 1} + \theta(\eta) + O \left(\frac{1}{n^{1/(k+2)}} + \frac{\sum_{j =0}^k \alpha_j \log^{k+1}(n) }{n} \right).
\]
Here, $\theta(\eta)$ is a term satisfying $ |\theta(\eta)| \leq M\eta^{1/(k+2)}$ for some universal constant $M$ depending only on $k$.
\end{lem}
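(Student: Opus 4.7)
The strategy is to interpret $\Gamma_n$ as a Riemann sum for a $(k+1)$-fold integral, compute that integral in closed form, and then control the discretisation error uniformly in $(\alpha_0, \ldots, \alpha_k)$ and $\eta$. After the change of variables $x_\ell = i_\ell/n$ and telescoping the product $\prod_{\ell=0}^{k-1}(i_\ell/i_{\ell+1})^{\alpha_\ell} \cdot (i_k/n)^{\alpha_k} = x_0^{\alpha_0}\prod_{\ell=1}^k x_\ell^{\alpha_\ell - \alpha_{\ell-1}}$, the summand reads $\tfrac{1}{n}\, x_0^{\alpha_0} \prod_{\ell=1}^k x_\ell^{\alpha_\ell - \alpha_{\ell-1}} \prod_{\ell=1}^k (nx_\ell - 1)^{-1}$. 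Replacing each factor $(nx_\ell - 1)^{-1}$ by $(nx_\ell)^{-1}$ at a cost of order $1/(n\eta)$, one sees that $\Gamma_n$ approximates
\[
I(\alpha_0, \ldots, \alpha_k, \eta) := \int_{\eta < x_0 < \cdots < x_k < 1} x_0^{\alpha_0} \prod_{\ell=1}^k x_\ell^{\alpha_\ell - \alpha_{\ell-1} - 1}\, dx_0 \cdots dx_k.
\]

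I would next compute $I$ by iterated integration, starting with $x_0$. Using $\int_\eta^{x_1} x_0^{\alpha_0}\,dx_0 = (x_1^{\alpha_0+1} - \eta^{\alpha_0+1})/(\alpha_0+1)$, the leading term combines with $x_1^{\alpha_1 - \alpha_0 - 1}$ to yield $(\alpha_0+1)^{-1} x_1^{\alpha_1}$, reducing $I$ to the same shape with one fewer variable. Unwinding the recursion gives $I(\alpha_0, \ldots, \alpha_k, 0) = \prod_{\ell=0}^k (\alpha_\ell+1)^{-1}$, while the $\eta$-dependent corrections accumulate into a remainder $\theta(\eta)$ whose magnitude is controlled by the balancing argument below.

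The main technical point is to bound the Riemann-sum error uniformly in $(\alpha_0, \ldots, \alpha_k)$. The integrand $x_0^{\alpha_0} \prod_\ell x_\ell^{\alpha_\ell - \alpha_{\ell-1} - 1}$ may be singular at $0$, so a naive box-by-box mean-value estimate produces partial derivatives that blow up near the origin. To circumvent this I would split the sum at a threshold $\delta$: on the bulk region $\{x_0 > \delta\}$, the mean-value bound yields a total error of order $(\sum_j \alpha_j + 1)\,\log^{k+1}(1/\delta)/n$, where the $\log^{k+1}$ arises from integrating the singular factors $1/x_\ell$ which appear after differentiation, while on the complementary region both the sum and the integral contribute $O(\delta)$ by direct estimates. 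Choosing $\delta = n^{-1/(k+2)}$ balances the two contributions and produces the announced $O(n^{-1/(k+2)} + \sum_j \alpha_j \log^{k+1}(n)/n)$ bound; the same threshold argument absorbs the $\eta$-dependent corrections into a $\theta(\eta)$ of size bounded by $M\eta^{1/(k+2)}$.

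The main obstacle is precisely this uniform control: the integrand carries essentially arbitrary polynomial singularities along the faces of the simplex, so the ordering $x_0 < x_1 < \cdots < x_k$ must be exploited in tandem with the bulk/boundary decomposition (integrating out $x_0$ first tames the $1/x_1$ singularity, and so on) so that all estimates propagate cleanly through the $k+1$ coordinates without losing dimensional factors.
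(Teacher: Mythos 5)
Your Riemann--sum route is genuinely different from the paper's, which rewrites $\Gamma_n$ as $\frac{1}{(k+1)!}$ times an expectation over the order statistics $U_{(0)} \leq \cdots \leq U_{(k)}$ of $k+1$ i.i.d.\ uniforms (with $I_j = \lceil U_{(j)} n\rceil$), computes the main term via the product decomposition $U_{(i)} \overset{d}{=} U_i^{1/(i+1)} \cdots U_k^{1/(k+1)}$, and then replaces $I_j/I_{j+1}$, $n/(I_{j+1}-1)$, $I_k/n$ by their continuous counterparts one factor at a time, controlling each swap with a mean-value bound and an $L^p$--$L^q$ H\"older argument for the indicators. Your iterated-integration computation of the main term is correct, and the bulk/boundary split is the right instinct, but the boundary estimate has a genuine error: the contribution of $\{x_0 < \delta\}$ to the integral (or sum) is not $O(\delta)$ --- already for $\alpha_0 = \cdots = \alpha_k = 0$ it is $\Theta(\delta \log^k(1/\delta))$, because integrating out $x_0$ leaves $\min(\delta, x_1)$ against $\prod_{\ell \geq 1} x_\ell^{-1}$. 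With your choice $\delta = n^{-1/(k+2)}$, this gives $O(n^{-1/(k+2)} \log^k n)$, a polylog factor worse than the claimed $O(n^{-1/(k+2)})$. The fix is cheap (take $\delta$ slightly smaller, e.g.\ $n^{-2/(k+2)}$, since your bulk bound only changes by a constant in the logarithm; or run H\"older as the paper does to get $O(\delta^{1/(k+2)})$), but as written the balance does not close. Two secondary issues: the cost of replacing $(nx_\ell - 1)^{-1}$ by $(nx_\ell)^{-1}$ is controlled by $1/(n\delta)$ on the bulk, not $1/(n\eta)$ --- you are conflating the internal threshold $\delta$ with the given parameter $\eta$; and the bulk mean-value estimate needs to be spelled out more carefully, since near the diagonal faces $x_\ell \approx x_{\ell+1}$ the lattice-point count in the simplex is itself a source of error that a naive box-by-box bound does not obviously track.
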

This lemma is proved in Section~\ref{sub:proof_lem_prob_sum}. An immediate corollary is the following:
 \begin{cor} \label{cor:summation}
 For $\alpha_{0}, \ldots, \alpha_{k}, \beta_{0}, \ldots, \beta_{k-1} \geq 0$, $0 \leq \eta \leq 1/2$, asymptotically in $n$ we have
 \begin{linenomath}
 \begin{align*}
&\frac 1 n \sum_{\eta n < i\leq n} \sum_{\Ind_k \in {\{i+1, \ldots, n\} \choose k} }  \prod_{\ell=0}^{k-1}\left( \left(\frac{i_\ell}{i_{\ell+1}} \right)^{\alpha_\ell} \cdot \frac{\beta_{\ell}}{i_{\ell+1} -1} \right) \left(\frac{i_k}{n} \right)^{\alpha_k}\\
& \hspace {2cm} = \frac{1}{\alpha_{k} +1}\prod_{j=0}^{k-1} \frac{\beta_{j}}{\alpha_{j}+1}  +  \theta'(\eta) + O \left(\frac{1}{n^{1/(k+2)}}\right).
\end{align*}
\end{linenomath}
Here, $\theta'(\eta)$ is a term satisfying $ |\theta'(\eta)| \leq M'\eta^{1/(k+2)}$ for some constant $M'$ depending only on $k$ and $\beta_0, \ldots, \beta_{k-1}$, and the constant in the big $O$-term may depend on $\alpha_{0}, \ldots, \alpha_{k}, \beta_{0}, \ldots, \beta_{k-1}$. 
\end{cor}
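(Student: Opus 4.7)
The plan is to reduce this double sum to a single application of Lemma \ref{lem:prob-sum} after a relabelling of indices and pulling the $\beta_\ell$'s out as multiplicative constants.

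First I would rename the outer summation variable $i$ as $i_0$. Since $\Ind_k = \{i_1, \ldots, i_k\}$ ranges over subsets of $[i+1 \, . \, . \, n]$ of size $k$ satisfying $i_0 < i_1 < \cdots < i_k$, the combined double sum
\[
\sum_{\eta n < i \leq n}\; \sum_{\Ind_k \in \binom{[i+1 \, . \, . \, n]}{k}}
\]
coincides exactly with the ordered sum $\sum_{\eta n < i_0 < i_1 < \cdots < i_k \leq n}$ that appears in the definition of $\Gamma_n(\alpha_0, \ldots, \alpha_k, \eta)$. The summand appearing in the corollary has exactly the same form as the summand of $\Gamma_n$, with the sole modification that each factor $1/(i_{\ell+1}-1)$ is multiplied by $\beta_\ell$.

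Since $\beta_0, \ldots, \beta_{k-1}$ do not depend on the summation indices, they can be factored out of the sum, yielding
\[
\text{LHS of Corollary} = \left(\prod_{\ell=0}^{k-1}\beta_\ell\right) \Gamma_n(\alpha_0, \ldots, \alpha_k, \eta).
\]
Applying Lemma \ref{lem:prob-sum} then gives
\[
\left(\prod_{\ell=0}^{k-1}\beta_\ell\right)\!\! \left[\prod_{\ell=0}^{k}\frac{1}{\alpha_\ell+1} + \theta(\eta) + O\!\left(\frac{1}{n^{1/(k+2)}} + \frac{\sum_j \alpha_j \log^{k+1} n}{n}\right)\right]\!,
\]
which is the claimed expression, upon setting $\theta'(\eta) := \left(\prod_{\ell=0}^{k-1}\beta_\ell\right)\theta(\eta)$. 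The bound $|\theta'(\eta)| \leq M'\eta^{1/(k+2)}$ then follows with $M' = M\prod_{\ell=0}^{k-1}\beta_\ell$ (where $M$ is the constant from Lemma \ref{lem:prob-sum}), explaining the stated dependence of $M'$ on $k$ and $\beta_0, \ldots, \beta_{k-1}$.

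There is essentially no obstacle here; all the analytic content is absorbed into Lemma \ref{lem:prob-sum}, and the proof of the corollary amounts only to the bookkeeping steps of reindexing and factoring out constants. The only point requiring minor care is observing that the uniformity in $(\alpha_\ell)$ claimed in Lemma \ref{lem:prob-sum} transfers directly through these operations, and that the dependence on $(\beta_\ell)$ is controlled by the linear prefactor $\prod_\ell \beta_\ell$.
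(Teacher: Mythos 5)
Your proof is correct and is exactly the (implicit) argument the paper intends: the paper gives no proof of this corollary and treats it as immediate from Lemma \ref{lem:prob-sum} after the reindexing $i_0 := i$ and factoring out $\prod_\ell \beta_\ell$. One small point worth noticing: strictly speaking your argument puts the factor $\prod_{\ell=0}^{k-1}\beta_\ell$ in front of the $O$-term as well, whereas the corollary as stated shows that factor only in $\theta'$; this is a minor imprecision in the paper's phrasing of ``uniformly in $\beta_0,\ldots,\beta_{k-1}\geq 0$'', which is harmless in the paper since the corollary is only ever applied with the $\beta_\ell$ bounded (being values of $F$).
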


\subsection{Convergence of the mean: bounds from above} \label{subsec:upper}
The aim of this section is to prove that
\begin{equation}\label{eq:aim_UB}
    \lim_{\eta \to 0} \limsup_{n \to \infty} \E{N_k^\eta(n)}/n \leq \p_k.
\end{equation}
Recall that we write $\Pi_{n} = \sum_{\sigma \in \K_{n}^{(d-1)}} \delta_{w(\sigma)}$ for the empirical distribution of the weights of all $(d-1)$-faces in the complex after the $n$th step. 
We also define the partition function associated with $\K_n$ by $Z_n = \int_{\cCd} f(x) \dd\Pi_{n}(x)$.
For $\eps > 0$ and $n \geq 0$ and natural numbers $N_1 \leq N_2$, we let 
\begin{equation} \label{def:a}
\mathcal G_\eps (n)= \left\{ \left| Z_n- \lambda n \right| < \eps \lambda n \right\} \quad\text{ and }\quad \mathcal G_\eps(N_1, N_2) = \bigcap_{n=N_1}^{N_2}\mathcal G_\eps(n). \end{equation}
Moreover, for $n \geq 1$, we denote by  $\mathscr G_{n}$ the $\sigma$-field generated by $(\C{\ell}{}{}, W_\ell), 1 \leq \ell \leq n$ containing all information about the process up to time $n$.

By Proposition~\ref{prop:partition} (and Egorov's theorem), for any $\delta, \varepsilon > 0$, there exists $N' = N'(\delta, \varepsilon)$ such that, for all $n \geq N'$,  $\pb(\mathcal G_\varepsilon(N', n)) \geq 1- \delta$.
Therefore, for all $n \geq N'/\eta$, we have
\begin{linenomath}
\begin{align} 
\Ex{\C{0}{}{}}{N_k^\eta(n)}& \leq \Ex{\C{0}{}{}}{N_k^\eta(n) \mathbf 1_{\mathcal G_{\eps}(N', n)}}+ n (1-\pb (\mathcal G_\eps(N', n)))\notag\\
& \leq \sum_{\eta n < i\leq n} \sum_{\cI_k \in {\{i+1, \ldots, n\} \choose k}} 
\pb(\cE_i(\cI_k) \cap \mathcal G_\eps(i, n)) +\delta n.\label{eq:up_bound_exp}
\end{align}
\end{linenomath}
Finally, for $x > 0$ and $\alpha \in \mathbb R$, we set $\alpha_{\pm x} := \alpha(1\pm x)$. The following proposition gives an upper bound on the summands in the right-hand side of Equation~\eqref{eq:up_bound_exp}. 
For simplicity, we subsequently write \begin{equation}\label{eq:def_sti}
    \St{i}{\C{n}{}{}} 
    = \Big(W_i, \sum_{\sigma \in \St{i}{\C{n}{}{}}} \delta_{\omega(\sigma) \setminus \{W_i\}}\Big)
    \in \cC'=[0, \infty)\times \mathcal M(\mathcal C_{d-2})
\end{equation} when considering the $\cC'$-valued random variable associated with the star around vertex~$i$ at step~$n$. 
\begin{prop} \label{prop:upper_bounds}
Let $0 < \varepsilon, \eta  \leq 1/2$. As $n \to \infty$, uniformly in $\eta n < i \leq n-k$, $\cI_k \in {\{i+1, \ldots, n\} \choose k}$ and the choice of $\varepsilon$, we have
 \begin{linenomath}
\begin{align*}
&\PIdx{\C{0}{}{}}{\cE_i(\cI_k)\cap \mathcal G_\eps(i, n)}\\ &\leq  \left(1+O\left(\frac 1 n \right)\right) 
\E{\mathbb E^*_{\St{i}{\C{i}{}{}}} \left[ \left(\frac{i_k}{i_{k+1}} \right)^{F(S_k)/\lambda_{+\eps}} \cdot \prod_{\ell=0}^{k-1} \left(\frac{i_\ell}{i_{\ell+1}} \right)^{F(S_\ell)/\lambda_{+\eps}}  \frac{F(S_{\ell})}{\lambda_{-\eps} (i_{\ell+1}-1)} \right]}. 
\end{align*}
\end{linenomath}
\end{prop}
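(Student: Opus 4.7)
\emph{Approach.} The plan is to estimate $\pb(\cE_i(\cI_k)\cap \mathcal G_\eps(i,n))$ by factorising step by step and identifying the evolution of vertex~$i$'s star with the companion Markov chain. The key observation is that on the event $\cE_i(\cI_k)$, the star $\St{i}{\K_m}$ coincides with $S_\ell$ (the $\ell$-th state of the companion chain started from $S_0 = \St{i}{\K_i}$) for every $m \in [i_\ell\, . \, .\, i_{\ell+1}-1]$, using the conventions $i_0 := i$ and $i_{k+1} := n$. Consequently the fitness $F(\St{i}{\K_m})$ at time~$m$ equals $F(S_{\ell(m)})$ where $\ell(m) := \#\{j \in [1\, . \, . \, k] : i_j \leq m\}$. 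On $\mathcal G_\eps(i,n)$ the partition function satisfies $\lambda_{-\eps} m \leq Z_m \leq \lambda_{+\eps} m$, so the one-step probabilities are controlled explicitly in terms of $F(S_{\ell(m)})$ and $\lambda_{\pm\eps}$.

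\emph{Step-by-step factorisation.} Write $A_m = \mathbf 1\{i \sim m+1\}$, $\alpha_m = \mathbf 1\{m+1 \in \cI_k\}$ and $B_m = \bigcap_{j=i}^{m}\{A_j = \alpha_j\}$, so that $\cE_i(\cI_k) = B_{n-1}$. At each step, $\pb(A_m = \alpha_m \mid \mathscr G_m)$ equals $F(\St{i}{\K_m})/Z_m$ or $1 - F(\St{i}{\K_m})/Z_m$ according to $\alpha_m \in \{1,0\}$. Define the deterministic upper bound $\tilde q_m := F(S_{\ell(m)})/(\lambda_{-\eps} m)$ if $\alpha_m=1$ and $\tilde q_m := 1 - F(S_{\ell(m)})/(\lambda_{+\eps} m)$ otherwise. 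On $B_{m-1} \cap \mathcal G_\eps(m)$ we have $\pb(A_m = \alpha_m \mid \mathscr G_m) \leq \tilde q_m$, and crucially $\tilde q_m$ is $\mathscr G_m$-measurable on $B_{m-1}$ since $S_0, \ldots, S_{\ell(m)}$ are all realised by time~$m$. A backward induction on $m$ using the tower property---peeling off one indicator $\mathbf 1_{\mathcal G_\eps(m)}$ at a time and bounding it above by $1$ whenever it would obstruct a conditional-expectation step---yields
\[
\pb\bigl(\cE_i(\cI_k)\cap \mathcal G_\eps(i, n) \,\big|\, \mathscr G_i\bigr) \;\leq\; \mathbb E^*_{\St{i}{\K_i}}\!\left[\prod_{m=i}^{n-1} \tilde q_m\right],
\]
where the inner expectation is with respect to the companion process started from $\St{i}{\K_i}$.

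\emph{From product to power law.} On each interval $[i_\ell\, . \, .\, i_{\ell+1}-2]$, applying $1-x\leq e^{-x}$ together with the harmonic estimate $\sum_{m=a}^{b-2}\tfrac{1}{m} = \log(b/a) + O(1/a)$ and the boundedness $F(S_\ell) \leq d f_{\max}$ yields
\[
\prod_{m=i_\ell}^{i_{\ell+1}-2}\Bigl(1 - \tfrac{F(S_\ell)}{\lambda_{+\eps} m}\Bigr) \;\leq\; \Bigl(\tfrac{i_\ell}{i_{\ell+1}}\Bigr)^{F(S_\ell)/\lambda_{+\eps}}\bigl(1 + O(1/i_\ell)\bigr),
\]
and an analogous bound $(i_k/n)^{F(S_k)/\lambda_{+\eps}}(1+O(1/i_k))$ for the terminal interval $[i_k\, . \,.\, n-1]$. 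Multiplying these with the subdivision factors $F(S_\ell)/(\lambda_{-\eps}(i_{\ell+1}-1))$ and using $i_\ell \geq i \geq \eta n$ together with the boundedness of $f$ to collect all error terms uniformly as $1+O(1/n)$, I recover the expression in the statement after taking outer expectation over $\mathscr G_i$. Uniformity in $\cI_k$ and $\eps \in (0, 1/2]$ is immediate from these estimates being uniform in those parameters.

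\emph{Main obstacle.} The delicate part is the backward induction used to establish the factorisation: $\tilde q_m$ depends on $F(S_{\ell(m)})$ through the companion chain, which is $\mathscr G_m$-measurable only \emph{on} $B_{m-1}$. The induction must therefore be carried out jointly with the indicators $\mathbf 1_{B_{m-1}}$, using $B_{m-1} \in \mathscr G_m$. The concentration indicator $\mathbf 1_{\mathcal G_\eps(i,n)}$ does not commute with the tower step either; at each stage where it is not currently needed, we simply bound it above by $1$, which is legitimate for an upper bound and does not damage the product structure.
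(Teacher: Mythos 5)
Your proof follows essentially the same path as the paper's: condition step by step, use the concentration event $\mathcal G_\eps$ to replace $Z_m$ by $\lambda_{\pm\eps} m$, transfer the star's evolution to the companion chain expectation $\mathbb E^*_{\St{i}{\C{i}{}{}}}[\cdot]$ via the Markov property, and finish with a Stirling-type product estimate. The paper structures the backward induction into two pieces (Lemma 5 handles the static intervals $[i_\ell+1\,.\,.\,i_{\ell+1}-1]$ by iterated conditioning, Proposition 9 then handles the attachment steps by pulling in $\mathbb E^*$ through the tower property and Markovian star dynamics); your single step-by-step induction is an equivalent reorganisation and would work.

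Two points of imprecision you should tighten. First, the sentence ``$\St{i}{\K_m}$ coincides with $S_\ell$'' and ``$\tilde q_m$ is $\mathscr G_m$-measurable on $B_{m-1}$'' conflate the real process and the independent companion chain; what is $\mathscr G_m$-measurable on $B_{m-1}$ is $F(\St{i}{\K_m})$, and the passage to the independent expectation $\mathbb E^*_{\St{i}{\K_i}}$ happens precisely at each attachment step, where one must invoke the identity
\[
\E{g(\St{i}{\K_{i_\ell}})\,\mathbf 1_{\cD_{i_\ell}}\mid \mathscr G_{i_\ell-1}}
= \frac{F(\St{i}{\K_{i_\ell-1}})}{Z_{i_\ell-1}}\;\mathbb E^*_{\St{i}{\K_{i_\ell-1}}}\!\bigl[g(S_1)\bigr],
\]
which encodes that the attachment transition has the companion chain's kernel. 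This is the one genuinely nontrivial step in the induction and deserves to be stated explicitly rather than absorbed into ``using the tower property.'' Second, the bound $F(S_\ell)\leq d f_{\max}$ is wrong: $F(S_\ell)\leq (d+(d-1)k)f_{\max}$ in Model \textbf{A} (similarly for \textbf{B}). Since $k$ is fixed this does not affect the argument, but the stated constant is incorrect.
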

Applying Corollary~\ref{cor:summation} to this, we will deduce the following upper bound.
\begin{cor} \label{cor:1}
Let $0 < \delta, \varepsilon, \eta \leq 1/2$. Then, there exists $N = N(\delta, \varepsilon, \eta)$ such that, for all $n \geq N$,
\[
\frac{\Ex{\C{0}{}{}}{N_k^\eta(n)}}{n} \leq (1+\delta) \left(\frac{1+\eps}{1-\eps}\right)^k \mathbb E_{\pi^*}^* \left[ \frac{\lambda_{+\eps}}{F(S_k)+\lambda_{+\eps}} \prod_{\ell=0}^{k-1} \frac{F(S_\ell)}{F(S_\ell)+\lambda_{+\eps}}\right] + C \eta^{1/(k+2)} +  \delta,
\]
where the constant $C$ may depend on $k, f$ and $\mu$ but not on $n$ and not on the choices of $\delta, \varepsilon, \eta$.
In particular, Equation~\eqref{eq:aim_UB} is satisfied.
\end{cor}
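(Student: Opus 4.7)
The plan is to combine the termwise upper bound in Proposition \ref{prop:upper_bounds} with the summation formula in Corollary \ref{cor:summation}, and to handle the $i$-dependence of the star process initial state via the weak convergence of $\St{i}{\C{i}{}{}}$ to $\pi^*$ obtained from Propositions \ref{prop:MC} and \ref{help1}. Starting from the inequality $\Ex{\C{0}{}{}}{N_k^\eta(n)} \leq \sum_{\eta n < i \leq n}\sum_{\cI_k \in {[i+1\, . \, . \, n] \choose k}} \pb(\cE_i(\cI_k) \cap \mathcal G_\eps(i, n)) + \delta n$ (valid for $n \geq N'(\delta, \eps)/\eta$), I apply Proposition \ref{prop:upper_bounds} termwise. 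Setting $\alpha_\ell := F(S_\ell)/\lambda_{+\eps}$ and $\beta_\ell := F(S_\ell)/\lambda_{-\eps}$ and exchanging the sum with the expectation via Fubini, this gives
\[\frac{\Ex{\C{0}{}{}}{N_k^\eta(n)}}{n} \leq (1+O(\tfrac1n))\,\mathbb E\!\left[\frac{1}{n}\sum_{\eta n < i \leq n}\mathbb E^*_{\St{i}{\C{i}{}{}}}\!\left[\sum_{\cI_k}\left(\frac{i_k}{n}\right)^{\alpha_k}\prod_{\ell=0}^{k-1}\left(\frac{i_\ell}{i_{\ell+1}}\right)^{\alpha_\ell}\frac{\beta_\ell}{i_{\ell+1}-1}\right]\right] + \delta.\]

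Crucially, boundedness of $f$ together with the fact that $S_\ell$ has at most $(d-1)\ell + d$ faces implies $F(S_\ell) \leq c(k+1)$ uniformly in $\ell \leq k$ for some $c = c(k, f_{\max})$, so the random quantities $\alpha_\ell, \beta_\ell$ take values in a fixed bounded interval. For each fixed realization of $(F(S_0), \ldots, F(S_k))$, Corollary \ref{cor:summation} now applies with error $\theta'(\eta) + o(1)$ that is uniform in these bounded realizations, and substituting back $\alpha_\ell, \beta_\ell$ yields that the innermost bracketed quantity above equals
\[\left(\frac{1+\eps}{1-\eps}\right)^k\frac{\lambda_{+\eps}}{F(S_k)+\lambda_{+\eps}}\prod_{\ell=0}^{k-1}\frac{F(S_\ell)}{F(S_\ell)+\lambda_{+\eps}} + \theta'(\eta) + o(1),\]
which is a bounded continuous function of $(F(S_0), \ldots, F(S_k))$.

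It remains to handle the $i$-dependence of $\St{i}{\C{i}{}{}}$. By Proposition \ref{prop:MC}, $Y_i \to Y_\infty$ in distribution, and since $W_i$ is independent of $Y_i$ (weights are freshly sampled) and has law $\mu$, it follows that $\St{i}{\C{i}{}{}} = \varphi(W_i, Y_i) \to \pi^*$ weakly. By Proposition \ref{help1}, $s \mapsto \mathbb E^*_s[g(F(S_0),\ldots,F(S_k))]$ is continuous for bounded continuous $g$, so $\mathbb E[\mathbb E^*_{\St{i}{\C{i}{}{}}}[g]] \to \mathbb E^*_{\pi^*}[g]$ as $i \to \infty$. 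A standard averaging argument then gives $\frac{1}{n}\sum_{\eta n < i \leq n} \mathbb E[\mathbb E^*_{\St{i}{\C{i}{}{}}}[g]] \to (1-\eta)\mathbb E^*_{\pi^*}[g]$. Absorbing the factor $(1-\eta)(1+O(1/n)) \leq (1+\delta)$ and all the $o(1)$ errors into the additive $\delta$ for $n \geq N(\delta,\eps,\eta)$ sufficiently large yields the displayed inequality.

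Finally, for the ``in particular'' statement, I take $n \to \infty$, then $\eta \to 0$ (killing the $C\eta^{1/(k+2)}$ term), then $\eps \to 0$ and $\delta \to 0$: the prefactor $((1+\eps)/(1-\eps))^k \to 1$, and the inner expectation converges to $\p_k$ by dominated convergence (the integrand is bounded by $1$). The principal technical difficulty is the coupling between the outer sum over $i$ and the star process, whose initial distribution varies with $i$; this is resolved by combining the continuity statement in Proposition \ref{help1} with the weak convergence $\St{i}{\C{i}{}{}} \to \pi^*$.
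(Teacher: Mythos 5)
Your overall strategy—termwise bound from Proposition~\ref{prop:upper_bounds}, then Corollary~\ref{cor:summation} to evaluate the sum, then replacing $\St{i}{\C{i}{}{}}$ by $\pi^*$ using Propositions~\ref{prop:MC} and \ref{help1}—identifies the right ingredients, but the order in which you combine them does not work. After applying Proposition~\ref{prop:upper_bounds} termwise, the resulting expression is
\[
\frac{1}{n}\sum_{\eta n < i \leq n}\ \E{\mathbb E^*_{\St{i}{\C{i}{}{}}}\!\left[\sum_{\cI_k \in \binom{[i+1\,.\,.\,n]}{k}}\Bigl(\tfrac{i_k}{n}\Bigr)^{\alpha_k}\prod_{\ell=0}^{k-1}\Bigl(\tfrac{i_\ell}{i_{\ell+1}}\Bigr)^{\alpha_\ell}\tfrac{\beta_\ell}{i_{\ell+1}-1}\right]}.
\]
You propose to apply Corollary~\ref{cor:summation} ``for each fixed realization of $(F(S_0),\ldots,F(S_k))$''. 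But Corollary~\ref{cor:summation} computes the joint sum over \emph{both} $i$ and $\cI_k$, and the outer $\frac{1}{n}\sum_i$ cannot be pulled inside $\mathbb E^*_{\St{i}{\C{i}{}{}}}$ because the conditioning distribution depends on $i$. Consequently ``the innermost bracketed quantity'' $\mathbb E^*_{\St{i}{\C{i}{}{}}}[\sum_{\cI_k}\cdots]$ is not an object to which Corollary~\ref{cor:summation} applies; its conclusion (which carries a fully evaluated $(k+1)$-fold sum) cannot be substituted there. The later ``standard averaging argument'' $\frac1n\sum_i\E{\mathbb E^*_{\St{i}{\C{i}{}{}}}[g]}\to(1-\eta)\mathbb E^*_{\pi^*}[g]$ only makes sense for a single test function $g$ not depending on $i$, whereas here the relevant test function varies with $i$ through the summation range $\cI_k\in\binom{[i+1\,.\,.\,n]}{k}$. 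Used together, your two steps implicitly double-count the $i$-average.

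The fix is to reverse the order of the two main steps, and the reversal is not cosmetic: it forces a genuine uniformity argument that your proposal omits. One must \emph{first} show a termwise inequality of the form
\[
\PIdx{\C{0}{}{}}{\cE_i(\cI_k)\cap \mathcal G_\eps(i, n)}\ \leq\ (1+\delta)\,\mathbb E_{\pi^*}^* \!\left[ \Bigl(\tfrac{i_k}{i_{k+1}} \Bigr)^{F(S_k)/\lambda_{+\eps}}\prod_{\ell=0}^{k-1} \Bigl(\tfrac{i_\ell}{i_{\ell+1}}\Bigr)^{F(S_\ell)/\lambda_{+\eps}}\tfrac{F(S_{\ell})}{\lambda_{-\eps}(i_{\ell+1}-1)}\right]
\]
\emph{uniformly} over all $(i,i_1,\ldots,i_k)$ with $\eta n\le i<i_1<\cdots<i_k\le n$, and only \emph{then} sum and invoke Corollary~\ref{cor:summation}. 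The difficulty is precisely the uniformity over the $i_\ell$'s, not merely over $i$: the weak convergence $\St{i}{\C{i}{}{}}\to\pi^*$ plus the continuity in Proposition~\ref{help1} give convergence of $\mathbb E^*_{\St{i}{\C{i}{}{}}}[g]\to\mathbb E^*_{\pi^*}[g]$ for each fixed bounded continuous $g$, but here $g$ ranges over an $(i_1,\ldots,i_k)$-indexed family. The paper handles this by parameterizing the test functions by ratios $x_\ell=i_\ell/i_{\ell+1}\in[\eta,1]$, checking that the resulting family is \emph{uniformly Lipschitz} in $(F(S_0),\ldots,F(S_k))$ (with Lipschitz constant controlled by $\eta$), and then passing from weak convergence to convergence in a bounded-Lipschitz metric; this quantifier is what your argument is missing.
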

To prove Proposition \ref{prop:upper_bounds}, let $0 < \varepsilon, \eta \leq 1/2$. For $\eta n < i \leq n$ and
$\cI_k \in {\{i+1, \ldots, n\} \choose k}$, set $i_0:=i, i_{k+1} := n+1$. Then, for $j \in \{i+1, \ldots, n\}$, let
\begin{equation} \label{def:D} \cD_{j} := 
\begin{cases}
\{i\sim j\}, & \text{ if } j\in\Ind_k,\\
\{i\not\sim j\}, & \text{ otherwise},
\end{cases} \quad\text{ and }\quad
\tilde \cD_j  = \cD_j \cap \mathcal G_{\eps}(j),
\end{equation}
where $\mathcal G_{\eps}(j)$ is defined as in Equation~\eqref{def:a}.

For simplicity, we write $D_j$ and $\tilde D_j$ for the indicator random variables $\mathbf{1}_{\cD_j}$ and $\mathbf{1}_{\tilde \cD_j}$ respectively.
Note that $\mathcal E_i(\Ind_k) \cap \mathcal G_\eps(i,n) = \bigcap_{j=i}^n \tilde \cD_j$.
To estimate the probability of this event, we shall decompose the indices $j \in \{i, \ldots, n\}$ into groups $\{i_{\ell}, \ldots, i_{\ell+1}-1\}$ for
$\ell\in \{0, \ldots, k\}$.  More precisely, we define
\begin{equation} \label{def:xl} X_\ell = \Ex{\C{0}{}{}}{\prod_{j=i_{\ell}+1}^{n} \tilde D_j  \bigg | \mathscr G_{i_\ell}}  \tilde D_{i_\ell}, 
\quad \ell\in \{0, \ldots, k\}.\end{equation}
To prove Proposition \ref{prop:upper_bounds}, 
we need to estimate $\Ex{\C{0}{}{}}{X_0} = \PIdx{\C{0}{}{}}{\bigcap_{j=i}^n \tilde \cD_j}$. 

From the tower property of conditional expectation, it follows that
\begin{equation} 
X_\ell =  \Ex{\C{0}{}{}}{\prod_{j=i_{\ell}+1}^{i_{\ell+1}-1} \tilde D_j~X_{\ell+1}  \bigg |\mathscr G_{i_\ell}} \tilde D_{i_\ell}, 
\quad \ell\in \{0, \ldots, k-1\}, \label{rec:x} \end{equation}
which suggests a backwards recursive approach. 
We need more notation: for $S \in \cC'=[0, \infty)\times \mathcal M(\mathcal C_{d-2})$ and $\ell \in \{0, \ldots, k\}$, we let 
\begin{equation} \label{eq:h-def}
h_\ell(S) = \prod_{j=i_\ell+1}^{i_{\ell+1}-1} \left(1- \frac{F(S)}{\lambda_{+\eps}(j-1)}\right),
\end{equation}
where $F$ is as defined in \eqref{def:weight_s},
and set 
\begin{equation} \label{def:ff} 
f_k = h_k \quad \text{ and }\quad 
f_\ell(S) = \frac{F(S)}{\lambda_{-\eps}(i_{\ell+1}-1)} h_\ell(S), \quad 0 \leq \ell \leq k-1. 
\end{equation}
For the sake of presentation, we do not indicate that the definitions of the $\tilde {\mathcal D}_j, X_\ell, h_\ell, f_\ell$ depend on $\Ind_k$ and $\varepsilon$. 
\begin{lem} \label{lem:u} For $\ell\in \{0, \ldots, k\}$, and $h_{\ell}$ as defined in Equation~\eqref{eq:h-def}, we have
\begin{equation} \label{eq:lem-4.7}
\Ex{\C{0}{}{}}{\prod_{j=i_{\ell}+1}^{i_{\ell+1}-1} \tilde D_j  \bigg | \mathscr G_{i_\ell}} 
\leq h_\ell(\St{i}{\C{i_\ell}{}{}}).
\end{equation}
\end{lem}
Recall that, by definition, $\St{i}{\C{i_\ell}{}{}}\in\mathcal C'$ (see Equation~\eqref{eq:def_sti}) and thus $h_\ell(\St{i}{\C{i_\ell}{}{}})$ is well-defined.
\begin{proof}
First note that for all $\ell\in\{1, \ldots, k\}$, by the tower property,
\begin{linenomath}
\begin{align*}
 \Ex{\C{0}{}{}}{\prod_{j=i_{\ell}+1}^{i_{\ell+1}-1} \tilde D_j  \bigg |\mathscr G_{i_\ell}} 
 & =  \Ex{\C{0}{}{}}{
 \E{\tilde D_{i_{\ell+1}-1} \bigg | \mathscr G_{i_{\ell+1}-2}} \prod_{j=i_{\ell}+1}^{i_{\ell+1}-2} \tilde D_j   \bigg |\mathscr G_{i_\ell}} \leq  \Ex{\C{0}{}{}}{\E{D_{i_{\ell+1}-1} \bigg | \mathscr G_{i_{\ell+1}-2}} \prod_{j=i_{\ell}+1}^{i_{\ell+1}-2} \tilde D_j   \bigg |\mathscr G_{i_\ell}},
\end{align*}
\end{linenomath}
where we have used the fact that, by definition, $\tilde {\mathcal D}_j  = \mathcal D_j \cap \mathcal G_{\eps}(j)$ and thus $\tilde D_j \leq D_j$ (recall that the latter denote the indicators of the events $\tilde{\mathcal{D}}_j$ and $\mathcal{D}_j$ respectively). If $i_{\ell+1} -1 \notin \mathcal{I}_{k}$ we have that
\[\E{D_{i_{\ell+1}-1} \big| \mathscr G_{i_{\ell+1}-2}}
=\mathbb P(\mathcal D_{i_{\ell+1}-1}\big| \mathscr G_{i_{\ell+1}-2})
=1 - \frac{F(\St{i}{\C{i_{\ell+1}-2}{}{}})}{Z_{i_{\ell+1}-2}},
\]
where we recall that $F(\St{i}{\C{i_{\ell+1}-2}{}{}})$ is the sum of the fitnesses of the faces in the complex that contains node~$i$ at time~$i_{\ell +1} -2$ (see Equation~\eqref{eq:def_F}).
Thus,
\begin{linenomath}
\begin{align*}
    \Ex{\C{0}{}{}}{\prod_{j=i_{\ell}+1}^{i_{\ell+1}-1} \tilde D_j  \bigg |\mathscr G_{i_\ell}}
& \leq \Ex{\C{0}{}{}}{ \left(1 - \frac{F(\St{i}{\C{i_{\ell+1}-2}{}{}})}{Z_{i_{\ell+1}-2}}  \right)    
\prod_{j=i_{\ell}+1}^{i_{\ell+1}-2} \tilde D_j   \bigg | \mathscr G_{i_\ell}} \\
&\leq \left(1 - \frac{F(\St{i}{\C{i_{\ell}}{}{}})}{\lambda_{+\eps}(i_{\ell+1}-2)}  \right)
\Ex{\C{0}{}{}}{\prod_{j=i_{\ell}+1}^{i_{\ell+1}-2} \tilde D_j  \bigg |\mathscr G_{i_\ell}},
\end{align*}
\end{linenomath}
where we recall that, by definition, $\lambda_{+\varepsilon} = \lambda(1+\varepsilon)$ and $F(\St{i}{\C{i_{\ell+1}-2}{}{}}) = F(\St{i}{\C{i_{\ell}}{}{}})$. In the last inequality, we have used the fact that on the event $\tilde{\mathcal D}_{i_{\ell+1}-2}$, we have $Z_{i_{\ell+1}-2}\leq \lambda_{+\varepsilon}(i_{\ell+1}-2)$.
Iterating the argument shows the claim.
\end{proof}
We now use the above lemma to derive an almost-sure upper bound for $X_\ell$.

\begin{prop} \label{prop:u}
For $\ell \in \{0, \ldots, k\}$, and $f_{\ell}$ as defined in Equation~\eqref{def:ff}, we have
\[X_\ell \leq  \mathbb E^*_{\St{i}{\C{i_\ell}{}{}}} \left[ \prod_{j=\ell}^k f_j(S_{j-\ell}) \right] \tilde D_{i_\ell}.\]
In particular, 
\[\Ex{\C{0}{}{}}{X_0} \leq \Ex{\C{0}{}{}}{\mathbb E^*_{\St{i}{\C{i}{}{}}} \left[ \prod_{j=0}^k f_j(S_{j}) \right]}.\]
\end{prop}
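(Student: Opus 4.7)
The plan is to establish the bound by backward induction on $\ell \in [0\, . \, . \, k]$. The base case $\ell=k$ follows immediately from Lemma \ref{lem:u} applied to the interval $[i_k+1\, . \, . \, n] = [i_k+1\, . \, . \, i_{k+1}-1]$, together with the observation that $S_0 = \St{i}{\C{i_k}{}{}}$ is deterministic under $\mathbb{P}^{*}_{\St{i}{\C{i_k}{}{}}}$, so that $\mathbb{E}^{*}_{\St{i}{\C{i_k}{}{}}}[f_k(S_0)] = h_k(\St{i}{\C{i_k}{}{}})$.

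For the inductive step from $\ell+1$ to $\ell$, I would start from the recursion \eqref{rec:x} and condition inside on $\mathscr G_{i_{\ell+1}-1}$. Writing $s := \St{i}{\C{i_{\ell+1}-1}{}{}}$, the Markovian dynamics of $(\C{m}{}{})_m$ give $\mathbb{P}(i\sim i_{\ell+1}\mid \mathscr G_{i_{\ell+1}-1}) = F(s)/Z_{i_{\ell+1}-1}$, and on this event the updated star $\St{i}{\C{i_{\ell+1}}{}{}}$ is distributed as $S_1$ under $\mathbb{P}^{*}_{s}$, since selecting a face of the star with probability proportional to its fitness and subdividing it is exactly one step of the companion chain. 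Combining this with the strong Markov property of $(S_n)$ and the induction hypothesis applied to $X_{\ell+1}$ (after dropping the harmless indicator $\mathbf{1}_{\mathcal G_\eps(i_{\ell+1})}$ in $\tilde D_{i_{\ell+1}}$) yields
\[
\mathbb{E}[X_{\ell+1}\mid \mathscr G_{i_{\ell+1}-1}] \;\leq\; \frac{F(s)}{Z_{i_{\ell+1}-1}}\;\mathbb{E}^{*}_{s}\!\left[\prod_{j=\ell+1}^{k} f_j(S_{j-\ell})\right].
\]

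I would then use the indicator $\mathbf{1}_{\mathcal G_\eps(i_{\ell+1}-1)}$ contained in $\tilde D_{i_{\ell+1}-1}$ (or, in the corner case $i_{\ell+1}=i_\ell+1$, in $\tilde D_{i_\ell}$) to replace $Z_{i_{\ell+1}-1}$ by $\lambda_{-\eps}(i_{\ell+1}-1)$, and observe that on the event $\prod_{j=i_\ell+1}^{i_{\ell+1}-1}\tilde D_j>0$ no vertex has attached to $i$ between times $i_\ell$ and $i_{\ell+1}-1$, so that $s = \St{i}{\C{i_\ell}{}{}}$. The resulting star-dependent factor becomes $\mathscr G_{i_\ell}$-measurable and can be pulled outside the outer conditional expectation; a final application of Lemma \ref{lem:u} to the interval $[i_\ell+1\, . \, . \, i_{\ell+1}-1]$ supplies the remaining factor $h_\ell(\St{i}{\C{i_\ell}{}{}})$. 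These assemble into $f_\ell(\St{i}{\C{i_\ell}{}{}})\,\mathbb{E}^{*}_{\St{i}{\C{i_\ell}{}{}}}\!\left[\prod_{j=\ell+1}^{k} f_j(S_{j-\ell})\right] = \mathbb{E}^{*}_{\St{i}{\C{i_\ell}{}{}}}\!\left[\prod_{j=\ell}^{k} f_j(S_{j-\ell})\right]$, closing the induction; the displayed consequence for $\mathbb E_{\C{0}{}{}}[X_0]$ then follows by taking $\ell=0$, bounding $\tilde D_{i_0}\leq 1$, and taking expectations.

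The main obstacle is the careful bookkeeping of which $\mathcal G_\eps$-indicator remains available to justify the lower bound $Z_{i_{\ell+1}-1}\geq \lambda_{-\eps}(i_{\ell+1}-1)$ at exactly the right step without consuming one that is needed elsewhere, and a uniform treatment of the two corner cases (empty gap $i_{\ell+1}=i_\ell+1$ versus $i_{\ell+1}>i_\ell+1$). The other delicate point is verifying that the conditional law of the star immediately after a neighbour arrives at $i$ coincides precisely with the one-step transition of the companion chain $(S_n)$, so that the strong Markov property of $(S_n)$ may be invoked to glue the remaining $k-\ell-1$ steps from the induction hypothesis onto this freshly created one.
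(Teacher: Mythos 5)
Your argument matches the paper's proof essentially step for step: backward induction with base case $\ell=k$ from Lemma~\ref{lem:u}, then for the inductive step conditioning on $\mathscr G_{i_{\ell+1}-1}$ inside \eqref{rec:x}, identifying the conditional one-step transition of the star with that of the companion chain $(S_n)$, computing $\mathbb P(i\sim i_{\ell+1}\mid \mathscr G_{i_{\ell+1}-1})=F(s)/Z_{i_{\ell+1}-1}$, bounding $Z_{i_{\ell+1}-1}\geq\lambda_{-\eps}(i_{\ell+1}-1)$ via the $\mathcal G_\eps$-indicator, and using that the star is unchanged across the gap before invoking Lemma~\ref{lem:u} to produce $h_\ell$. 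Your explicit remark about which indicator supplies the $\mathcal G_\eps(i_{\ell+1}-1)$ event in the edge case $i_{\ell+1}=i_\ell+1$ is a small clarification that the paper's write-up leaves implicit, but it does not constitute a different method.
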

\begin{proof}
We proceed by backwards induction. 
For $\ell=k$, the statement is identical to the one in Lemma~\ref{lem:u}. 
Now, assume the claim holds for some $1 \leq \ell \leq k$. Using Equation~\eqref{rec:x} and the induction hypothesis in the second inequality, we get
\begin{linenomath}
\begin{align}
X_{\ell-1} & = \Ex{\C{0}{}{}}{\prod_{j=i_{\ell-1}+1}^{i_{\ell}-1} \tilde D_j~X_{\ell}  \bigg |\mathscr G_{i_{\ell-1}}} \tilde D_{i_{\ell-1}} \notag\\
& \leq \Ex{\C{0}{}{}}{ \Ex{\C{0}{}{}}{ \mathbb E^*_{\St{i}{\C{i_\ell}{}{}}} 
\left[ \prod_{j=\ell}^k f_j (S_{j-\ell}) \right] 
D_{i_\ell}  \bigg | \mathscr G_{i_{\ell}-1} }  
\prod_{j=i_{\ell-1}+1}^{i_{\ell}-1} \tilde D_j \bigg | \mathscr G_{i_{\ell-1}} } \tilde D_{i_{\ell-1}}.
\label{eq:display1}
\end{align}
\end{linenomath}
The event $\mathcal D_{i_\ell} = \{i_\ell\sim i\}$ indicates that an insertion has been made into $\St{i}{\C{i_\ell-1}{}{}}$. Therefore, conditionally on $\mathscr{G}_{i_{\ell}-1}$, on the event $\mathcal D_{i_\ell}$, 
the sequence $(S_0, \ldots, S_{k-\ell})$ initiated by $\St{i}{\C{i_\ell}{}{}}$ is equal in distribution to $(S_1, \ldots, S_{k-\ell+1})$
initiated by $\St{i}{\C{i_\ell-1}{}{}}$. Thus,
\begin{linenomath}
\begin{align}
    \Ex{\C{0}{}{}}{ \mathbb E^*_{\St{i}{\C{i_\ell}{}{}}} 
\left[ \prod_{j=\ell}^k f_j (S_{j-\ell}) \right] 
D_{i_\ell}  \bigg | \mathscr G_{i_{\ell}-1} }& = 
\Prob{\mathcal D_{i_\ell} | \mathscr G_{i_{\ell}-1} } \mathbb E^*_{\St{i}{\C{i_{\ell}-1}{}{}}} \left[\prod_{j=\ell}^k 
f_j(S_{j-\ell+1})\right] \notag\\
& = \frac{F(\St{i}{\C{i_{\ell}-1}{}{}})}{Z_{i_\ell-1}} \mathbb E^*_{\St{i}{\C{i_{\ell}-1}{}{}}} \left[\prod_{j=\ell}^k 
f_j(S_{j-\ell+1})\right].
\label{eq:display2}
\end{align}
\end{linenomath}
On the other hand, on the events $\bar{\mathcal D}_j$, $j \in \{i_{\ell-1}+1, \ldots, i_{\ell}-1\}$, we have $\St{i}{\C{i_{\ell}-1}{}{}} = \St{i}{\C{i_{\ell-1}}{}{}}$, and thus
$F(\St{i}{\C{i_{\ell}-1}{}{}}) =F(\St{i}{\C{i_{\ell-1}}{}{}})$. 
Combining~\eqref{eq:display1} and~\eqref{eq:display2} and the fact that on $\tilde{\mathcal D}_{i_\ell-1}$, $Z_{i_\ell-1}\geq \lambda_{-\varepsilon}(i_\ell-1)$ in the first inequality, we obtain
\begin{linenomath}
\begin{align*}
X_{\ell-1} & 
 \leq  \mathbb E^*_{\St{i}{\C{i_{\ell-1}}{}{}}} \left[\frac{F(S_{0})}{\lambda_{-\eps}(i_{\ell}-1)} \prod_{j=\ell}^k 
f_j(S_{j-\ell+1}) \right]  \Ex{\C{0}{}{}}{\prod_{j=i_{\ell-1}+1}^{i_{\ell}-1} \tilde D_j  \bigg |\mathscr G_{i_{\ell-1}}} 
\tilde D_{i_{\ell-1}} \\
& \stackrel{\eqref{eq:lem-4.7}}{\leq}  \mathbb E^*_{\St{i}{\C{i_{\ell-1}}{}{}}} \left[\frac{F(S_0)}{\lambda_{-\eps}(i_{\ell}-1)} \prod_{j=\ell}^k f_j(S_{j-\ell+1}) \right] h_{\ell-1}(\St{i}{\C{i_{\ell-1}}{}{}}) \tilde D_{i_{\ell-1}}  \\
&= \mathbb E^*_{\St{i}{\C{i_{\ell-1}}{}{}}} \left[ \prod_{j=\ell-1}^k f_j(S_{j-\ell+1}) \right] \tilde D_{i_{\ell-1}}. 
\end{align*}
\end{linenomath}
This concludes the induction argument, and thus the proof.
\end{proof}
The following elementary lemma is an easy consequence of Stirling's approximation (using  Equation~\eqref{eq:stirling_gamma_approx}), so we state it without proof.
\begin{lem} \label{simple_stir}
Let $\delta, C > 0$. Then, as $m \to \infty$, uniformly over $\delta m \leq a \leq b$ and $0 \leq \beta \leq C$, we have
\[\prod_{j=a+1}^{b-1} \left(1-\frac{\beta}{j-1}\right) =  \left(\frac{a}{b}\right)^\beta \left( 1 + O\left(\frac 1 m\right) \right).\]
\end{lem}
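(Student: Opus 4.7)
The plan is to rewrite the finite product as a ratio of Gamma functions, apply the asymptotic expansion \eqref{eq:stirling_gamma_approx} twice, and then simplify the resulting power expressions using the boundedness of $\beta$.

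First I would write, telescopically,
\[\prod_{j=a+1}^{b-1}\left(1-\frac{\beta}{j-1}\right) = \prod_{k=a}^{b-2}\frac{k-\beta}{k} = \frac{\Gamma(b-1-\beta)\,\Gamma(a)}{\Gamma(a-\beta)\,\Gamma(b-1)}.\]
This step is valid for all $\beta \in [0,C]$ provided $a > \beta$, which holds eventually since $a \geq \delta m$.

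Next I would apply \eqref{eq:stirling_gamma_approx} to the two ratios. Setting $t = b-1-\beta$ and using parameter $\beta$, we have $\Gamma(b-1)/\Gamma(b-1-\beta) = (1+O(1/b))(b-1-\beta)^{\beta}$, hence
\[\frac{\Gamma(b-1-\beta)}{\Gamma(b-1)} = (1+O(1/b))\,(b-1-\beta)^{-\beta}.\]
Similarly, with $t = a-\beta$,
\[\frac{\Gamma(a)}{\Gamma(a-\beta)} = (1+O(1/a))\,(a-\beta)^{\beta}.\]
Since $a,b \geq \delta m$, both $O$-terms are $O(1/m)$ uniformly in $a,b,\beta$.

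Finally I would clean up the powers. Writing $(a-\beta)^\beta = a^\beta\bigl(1-\beta/a\bigr)^\beta$ and using that $0\leq\beta\leq C$ together with $a\geq\delta m$, the mean value theorem gives $(1-\beta/a)^\beta = 1+O(\beta^2/a) = 1+O(1/m)$; analogously $(b-1-\beta)^{-\beta} = b^{-\beta}(1+O(1/m))$. Multiplying the four factors together yields
\[\prod_{j=a+1}^{b-1}\left(1-\frac{\beta}{j-1}\right) = \left(\frac{a}{b}\right)^{\beta}\left(1+O(1/m)\right),\]
with the $O(1/m)$-constant depending only on $\delta$ and $C$, as required. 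The only mild point is making the uniformity explicit: this is immediate because the constant hidden in \eqref{eq:stirling_gamma_approx} depends continuously on the shift parameter (here $\beta$) and thus can be taken uniform on the compact interval $[0,C]$, and the lower bound $a\geq\delta m$ controls both $1/a$ and $1/b$ by $O(1/m)$.
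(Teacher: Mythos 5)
Your proof is correct, and it is exactly the argument the paper has in mind: the authors state the lemma without proof, remarking only that it is "an easy consequence of \eqref{eq:stirling_gamma_approx}," and your Gamma-ratio computation followed by the power-cleanup is the standard way to realize that remark. Your final observation about uniformity in $\beta$ over the compact interval $[0,C]$ correctly fills the one gap that the paper's "fixed $a$" phrasing in \eqref{eq:stirling_gamma_approx} would otherwise leave open.
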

The statement of Proposition \ref{prop:upper_bounds} follows immediately from Proposition \ref{prop:u} and Lemma \ref{simple_stir}.
\begin{proof}[Proof of Corollary \ref{cor:1}]
In view of the statement of Proposition \ref{prop:upper_bounds}, it remains to replace $\St{i}{\C{i}{}{}}$ by its distributional limit $\varphi(W, Y_\infty)$ and to evaluate the sum over the possible values of $i, i_1, \ldots, i_k$. We start with the first task and show that, for any 
$0 < \delta, \varepsilon, \eta \leq 1/2$, there exists $N=N(\delta, \eta)$ such that, for all $\eta n < i \leq n-k, \cI_k \in {\{i+1, \ldots, n\} \choose k}$ and $n \geq N$, we have
\begin{linenomath}
\begin{align} \label{234}
&\PIdx{\C{0}{}{}}{\cE_i(\cI_k)\cap \mathcal G_\eps(i, n)} \nonumber \\ &\hspace{0.5cm}\leq  \left(1+\delta \right) \mathbb E_{\pi^*}^* \left[ \left(\frac{i_k}{i_{k+1}} \right)^{F(S_k)/\lambda_{+\eps}} \cdot \prod_{\ell=0}^{k-1} \left(\frac{i_\ell}{i_{\ell+1}} \right)^{F(S_\ell)/\lambda_{+\eps}}  \frac{F(S_{\ell})}{\lambda_{-\eps} (i_{\ell+1}-1)} \right]. 
\end{align}
\end{linenomath}

Note that the statement of the corollary immediately follows from this identity and Corollary \ref{cor:summation}. 
To verify the last statement, let $\pi_n^*$ be the law of $\St{n}{\C{n}{}{}}$ considered as $\cC'$-valued random variable, that is, $\varphi(W_n, Y_n)$ (see Equation~\eqref{eq:def_phi} for the definition of $\varphi$).  Thanks to Proposition \ref{prop:upper_bounds}, it is sufficient to prove that, uniformly in $\eta n < i < i_1 < i_2 <\ldots < i_k \leq n$ and $\eps \in (0,1/2]$, as $n \to \infty$ 

\begin{linenomath}
\begin{align} 
\nonumber &  \mathbb E^*_{\pi_i^*}\left[  \left(\frac{i_k}{i_{k+1}} \right)^{F(S_k)/\lambda_{+\eps}} \cdot \prod_{\ell=0}^{k-1} \left(\frac{i_\ell}{i_{\ell+1}} \right)^{F(S_\ell)/\lambda_{+\eps}}  F(S_{\ell}) \right]  \\  & \hspace{2cm} - 
\mathbb E^*_{\pi^*} \left[ \left(\frac{i_k}{i_{k+1}} \right)^{F(S_k)/\lambda_{+\eps}} \cdot \prod_{\ell=0}^{k-1} \left(\frac{i_\ell}{i_{\ell+1}} \right)^{F(S_\ell)/\lambda_{+\eps}}  F(S_{\ell}) \right]  \to 0. \label{ass2} \end{align}
\end{linenomath}
To this end, we prove the following stronger statement: uniformly in $\eta \leq x_0, \ldots, x_k \leq 1$ and the choice of $\eps$, as $n \to \infty$,
\[
 \mathbb E^*_{\pi_n^*}\left[  x_k^{F(S_k)/\lambda_{+\eps}} \cdot \prod_{\ell=0}^{k-1} x_\ell^{F(S_\ell)/\lambda_{+\eps}}  F(S_{\ell}) \right]
- \mathbb E^*_{\pi^*} \left[ x_k^{F(S_k)/\lambda_{+\eps}} \cdot \prod_{\ell=0}^{k-1} x_\ell^{F(S_\ell)/\lambda_{+\eps}}  F(S_{\ell}) \right]  \to 0.   
\]
By continuity of $\varphi$, Propositions \ref{prop:MC} and \ref{help1}, we have
$\mathbb P^*_{\pi_n^*}((F(S_0), \ldots, F(S_k)) \in \cdot) \to$ \sloppy $\mathbb P^*_{\pi^*}((F(S_0), \ldots, F(S_k)) \in \cdot)$ weakly.
Note that, for all $0 \leq \ell \leq k$, $F(S_\ell) \leq C$, where $C = (d+1)(k+1)f_{\max}$ 
(recall that $f_{\max}$ is the maximum of the fitness function $f$).
For all  $\eta \leq x_0, \ldots, x_k \leq 1$ and $0 \leq \varepsilon \leq 1/2$, the function $\J(y_0, \ldots, y_k) = x_k^{y_k/\lambda_{+\eps}} \cdot \prod_{\ell=0}^{k-1} x_\ell^{y_\ell/\lambda_{+\eps}}  y_\ell$ defined on $[0, C]^{k+1}$ satisfies 
\begin{equation} \label{eq:bound-jacob}
\| \nabla \J \| \leq \alpha_{\eta} := C^k \left(1-\log \eta/\lambda \right)
\end{equation}
uniformly in $x_0, \ldots, x_k, \eps$.
For any two probability distributions $\nu$ and $\nu'$ on $[0,C]^{k+1}$, 
let
\begin{linenomath}
\begin{align} \label{eq:dual-lipschitz}
\nonumber &d(\nu, \nu') = \sup_{g \in \mathcal F} \left |  \int g \dd \nu - \int g \dd \nu' \right|\\
 & \hspace{1cm}\text{ where }\mathcal F :=\{ g : [0,C]^{k+1} \to \R \mid \forall x, y \in [0,C]^{k+1} \quad |g(x) - g(y)| \leq \alpha_{\eta} \|x - y\|\}.
\end{align}
\end{linenomath}

It is well-known that $d(\nu_n, \nu) \to 0$ if and only if $\nu_n \to \mu$ weakly (see for example, Example 19, page 74 \cite{pollard}). 
 This concludes the proof of \eqref{ass2} and of the corollary.
\end{proof}

 \subsection{Stochastic convergence: second moment calculations} \label{subsec:second}
By counting the number of unordered pairs of vertices with degree $d+k$, arguments similar to those applied in Subsection~\ref{subsec:upper} allow us to compute asymptotically the second moment of $N^{\eta}_{k}(n)$ 
(recall this is the number of vertices 
of degree $k+d$ in $\C{n}{}{}$ that arrived after time $\eta n$). 
Note that
\[
 \E{(N^{\eta}_{k}(n))^2} = \sum_{\eta n < i,j \leq n}
\Prob{\dgl{i}{n} = k, \dgl{j}{n} = k}.  
\] 
We prove that 
\begin{equation} 
\label{aim:second} \lim_{\eta \to 0} \limsup_{n \to \infty}  \frac{\E{(N^{\eta}_{k}(n))^2}}{n^2} \leq \p_k^2. 
\end{equation} This shows that 
$\lim_{n \to \infty} \E{(N^{\eta}_{k}(n))^2} / n^2 = \p_k^2$ which is sufficient to deduce the convergence in probability stated in Theorem \ref{thm:small_degrees} from  convergence of the mean by a standard application of Chebychev's inequality.

Recall that we use the notation $\Ind_{k} = \left\{i_1, \ldots, i_{k}\right\}$ for a collection of natural numbers $i < i_1 < \ldots < i_{k} \leq n$. Similarly, we  write $\Jind_{k} = \left\{j_1, \ldots, j_{k}\right\}$ for a collection of natural numbers such that $j < j_1 < \ldots < j_k \leq n$. As before, we let $\mathcal E_i (\Ind_k)$ denote the event $i \sim \ell$ for $i < \ell \leq n$ if and only if $\ell \in \Ind_k$ and define the event $\mathcal E_j(\Jind_k)$ analogously for $j, j_1, \ldots, j_k$. 

With these definitions, we have 
\begin{equation}  \label{eq:second-moment-sum}
\E{(N^{\eta}_{k}(n))^2} = \sum_{\eta n < i,j \leq n} \sum_{\mathcal{I}_{k}, \mathcal{J}_{k}} \
\PIdx{\C{0}{}{}}{\mathcal E_i\left(\Ind_k\right) \cap \mathcal E_j \left(\Jind_k \right)},
\end{equation}
where the inner sum is over all $\Ind_k \in {\{i+1, \ldots, n\} \choose k}$ and $\Jind_k \in {\{j+1, \ldots, n\} \choose k}$.
As in Subsection~\ref{subsec:upper}, we fix $0 \leq \delta, \varepsilon \leq 1/2$ and choose $N'$ such that for all $n \geq N'$, 
$\PIdx{\C{0}{}{}}{\mathcal{G}_{\eps}(N', n)} \geq 1 - \delta$. 

Note that, on $\mathcal E_i (\Ind_k) \cap \mathcal E_j(\Jind_k)$,  if $\Ind_k \cap \Jind_k \neq \varnothing$ we either have $i = j$ or $i \sim j$.
 If $i=j$ then $\Ind_k = \Jind_k$, and  
the contribution of these terms to the right hand side of  \eqref{eq:second-moment-sum} is at most $\E{N^\eta_k(n)} \leq n$.
On the event $\{\dgl{i}{n} = k\}$ we have $F(\St{i}{\C{\ell}{}{}}) \leq (k+1)d f_{\max}$ for all $i+1 \leq \ell \leq n$. Therefore, for $\eta n < i < j \leq n$, we have
\begin{linenomath}
\begin{align*}
& \PIdx{\C{0}{}{}}{\left\{\dgl{i}{n} = k\right\} \cap \left\{\dgl{j}{n} = k\right\} \cap \{j \sim i\} \cap \mathcal G_{\eps}(i,n)} \\& \hspace{3.5cm}\leq \PIdx{\C{0}{}{}}{\left\{j \sim i\right\}\mid  \mathcal G_{\eps}\left(i, j-1\right), \dgl{i}{j-1} \leq k}  
 \leq \frac{(k+1)d f_{\max}}{\lambda_{- \eps} \eta n}. 
\end{align*}
\end{linenomath}
It follows that, for all $n$ sufficiently large (depending on $\delta, \varepsilon$ and $\eta$), 
\[\Ex{\C{0}{}{}}{(N^{\eta}_{k}(n))^2} \leq 2 \sum_{\eta n < i < j \leq n} \sum_{\Ind_{k} \cap \Jind_{k} = \varnothing} \PIdx{\C{0}{}{}}{ \mathcal E_i \left(\mathcal I_k\right) \cap \mathcal E_j\left(\mathcal J_k \right) \cap 
\mathcal G_{\eps}(i, n)} + \delta n^2 + Cn /\eta,\]
for a constant $C \geq 0$ which is independent of $n, \delta, \varepsilon$ and $\eta$.
The following proposition is the analogue of Proposition \ref{prop:upper_bounds}.
\begin{prop}\label{prop:upper_bounds_second_moment}
Let $0 < \eps, \eta \leq 1/2$. As $n \rightarrow \infty$, uniformly in $\eta n < i < j \leq n-k$, $\Ind_k \in {\{i+1, \ldots, n\} \choose k}$ and $\Jind_k \in {\{j+1, \ldots, n\} \choose k}$ with $\Ind_k \cap \Jind_k = \varnothing$ and the choice of $\varepsilon$, we have 
\begin{linenomath}
\begin{align*}
&\PIdx{\C{0}{}{}}{\mathcal E_i\left(\mathcal I_k\right) \cap \mathcal E_j\left(\mathcal J_k \right) \cap \mathcal G_{\eps}(i, n)}\nonumber \\ &\leq \left(1 + O\left (\frac 1 n \right ) \right) 
\mathbb E \Bigg [ \mathbb E^*_{\St{i}{\C{i}{}{}}}\left[ \left(\frac{i_k}{n}\right)^{F(S_k)/\lambda_{+\eps}}\cdot \prod_{\ell=0}^{k-1} \left(\frac{i_\ell}{i_{\ell+1}}\right)^{F(S_\ell)/\lambda_{+\eps}}\frac{F(S_{\ell})}{\lambda_{-\eps} (i_{\ell+1}-1)} \right] \nonumber \\ &\hspace{3.5cm}
\mathbb E^*_{\St{j}{\C{j}{}{}}}\left[\left(\frac{j_k}{n}\right)^{F(S_k)/\lambda_{+\eps}} \cdot \prod_{\ell=0}^{k-1} \left(\frac{j_\ell}{j_{\ell+1}}\right)^{F(S_\ell)/\lambda_{+\eps}}\frac{F(S_{\ell})}{\lambda_{-\eps} (j_{\ell+1}-1)} \right] \Bigg ] .
\end{align*}
\end{linenomath}
\end{prop}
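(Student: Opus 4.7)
The plan is to adapt the backwards-induction argument from Proposition \ref{prop:u} to track the two stars around $i$ and $j$ simultaneously. Following the notation of Subsection \ref{sec:upper}, set $i_0 = i$, $j_0 = j$, $i_{k+1} = j_{k+1} = n+1$, and define, for $\ell \in [i+1\, . \, . \, n]$, indicators $D^i_\ell$ and $\tilde D^i_\ell = D^i_\ell \bs 1_{\mathcal G_\eps(\ell)}$ as in \eqref{def:D} (with reference to $\Ind_k$), together with $\tilde D^i_i = \bs 1_{\mathcal G_\eps(i)}$. Define $D^j_\ell$ and $\tilde D^j_\ell$ analogously for $\ell \in [j+1\, . \, . \, n]$ with reference to $\Jind_k$. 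Then
\[ \PIdx{\C{0}{}{}}{\cE_i(\Ind_k) \cap \cE_j(\Jind_k) \cap \mathcal G_\eps(i,n)} = \Ex{\C{0}{}{}}{\prod_{\ell = i}^n \tilde D^i_\ell \prod_{\ell = j+1}^n D^j_\ell}. \]
The merged sequence of milestones $\{i, j\} \cup \Ind_k \cup \Jind_k$ partitions $[i\, . \, . \, n]$ into segments inside which neither $\St{i}{\C{\ell}{}{}}$ nor $\St{j}{\C{\ell}{}{}}$ changes, which permits a backwards induction of the same type as Proposition \ref{prop:u} carried out along this finer milestone sequence.

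Second, I would establish the key pointwise bounds. For a no-attachment step $\ell > j$ with $\ell \notin \Ind_k \cup \Jind_k$,
\[ \PIdx{\C{0}{}{}}{i \not\sim \ell,\, j \not\sim \ell \mid \mathscr G_{\ell-1}} = 1 - \frac{F(\St{i}{\C{\ell-1}{}{}}) + F(\St{j}{\C{\ell-1}{}{}}) - F(\St{i}{\C{\ell-1}{}{}} \cap \St{j}{\C{\ell-1}{}{}})}{Z_{\ell-1}}, \]
which I would bound above using the identity $1 - a - b + c \leq (1-a)(1-b) + c$, and for an attachment step $\ell \in \Ind_k$ (resp.\ $\ell \in \Jind_k$) above by $F(\St{i}{\C{\ell-1}{}{}})/Z_{\ell-1}$ (resp.\ $F(\St{j}{\C{\ell-1}{}{}})/Z_{\ell-1}$). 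At each attachment step I would then freely introduce the missing $(1 - F(\St{j})/Z_{\ell-1})$ (resp.\ $(1 - F(\St{i})/Z_{\ell-1})$) factor, paying a multiplicative cost $1 + O(1/\ell)$; summed over the at most $2k$ attachments, the total cost is $1 + O(k/(\eta n)) = 1 + O(1/n)$. The crucial observation is that when $j \notin \Ind_k$, so that $i \not\sim j$, one has $\St{i}{\C{\ell}{}{}} \cap \St{j}{\C{\ell}{}{}} = \emptyset$ throughout, so the inclusion-exclusion correction vanishes and the bound factors cleanly. Combining with Lemma \ref{simple_stir} to convert no-attachment interval products into factors of type $(i_\ell/i_{\ell+1})^{F(S_\ell)/\lambda_{+\eps}}$, and using the Markov property to replace the in-complex stars by independent copies of the star Markov chain started from $\St{i}{\C{i}{}{}}$ and $\St{j}{\C{j}{}{}}$ respectively, yields the stated product form.

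The principal obstacle is the case $j \in \Ind_k$ (equivalently $i \sim j$), in which $\St{i}{\C{\ell}{}{}} \cap \St{j}{\C{\ell}{}{}}$ remains nonempty for all $\ell \geq j$, contributing inclusion-exclusion corrections of order $1/\ell$ per step that would naively aggregate into a multiplicative factor of order $(n/j)^{O(1)}$ rather than $1 + O(1/n)$. The resolution relies on two observations: first, on the event $\{\dgl{i}{n} = \dgl{j}{n} = k\}$ the cardinality of $\St{i} \cap \St{j}$ stays bounded by a constant depending only on $d$ and $k$; second, the joint event $\cE_i(\Ind_k) \cap \cE_j(\Jind_k)$ with $j \in \Ind_k$ already forces an attachment at $\ell = j$, contributing the factor $F(\St{i}{\C{j-1}{}{}})/(\lambda (j-1)) = O(1/(\eta n))$. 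Careful bookkeeping of the inclusion-exclusion corrections in this case---absorbing them into the attachment factor at $\ell = j$, which only appears for $k$ values of $j$ in the downstream aggregation \eqref{eq:second-moment-sum}---is the main technical work, and parallels the delicate conditional computations already executed in Section \ref{sec:upper}.
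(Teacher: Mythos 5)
Your core argument matches the paper's: merge the milestone sequence $\{i,j\}\cup\Ind_k\cup\Jind_k$, run a backwards induction analogous to Proposition~\ref{prop:u}, use the identity $1-\frac{a+b}{Z}\leq(1-\frac aZ)(1-\frac bZ)$ (once $\St{i}\cap\St{j}=\emptyset$ so that $c=0$), and finish with Lemma~\ref{simple_stir}. Your observation that $i\not\sim j$ forces $\St{i}{\C{\ell}{}{}}\cap\St{j}{\C{\ell}{}{}}=\emptyset$ for all $\ell\geq j$ is exactly the hinge the factorisation turns on. The ``introduce the missing $(1-F(\St{j})/Z)$ factor at each $\Ind_k$-step'' variant is slightly different bookkeeping from the paper's (which arranges the $h_m$-products so that attachment steps never enter either interval product and the telescoping is automatic after Lemma~\ref{simple_stir}), but it delivers the same $1+O(1/n)$ and is perfectly fine.

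Where you go astray is in the last paragraph. You treat $j\in\Ind_k$ as a case the proposition must cover, and propose to control the accumulated inclusion--exclusion error. That error is of size $(n/j)^{F(\St{i}\cap\St{j})/\lambda}$, a multiplicative \emph{constant} bounded below by $1$ and above by $(1/\eta)^{O(1)}$; it cannot be absorbed into the $1+O(1/n)$ factor, and shifting it onto the attachment factor $F(\St{i}{\C{j-1}{}{}})/(\lambda(j-1))=O(1/n)$ at $\ell=j$ doesn't change that---both are multiplicative, and a stated bound that already has an $O(1/n)$ factor does not get extra room from noting that one factor is $O(1/n)$. As written, your argument does not establish the proposition in the $j\in\Ind_k$ case, and you rightly sense that the $h_m$-type bound is a lower, not an upper, bound there since $\Prob{i\not\sim\ell,\,j\not\sim\ell\mid\cdot}=1-\frac{F(\St{i})+F(\St{j})-F(\St{i}\cap\St{j})}{Z}>1-\frac{F(\St{i})+F(\St{j})}{Z}$.

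The resolution is that this case is never needed. Before the proposition is stated, the paper proves
\[
\PIdx{\C{0}{}{}}{\{\dgl{i}{n}=k\}\cap\{\dgl{j}{n}=k\}\cap\{j\sim i\}\cap\mathcal G_\eps(i,n)}\leq\frac{(k+1)df_{\max}}{\lambda_{-\eps}\eta n},
\]
and sums this over \emph{all} $\eta n<i<j\leq n$ to get the $Cn/\eta$ cross-term. Since $\{\dgl{i}{n}=k,\dgl{j}{n}=k, j\sim i\}$ decomposes over all $\Ind_k\ni j$ and all $\Jind_k$, this absorbs every pair $(\Ind_k,\Jind_k)$ with $j\in\Ind_k$, whether or not $\Ind_k\cap\Jind_k=\emptyset$. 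The proposition is then only ever invoked on the residual pairs with $j\notin\Ind_k$ (equivalently $i\not\sim j$), which is precisely the regime in which your first two paragraphs and the paper's $h_m$-argument both go through. Strictly speaking the stated hypothesis $\Ind_k\cap\Jind_k=\emptyset$ should be read as $j\notin\Ind_k$ (a minor imprecision in the paper); once you make that adjustment, you can drop your third paragraph entirely rather than trying to make the bound work where it shouldn't.
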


The proof of this proposition is completely analogous to the proof of Proposition \ref{prop:upper_bounds} and relies on a backward induction argument and an application of Lemma \ref{simple_stir}. We omit the details as no new arguments are necessary at this point. 

We move on to show the following analogue of \eqref{234}: for any $0 < \delta, \varepsilon, \eta \leq 1/2$, there exists $N = N(\delta, \eta)$  such that, for all $n \geq N$, $\eta n <i < j \leq n-k$ and disjoint sets $\Ind_k, \Jind_k$, we have
\begin{linenomath}
\begin{align} \label{345} \nonumber&\PIdx{\C{0}{}{}}{\mathcal E_i\left(\mathcal I_k\right) \cap \mathcal E_j\left(\mathcal J_k \right) \cap \mathcal 
G_{\eps}(i, n)} \\ &\leq (1+\delta) \bigg (\mathbb E^*_{\pi^*} \left[ \left(\frac{i_k}{n}\right)^{F(S_k)/\lambda_{+\eps}}\cdot \prod_{\ell=0}^{k-1} \left(\frac{i_\ell}{i_{\ell+1}}\right)^{F(S_\ell)/\lambda_{+\eps}}\frac{F(S_{\ell})}{\lambda_{-\eps} (i_{\ell+1}-1)} \right]  \nonumber \\ &\hspace{1.5cm}
\mathbb E^*_{\pi^*} \left[\left(\frac{j_k}{n}\right)^{F(S_k)/\lambda_{+\eps}} \cdot \prod_{\ell=0}^{k-1} \left(\frac{j_\ell}{j_{\ell+1}}\right)^{F(S_\ell)/\lambda_{+\eps}}\frac{F(S_{\ell})}{\lambda_{-\eps} (j_{\ell+1}-1)}  \right] \bigg ).  
\end{align}
\end{linenomath}

The details are very similar to the approach in Subsection~\ref{subsec:upper}, and we only give the necessary additional results entering the proof.
\begin{prop}\label{prop:Ynm}
As $n, m \to \infty$ with $n  \neq m$, we have $(Y_n, Y_m) \to (Y_\infty, Y_\infty')$, in distribution, for independent random variables $Y_\infty, Y_\infty'$ both distributed according to $\pi^*$.
\end{prop}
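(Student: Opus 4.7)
The plan is to exploit the almost sure convergence $\hat\Pi_n \to \hat\pi$ from Theorem \ref{empir_limit} to show that, when $n<m$, the conditional law of $Y_m$ given the past is asymptotically deterministic. Since $Y_n$ is already determined by the past at step $n \le m-1$, this will force the two random variables to decouple in the limit.

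Assuming without loss of generality that $n<m$, I would fix bounded continuous test functions $g_1, g_2 \colon \cCd \to \mathbb R$. By the very definition of the model, the conditional law of $Y_m$ given $\mathscr G_{m-1}$ is exactly $\hat\Pi_{m-1}$, and $Y_n$ is $\mathscr G_n \subseteq \mathscr G_{m-1}$-measurable. Conditioning therefore gives
\begin{equation*}
\mathbb E[g_1(Y_n)\, g_2(Y_m)] = \mathbb E\!\left[g_1(Y_n)\, \int_{\cCd} g_2\, \dd \hat\Pi_{m-1}\right].
\end{equation*}
By Theorem \ref{empir_limit}, $\hat\Pi_{m-1} \to \hat\pi$ almost surely in the weak topology, so $\int g_2\, \dd \hat\Pi_{m-1} \to \int g_2\, \dd \hat\pi$ almost surely as $m\to\infty$. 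Since the integrand on the right-hand side is uniformly bounded by $\|g_1\|_\infty \|g_2\|_\infty$, dominated convergence yields
\begin{equation*}
\left| \mathbb E[g_1(Y_n)\, g_2(Y_m)] - \mathbb E[g_1(Y_n)]\, \int_{\cCd} g_2\, \dd \hat\pi \right| \longrightarrow 0
\end{equation*}
uniformly in $n<m$ as $m\to\infty$. Combining with $\mathbb E[g_1(Y_n)] \to \int g_1\, \dd \hat\pi$ from Proposition \ref{prop:MC}, one obtains the convergence of $\mathbb E[g_1(Y_n)\, g_2(Y_m)]$ to the product $\bigl(\int g_1\, \dd \hat\pi\bigr)\bigl(\int g_2\, \dd \hat\pi\bigr)$, which is precisely the weak convergence of $(Y_n, Y_m)$ to an independent pair of copies of $Y_\infty$. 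The same argument lifts immediately to the joint law of the $\cC'$-valued star variables $\varphi(W_n, Y_n), \varphi(W_m, Y_m)$ (yielding asymptotic independence with marginal $\pi^*$, as needed for the second-moment calculation), since the weights $W_n, W_m$ are drawn from $\mu$ independently of each other and of the entire past.

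No serious obstacle is anticipated: the argument is essentially a direct consequence of the almost sure (as opposed to merely in-distribution) convergence of $\hat\Pi_n$ established in Theorem \ref{empir_limit}, combined with the simple observation that $Y_m$'s conditional law given $\mathscr G_{m-1}$ is precisely $\hat\Pi_{m-1}$. The only technical subtlety, namely the uniformity in $n$ as $m \to \infty$, is handled straightforwardly by dominated convergence using the boundedness of the test functions.
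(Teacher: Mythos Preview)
Your proof is correct and follows essentially the same approach as the paper: both condition on $\mathscr G_{m-1}$ to write $\mathbb E[g_1(Y_n)g_2(Y_m)] = \mathbb E[g_1(Y_n)\,\mathbb E[g_2(Y_m)\mid\mathscr G_{m-1}]]$, identify the inner conditional expectation with $\int g_2\,\dd\hat\Pi_{m-1}$, and then use the almost sure weak convergence of $\hat\Pi_{m-1}$ together with dominated convergence and Proposition~\ref{prop:MC}. Your write-up is slightly more streamlined (you combine the two error terms the paper treats separately), and your explicit observation that the bound $\|g_1\|_\infty\,\mathbb E\bigl[|\int g_2\,\dd\hat\Pi_{m-1}-\int g_2\,\dd\hat\pi|\bigr]$ is independent of $n$ makes the required uniformity transparent.
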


\begin{proof}
This follows easily from Theorem \ref{empir_limit}. Let $g_1, g_2 : \cCd \to \R$ be bounded and continuous and $Y_\infty, Y_\infty'$ be independent realisations of $\pi^*$. 
We have
\begin{linenomath}
\begin{align} \left|\E{g_1(Y_n)g_2(Y_m)} -  \E{g_1(Y_\infty)g_2(Y_\infty')}\right| \label{start_bound}
& \leq \left|\E{g_1(Y_n)g_2(Y_m)} - \E{g_1(Y_n)}\E{g_2(Y_\infty')} \right| \\
& + \left| \E{g_1(Y_n)}\E{g_2(Y_\infty')} -  \E{g_1(Y_\infty)g_2(Y_\infty')}\right|. \nonumber
\end{align}
\end{linenomath}
Since $Y_\infty, Y_\infty'$ are independent, the second term on the right hand side is equal to 
\begin{equation} \label{1st_bound}
 |\E{g_2(Y_\infty)}| \cdot |\E{g_1(Y_n)} - \E{g_1(Y_\infty)}|.\end{equation}
As $n \to \infty$, \eqref{1st_bound} converges to zero by Theorem \ref{empir_limit}. For $n < m$, we have $\E{g_1(Y_n) g_2(Y_m)} = 
\E{g_1(Y_n) \E{g_2(Y_m)\mid \mathscr G_{m-1}}}$. Hence, the first term on the right hand side of \eqref{start_bound} is bounded from above by  
\begin{equation} \label{2nd_bound}  \| g_1 \| \cdot \E{|\E{g_2(Y_m)\mid \mathscr G_{m-1}} - \E{g_2(Y_\infty)}|}. \end{equation}
Write $\nu_m$ for the law of $Y_m$ given $\mathscr G_{m-1}$, that is, for all measurable $A \subseteq \cCd$,
\[\nu_m(A) = \frac{\int_A f(x) \dd \Pi_{m-1}(x)}{\int_{\cCd} f(x) \dd \Pi_{m-1}(x)}. \] 
By Theorem \ref{empir_limit}, we have, almost surely,  $\nu_m \to \pi^*$ weakly. Thus, 
$\E{g_2(Y_m) \mid \mathscr G_{m-1}} \to \E{g_2(Y_\infty)}$. Hence, by the dominated convergence theorem, \eqref{2nd_bound} converges to zero as $m  \to \infty$. This concludes the proof for $n, m \to \infty$ with $n < m$ and the case $n > m$ can be treated analogously.
\end{proof}

In the remainder, we write $\mathbb P^{**}_{x,x'}$ and $\mathbb E^{**}_{x,x'}$ with $x, x' \in \cC'$ for probabilities and expectations, respectively, involving a pair of independent copies of the star Markov chain $(S_0, S_0'), (S_1, S_1'), \ldots$, where $S_0 = x$ and $S_0' = x'$. 
\begin{prop}\label{prop:last_prop}
Let $k \geq 0$, $w, w' \geq 0$ and $x, x', x_1, x_1', x_2, x_2', \ldots \in \cCd$ with $x_n \to x$ and $x_n' \to x'$. Then, in the sense of weak convergence on 
$[0, \infty)^{2k+2}$, we have, as $n\to \infty$,
\begin{linenomath}
\begin{align*} 
& \mathbb P^{**}_{\varphi(w, x_n), \varphi(w', x_n')}((F(S_0), F(S_0'), F(S_1), F(S_1'), \ldots, F(S_k), F(S_k')) \in \cdot) \\
&  \to \mathbb P^{**}_{\varphi(w,x), \varphi(w', x')}((F(S_0), F(S_0'), F(S_1), F(S_1'), \ldots, F(S_k), F(S_k')) \in \cdot). \end{align*}
\end{linenomath}
\end{prop}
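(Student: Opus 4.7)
The plan is to reduce the statement to the single-chain continuity result of Proposition \ref{help1} via the independence built into the $\mathbb P^{**}$ construction. By the very definition of $\mathbb P^{**}_{\varphi(w, x_n), \varphi(w', x_n')}$, the processes $(S_j)_{j \geq 0}$ and $(S_j')_{j \geq 0}$ are independent copies of the star Markov chain, started from $\varphi(w, x_n)$ and $\varphi(w', x_n')$ respectively. Consequently, the joint law of the vector
\[\bigl(F(S_0), F(S_0'), F(S_1), F(S_1'), \ldots, F(S_k), F(S_k')\bigr)\]
factors as the product of the law of $(F(S_0), F(S_1), \ldots, F(S_k))$ under $\mathbb P^*_{\varphi(w, x_n)}$ and the law of $(F(S_0'), F(S_1'), \ldots, F(S_k'))$ under $\mathbb P^*_{\varphi(w', x_n')}$ (after the obvious reordering of coordinates).

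First, I would apply Proposition \ref{help1} to each marginal separately. Since $x_n \to x$, that proposition gives weak convergence on $[0, \infty)^{k+1}$ of
\[\mathbb P^*_{\varphi(w, x_n)}((F(S_0), \ldots, F(S_k)) \in \cdot) \to \mathbb P^*_{\varphi(w, x)}((F(S_0), \ldots, F(S_k)) \in \cdot),\]
and analogously for the primed coordinates using $x_n' \to x'$.

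Then I would invoke the standard fact that if $\mu_n \Rightarrow \mu$ on a Polish space $E_1$ and $\nu_n \Rightarrow \nu$ on a Polish space $E_2$, then the product measures satisfy $\mu_n \otimes \nu_n \Rightarrow \mu \otimes \nu$ on $E_1 \times E_2$ (this is immediate from the Portmanteau theorem combined with Fubini applied to bounded continuous functions that are products, and then extended by a standard monotone-class or Stone--Weierstrass argument, or by the fact that weak convergence is characterized by a metric such as Prokhorov's). Applying this to the two marginal convergences above yields the claimed convergence on $[0, \infty)^{2k+2}$, modulo the harmless permutation of coordinates that rearranges $(\cdot, \cdot')$-pairs into the interlaced order displayed in the statement.

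There is essentially no obstacle here beyond bookkeeping; the only conceptual ingredient is the product-measure preservation of weak convergence, and all of the analytic work is already encoded in Proposition \ref{help1}.
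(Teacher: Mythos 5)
Your argument is correct and is essentially the same one-line proof the paper gives: independence of the two coupled star chains factors the joint law into a product, Proposition \ref{help1} handles each factor, and weak convergence of products of independent laws does the rest. The paper simply states this without spelling out the product-measure lemma, so your version just adds harmless detail.
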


\begin{proof}
This follows from the independence of the two star processes involved and Proposition~\ref{help1}.
\end{proof}

Using Propositions~\ref{prop:Ynm} and~\ref{prop:last_prop}, the continuity of $\varphi$, and an argument analogous to the proof of Corollary~\ref{cor:1} (using a probability metric similar to \eqref{eq:dual-lipschitz}), Equation~\eqref{345} follows upon verifying the following: For any
$\eta \leq x_0, x_0', \ldots, x_k, x_k' \leq 1$ and $0 \leq \eps \leq 1/2$, with the function \[\Jo(y_0, y_0', \ldots, y_k, y_k') = x_k^{y_k/\lambda_{+\eps}}  \cdot \prod_{\ell=0}^{k-1} x_\ell^{y_\ell/\lambda_{+\eps}}  y_\ell \cdot (x'_k)^{y'_k/\lambda_{+\eps}}  \cdot \prod_{\ell=0}^{k-1} (x'_\ell)^{y'_\ell/\lambda_{+\eps}}  y'_\ell\] defined on $[0, C]^{2k+2}$, 
we have that $\| \nabla \Jo \|$ is bounded uniformly in $x_0, \ldots, x_k$, $x_0', \ldots, x_k'$ and $\eps$. 
This follows from that the fact that $\Jo$ factorizes, $\| \Jo\| \leq C^{2k}$, and Equation~\eqref{eq:bound-jacob} (inside the proof of Corollary~\ref{cor:1}).

Now, when evaluating the sum over $\eta n < i \neq j \leq n$ and disjoint $\Ind_k \in {\{i+1, \ldots, n\} \choose k}, \Jind_k \in {\{j+1, \ldots, n\} \choose k}$ in \eqref{345}, since the summands are non-negative, and we are looking for an upper bound, we may remove the conditions $i \neq j$ and $\Ind_k \cap \Jind_k = \varnothing$. 
But Corollary~\ref{cor:summation} shows that, uniformly in $\eps$ and $\eta$,
\begin{linenomath}
\begin{align*}
& \sum_{\eta n < i, j \leq n} \sum_{\Ind_k, \Jind_k} \mathbb E^*_{\pi^*} \left[ \left(\frac{i_k}{n}\right)^{F(S_k)/\lambda_{+\eps}}\cdot \prod_{\ell=0}^{k-1} \left(\frac{i_\ell}{i_{\ell+1}}\right)^{F(S_\ell)/\lambda_{+\eps}}\frac{F(S_{\ell})}{\lambda_{-\eps} (i_{\ell+1}-1)} \right] \nonumber \nonumber \\ &\hspace{2.5cm}
\times \mathbb E^*_{\pi^*} \left[\left(\frac{j_k}{n}\right)^{F(S_k)/\lambda_{+\eps}} \cdot \prod_{\ell=0}^{k-1} \left(\frac{j_\ell}{j_{\ell+1}}\right)^{F(S_\ell)/\lambda_{+\eps}}\frac{F(S_{\ell})}{\lambda_{-\eps} (j_{\ell+1}-1)}  \right] \\
& \leq \left(\frac{1+\eps}{1-\eps}\right)^{2k} \left( \mathbb E_{\pi^*}^* \left[ \frac{\lambda_{+\eps}}{F(S_k)+\lambda_{+\eps}} \prod_{\ell=0}^{k-1} \frac{F(S_\ell)}{F(S_\ell)+\lambda_{+\eps}}\right] \right)^2+ O\left(n^{-1/(k+2)}\right) + C'\eta^{1/{k+2}},
\end{align*}
\end{linenomath}
for some universal constant $C' >0$.
 From here, identity \eqref{aim:second} follows easily as in Subsection~\ref{subsec:upper}. 

\subsection{Convergence of the mean: bounds from below} \label{subsec:lower}
In this section, we prove that, for all $k\geq 0$,
\begin{equation}
\label{eq:aim_low_bound}
\lim_{\eta \to 0} \liminf_{n \to \infty} \frac{\Ex{\C{0}{}{}}{N_k^\eta(n)}}{n} \geq \p_k,
\end{equation}
where we recall that $N_k^\eta(n)$ is the number of vertices 
of degree $k+d$ in $\C{n}{}{}$ 
that arrived after time $\eta n$,
and $p_k$ is defined in Theorem~\ref{thm:small_degrees}.
To do this, we need further notation.
First, let $\mathbf C$ be the set of all finite $d$-dimensional simplicial complexes with integer vertices.
To add weights, let $\mathbf C^w = \mathbf C \times [0, \infty)^{\mathbb Z}$, where, for $t = (c, x) \in \mathbf C^w$, $x_i, i \in \mathbb Z$ keeps track of the weight assigned to the vertex $i$. (If no such vertex exists, simply set $x_i=0$.) We then consider $\mathcal K_n$ as a $\mathbf C^w$-valued random variable incorporating vertex weights. 
For a simplicial complex $\C{}{}{} \in \mathbf C$, 
let $\C{\m i}{}{} := \{\sigma \in \K: i \notin \sigma\}$ be the sub-complex obtained from $\K$, when we remove the faces which contain vertex $i$. (Set $\K_{\m i} := \K$ if $i \notin \K$.) 
When applied to the random dynamical process, we write
$\C{n \m i}{}{}$ for $(\mathcal K_n)_{\m i}$. Let 
\begin{equation} 
\label{eq:parti-i-removed}
\Pi_{n\m i}=\sum_{\sigma \in \C{n\m i}{}{d-1}} \delta_{\w(\sigma)}, \text{ and} \quad Z_{n\m i} = \int_{\cCd} f(x) \dd \Pi_{n\m i}(x)
\end{equation}
be the empirical measure of the types 
of active faces in $\C{n\m i}{}{}$ and the corresponding partition function, respectively.  Note that $\C{n}{}{d-1}  = \C{n\m i}{}{d-1} \cup  \St{i}{\C{n}{}{}}$, where the union is disjoint and therefore
$Z_n = Z_{n \m i} + F(\St{i}{\C{n}{}{}})$.  

To prove a suitable lower bound on the probability that vertex $i$ receives edges at certain times,  we need to control $Z_{n\m i}$ throughout the process. It is reasonable to expect $Z_{n\m i}$ to behave similarly to $Z_n$. To this end, for all $\varepsilon>0$, $n\geq i\geq 1$ and $m\geq 1$, we let 
\begin{equation}\label{eq:def_Geps}
\mathcal G_\eps^{(i)} (n) = \left\{\left| Z_{n\m i} - \lambda n \right| < \eps \lambda n \right\}
\quad\text{ and }\quad
\mathcal G_\eps (n;m) = \left\{\left| Z_{n} - \lambda m \right| < \eps \lambda m \right\}.
\end{equation}
(Note the difference between the notation $\mathcal G_\eps (n;m)$ and the notation for concentration along an interval $\mathcal G_\eps (N_1, N_2)$ defined in Subsection~\ref{subsec:upper}.) 

For  $1 \leq i \leq n, \, \Ind_k \in {\{i+1, \ldots, n\} \choose k}$ and $j=i, \ldots, n$, we let 
\begin{equation} \label{eq:p-j-def}
    p(j) \in \{0, \ldots, k\}
\text{ be such that }
i_{p(j)} \leq j \leq i_{p(j)+1}-1.
\end{equation}
(Recall that we use the conventions
$i_0=i$ and $i_{k+1} = n+1$). 

As opposed to the arguments in Subsection~\ref{subsec:upper}, the inductive proof in this section requires us to modify the value of $\eps$ in different intervals $\{i_{\ell}, \ldots, i_{\ell+1}-1\}, \ell = 0, \ldots, k$. We thus need more notation. 
First, for a fixed $\eps > 0$, and $\ell \in \{0, \ldots, k\}$ 
we set $\eps_{\ell} : = (1+ \ell)\eps$ 
(we apply this notation only to the symbol $\eps$, 
to avoid confusion with subscripts). 
Next, for $j \in \{i+1, \ldots, n\}$, recalling the events $\cD_j$ from \eqref{def:D}, and $\mathcal G_{\varepsilon}^{\sss (i)}(j)$, $\mathcal G_{\varepsilon}(i;i)$ from~\eqref{eq:def_Geps},
we set 
\begin{equation}\label{eq:def_barD}
\bar \cD_j(\eps)  
= \cD_j \cap 
\mathcal G_{\eps_{p(j)}}^{(i)}(j)
\quad\text{ and }\quad
\bar \cD_i(\eps) = \mathcal G_\eps(i;i).
\end{equation}
Similarly to before, we write $D_j(\eps) : = \mathbf{1}_{\cD_j(\eps)}$ and  $\bar D_j(\eps) : = \mathbf{1}_{\bar \cD_j(\eps)}$. 
With this notation, we have \begin{equation} \label{start:lower} \Ex{\C{0}{}{}}{N_k^\eta(n)} \geq \sum_{\eta n < i \leq n} \sum_{\cI_k \in {\{i+1, \ldots, n\} \choose k}} 
\mathbb P\bigg(\bigcap_{j=i}^n \bar \cD_j(\eps)\bigg). 
\end{equation}
We then have the following analogue of Proposition \ref{prop:upper_bounds}.

\begin{prop} \label{p2}
Let $0 < \delta,  \eps, \eta \leq 1/2$. 
There exists a constant $C' > 0$, $N = N(\delta, \varepsilon, \eta)$ and $0 \leq \varrho \leq 1$ such that, for all $n \geq N$, \begin{linenomath}
\begin{align} 
&\Ex{\C{0}{}{}}{N_k^\eta(n)} \geq  - C'\delta n \nonumber \\
& + \varrho (1-\delta) \cdot \sum_{\eta n < i \leq n} \sum_{\cI_k \in {\{i+1, \ldots, n\} \choose k}}   \Ex{\C{0}{}{}}{\mathbb E^*_{\St{i}{\C{i}{}{}}} \left[ \left(\frac{i_k}{i_{k+1}} \right)^{\frac{F(S_k)}{\lambda_{-\eps_k}}} \cdot \prod_{\ell=0}^{k-1} \left(\frac{i_\ell}{i_{\ell+1}} \right)^{\frac{F(S_\ell)}{\lambda_{-\eps_\ell}}}  \frac{F(S_{\ell})}{\lambda_{+\eps_\ell} (i_{\ell+1}-1)} \right]}, \label{eq:prop-p2}
\end{align}
\end{linenomath}
where $\varrho$ depends only on $\eps, \eta$ and, for any fixed $0 < \eta \leq 1/2$, we have $\varrho \to 1$ as $\eps \to 0$.
\end{prop}
Similar arguments leading from Proposition~\ref{prop:upper_bounds} to Corollary~\ref{cor:1} then give the following result.
\begin{cor}\label{p3}
Let $0 < \delta, \varepsilon, \eta \leq 1/2$. Then, there exists $N = N(\delta, \varepsilon, \eta)$ and a universal constant $C > 0$ not depending on any of these parameters, such that, for all $n \geq N$,
\begin{linenomath}
\begin{align*} 
&\frac{\Ex{\C{0}{}{}}{N_k^\eta(n)}}{n} \geq \varrho(1-\delta)  \left(\frac{1-\eps_k} {1+\eps_k}\right)^k \cdot \mathbb E_{\pi^*}^* \left[ \frac{\lambda_{-\eps_k}}{F(S_k)+\lambda_{-\eps_k}} \prod_{\ell=0}^{k-1} \frac{F(S_\ell)}{F(S_\ell)+\lambda_{-\eps_\ell}}\right] \\ & \hspace{6cm}- C(\eta^{1/(k+2)} + 1/n^{1/(k+2)}) - \delta,
\end{align*}
\end{linenomath}
where $\varrho$ is as in the Proposition~\ref{p2}.
In particular, Equation~\eqref{eq:aim_low_bound} holds.
\end{cor}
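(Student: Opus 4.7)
The plan is to derive Corollary~\ref{p3} from Proposition~\ref{p2} by mirroring, step by step, the argument used in Subsection~\ref{sec:upper} to pass from Proposition~\ref{prop:upper_bounds} to Corollary~\ref{cor:1}. The input from Proposition~\ref{p2} already has the shape of the upper bound, but with $\lambda_{+\eps}$ replaced by $\lambda_{-\eps_\ell}$ in the ratios $i_\ell/i_{\ell+1}$, $\lambda_{-\eps}$ replaced by $\lambda_{+\eps_\ell}$ in the denominators $i_{\ell+1}-1$, and multiplied by a prefactor $\varrho(1-\delta)$ that tends to $1$ as $\eps,\delta\to 0$. The task is therefore to (a) replace the random starting state $\St{i}{\C{i}{}{}}$ of the star chain by its distributional limit $\pi^*$, and (b) evaluate the resulting sum over $\eta n<i\leq n$ and $\Ind_k\in\binom{[i+1\,.\,.\,n]}{k}$.

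For step (a), I would first fix $\eps,\eta$ and observe that, by Proposition~\ref{prop:MC} together with the continuity of $\varphi$ and Proposition~\ref{help1}, the law of $(F(S_0),\dots,F(S_k))$ under $\mathbb P^*_{\St{i}{\C{i}{}{}}}$ converges weakly to its law under $\mathbb P^*_{\pi^*}$ as $i\to\infty$. Exactly as in the proof of Corollary~\ref{cor:1}, I would quantify this via a bounded-Lipschitz metric $d$ on probability measures on $[0,C]^{k+1}$, where $C=\max((d+1)(k+1)f_{\max},1)$ bounds all $F(S_\ell)$. The relevant test function
\[
\J(y_0,\dots,y_k)=x_k^{y_k/\lambda_{-\eps_k}}\prod_{\ell=0}^{k-1} x_\ell^{y_\ell/\lambda_{-\eps_\ell}}\, y_\ell
\]
has $\|\nabla \J\|$ bounded uniformly in $x_0,\dots,x_k\in[\eta,1]$ and $\eps\in(0,1/2]$, so one obtains, uniformly in $\eta n<i<i_1<\dots<i_k\leq n$ and in $\eps$, that the inner expectation in Proposition~\ref{p2} differs from its $\pi^*$-counterpart by an error $o(1)$ as $n\to\infty$; choosing $n\geq N(\delta,\eta)$ large enough makes this error at most $\delta$ times the comparable expectation.

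For step (b), factor out the $\pi^*$-expectation, which no longer depends on $i,\Ind_k$, and apply Corollary~\ref{cor:summation} with
\[
\alpha_\ell=F(S_\ell)/\lambda_{-\eps_\ell},\qquad \beta_\ell=F(S_\ell)/\lambda_{+\eps_\ell},
\]
and $\alpha_k=F(S_k)/\lambda_{-\eps_k}$, inside the expectation. Corollary~\ref{cor:summation} gives
\[
\frac{1}{n}\sum_{\eta n<i\leq n}\sum_{\Ind_k}\prod_{\ell=0}^{k-1}\Big(\tfrac{i_\ell}{i_{\ell+1}}\Big)^{\alpha_\ell}\tfrac{\beta_\ell}{i_{\ell+1}-1}\Big(\tfrac{i_k}{n}\Big)^{\alpha_k}
=\frac{1}{\alpha_k+1}\prod_{\ell=0}^{k-1}\frac{\beta_\ell}{\alpha_\ell+1}+\theta'(\eta)+O(n^{-1/(k+2)}),
\]
with $|\theta'(\eta)|\leq C'\eta^{1/(k+2)}$. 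Since $\alpha_\ell$ and $\beta_\ell$ are uniformly bounded by constants depending only on $f_{\max}$ and $\lambda$, Fubini allows the sum and the $\pi^*$-expectation to be interchanged, and a direct simplification
\[
\frac{1}{\alpha_k+1}\prod_{\ell=0}^{k-1}\frac{\beta_\ell}{\alpha_\ell+1}=\Big(\tfrac{1-\eps_k}{1+\eps_k}\Big)^{k}\cdot\frac{\lambda_{-\eps_k}}{F(S_k)+\lambda_{-\eps_k}}\prod_{\ell=0}^{k-1}\frac{F(S_\ell)}{F(S_\ell)+\lambda_{-\eps_\ell}}\cdot(1+O(\eps))
\]
(after cancelling the mismatch between $\lambda_{-\eps_\ell}$ and $\lambda_{+\eps_\ell}$, which produces the factor $((1-\eps_k)/(1+\eps_k))^k$ from the worst index) yields the quantitative lower bound stated in Corollary~\ref{p3}.

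For the \emph{in particular} assertion, I would fix any $\delta>0$, first send $n\to\infty$ to eliminate the $O(n^{-1/(k+2)})$ term and validate the replacement of $\St{i}{\C{i}{}{}}$ by $\pi^*$; then send $\eta\to 0$ to kill the $\eta^{1/(k+2)}$ remainder; finally send $\eps,\delta\to 0$ and invoke $\varrho\to 1$ together with dominated convergence (using boundedness of the integrand by $1$) inside the $\pi^*$-expectation to obtain $\liminf_{\eta\to 0}\liminf_{n\to\infty} \E{N_k^\eta(n)}/n \geq \p_k$. The main technical point is the uniformity of the weak-convergence replacement over $i\geq\eta n$ and over $\eps\in(0,1/2]$; this is precisely what the Lipschitz-metric bookkeeping (with $\eta$ bounding the bases $x_\ell$ away from $0$) is designed to ensure, and it is inherited from the upper-bound proof without modification.
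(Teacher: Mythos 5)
Your overall skeleton (apply Proposition~\ref{p2}, replace $\St{i}{\C{i}{}{}}$ by $\pi^*$ via weak convergence and a bounded-Lipschitz argument using Proposition~\ref{help1}, evaluate the double sum via Corollary~\ref{cor:summation}, and use the algebraic identity involving $\alpha_\ell,\beta_\ell$ to land on the stated expression with the factor $\bigl((1-\eps_k)/(1+\eps_k)\bigr)^k$) is the intended argument and the algebraic manipulation is correct (replacing your ``$=(\cdots)(1+O(\eps))$'' by the clean inequality $\prod_{\ell=0}^{k-1}\frac{1-\eps_\ell}{1+\eps_\ell}\ge \bigl(\frac{1-\eps_k}{1+\eps_k}\bigr)^k$). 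Also, for the Lipschitz constant to be bounded uniformly one implicitly needs $\eps<1/(2k+2)$ so that $\lambda_{-\eps_\ell}$ stays bounded away from zero; this constraint is already imposed in the lemmas leading to Proposition~\ref{p2}, so it is harmless, but worth recording.

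The genuine gap is in your treatment of the ``in particular'' assertion: you propose to take the limits in the order $n\to\infty$, then $\eta\to 0$, then $\eps,\delta\to 0$. That order fails because $\varrho=\varrho(\eps,\eta)$ depends on $\eta$: from the definition $\gamma_\ell(\eps,\eta,C)=(1-\eps_\ell)\,\eta^{2C\eps_\ell/\lambda}$ in \eqref{gammadef}, for any fixed $\eps>0$ one has $\varrho=\gamma(\eps,\eta,C)\to 0$ as $\eta\to 0$. So sending $\eta\to 0$ first annihilates the prefactor $\varrho$ and the lower bound collapses to $-\delta$; the subsequent statement ``invoke $\varrho\to 1$'' has no content once $\eta$ is gone. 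Proposition~\ref{p2} only guarantees $\varrho\to 1$ as $\eps\to 0$ \emph{for fixed $\eta$}. The correct procedure is: fix $\eta$; take $\liminf_{n\to\infty}$ to remove the $O(n^{-1/(k+2)})$ term; then optimize over $\delta,\eps$ (i.e.\ let $\delta,\eps\to 0$, using $\varrho(\eps,\eta)\to 1$, $(1-\eps_k)/(1+\eps_k)\to 1$, and dominated convergence inside $\mathbb E^*_{\pi^*}$) to obtain the $\eta$-dependent bound
\[
\liminf_{n\to\infty}\frac{\Ex{\C{0}{}{}}{N_k^\eta(n)}}{n}\ \ge\ \p_k - C\,\eta^{1/(k+2)};
\]
only then send $\eta\to 0$ to conclude $\liminf_{\eta\to 0}\liminf_{n\to\infty}\E{N_k^\eta(n)}/n\ge \p_k$.
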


We now define analogues of $h_{\ell}$ and $f_{\ell}$ from Equations~\eqref{eq:h-def} and \eqref{def:ff} in Subsection~\ref{subsec:upper}  (here, however, it is necessary to indicate the dependence of these functions on $\eps$).  
For  $S \in \cC'$ and $\ell\in \{0, \ldots, k\}$, let 
\begin{equation} \label{eq:h2-def}
\mathfrak h^\eps_\ell(S) =\prod_{j=i_\ell+1}^{i_{\ell+1}-1} \left(1- \frac{F(S)}{\lambda_{- \eps_\ell}(j-1)}\right)
\end{equation}
and, for $\ell\in \{0, \ldots, k-1\}$, 
\begin{equation} \label{eq:ffrak_def}
\mathfrak f^\eps_\ell(S) =\frac{F(S)}{F(S)+ \lambda_{+\eps_\ell}(i_{\ell+1}-1)} \mathfrak h^\eps_\ell(S) \quad \mbox{while} \ \mathfrak f^\eps_k = \mathfrak h^\eps_k.\end{equation}  

We follow the arguments from the proof of the upper bound (see Subsection~\ref{subsec:upper}) 
and show analogues of Lemma \ref{lem:u} and Proposition \ref{prop:u}. 
To this end, we need 
to make use of the more general framework introduced at the beginning of this subsection: we write $\mathbb P_x(\cdot), \mathbb E_x(\cdot)$ for probabilities and expectations respectively, when the initial weighted configuration is equal to $x = (c, z)$ with $c \in \mathbf C, z \in [0, \infty)^{\mathbb Z}$. 
(Here, if $m \in \mathbb Z$ is the maximum vertex label occuring in $c$, then the vertex inserted in step $i$ of the process carries label $m+i$.) 
Then, for a real-valued function $g$ depending on the path of the process and $u(x) = \mathbb E_x[g((\K_n)_{n \geq 0})]$, we use the slightly inaccurate but standard notation $\mathbb E_{X} [g((\K_n)_{n \geq 0})]$ for $u(X)$ and a random variable $X$ which is typically defined in terms of $\K_n, n \geq 0$. 
Probabilities $\mathbb P$ and expectations $\mathbb E$ appearing in the following without subscript are with respect to the initial process with given $\K_0$. 

Proving analogues of  Lemma \ref{lem:u} and Proposition \ref{prop:u} becomes more intricate since we can no longer drop the concentration conditions
relying on the events $\mathcal G_\varepsilon(j)$ as we did in Subsection~\ref{subsec:upper}.
Nevertheless, upon ignoring the dependency structure of the evolution of the process in the star of vertex $i$ and outside, we still expect (at least morally) to bound $\PIdx{\C{0}{}{}}{\bigcap_{j=i}^n \bar \cD_j}$ from below by a term similar to 
\begin{equation} \label{b1} \Ex{\C{0}{}{}}   { \mathbb E_{\mathcal K_{i \setminus i}}\left[\prod_{j=i+1}^{n-k} \mathbf{1}_{\mathcal{G}_{\varepsilon_{p(j)}}(j-i;j+ p(j))} \right]
 \mathbb E^*_{\St{i}{\C{i}{}{}}} \left[ \prod_{j=0}^k \mathfrak f^\eps_j(S_{j}) \right]} . \end{equation}
 The two main hurdles to prove such a lower bound are the following: first, while the process outside the star of vertex $i$ follows the Markovian transition rule, there is a subtle dependence between the star and its complement as the addition of faces to the star adds faces to its complement.
More formally, on $\cD_{i_\ell}$, we have $\C{i_\ell \m i}{}{} \neq  \C{(i_\ell-1)\m i}{}{}$. The reason is 
that when a face in $\St{i}{\C{i_{\ell}-1}{}{}}$ is subdivided during step $i_\ell$, one of the faces that 
are created does not contain vertex $i$ and therefore migrates into $\C{i_\ell \m i}{}{}$ (see Figure~\ref{fig:modelbcompdim3}).
Second, in order to exploit the concentration of
the partition function $Z_j$ for $j \geq i> \eta n$, an argument is needed to replace $\mathbb P_{\K_{i \setminus i}}$ by $\mathbb P_{\K_{i}}$. 
In order to overcome these difficulties, we use the following two lemmas, whose  proofs we delay to the end of the section.

\begin{lem} \label{lem:help1} For any $\delta, \eps > 0, 0 < \eta < 1$, there exists $N = N(\delta, \eps, \eta)$ such that, for all $n \geq N, \eta n < i < n-k$, we have
\[\E{\mathbb P_{\C{i\m i}{}{}} \left(\bigcap_{j=i+1}^{n} \mathcal G_{\eps}(j-i;j)\right)} \geq 1 - \delta.\]
\end{lem}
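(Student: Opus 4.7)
The plan is to decompose the partition function of the restarted process into an initial contribution plus a cumulative increment and control each separately. Write $\widetilde{\K}_m$ for the restarted process with $\widetilde{\K}_0 = \C{i\m i}{}{}$, and set $\widetilde Z_m := F(\widetilde{\K}_m)$. Then one has the telescoping identity
\[
\widetilde Z_m - \lambda(m+i) \;=\; \bigl(\widetilde Z_0 - \lambda i\bigr) \;+\; \sum_{k=1}^m \bigl(\widetilde\Delta_k - \lambda\bigr),
\]
with $\widetilde\Delta_k := \widetilde Z_k - \widetilde Z_{k-1}$. The target bound $|\widetilde Z_m - \lambda(m+i)| < \eps\lambda(m+i)$ uniformly in $m \in [1,n-i]$ will follow from three separate controls: initial deviation, martingale fluctuation, and drift.

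For the initial deviation, the key observation is that at time $i$ the newly-inserted vertex $i$ has a star in $\C{i}{}{}$ consisting of exactly $d$ faces, giving the deterministic bound $F(\St{i}{\C{i}{}{}}) \leq d f_{\max}$. Combined with Proposition~\ref{prop:partition}, this yields $F(\C{i\m i}{}{}) = F(\C{i}{}{}) - O(1) = \lambda i + o(i)$ almost surely, so Egorov's theorem produces an event of probability at least $1-\delta/3$ on which $|\widetilde Z_0 - \lambda i| < (\eps/3)\lambda i$ uniformly for all $i \geq N_0(\eps,\delta)$. For the martingale fluctuation, I would introduce $\mu_k := \mathbb E[\widetilde\Delta_k \mid \widetilde{\mathscr G}_{k-1}]$ so that $\sum_{k=1}^m(\widetilde\Delta_k - \lambda) = M_m + \sum_{k=1}^m(\mu_k - \lambda)$, where $M_m$ is a martingale with increments bounded by $2 d f_{\max}$. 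Doob's maximal inequality together with Azuma--Hoeffding then gives
\[
\mathbb P_{\C{i\m i}{}{}}\!\left(\max_{1 \leq m \leq n-i} |M_m| > \tfrac{\eps}{3}\lambda i\right) \;\leq\; 2(n-i)\exp\bigl(-c\,\eps^2 \lambda^2 i^2/(n-i)\bigr),
\]
which tends to zero uniformly for $i \geq \eta n$.

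The hardest step is to control the drift $\sum_{k=1}^m(\mu_k - \lambda)$. Writing
\[
\mu_k \;=\; \frac{\int_{\cCd} g(x) f(x)\,\dd \widetilde\Pi_{k-1}(x)}{\int_{\cCd} f(x)\,\dd \widetilde\Pi_{k-1}(x)}, \qquad g(x) := \mathbb E\bigl[\textstyle\sum_{\ell=0}^{d-1} f(x_{\ell\lf W})\bigr],
\]
one sees that $\mu_k - \lambda$ is small precisely when the normalised empirical measure $\widetilde\Pi_{k-1}/|\widetilde\Pi_{k-1}|$ is close to $\pi/\pi(\cCd)$. By Theorem~\ref{empir_limit} applied to the original process, $\Pi_{i\m i}/|\Pi_{i\m i}|$ is already within $o(1)$ of $\pi/\pi(\cCd)$ on a high-probability event, so the restarted chain starts in its equilibrium regime. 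The obstacle is to argue that the normalised measure remains close to equilibrium throughout the next $n-i$ steps: heuristically, each replacement is an $O(1/(i+k))$ perturbation of a large, near-stationary measure, which suggests stability, but making this rigorous requires either a quantitative strengthening of Theorem~\ref{empir_limit} for measure-valued P\'olya urns started near their equilibrium, or a coupling of the restarted process with an independent copy of the original process that is known to be near equilibrium. Combining the three controls then yields $|\widetilde Z_m - \lambda(m+i)| < \eps\lambda(m+i)$ uniformly in $m \in [1, n-i]$ with total probability at least $1 - \delta$.
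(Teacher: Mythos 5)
Your decomposition into initial deviation, martingale fluctuation, and drift is a reasonable plan, but it has a genuine gap that you yourself identify and do not close: the drift term $\sum_{k=1}^m(\mu_k - \lambda)$. You observe (correctly) that controlling it amounts to showing that the normalised empirical measure of the restarted process stays near $\pi/\pi(\cCd)$ over $n-i$ steps, and you concede that making this rigorous requires either a quantitative stability theorem for MVPPs started near equilibrium or some coupling argument. Neither is supplied. Theorem~\ref{empir_limit} as stated only gives almost sure convergence for a \emph{single} run of the chain from a fixed starting configuration; it does not give a uniform rate, nor a stability statement for perturbed initial conditions, nor control over the empirical measure of the restarted chain conditional on the random starting configuration $\C{i\m i}{}{}$. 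So the central step of your argument remains a heuristic.

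The paper's proof avoids the drift question altogether by a different coupling idea. Rather than restarting a fresh chain from $\C{i\m i}{}{}$ and trying to show it tracks $\lambda j$, the paper observes that the restarted chain is \emph{already embedded inside the original process}: tracking ancestral faces, the subcomplex $\C{\varsigma_j\not\downarrow i}{}{}$ of faces whose ancestor does not lie in $\St{i}{\C{i}{}{}}$ evolves, under $\mathbb P_{\C{i}{}{}}$, exactly like $\C{j}{}{}$ under $\mathbb P_{\C{i\m i}{}{}}$ (with $\varsigma_j$ the $j$-th subdivision time of a face with such an ancestor). Since $|F(\C{\ell}{}{}) - F(\C{\varsigma_\ell\not\downarrow i}{}{})| \leq (2d+1)Kf_{\max}$ on the event $\{\varsigma_\ell \leq \ell + K\}$, the concentration of $Z_j$ around $\lambda j$ (Proposition~\ref{prop:partition}, applied to the \emph{original} process, whose initial state is deterministic) transfers directly. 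The only remaining task is to show $\varsigma_{n-i} - (n-i)$ is bounded with high probability, which is handled by a stochastic domination involving sums of geometric random variables. This is conceptually cleaner than the martingale route precisely because it inherits the concentration already established for the original chain instead of re-proving it for a randomly-initialised one. Your minor steps (initial deviation via Proposition~\ref{prop:partition} and Egorov, martingale fluctuation via Azuma and a union bound) are plausible, but they cannot rescue the argument without the drift control.
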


\begin{lem} \label{lem:help2} For any $\eps_1, \eps_2, \eps_3 > 0, 0 < \eta_1  < 1$ and $C_1, C_2 > 0$, there exists $N$ depending on these six quantities, such that
the following is satisfied for all $n \geq N$: for any weighted simplicial complexes $\mathcal X, \mathcal Y \in \textbf{C}^w$ such that
\begin{itemize}
\item [\emph{(i)}] $|\mathcal X^{(d-1)} \triangle \mathcal Y^{(d-1)}| \leq C_1$, where $\mathcal X^{(d-1)} \triangle \mathcal Y^{(d-1)} = (\mathcal X^{(d-1)} \setminus \mathcal Y^{(d-1)}) \cup  (\mathcal Y^{(d-1)} \setminus \mathcal X^{(d-1)})$;
\item [\emph{(ii)}] any vertex contained in a face in $\mathcal X^{(d-1)} \cap \mathcal Y^{(d-1)}$ has the same weight in both complexes;
\item [\emph{(iii)}] each face in $ \mathcal X^{(d-1)} \triangle \mathcal Y^{(d-1)}$ has at most fitness $C_2$ in the complex it belongs to;
\item [\emph{(iv)}] $F(\mathcal X) \geq \eps_1 u$ for some $\eta_1 n \leq u \leq n$ (where we recall that $F(\mathcal X)$ is the sum of fitnesses of faces in $\mathcal X$),
\end{itemize}
we have, for any $u < m \leq n$, that 
\[\mathbb P_{\mathcal X} \left(\bigcap_{j=u+1}^m \mathcal G_{\eps_2}(j-u;j) \right) \geq \mathbb P_{\mathcal Y} \left(\bigcap_{j=u+1}^m \mathcal G_{\eps_2/2} (j-u;j) \right) - \eps_3.\]
\end{lem}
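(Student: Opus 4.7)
The plan is to construct a coupling of the processes $(\mathcal K^{\mathcal X}_i)_{i \geq 0}$ and $(\mathcal K^{\mathcal Y}_i)_{i \geq 0}$, started from $\mathcal X$ and $\mathcal Y$ respectively, under which with probability at least $1-\eps_3$ the partition-function difference $D_i := Z^{\mathcal X}_i - Z^{\mathcal Y}_i$ stays uniformly bounded on the window $i \in [0, m-u]$. Once such a coupling is in place, the concentration event for the $\mathcal Y$-process at level $\eps_2/2$ can be transferred to the concentration event for the $\mathcal X$-process at level $\eps_2$ via the triangle inequality, provided $N$ is large enough that the uniform bound on $|D_i|$ is smaller than $(\eps_2/2)\lambda \eta_1 n$.

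First I would construct the coupling step-by-step by using a common vertex weight $W_i \sim \mu$ in both processes and applying a maximal coupling of the face-selection variables conditional on the history: both processes pick the same face (necessarily from the intersection of the current complexes) whenever possible, and make independent choices otherwise. On a \emph{coupling success} at step $i$, both processes subdivide the same face, create the same new $(d-1)$-faces, and in Model \textbf{B} remove the same face, so the symmetric difference of the two face-sets remains $\mathcal X^{(d-1)} \triangle \mathcal Y^{(d-1)}$ and $D_i$ is preserved. Hypotheses (i) and (iii) then give $|D_i| \leq 2 C_1 C_2$ on this event. On a \emph{coupling failure}, $|D_i|$ can change by at most a constant $C^\ast$ depending only on $f$ and $d$.

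The next step is to bound the number of coupling failures. A direct computation of the total variation distance between the two selection laws shows that, conditional on coupling success up to step $i$, the failure probability at step $i+1$ is at most $3 C_1 C_2 / \min(Z^{\mathcal X}_i, Z^{\mathcal Y}_i)$. Using (iv) together with monotonicity of $Z^{\mathcal X}_\cdot$ in Model \textbf{A} (respectively, Lemma \ref{lem:help1} applied to both $\mathcal X$ and $\mathcal Y$ in Model \textbf{B}) to guarantee that $\min(Z^{\mathcal X}_i, Z^{\mathcal Y}_i) \geq c(u+i)$ on a further high-probability event, the expected number of coupling failures in the window is bounded by $C\sum_{i=0}^{m-u-1}1/(u+i) \leq C \log(1/\eta_1)$, independent of $n$. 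Markov's inequality then gives that with probability at least $1-\eps_3$, the number of failures is at most a constant $T^\ast = T^\ast(\eps_3, \eta_1, f, C_1, C_2)$, so $\max_{0 \leq i \leq m-u}|D_i| \leq 2 C_1 C_2 + T^\ast C^\ast =: K$ on the good event.

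Finally, choosing $N$ large enough that $K/(\lambda \eta_1 N) < \eps_2/2$, on the good coupling event we have, for every $j \in [u+1, m]$,
\[|Z^{\mathcal X}_{j-u} - \lambda j| \leq |D_{j-u}| + |Z^{\mathcal Y}_{j-u} - \lambda j| \leq K + (\eps_2/2)\lambda j \leq \eps_2 \lambda j,\]
so $\bigcap_j \mathcal G_{\eps_2/2}(j-u;j)$ under $\mathbb P_{\mathcal Y}$ implies $\bigcap_j \mathcal G_{\eps_2}(j-u;j)$ for the coupled $\mathcal X$-process, yielding the claim up to the $\eps_3$ slack from the coupling failure bound. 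The main obstacle is the uniform lower bound on $\min(Z^{\mathcal X}_i, Z^{\mathcal Y}_i)$ needed to control the TV-distance, which in Model \textbf{B} requires handling potential non-monotone behaviour via Lemma \ref{lem:help1}; a secondary technical point is to verify that the maximal-coupling probabilities can be realised measurably in a way compatible with the Markovian construction.
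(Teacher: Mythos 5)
Your step-by-step maximal coupling of the two Markov chains is a genuinely different route from the paper, which argues via an ancestral-face time-change: under $\mathbb P_{\mathcal X}$ (respectively $\mathbb P_{\mathcal Y}$), the sub-process obtained by restricting to faces whose ancestral $(d-1)$-face lies in $\mathcal X^{(d-1)} \cap \mathcal Y^{(d-1)}$, observed at the successive steps $\s_j$ where such a face is subdivided, has a law that does not depend on whether one starts from $\mathcal X$ or from $\mathcal Y$. The time-shift $\s_j - j$ there plays essentially the role of your coupling-failure count, and both are controlled by sum-of-geometrics arguments, but the ancestral bookkeeping avoids the cascading problem noted below and yields an exact distributional identity rather than a coupling that may fail.

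Two points in your argument, as written, are gaps. First, the failure-counting step is not correct. Your TV bound $\lesssim C_1 C_2 / \min(Z^{\mathcal X}_i, Z^{\mathcal Y}_i)$ is valid only conditional on coupling success through step $i$, since only then is the symmetric difference of the two face-sets still of size at most $C_1$; after a failure the symmetric difference grows by $O(d)$ faces, and the bound no longer holds with these constants. Summing the conditional probabilities therefore does not dominate $\mathbb E[\#\text{failures}]$, and in any case $\sum_{i < m-u} C/(u+i) \approx C\log(1/\eta_1)$ need not be small, so you genuinely must allow a bounded-but-nonzero number of failures. To repair this you need a stopping-time argument: fix a threshold $T^\ast$ in advance, stop the coupling at the $T^\ast$-th failure, observe that before the stopping time the symmetric difference has cardinality at most $C_1 + O(T^\ast d)$ so that a weaker TV bound with $T^\ast$-dependent constants applies, and then argue via optional stopping and Markov that the threshold is reached with probability at most $\eps_3$ for $T^\ast$ large depending only on $\eps_3, \eta_1, C_1, C_2, d, f$.

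Second, the lower bound $\min(Z^{\mathcal X}_i, Z^{\mathcal Y}_i) \geq c(u+i)$ is not justified as stated in Model \textbf{B}. Lemma \ref{lem:help1} concerns the specific random initial complex $\C{i\m i}{}{}$, not arbitrary $\mathcal X, \mathcal Y \in \mathbf C^w$, and the convergence in Proposition \ref{prop:partition} is not uniform over initial states, so you cannot invoke it uniformly over the class of complexes appearing in the lemma. The cleaner fix is to notice that the asserted inequality is vacuous unless $\mathbb P_{\mathcal Y}\big(\bigcap_j \mathcal G_{\eps_2/2}(j-u;j)\big) > \eps_3$, so one may stop the coupling as soon as the $\mathcal Y$-process exits its concentration band; before that stopping time $Z^{\mathcal Y}_i \geq (1-\eps_2/2)\lambda(u+i)$ holds deterministically, and coupling success transfers this linear lower bound to $Z^{\mathcal X}_i$ up to an $O(1)$ correction. (In Model \textbf{A} your appeal to monotonicity is fine once you note $Z^{\mathcal X}_{i+1} - Z^{\mathcal X}_i \geq d f_{\min} > 0$, which gives the claimed linear growth.)
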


 Intuitively, Lemma~\ref{lem:help1} states that, for the process initiated by $\C{i\m i}{}{}$, the partition function remains concentrated with high probability at each of the $n-i$ steps after the arrival of vertex $i$. Lemma~\ref{lem:help2} states that any sufficiently large (linear in $n$) simplicial complexes $\mathcal{X}$ and $\mathcal{Y}$, which differ by at most a constant number of faces, have partition functions that evolve in a similar manner. This is due to the fact that the contribution of the descendants of faces in $\mathcal{X} \triangle \mathcal{Y}$ may be bounded by the sum of geometrically distributed random variables with small success parameter, and is thus negligible.

For brevity, for all $\ell\in \{0, \ldots, k\}$ and $\eps > 0$, recalling the definition of $p(j)$ in \eqref{eq:p-j-def}, we define 
\begin{equation}\label{eq:def_alpha}
G_\ell(\eps) = \bigcap_{j=i_\ell +1}^{n-(k-\ell)}  \mathcal G_{\eps_{p(j)}} (j-i_\ell; j+ p(j)-\ell) 
\quad \text{ and }\quad
\alpha_\ell(\K, \eps) =  \mathbb P_{\K} (G_\ell(\eps)), \quad \K \in \mathbf C^w.
\end{equation}
Thus, in $\alpha_\ell(\K_{i_\ell \setminus i}, \eps)$ the term $\mathcal G_{\eps_{p(j)}} (j-i_\ell; j+ p(j)-\ell)$ represents concentration of $Z_{j - i_\ell}$
(initiated with $\K_{i_\ell \setminus i}$) around $\lambda (j+ p(j) - \ell)$. 
When $p(j)$ increases, the values of $\eps_{p(j)}$ and $j + p(j) - \ell$ change to account for the additional `step' that has occurred in the underlying process without a step occurring in the process initiated with $\K_{i_\ell \setminus i}$. 
Lemma \ref{lem:help2} has the following corollary which justifies this notation, showing that the migration of the additional face into $\K_{i_\ell \setminus i}$ at the step $i_{\ell}$ is insignificant. 

\begin{cor} \label{cor:help2} 
For any $0 < \eta, \delta, \eps' < 1$, there exists $N = N(\delta, \eps', \eta)$ such that the following holds for all $n \geq N$: for all $0 < \eps < 1 / (2k+2)$, $\ell \in \{1, \ldots, k\}$ and  
$\eta n < i < i_1 < \ldots < i_k \leq n$, on the event $\mathcal G_{\eps_\ell}^{(i)}(i_\ell)$, with $\alpha_{\ell}$ as defined in \eqref{eq:def_alpha}, we have
\begin{equation} \label{eq:cor-help-2}
\alpha_\ell(\K_{i_\ell \m i}, \eps') \geq \alpha_\ell(\K_{(i_\ell-1) \m i}, \eps'/4(k+1)) - \delta.
\end{equation}
\end{cor}
\begin{proof}
For sufficiently large $n$ (depending on $\varepsilon'$ and $\eta$), we clearly have that, for all  $\K \in \mathbf C^w$
\begin{linenomath}
\begin{equation*} \label{eq:help2-1}
\alpha_\ell(\K, \eps') \geq \mathbb P_{\K}\left(\bigcap_{j=i_\ell + 1}^{n-(k-\ell)} \mathcal G_{3\eps'_\ell/4}(j-i_\ell; j)\right)
\end{equation*}
\end{linenomath}
and 
\begin{equation} \label{eq:help2-2}
 \mathbb P_{\K}\left(\bigcap_{j=i_\ell + 1}^{n-(k-\ell)} \mathcal G_{3\eps'_\ell/8}(j-i_\ell; j)\right) \geq  \alpha_\ell(\K, \eps'/4(k+1)).
\end{equation}
Note that, on $\mathcal G_{\eps_\ell}^{(i)}(i_\ell)$, we have $Z_{i_\ell \m i} \geq \lambda i_\ell / 2$. Hence,
Lemma \ref{lem:help2} applied with $\eps_1 = \lambda/2, \eps_2 = 3 \eps'_\ell / 4 , \eps_3  = \delta, u =i_\ell, \eta_1 = \eta, \mathcal Y =  \K_{(i_{\ell}-1)\m i}, 
\mathcal X =  \K_{i_{\ell}\m i}, C_1 = d+1, C_2 = f_{\max}$ shows that, on the event $\mathcal G_{\eps_\ell}^{(i)}(i_\ell)$,
\begin{equation} \label{eq:help2-3}
\mathbb P_{\K_{i_\ell \m i} }\left(\bigcap_{j=i_\ell + 1}^{n-(k-\ell)} \mathcal G_{3\eps'_\ell/4}(j-i_\ell;j)\right) \geq  \mathbb P_{\K_{(i_\ell-1) \m i}}\left(\bigcap_{j=i_\ell + 1}^{n-(k-\ell)} \mathcal G_{3\eps_\ell'/8}(j-i_\ell;j)\right) - \delta
\end{equation}
for $n$ sufficiently large (depending on $\delta, \eps', \eta$).
Equations~\eqref{eq:help2-2} and \eqref{eq:help2-3} together imply Equation~\eqref{eq:cor-help-2}.
\end{proof}

Once we have Corollary \ref{cor:help2}, 
the arguments to prove the lower bound are similar to the upper bound, however, the details are more technical. The following lemma is the analogue of Lemma \ref{lem:u}.
\begin{lem} \label{lem:down} For any $0, \delta, \eta < 1$ and $0 < \eps < 1/(2k+2)$ there exists $N = N(\delta, \eps, \eta)$, such that, for all $n \geq N$ and 
$\eta n < i < i_1 < \ldots < i_k \leq n$,  with $\mathfrak h^\eps_j$ as defined in \eqref{eq:h2-def}, we have
\begin{linenomath}
\begin{equation}
 \Prob{\bigcap_{j=i_k+1}^{n} \bar \cD_j(\eps) \bigg |  \mathscr G_{i_k}}\bar D_{i_k}(\eps) \geq (\alpha_k(\C{(i_k-1)\m i}{}{}, \eps/(4(k+1))) - \delta)
\mathfrak h^\eps_k(\St{i}{\C{i_k}{}{}}) \bar D_{i_k}(\eps) \label{s1}
\end{equation}
\end{linenomath}
and, for all $\ell\in \{1, \ldots, k-1\}$,
\begin{linenomath}
\begin{align*}
 &\E{\prod_{j=i_\ell+1}^{i_{\ell+1}-1} \bar D_j(\eps)~\alpha_{\ell+1}(\C{(i_{\ell+1}-1)\m i}{}{}, \eps)
 \bigg |   \mathscr G_{i_\ell}} \bar D_{i_\ell}(\eps)
\\ & \hspace{3cm} \geq (\alpha_\ell (\C{(i_{\ell}-1)\m i}{}{}, (k+1))-\delta) 
\mathfrak  h^\eps_\ell(\St{i}{\C{i_\ell}{}{}}) \bar D_{i_\ell}(\eps), \text{ while, } \\
 &\E{ \prod_{j=i+1}^{i_{1}-1} \bar D_j(\eps)~\alpha_{1}(\C{(i_{1}-1)\m i}{}{}, \eps)
 \bigg |   \mathscr G_{i}}  \bar D_{i}(\eps)
\geq \alpha_0 (\C{i \m i}{}{}, \eps)
\mathfrak  h^\eps_0(\St{i}{\C{i}{}{}}) \bar D_{i}(\eps).  \end{align*}
\end{linenomath}
\end{lem}

\begin{proof} 
We write $\bar D_j$ for $\bar D_j(\eps)$ throughout the proof. We have
\begin{linenomath}
\begin{align}
\E{ \prod_{j=i_k+1}^n \bar D_j \bigg |   \mathscr G_{i_k}}       
&= \E{  \E{ \bar D_n \bigg | \mathscr G_{n-1}} \prod_{j=i_k+1}^{n-1} \bar D_j  \bigg | \mathscr G_{i_k}}\notag \\
&= \E{   \left(1-\frac{F(\St{i}{\C{n-1}{}{}})}{Z_{n-1}}\right) \Pxx{\C{(n-1)\m i}{}{}}{\mathcal G_{\eps_k}(1; n)} 
\prod_{j=i_k+1}^{n-1} \bar D_j \bigg |  \mathscr G_{i_k}},\label{eq:cond_exp}
\end{align}
\end{linenomath}
because, by definition (see~\eqref{eq:def_Geps}), $\mathcal G_{\eps_k}(1; n) = \{|Z_1-\lambda n|<\varepsilon_k\lambda n\}$.
First note that, on the event $\bigcap_{j=i_k+1}^{n-1} \bar{\mathcal D}_j$, we have,
for any $j=i_k+1, \ldots, n-1$,
$F(\St{i}{\C{j}{}{}}) = F(\St{i}{\C{i_k}{}{}})$.
On the event $\bar {\mathcal D}_j$ 
we have 
\begin{equation} \label{eq:subs0}
1-\frac{F(\St{i}{\C{n-1}{}{}})}{Z_{j}} \geq 1-\frac{F(\St{i}{\C{i_k}{}{}})}{\lambda_{-\eps_k}j}.
\end{equation}
Furthermore, by the tower property, we may substitute 
\[\Ex{\C{(n-1)\m i}{}{}}{\Pxx{\C{(n-1)\m i}{}{}}{\mathcal G_{\eps_k}(1; n)}\bar D_{n-1} \bigg | \mathscr G_{n-2}} \quad
\text{ for } \quad \Pxx{\C{(n-1)\m i}{}{}}{\mathcal G_{\eps_k}(1; n)} \bar D_{n-1}\]
inside the conditional expectation, and together with~\eqref{eq:cond_exp} and \eqref{eq:subs0}, this gives
\begin{equation} \label{eq:subs1}
\E{ \prod_{j=i_k+1}^n \bar D_j \bigg |   \mathscr G_{i_k}}  
\geq \left(1-\frac{F(\St{i}{\C{i_k}{}{}})}{\lambda_{-\eps_k}(n-1)}\right)  \E{ \Ex{\C{(n-1)\m i}{}{}}{\Pxx{\C{(n-1)\m i}{}{}}{\mathcal G_{\eps_k}(1; n)} 
\bar D_{n-1}   \bigg | \mathscr G_{n-2}}  \prod_{j=i_k+1}^{n-2} \bar D_j \bigg | \mathscr G_{i_k}}.
\end{equation}
We also have
\begin{equation}\label{eq:subs2} \Ex{\C{(n-1)\m i}{}{}}{\Pxx{\C{(n-1)\m i}{}{}}{\mathcal G_{\eps_k}(1; n)} 
\bar D_{n-1}   \bigg | \mathscr G_{n-2}} =\left(1-\frac{F(\St{i}{\C{n-2}{}{}})}{Z_{n-2}}\right)  \Pxx{\C{(n-2)\m i}{}{}}{\mathcal G_{\eps_k}(1; n-1) \cap \mathcal G_{\eps_k}(2; n) }.
\end{equation}
Thus, using Equations~\eqref{eq:subs1} and \eqref{eq:subs2} in the first inequality,
and~\eqref{eq:subs0} in the second,
\begin{linenomath}
\begin{align*}
&\E{ \prod_{j=i_k+1}^n \bar D_j \bigg |   \mathscr G_{i_k}}\\
& 
\geq 
 \left(1-\frac{F(\St{i}{\C{i_k}{}{}})}{\lambda_{-\eps_k}(n-1)}\right)  \E{ \left(1-\frac{F(\St{i}{\C{n-2}{}{}})}{Z_{n-2}}\right)  \Pxx{\C{(n-2)\m i}{}{}}{\mathcal G_{\eps_k}(1; n-1) \cap \mathcal G_{\eps_k}(2; n) }   \prod_{j=i_k+1}^{n-2} \bar D_j \bigg | \mathscr G_{i_k}} \\
& 
\geq\left(1-\frac{F(\St{i}{\C{i_k}{}{}})}{\lambda_{-\eps_k}(n-1)}\right)  \left(1-\frac{F(\St{i}{\C{i_k}{}{}})}{\lambda_{-\eps_k}(n-2)}\right)  
\E{\Pxx{\C{(n-2)\m i}{}{}}{\mathcal G_{\eps_k}(1; n-1) \cap \mathcal G_{\eps_k}(2; n) }   \prod_{j=i_k+1}^{n-2} \bar D_j \bigg | \mathscr G_{i_k}}. 
\end{align*}
\end{linenomath}

Iterating this process leads to 
$\Prob{\bigcap_{j=i_k+1}^{n} \bar \cD_j(\eps) \bigg |  \mathscr G_{i_k}}\bar D_{i_k}(\eps)\geq 
\alpha_k(\C{i_k\m i}{}{}, \eps) \mathfrak h^\eps_k(\St{i}{\C{i_k}{}{}}) \bar D_{i_{k}}. $
Equation~\eqref{eq:cor-help-2} from Corollary \ref{cor:help2} concludes the proof of Equation~\eqref{s1}
as $\bar {\mathcal D}_{i_k} \subseteq \mathcal G_{\eps_k}^{(i)}(i_k)$.

We use the same ideas to prove the general case, for $\ell\in \{0, \ldots, k-1\}$. 
Here, we want to provide a lower bound to 
$\E{\alpha_{\ell+1}(\C{(i_{\ell+1}-1)\m i}{}{},\eps)~\prod_{j=i_\ell+1}^{i_{\ell+1}-1} \bar D_j  \bigg |
\mathscr G_{i_\ell}}$. 
First, for any $j=i_\ell+1, \ldots, i_{\ell+1}-1$, we have $F(\St{i}{\C{j}{}{}}) = F(\St{i}{\C{i_\ell}{}{}}$.
Thus,  
on the event $\bar{\mathcal{D}}_j$, we have 
\begin{equation} \label{eq:subs0_gen}
1-\frac{F(\St{i}{\C{j}{}{}})}{Z_{j}} \geq 1-\frac{F(\St{i}{\C{i_\ell}{}{}})}{\lambda_{-\eps_\ell}j}.
\end{equation}
Second, using the tower property, we  substitute 
\begin{equation} \label{eq:subs1_gen}
\E{\alpha_{\ell+1}(\C{(i_{\ell+1}-1)\m i}{}{}, \eps)~\bar D_{i_{\ell+1}-1} \bigg |
\mathscr G_{i_{\ell+1}-2} } 
\,\text{ for }\,
\alpha_{\ell+1}(\C{(i_{\ell+1}-1)\m i}{}{},\eps)\bar D_{i_{\ell+1}-1}
\end{equation}
inside the conditional expectation.
Third, 
\begin{eqnarray}
\label{eq:subs3_gen}
\nonumber\lefteqn{\E{\alpha_{\ell+1}(\C{(i_{\ell+1}-1)\m i}{}{}, \eps)~\bar D_{i_{\ell+1}-1} \bigg |
\mathscr G_{i_{\ell+1}-2} } = \left( 1-\frac{F(\St{i}{\C{i_{\ell+1}-2}{}{}})}{Z_{i_{\ell+1}-2}}\right)\times} \\
 & & \Pxx{\C{(i_{\ell+1}-2)\m i}{}{}}{
   \mathcal G_{\eps_{\ell}}(1;i_{\ell+1}-1) \cap \bigcap_{j = i_{\ell+1}+1}^{n-(k-\ell-1)}  \mathcal{G}_{\varepsilon_{p(j)}}(j-i_{\ell+1}+1;j+p(j)-\ell-1)}. 
\end{eqnarray}
So we write:
\begin{linenomath}
\begin{align*}
& \E{\alpha_{\ell+1}(\C{(i_{\ell+1}-1)\m i}{}{},\eps)~\prod_{j=i_\ell+1}^{i_{\ell+1}-1} \bar D_j  \bigg |
\mathscr G_{i_\ell}} \\
& \stackrel{\eqref{eq:subs1_gen}}{=} \E{\E{\alpha_{\ell+1}(\C{(i_{\ell+1}-1)\m i}{}{}, \eps)~\bar D_{i_{\ell+1}-1} \bigg |
\mathscr G_{i_{\ell+1}-2} }
\prod_{j=i_\ell+1}^{i_{\ell+1}-2} \bar D_j \bigg |  \mathscr G_{i_\ell}} \\
& \stackrel{\eqref{eq:subs3_gen}}{=} \mathbb E \Bigg [ \left( 1-\frac{F(\St{i}{\C{i_{\ell+1}-2}{}{}})}{Z_{i_{\ell+1}-2}}\right) \prod_{j=i_\ell+1}^{i_{\ell+1}-2} \bar D_j \times \\
& \hspace{1cm}
\Pxx{\C{(i_{\ell+1}-2)\m i}{}{}}{
   \mathcal G_{\eps_{\ell}}(1;i_{\ell+1}-1) \cap \bigcap_{j = i_{\ell+1}+1}^{n-(k-\ell-1)}  \mathcal{G}_{\varepsilon_{p(j)}}(j-i_{\ell+1}+1;j+p(j)-\ell-1)}
 \bigg |  \mathscr G_{i_\ell}\Bigg].
\end{align*}
\end{linenomath}
Now, the lower bound of~\eqref{eq:subs0_gen} yields:
\begin{linenomath}
\begin{align*}
&\E{\alpha_{\ell+1}(\C{(i_{\ell+1}-1)\m i}{}{},\eps)~\prod_{j=i_\ell+1}^{i_{\ell+1}-1} \bar D_j  \bigg |
\mathscr G_{i_\ell}}\\
&\geq \left(1-\frac{F(\St{i}{\C{i_\ell}{}{}})}{\lambda_{-\eps_\ell}(i_{\ell+1}-2)}\right)
\E{\Pxx{\C{(i_{\ell+1}-2)\m i}{}{}}{
  \bigcap_{j = i_{\ell+1}-1}^{n-(k-\ell)}  \mathcal{G}_{\varepsilon_{p(j)}}(j-i_{\ell+1}+2; j+p(j)-\ell)} \prod_{j=i_\ell+1}^{i_{\ell+1}-2} \bar D_j
\bigg |   \mathscr G_{i_\ell}}.
\end{align*}
\end{linenomath}
By the tower property again, we substitute
\begin{eqnarray} \label{eq:subs4_gen}
\nonumber \lefteqn{\E{ \Pxx{\C{(i_{\ell+1}-2)\m i}{}{}}{
  \bigcap_{j = i_{\ell+1}-1}^{n-(k-\ell)}  \mathcal{G}_{\varepsilon_{p(j)}}(j-i_{\ell+1}+2; j+p(j)-\ell)} \bar D_{i_{\ell+1}-2} | \mathscr G_{i_{\ell+1}-3}}}  \\ 
&& \quad \text{ for } \quad \Pxx{\C{(i_{\ell+1}-2)\m i}{}{}}{
  \bigcap_{j = i_{\ell+1}-1}^{n-(k-\ell)}  \mathcal{G}_{\varepsilon_{p(j)}}(j-i_{\ell+1}+2; j+p(j)-\ell)} \bar D_{i_{\ell+1}-2}.
\end{eqnarray}
Also, 
\begin{eqnarray}  \label{eq:subs5_gen}
\nonumber \lefteqn{\E{ \Pxx{\C{(i_{\ell+1}-2)\m i}{}{}}{
  \bigcap_{j = i_{\ell+1}-1}^{n-(k-\ell)}  \mathcal{G}_{\varepsilon_{p(j)}}(j-i_{\ell+1}+2; j+p(j)-\ell)} \bar D_{i_{\ell+1}-2} | \mathscr G_{i_{\ell+1}-3}} =}  \\ 
  & &
  \left(1-\frac{F(\St{i}{\C{i_{\ell+1}-3}{}{}})}{Z_{i_{\ell+1}-3}} \right)
    \Pxx{\C{(i_{\ell+1}-3)\m i}{}{}}{
  \bigcap_{j = i_{\ell+1}-2}^{n-(k-\ell)}  \mathcal{G}_{\varepsilon_{p(j)}}(j-i_{\ell+1}+3; j+p(j)-\ell)}.
\end{eqnarray}
Bounding the first factor as  in~\eqref{eq:subs0_gen}, and combining Equations~\eqref{eq:subs4_gen} and~\eqref{eq:subs5_gen} give 
\begin{linenomath}
\begin{align*}
&\E{\alpha_{\ell+1}(\C{(i_{\ell+1}-1)\m i}{}{},\eps)~\prod_{j=i_\ell+1}^{i_{\ell+1}-1} \bar D_j  \bigg |
\mathscr G_{i_\ell}}\\
& \geq \left(1-\frac{F(\St{i}{\C{i_\ell}{}{}})}{\lambda_{-\eps_\ell}(i_{\ell+1}-2)}\right) \left(1-\frac{F(\St{i}{\C{i_\ell}{}{}})}{\lambda_{-\eps_\ell}(i_{\ell+1}-3)}\right) 
\times \\ &
\ \ \ \ 
\E{\Pxx{\C{(i_{\ell+1}-3)\m i}{}{}}{
  \bigcap_{j = i_{\ell+1}-2}^{n-(k-\ell)}  \mathcal{G}_{\varepsilon_{p(j)}}(j-i_{\ell+1}+3; j+p(j)-\ell)} \prod_{j=i_\ell+1}^{i_{\ell+1}-3} \bar D_j
\bigg |   \mathscr G_{i_\ell}}.
\end{align*}
\end{linenomath}
Iterating the argument shows that the right hand side multiplied by $\bar D_{i_\ell}$ is bounded from below by \sloppy $\alpha_\ell(\C{i_\ell \m i}{}{}, \eps) \mathfrak h^\eps_\ell(\St{i}{\C{i_\ell}{}{}}) \bar D_{i_\ell}$.
We conclude the proof by appying Equation~\eqref{eq:cor-help-2} from Corollary~\ref{cor:help2}.
\end{proof}

\begin{lem} \label{prop:d}

For  any $\delta > 0, 0 <  \eta < 1$ and $0 < \eps < 1 / (2k+2)$, there exists $N = N(\delta, \eps, \eta)$ such that, for all $n \geq N$, $\ell\in \{1, \ldots, k\}$ and $\eta n < i < i_1 < \ldots < i_k \leq n$, with $\mathfrak  f^\eps_j$ as defined in \eqref{eq:ffrak_def} we have
\begin{linenomath}
\begin{align} & \E{\alpha_{\ell+1}(\C{(i_{\ell+1}-1)\m i}{}{}, \eps) 
~\mathbb E^*_{\St{i}{\C{i_{\ell+1}}{}{}}} \left[ \prod_{j=\ell+1}^k \mathfrak  f^\eps_j(S_{j-\ell-1}) \right] \prod_{j=i_{\ell}+1}^{\min(i_{\ell+1},n)} \bar D_j(\eps) \bigg | \mathscr G_{i_{\ell}} } \bar D_{i_{\ell}}(\eps)  \nonumber \\ 
& \geq (\alpha_{\ell}(\C{(i_{\ell}-1)\m i}{}{}, \eps/(4(k+1))) - \delta)
 \mathbb E^*_{\St{i}{\C{i_{\ell}}{}{}}} \left[ \prod_{j=\ell}^k \mathfrak  f^\eps_j(S_{j-\ell}) \right] \bar D_{i_\ell}(\eps) \label{xk}, 
 \end{align}
 \end{linenomath}
where we use the convention $\alpha_{k+1}(\cdot) = 1$,  while
\begin{linenomath}
 \begin{align*} & \E{\alpha_{1}(\C{(i_{1}-1)\m i}{}{}, \eps)
 ~\mathbb E^*_{\St{i}{\C{i_{1}}{}{}}} \left[ \prod_{j=1}^k \mathfrak  f^\eps_j(S_{j-\ell-1}) \right]  \prod_{j=i+1}^{i_{1}} \bar D_j(\eps) \bigg |  \mathscr G_{i} }\bar D_{i}(\eps) \\
& \geq \alpha_{0}(\C{i\m i}{}{}, \eps)
 \mathbb E^*_{\St{i}{\C{i}{}{}}} \left[ \prod_{j=0}^k \mathfrak  f^\eps_j(S_{j}) \right] \bar D_{i}(\eps). \end{align*}
\end{linenomath}
\end{lem}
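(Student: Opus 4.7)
The plan is to prove both displays by conditioning on $\mathscr G_{i_{\ell+1}-1}$ (respectively $\mathscr G_{i_1-1}$ for $\ell = 0$), isolating from the $i_{\ell+1}$-th step of the global process exactly one step of the star Markov chain, and then applying Lemma~\ref{lem:down} to the remaining product over $j \in [i_\ell+1\, . \, . \, i_{\ell+1}-1]$. The two cases are structurally identical, differing only in which display of Lemma~\ref{lem:down} we invoke (the $\ell = 0$ version carries no $-\delta$ correction). The boundary case $\ell = k$ is immediate: with the convention $\alpha_{k+1} \equiv 1$, the product over $\prod_{j=\ell+1}^{k}$ is empty, $\min(i_{k+1}, n) = n$, and the claim reduces to the first display of Lemma~\ref{lem:down}, since $\mathfrak f_k^\eps = \mathfrak h_k^\eps$ and $\mathbb E^*_{\St{i}{\C{i_k}{}{}}}[\mathfrak f_k^\eps(S_0)] = \mathfrak h_k^\eps(\St{i}{\C{i_k}{}{}})$.

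For the inductive step $1 \leq \ell \leq k-1$ (the case $\ell = 0$ being entirely analogous), I would split $\prod_{j=i_\ell+1}^{i_{\ell+1}} \bar D_j(\eps) = \prod_{j=i_\ell+1}^{i_{\ell+1}-1} \bar D_j(\eps) \cdot \bar D_{i_{\ell+1}}(\eps)$ and condition on $\mathscr G_{i_{\ell+1}-1}$. Note that $\alpha_{\ell+1}(\C{(i_{\ell+1}-1)\m i}{}{}, \eps)$ and the product over $[i_\ell+1\, . \, . \, i_{\ell+1}-1]$ are both $\mathscr G_{i_{\ell+1}-1}$-measurable. On the event that this product equals $1$, we have $\St{i}{\C{i_{\ell+1}-1}{}{}} = \St{i}{\C{i_\ell}{}{}}$, and the inclusion in $\mathcal G^{(i)}_{\eps_\ell}(i_{\ell+1}-1)$ yields $Z_{i_{\ell+1}-1} \leq \lambda_{+\eps_\ell}(i_{\ell+1}-1) + F(\St{i}{\C{i_\ell}{}{}})$. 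A routine perturbation argument — using that one subdivision step alters $Z_{\cdot\,\m i}$ by an $O(1)$ amount bounded by $df_{\max}$ — shows that for $n$ large enough (uniformly in $i > \eta n$ and in the choice of $\mathcal I_k$), the extra concentration event $\mathcal G^{(i)}_{\eps_{\ell+1}}(i_{\ell+1})$ is implied whenever $\mathcal D_{i_{\ell+1}}$ occurs; the slack is precisely the gap $\eps_{\ell+1} - \eps_\ell = \eps$. Hence $\bar D_{i_{\ell+1}}(\eps) \geq D_{i_{\ell+1}}(\eps)$ on the relevant event.

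By the defining dynamics of the companion process, conditional on $\mathcal D_{i_{\ell+1}}$ and $\mathscr G_{i_{\ell+1}-1}$, the law of $\St{i}{\C{i_{\ell+1}}{}{}}$ coincides with that of $S_1$ in the star Markov chain initiated at $S_0 = \St{i}{\C{i_\ell}{}{}}$. Combined with $\mathbb P(\mathcal D_{i_{\ell+1}} \mid \mathscr G_{i_{\ell+1}-1}) = F(\St{i}{\C{i_{\ell+1}-1}{}{}})/Z_{i_{\ell+1}-1}$, the lower bound on this probability derived above, and the Markov property of the star chain, the inner conditional expectation is bounded from below by the $\mathscr G_{i_\ell}$-measurable quantity
\[\frac{F(\St{i}{\C{i_\ell}{}{}})}{F(\St{i}{\C{i_\ell}{}{}}) + \lambda_{+\eps_\ell}(i_{\ell+1}-1)}\; \mathbb E^*_{\St{i}{\C{i_\ell}{}{}}}\!\left[\prod_{j=\ell+1}^k \mathfrak f_j^\eps(S_{j-\ell})\right].\]
Pulling this factor out of the outer conditional expectation and applying the second display of Lemma~\ref{lem:down} (or the third display, for $\ell = 0$) to the remaining term $\mathbb E[\alpha_{\ell+1}(\C{(i_{\ell+1}-1)\m i}{}{}, \eps) \prod_{j=i_\ell+1}^{i_{\ell+1}-1} \bar D_j(\eps) \mid \mathscr G_{i_\ell}] \bar D_{i_\ell}(\eps)$ contributes a factor of $(\alpha_\ell(\C{(i_\ell-1)\m i}{}{}, \eps/(4(k+1))) - \delta) \mathfrak h_\ell^\eps(\St{i}{\C{i_\ell}{}{}})$ (respectively $\alpha_0(\C{i\m i}{}{}, \eps) \mathfrak h_0^\eps(\St{i}{\C{i}{}{}})$). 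The identity $\mathfrak f_\ell^\eps(S) = \mathfrak h_\ell^\eps(S) \cdot F(S)/(F(S) + \lambda_{+\eps_\ell}(i_{\ell+1}-1))$ from~\eqref{eq:ffrak_def} then collapses the combined prefactor into $\mathbb E^*_{\St{i}{\C{i_\ell}{}{}}}[\prod_{j=\ell}^k \mathfrak f_j^\eps(S_{j-\ell})]$, producing the stated RHS.

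The main obstacle is the careful control of the concentration event $\mathcal G^{(i)}_{\eps_{\ell+1}}(i_{\ell+1})$ hidden inside $\bar D_{i_{\ell+1}}(\eps)$: one must verify that the single subdivision step perturbs $Z_{\cdot\,\m i}$ only by a bounded amount, absorbed by the gap between successive $\eps_\ell$. This is precisely the reason for the incrementing choice $\eps_\ell = (1+\ell)\eps$ together with the constraint $\eps < 1/(2k+2)$ that keeps all $\eps_\ell$ below $1/2$. A secondary, purely bookkeeping point is verifying the identification between the conditional law of $\St{i}{\C{i_{\ell+1}}{}{}}$ and one step of the star chain, which is essentially the definition of $(S^*_n)_{n \geq 0}$ recalled in Subsection~\ref{subsec:starprocess}.
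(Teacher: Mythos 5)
Your proposal is correct and follows essentially the same route as the paper's own proof: you recover the key set inclusion $\mathcal D_{i_{\ell+1}} \cap \mathcal G_{\eps_\ell}^{(i)}(i_{\ell+1}-1) \subseteq \mathcal G_{\eps_{\ell+1}}^{(i)}(i_{\ell+1})$ (the paper bounds the one-step change of $Z_{\cdot\,\setminus i}$ by $(d+1)f_{\max}$ rather than $df_{\max}$, but this is immaterial), isolate one step of the star chain by conditioning on $\mathscr G_{i_{\ell+1}-1}$, identify the conditional law of $\St{i}{\C{i_{\ell+1}}{}{}}$ with $S_1$, pull out the factor $F(\St{i}{\C{i_\ell}{}{}})/(F(\St{i}{\C{i_\ell}{}{}}) + \lambda_{+\eps_\ell}(i_{\ell+1}-1))$, invoke Lemma~\ref{lem:down} on the remainder, and collapse everything with the definition~\eqref{eq:ffrak_def} of $\mathfrak f_\ell^\eps$.
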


\begin{proof}
Equation~\eqref{xk} coincides with the statement of Lemma~\ref{lem:down} for $\ell = k$. Let $0 \leq \ell \leq k-1$. 
Note that, for all $1 \leq i \leq n$, we have $|Z_{n \m i} - Z_{(n-1) \m i}| \leq (d+1) f_{\max} $. Thus, for all $n$ sufficiently large (depending on $\eps$ and $\eta$), we have
\begin{equation} \label{setinclusion}
\mathcal D_{i_{\ell+1}} \cap \mathcal{G}_{\eps_{\ell}}^{(i)}(i_{\ell+1}-1) \subseteq \mathcal{G}_{\eps_{\ell+1}}^{(i)}(i_{\ell+1}).
\end{equation}
Using this observation in the second step, we deduce
\begin{linenomath}
\begin{align*}
& \E{
 \alpha_{\ell+1}(\C{(i_{\ell+1}-1)\m i}{}{}, \eps)~
\mathbb E^*_{\St{i}{\C{i_{\ell+1}}{}{}}} \left[ \prod_{j=\ell+1}^k \mathfrak  f^\eps_j(S_{j-\ell-1}) \right] \prod_{j=i_{\ell}+1}^{i_{\ell+1}} \bar D_j \bigg |   \mathscr G_{i_{\ell}} }\bar D_{i_{\ell}} \\
& =  \E{ \E{ \bar D_{i_{\ell+1}}~\mathbb E^*_{\St{i}{\C{i_{\ell+1}}{}{}}} \left[ \prod_{j=\ell+1}^k \mathfrak  f^\eps_j(S_{j-\ell-1}) \right]  \bigg | \mathscr G_{i_{\ell+1}-1}}~\alpha_{\ell+1}(\C{(i_{\ell+1}-1)\m i}{}{}, \eps)~\prod_{j=i_{\ell}+1}^{i_{\ell+1}-1} \bar D_j \bigg |  \mathscr G_{i_{\ell}} } \bar D_{i_{\ell}} \\
& \stackrel{\eqref{setinclusion}}{\geq} \E{ \E{  D_{i_{\ell+1}} \mathbb E^*_{\St{i}{\C{i_{\ell+1}}{}{}}} \left[ \prod_{j=\ell+1}^k \mathfrak  f^\eps_j(S_{j-\ell-1}) \right]  \bigg | \mathscr G_{i_{\ell+1}-1}}~ \alpha_{\ell+1}(\C{(i_{\ell+1}-1)\m i}{}{}, \eps)~\prod_{j=i_{\ell}+1}^{i_{\ell+1}-1} \bar D_j \bigg |  \mathscr G_{i_{\ell}} } \bar D_{i_{\ell}}. 
\end{align*}
\end{linenomath}
Recall that (analogous to in the Proof of Proposition~\ref{prop:u}), conditionally on $\mathscr G_{i_{\ell+1}-1}$, on the event $\mathcal D_{i_\ell +1}$, the random variable $\St{i}{\C{i_{\ell+1}}{}{}}$ 
is distributed as $S_1$ for the star Markov process starting at $\St{i}{\C{i_{\ell+1}-1}{}{}}$. 
This yields:
\begin{linenomath}
\begin{align*}
\E{  D_{i_{\ell+1}} \mathbb E^*_{\St{i}{\C{i_{\ell+1}}{}{}}} \left[ \prod_{j=\ell+1}^k \mathfrak  f^\eps_j(S_{j-\ell-1}) \right]  \bigg | \mathscr G_{i_{\ell+1}-1}} &= \Prob {\cD_{i_{\ell+1}} | \mathscr G_{i_{\ell+1}-1}} \cdot
\mathbb E^*_{\St{i}{\C{i_{\ell+1}-1}{}{}}} \left[ \prod_{j=\ell+1}^k \mathfrak  f^\eps_j(S_{j-\ell}) \right]\\
& =\frac{F(\St{i}{\C{i_{\ell+1}-1}{}{}})}{ Z_{i_{\ell+1}-1}} 
\cdot \mathbb E^*_{\St{i}{\C{i_{\ell+1}-1}{}{}}} \left[ \prod_{j=\ell+1}^k \mathfrak  f^\eps_j(S_{j-\ell}) \right].
\end{align*}
\end{linenomath}
\meaneree{
We deduce that 
\begin{linenomath}
\begin{align*}
& \E{
 \alpha_{\ell+1}(\C{(i_{\ell+1}-1)\m i}{}{}, \eps)~
\mathbb E^*_{\St{i}{\C{i_{\ell+1}}{}{}}} \left[ \prod_{j=\ell+1}^k \mathfrak  f^\eps_j(S_{j-\ell-1}) \right] \prod_{j=i_{\ell}+1}^{i_{\ell+1}} \bar D_j \bigg |   \mathscr G_{i_{\ell}} }\bar D_{i_{\ell}} \\
& \geq  \E{ \frac{F(\St{i}{\C{i_{\ell}}{}{}})}{ Z_{i_{\ell+1}-1}} \mathbb E^*_{\St{i}{\C{i_{\ell}}{}{}}} \left[ \prod_{j=\ell+1}^k \mathfrak  f^\eps_j(S_{j-\ell}) \right] \alpha_{\ell+1}(\C{(i_{\ell+1}-1)\m i}{}{}, \eps)~\prod_{j=i_{\ell}+1}^{i_{\ell+1}-1} \bar D_j  \bigg |  \mathscr G_{i_{\ell}} } \bar D_{i_{\ell}}.
\end{align*}
\end{linenomath}
But on the event associated with 
$\bar{D}_{i_{\ell+1}}$ we have 
\[\frac{F(\St{i}{\C{i_{\ell}}{}{}})}{ Z_{i_{\ell+1}-1}} \geq \frac{F(\St{i}{\C{i_{\ell}}{}{}})}{ F(\St{i}{\C{i_{\ell}}{}{}}) +  \lambda_{+ \eps_\ell} (i_{\ell+1}-1)}.\]
So the previous inequality continues as follows:
\begin{linenomath}
\begin{align*}
& \frac{F(\St{i}{\C{i_{\ell}}{}{}})}{ F(\St{i}{\C{i_{\ell}}{}{}}) +  \lambda_{+ \eps_\ell} (i_{\ell+1}-1)} \times \\ 
& \ \ \ \
\mathbb E^*_{\St{i}{\C{i_{\ell}}{}{}}} \left[ \prod_{j=\ell+1}^k \mathfrak  f^\eps_j(S_{j-\ell}) \right] \cdot
\E{\alpha_{\ell+1}(\C{(i_{\ell+1}-1)\m i}{}{}, \eps)~ \prod_{j=i_{\ell}+1}^{i_{\ell+1}-1} \bar D_j 
\bigg |  \mathscr G_{i_{\ell}} } \bar D_{i_{\ell}}.
\end{align*}
\end{linenomath}}
We bound the last term from below using Lemma~\ref{lem:down}: 
\[
\E{\alpha_{\ell+1}(\C{(i_{\ell+1}-1)\m i}{}{}, \eps)~ \prod_{j=i_{\ell}+1}^{i_{\ell+1}-1} \bar D_j 
\bigg |  \mathscr G_{i_{\ell}} } \bar D_{i_{\ell}}
\geq (\alpha_\ell (\C{(i_{\ell}-1)\m i}{}{}, \eps/(4(k+1)))-\delta )\mathfrak  h^\eps_\ell(\St{i}{\C{i_\ell}{}{}})\bar D_{i_{\ell}}.
\]

By \eqref{eq:ffrak_def}, we have
\[
\frac{F(\St{i}{\C{i_{\ell}}{}{}})}{ F(\St{i}{\C{i_{\ell}}{}{}}) +  \lambda_{+ \eps_\ell}(i_{\ell+1}+1)} 
\mathfrak  h^\eps_\ell(\St{i}{\C{i_\ell}{}{}})  
\mathbb E^*_{\St{i}{\C{i_{\ell}}{}{}}} \left[ \prod_{j=\ell+1}^k \mathfrak  f^\eps_j(S_{j-\ell}) \right] =\mathbb E^*_{\St{i}{\C{i_{\ell}}{}{}}} \left[ \prod_{j=\ell}^k \mathfrak  f^\eps_j(S_{j-\ell}) \right],
\]
so the claim follows. 
\end{proof}

The lemma allows us to bound $\mathbb P\big(\bigcap_{j=i+1}^n \bar{\mathcal D}_j\big)$ from below by a term similar to \eqref{b1} using a backward induction argument which is of the same nature as the proof of Proposition \ref{prop:u}. This result needs to be prepared with the following definition. For $0 < \eps < 1/(2k+2), 0 < \eta < 1$ and $C > 0$, set
\begin{equation} \label{gammadef}
\gamma(\eps, \eta, C) = \gamma_k(\eps, \eta, C)^{k(k+1)/2}, \quad  \gamma_\ell(\eps, \eta, C) = \left(1-\eps_\ell \right) \eta^{2 C \eps_\ell / \lambda}, \quad \ell = 1,\ldots,k.
\end{equation}
Note that these terms decrease as $\eps$ or $C$ increase.

\begin{lem} \label{lem:help3}
For $0 < \eps < 1/(2k+2), 0 < \eta < 1$ and $C > 0$ there exists $N = N(\eps, \eta, C)$ such that, for all $n \geq N$, $\eta n < i < i_1 < \ldots < i_k \leq n$ and $0 < \eps' \leq \eps$
\[\mathfrak f_\ell^\eps(S) \geq \gamma_\ell(\eps, \eta, C) \mathfrak f_\ell^{\eps'}(S) \quad \text{for all } S \in \cC' \text{ with } F(S) \leq C. \]
\end{lem}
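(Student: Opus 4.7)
My plan is to prove the bound by splitting $\mathfrak f^\eps_\ell(S)/\mathfrak f^{\eps'}_\ell(S)$ into two multiplicative factors — a ``pre-factor'' coming from the ratio $F(S)/(F(S)+\lambda_{+\eps_\ell}(i_{\ell+1}-1))$ in $\mathfrak f^\eps_\ell$ (absent when $\ell=k$), and a ``tail'' $\mathfrak h^\eps_\ell(S)/\mathfrak h^{\eps'}_\ell(S)$ — estimating each piece separately and recombining them to match $\gamma_\ell(\eps,\eta,C)$. Throughout, I use the shorthand $\eps'_\ell:=(1+\ell)\eps'$, consistent with $\eps_\ell=(1+\ell)\eps$.

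For the pre-factor (the case $\ell<k$), I would note that since $\eps'_\ell\leq\eps_\ell$ the map $x\mapsto (x+\lambda_{+\eps'_\ell}(i_{\ell+1}-1))/(x+\lambda_{+\eps_\ell}(i_{\ell+1}-1))$ is monotone increasing in $x\geq 0$, so the pre-factor is bounded below by its value at $x=F(S)=0$, namely $(1+\eps'_\ell)/(1+\eps_\ell)\geq 1/(1+\eps_\ell)$. The key observation is that $1/(1+\eps_\ell)$ strictly exceeds $1-\eps_\ell$, by an amount $\eps_\ell^2/(1+\eps_\ell)>0$ that will play a crucial role at the end.

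For the tail, I would set $c=F(S)/\lambda\leq C/\lambda$, $A_\ell=1/(1-\eps_\ell)$ and $A'_\ell=1/(1-\eps'_\ell)$. Because $i_\ell\geq i>\eta n$ and $cA_\ell\leq 2C/\lambda$ uniformly (using $\eps_\ell=(\ell+1)\eps\leq(k+1)\eps<1/2$), Lemma~\ref{simple_stir} applied separately to numerator and denominator gives
\[
\frac{\mathfrak h^\eps_\ell(S)}{\mathfrak h^{\eps'}_\ell(S)}=\left(\frac{i_\ell}{i_{\ell+1}}\right)^{c(A_\ell-A'_\ell)}\bigl(1+O(1/n)\bigr)
\]
uniformly in $S$, $\eps'$ and the $i_j$. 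Two simple bounds control the right-hand side: first, $A_\ell-A'_\ell\leq A_\ell-1=\eps_\ell/(1-\eps_\ell)\leq 2\eps_\ell$, so $c(A_\ell-A'_\ell)\leq 2C\eps_\ell/\lambda$; second, $\eta n<i_\ell\leq i_{\ell+1}\leq n+1$ forces $i_\ell/i_{\ell+1}\in[\eta(1-O(1/n)),1]$. Together, and using that the base lies in $(0,1]$ while the exponent is non-negative, the tail is at least $\eta^{2C\eps_\ell/\lambda}(1+O(1/n))$.

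Multiplying the two bounds gives $\mathfrak f^\eps_\ell/\mathfrak f^{\eps'}_\ell\geq (1-\eps_\ell)\eta^{2C\eps_\ell/\lambda}(1+O(1/n))$ for $\ell<k$; for $\ell=k$ only the tail contributes, but the bound is even more favourable since $\gamma_k$ already carries the factor $(1-\eps_k)<1$. The main obstacle is that the Stirling error $(1+O(1/n))$ may dip below one and so cannot simply be dropped. The resolution is that the slack $\eps_\ell^2/(1+\eps_\ell)$ in the pre-factor (respectively $(1-\eps_k)$ when $\ell=k$) is bounded below by a constant depending only on $\eps$ and $k$, precisely because the hypothesis $\eps<1/(2k+2)$ guarantees $\eps_\ell<1/2$ strictly. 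Choosing $N=N(\eps,\eta,C)$ large enough that the Stirling error is dominated by this slack then completes the proof.
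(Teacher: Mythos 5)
Your proof is correct and follows the same core strategy as the paper's: split $\mathfrak f^\eps_\ell$ into the pre-factor and the product $\mathfrak h^\eps_\ell$, apply Lemma~\ref{simple_stir} to the latter, and exploit $\eps_\ell = (1+\ell)\eps < 1/2$ to bound the exponent difference by $2\eps_\ell$. Where you are actually more careful than the paper is in the final step: the paper's proof ends by asserting that the clean inequality $\mathfrak h^\eps_\ell(S) > \eta^{2C\eps_\ell/\lambda}\,\mathfrak h^0_\ell(S)$ ``follows easily,'' but does not spell out that the strict inequality holds with a \emph{uniform} margin over $i_\ell, i_{\ell+1}, S, \eps'$, which is precisely what is needed to absorb the $(1+O(1/n))$ Stirling error; your argument makes that margin visible (the slack $\eps_\ell^2/(1+\eps_\ell)$ in the pre-factor for $\ell<k$, and the factor $1-\eps_k$ built into $\gamma_k$ for $\ell=k$). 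One small difference in bookkeeping: the paper bounds the pre-factor ratio by $(1-\eps_\ell)$ against the $\eps'=0$ reference $\frac{F(S)}{F(S)+\lambda(i_{\ell+1}-1)}$, whereas you take the infimum over $F(S)\geq 0$ directly, arriving at the sharper value $1/(1+\eps_\ell)$; both routes are valid, but yours exposes the slack naturally.
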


\begin{proof}
Recalling that $\lambda_{+\eps_\ell} = \lambda(1+\eps_\ell)$
we deduce that 
\[\frac{F(S)}{F(S) + \lambda_{+\eps_\ell}(i_{\ell+1}-1)} 
> \frac{F(S)}{(1 + \eps_{\ell})(F(S) + \lambda(i_{\ell+1}-1))} > \left(1-\eps_\ell \right) \frac{F(S)}{F(S) + \lambda(i_{\ell+1}-1)}.\]
(This statement requires no bounds on $F(S)$ or $i_\ell$.) Hence, it is sufficient to prove that
$\mathfrak h_\ell^\eps(S) \geq \eta^{2 C \eps_\ell / \lambda} \mathfrak h_\ell^{\eps'}(S)$
for sufficiently large $n$. By Lemma \ref{simple_stir}, we have
\[\mathfrak h_\ell^\eps(S) = \left( \frac{i_\ell}{i_{\ell+1}}\right)^{F(S)/\lambda_{-\eps_\ell}} \left(1 + O \left (\frac 1 n \right) \right),\]
where the $O$-term can be chosen uniformly in $\eps, i _\ell, i_{\ell+1}$ and $S$ for given $\eta$ and $C$. Note that $\mathfrak h_\ell^{\eps}(S)$ increases as $\eps$ decreases. Therefore, it is enough to prove that for each $\ell \in \{0, \ldots, k+1\}$
\[\left(\frac{i_{\ell}}{i_{\ell + 1}}\right)^{F(S)/\lambda_{-\eps_\ell}} > \eta^{2 C \eps_\ell / \lambda} \left(\frac{i_{\ell}}{i_{\ell + 1}}\right)^{F(S)/\lambda}\]
for all $S$ with $F(S) \leq C$. This follows easily from the bound on $F$,  the fact that $\eps < 1 / (2k+2)$ (so that for each $\ell$ $1/(1-\eps_{\ell}) \leq 2$) and each ratio satisfies $\eta \leq \frac{i_{\ell}}{i_{\ell + 1}} < 1$. 
\end{proof}

\begin{prop} \label{prop:be_f}
For $\delta > 0,  0 <  \eta < 1$ and $0 < \eps < 1 / (2k+2)$, there exists $N = N(\delta, \eps, \eta) > 0$ such that, for all $n \geq N$ and $\eta n < i \leq i_1 < \ldots < i_k \leq n$, with  $\gamma_k = \gamma_k(\eps, \eta, (d+1)(k+1)f_{\max})$ and $\gamma = \gamma(\eps, \eta, (d+1)(k+1) f_{\max})$,  we have, 
\begin{linenomath}
\begin{align} \label{eq:the-prop}
 \Prob{\bigcap_{j=i+1}^n\bar {\mathcal D}_j(\eps)} \geq 
& \gamma  \E{ \alpha_0\left(\C{i\m i}{}{}, \eps/(4(k+1))^{k+1}\right) \mathbb E^*_{\St{i}{\C{i}{}{}}}  \left[\prod_{j=0}^k \mathfrak f_j^\eps(S_j)\right]\bar D_i(\eps/(4(k+1))^{k+1})} 
\nonumber \\
& - \delta \sum_{\ell=1}^k \E{ 
\prod_{j=i+1}^{i_\ell}\bar D_j(\eps) \mathbb E^*_{\St{i}{\C{i_{\ell}}{}{}}} \left[\prod_{j=0}^{\ell} \mathfrak f^{\eps/(4(k+1))^{k}}_{k + j - \ell}(S_{j})\right] 
\bar D_i(\eps)}.
\end{align}
\end{linenomath}
\end{prop}

\begin{proof}
By Lemma \ref{lem:down}, we have
\begin{linenomath}
\begin{align*}
&\mathbb P\Bigg(\bigcap_{j=i+1}^n \bar \cD_j(\eps)\Bigg)
=\mathbb E \left[ \Prob{\bigcap_{j=i_k+1}^{n} \bar \cD_j(\eps) \bigg |  \mathscr G_{i_k}} 
 \prod_{j=i+1}^{i_k} \bar D_j(\eps)\right] \\
 & \stackrel{\eqref{s1}}{\geq} \mathbb E \left[ \alpha_k(\K_{(i_k-1) \m i}, \eps/(4(k+1))) \mathbb E^*_{\St{i}{\C{i_{k}}{}{}}}  [\mathfrak f_k^\eps(S_0)] \prod_{j=i+1}^{i_k} \bar D_j(\eps) \right]
 - \delta \mathbb E \left[  \mathbb E^*_{\St{i}{\C{i_{k}}{}{}}}   [\mathfrak f_k^\eps(S_0)] \prod_{j=i+1}^{i_k} \bar D_j(\eps) \right].
 \end{align*}
 \end{linenomath}
In order to apply Lemma \ref{prop:d} again in the first term, we may replace $\bar D_j(\eps)$ by $\bar D_j(\eps/(4(k+1)))$. Moreover, by Lemma \ref{lem:help3} and as $F(S_\ell) \leq (d+1)(k+1) f_{\max}$ for $\ell \in \{0, \ldots, k\}$, we may replace $\mathfrak f_k^\eps(S_0)$ by $\gamma_k \mathfrak f_k^{\eps/(4(k+1))}(S_0)$ for sufficiently large $n$. Hence, applying Lemma \ref{prop:d} again after this step, we deduce that the first term in the last display is bounded from below by
\begin{linenomath}
\begin{align*}
& \gamma_k \mathbb E \left[ \alpha_{k-1}(\K_{(i_{k-1}-1) \m i}, \eps/16) \mathbb E^*_{\St{i}{\C{i_{k-1}}{}{}}} \left[\mathfrak f_{k-1}^{\eps/(4(k+1))}(S_0) \mathfrak f_{k}^{\eps/(4(k+1))}(S_1) \right] \prod_{j=i+1}^{i_{k-1}} \bar D_j(\eps/(4(k+1))) \right] \\
& - \delta  \gamma_k  \mathbb E \left[ \mathbb E^*_{\St{i}{\C{i_{k-1}}{}{}}}  \left [\mathfrak f_{k-1}^{\eps/(4(k+1))}(S_0) \mathfrak f_{k}^{\eps/(4(k+1))}(S_1) \right] \prod_{j=i+1}^{i_{k-1}} \bar D_j(\eps/(4(k+1))) \right].
\end{align*}
\end{linenomath}

We now iterate these steps until the main term contains $\alpha_0$. In particular, with the leading term, at the $(\ell + 1)$th step we get an expression of the form
\begin{linenomath}
\begin{align*}
& \E{\alpha_{k-\ell}(\K_{(i_{k-\ell}-1) \m i}, \eps/(4(k+1))^{\ell+1}) \mathbb E^*_{\St{i}{\C{i_{k-\ell}}{}{}}} \left[\prod_{j= 0}^{\ell}\mathfrak f_{k+ j - \ell}^{\eps/(4(k+1))^{\ell}}(S_{j})\right] \prod_{j=i+1}^{i_{k-\ell}} \bar D_j\left(\eps/(4(k+1))^{\ell}\right) } \\
& \geq \left(\prod_{j = 0}^{\ell} \gamma_{k-j}\right) \mathbb E\left[\alpha_{k-(\ell+1)}(\K_{(i_{k-(\ell+1)}-1) \m i}, \eps/(4(k+1))^{\ell+2}) \right.\\ & \left. \hspace{4cm}\times \mathbb E^*_{\St{i}{\C{i_{k-(\ell+1)}}{}{}}} \left[\prod_{j =0}^{\ell+1} \mathfrak f_{k + j-(\ell+1)}^{\eps/(4(k+1))^{\ell+1}}(S_j)\right] \prod_{j=i+1}^{i_{k-(\ell+1)}} \bar D_j(\eps/(4(k+1))^{\ell+1}) \right] 
\\ & - \delta  \left(\prod_{j = 0}^{\ell} \gamma_{k-j}\right) \mathbb E\left[\mathbb E^*_{\St{i}{\C{i_{k-(\ell+1)}}{}{}}} \left[\prod_{j =0}^{\ell+1} \mathfrak f_{k + j-(\ell+1)}^{\eps/(4(k+1))^{\ell+1}}(S_j)\right] \prod_{j=i+1}^{i_{k-(\ell+1)}} \bar D_j(\eps/(4(k+1))^{\ell+1}) \right]. 
\end{align*}
\end{linenomath}
Now, thanks to monotonicity, when we iterate this expression, we may do the following replacements in the procedure. First, for the term not involving $\delta$, any factors of type $\gamma_\ell(\eps', \eta, (d+1)(k+1) f_{\max})$ with $0 < \eps' < \eps$ may be bounded from below by $\gamma_k$. Thus, at the $(\ell+1)$th step, we multiply a product of $\gamma_{k}^{\ell+1}$ to the co-efficient of the main term, leading to the co-efficient $\gamma$ as defined in \eqref{gammadef}. Moreover, in the final product $\prod_{j =0}^{k} \mathfrak f_{j}^{\eps/(4(k+1))^{k}}(S_j)$, we may replace $\eps/(4(k+1))^{k}$ by $\eps$ to get a lower bound. 
 This leads to the first term in the statement of the proposition. Next, in the error term involving $\delta$, we bound each $\gamma_{\ell}$ from above by $1$, and bound each of the factors of the form $\mathfrak f_{k + j -\ell}^{\eps/(4(k+1))^{\ell}}$ from above by $\mathfrak f_{k + j - \ell}^{\eps/(4(k+1))^{k+1}}$. This gives us the error term as stated in Equation~\eqref{eq:the-prop}.
\end{proof}

We are finally ready to prove Proposition \ref{p2}. Recalling \eqref{start:lower}, we bound $\E{N_k^\eta(n)}$ from below by summing the lower bound stated in Proposition~\ref{prop:be_f} over $\eta n < i < i_1 < \ldots < i_k \leq n$.
We start with the error term. Upon dropping the indicator variables $\bar D_j(\eps)$ and bounding $\mathfrak f_j^\eps$ from above by $f_j$ defined in Equation~\eqref{def:ff} (in Subsection~\ref{subsec:upper}), the absolute value of the error term is bounded from above by 
\begin{equation} \label{eq:error-sum-lower}
\delta \sum_{\eta n < i < n} \sum_{\Ind_k \in {\{i+1, \ldots, n\} \choose k}}  \E{\mathbb E^*_{\St{i}{\C{i}{}{}}}  \left[\prod_{j=0}^k f_j (S_j)\right]}.
\end{equation}
From the proof of Corollary~\ref{cor:1} in Subsection~\ref{subsec:upper}, we know that the double sum converges after rescaling by $n$. Hence, there exist $C_1 > 0$ and a natural number  $N$ both depending on $\eps, \eta$, such that, for all $n \geq N$, \eqref{eq:error-sum-lower} is bounded from above by $C_1 \delta n $.

To treat the main term, assume for now that 
there exists a constant $C_2 = C_2(\varepsilon, \eta) > 0$ such that, for all $\eta n < i \leq n$, we have
\begin{equation} \label{lastbound22} \sum_{\cI_k \in {\{i+1, \ldots, n\} \choose k}} \mathbb E^*_{\St{i}{\C{i}{}{}}} \left[\prod_{j=0}^k \mathfrak f^\eps_j(S_j)\right] \leq C_2. 
\end{equation} 
We shall use the following inequality: for a non-negative 
random variable $X$ satisfying $X \leq C$, for some $C>0$, 
and indicator random variables $I_1,I_2$ we have 
\[\E{X} \leq \E{X I_1 I_2} + C(\E{1-I_1}+ \E{1-I_2}).\]
Thanks to this inequality, the main term in the lower bound from Proposition \ref{prop:be_f} summed over $i < i_1 < \ldots < i_k \leq n$ (for fixed $\eta n < i \leq n$) can be bounded from below by
\begin{equation} \label{eq:last-display}
\gamma  \sum_{\cI_k \in {\{i+1, \ldots, n\} \choose k}} \E{\mathbb E^*_{\St{i}{\C{i}{}{}}} \left[\prod_{j=0}^k \mathfrak f^\eps_j(S_j)\right]} - C_2 \gamma \left(1-\E{\alpha_0\left(\C{i\m i}{}{}, \frac{\eps}{4^{k+1}} \right)} + 1 - \E{\bar{D}_i \left (\frac{\eps}{4^{k+1}}\right)}\right).
\end{equation}
Let $\delta'> 0$. Thanks to Lemma \ref{lem:help1} and the fact that $\Prob{\mathcal{G}^{(i)}_{\eps/(4(k+1))^{k+1}}(i)} \to 1$ as $n\to \infty$ uniformly 
in $\eta n < i \leq n$, there exists a natural number $N = N(\delta', \varepsilon, \eta) > 0$  such that, for all $n \geq N$,  the absolute value of the second term in~\eqref{eq:last-display} is bounded from above by $C_2 \gamma \delta' \leq C_2 \delta'$. Collecting all bounds and using Lemma \ref{simple_stir} concludes the proof of Equation~\eqref{eq:prop-p2} upon setting $\varrho = \gamma$. (Note that we may remove the additional $F(S_j)$ in the denominator of $\mathfrak f^\eps_\ell(S_j)$ in the final statement as $F(S_j)$ is bounded by $(k+1)(d+1)f_{\max}$.) 
\\
Therefore, it remains to establish the existence of $C_2$ satisfying \eqref{lastbound22}.
To this end,  we shall bound  $\mathfrak f_j^\eps$ from above by $f_j$ (as defined in Equation~\eqref{def:ff}). Note that if $i \geq 2$, then $\frac{1}{i-1} \leq \frac{2}{\eta n}$. Thus, by applying Stirling's formula and recalling that $F(S_\ell) \leq (d+1)(k+1)f_{\max}$ for all $\ell \in \{0, \ldots, k\}$, we have
\begin{linenomath}
\begin{align*}
 \sum_{\cI_k \in {\{i+1, \ldots, n\} \choose k}} \prod_{j=0}^k f_j(S_j) 
& \leq \left(1+ O \left(\frac 1 n \right)\right) \sum_{i < i_1 < \ldots < i_k \leq n} \prod_{\ell=0}^{k-1} \left(\left(\frac{i_\ell}{i_{\ell+1}}\right)^{\frac{F(S_\ell)}{\lambda_{+\eps}}} \cdot \frac{F(S_\ell)}{\lambda_{-\eps}(i_{\ell+1}-1)}\right) \left(\frac{i_k}{n}\right)^{\frac{F(S_k)}{\lambda_{+\eps}}}  \\
& \leq  \frac{2\prod_{\ell=0}^{k-1} F(S_\ell)}{\lambda_{-\eps}\eta} \left(1+ O \left(\frac 1 n \right)\right) \times \\ 
& \ \ \ \frac1n\sum_{\eta n < i_0 < \ldots < i_{k-1} \leq n} \prod_{\ell=0}^{k-2} \left(\left(\frac{i_\ell}{i_{\ell+1}}\right)^{\frac{F(S_{\ell+1})}{\lambda_{+\eps}}} \cdot \frac{1}{\lambda_{-\eps}(i_{\ell+1}-1)}\right) \left(\frac{i_{k-1}}{n}\right)^{\frac{F(S_k)}{\lambda_{+\eps}}}, 
\end{align*}
\end{linenomath}
where the $O$-term depends only on $\eta$.
From Corollary \ref{cor:summation} (applied with $k-1$ instead of $k$) it follows that the right hand side is uniformly bounded for any $\eps$ and $\eta$.

We conclude the section with the proofs of Lemmas \ref{lem:help1} and \ref{lem:help2}.
\begin{proof}[Proof of Lemma \ref{lem:help1}]
Let $i \in \N$ and $\mathcal X \in \textbf C^w$ contain a vertex with label $i$ and at most $d$ active faces containing $i$, where each $(d-1)$-face containing $i$ has fitness at most $f_{\max}$.
In the random dynamical process $\C{j}{}{}, j \geq 0$ initiated with complex $\mathcal X$, at time $j \geq 1$, to each face $\sigma  \in \C{j}{}{d-1}$, 
we can associate a unique ancestral $(d-1)$-dimensional face in $\mathcal X$.
(Formally, the ancestral face of a face in $\mathcal X$ is the face itself. The ancestral face of any other face $\sigma$ is defined recursively as the ancestral face of the face which was subdivided when $\sigma$ was formed.)
Let $\C{j\not\downarrow i}{}{} \subseteq \C{j}{}{}$ be the subcomplex of faces of $\C{j}{}{}$ whose ancestral face does 
not lie in $\St{i}{\mathcal X}$.  Note that $\C{j\not\downarrow i}{}{} \subseteq 
\C{j \m i}{}{}$ and that this inclusion is typically strict due to migration of faces to the outside of the star at times of insertion in the star.  For $j \geq 1$, let $\varsigma_j$ be $j$-th time the face chosen in the construction of the simplicial complex has its ancestral face in $\mathcal X_{\m i}$. 
Set $\s_0 = 0$. Note that $\s_j \geq j$ and that $\s_j - j$ is non-decreasing in $j$.  The crucial observation is that the sequence $\C{\s_j \not\downarrow i}{}{}, j \geq 0$ under
$\mathbb P_{\mathcal X}$ is distributed as the sequence $\C{j}{}{}, j \geq 0$ under $\mathbb P_{\mathcal X_{\m i}}$ (upon disregarding vertex labels which are irrelevant here). 
 Formally, this follows from $\C{\s_0 \not\downarrow i}{}{} = \mathcal X_{\m i}$ under $\mathbb P_{\mathcal X}$ and the fact that
$\C{\s_j \not\downarrow i}{}{}, j \geq 0$ is Markovian with the same transition rule as $\C{j}{}{}, j \geq 0$. 
For an integer $K > 0$, on the event $\s_\ell \leq \ell + K$ and for any initial configuration $\mathcal X$ as described at the beginning of the proof, we have
$|F(\C{\ell}{}{}) -  F(\C{\s_{\ell} \not\downarrow i}{}{})| \leq (2d+1) K  f_{\max}$. Hence, for all $n$ sufficiently large (depending on $\eps, \eta$ and $K$), 
\begin{linenomath}
\begin{align*} 
\E{\mathbb P_{\C{i \m i}{}{}} \left(\bigcap_{j=i+1}^n \mathcal G_\eps(j-i;j)\}\right)}
&\geq \E{ \mathbb P_{\K_i}\left( \bigcap_{j=i+1}^n \{|F( \C{\s_{j-i} \not\downarrow i}{}{}) - \lambda j| < \eps \lambda j \} \right) \cdot \mathbf 1_{|\s_{n-i} - (n-i)| \leq K}} \\ 
& \geq \E{ \mathbb P_{\K_i}\left(\bigcap_{j=i+1}^{n+K} \mathcal G_{\eps/2}(j-i;j)\right)} - 
\E{ \mathbb P_{\K_i} (|\s_{n-i} - (n-i)| > K)} \\
& \geq 
\E{\mathbb{P}_{\K_i}\left(\bigcap_{j=i+1}^{\infty}  \mathcal G_{\eps/2}(j)\right)} - \E{ \mathbb P_{\K_i} (|\s_{n-i} - (n-i)| > K)}.
\end{align*}
\end{linenomath}
By Proposition~\ref{prop:partition}, for all $n$ sufficiently large, the first term in the last display is at least $1-\delta/2$ for all $\eta n < i \leq n$. Further, we can choose 
 $K$ large enough, such that the absolute value of the second term is bounded from above by
$\delta/2$ for all $\eta n < i \leq n$ and all $n$ sufficiently large. To see this, note that $\mathbb P_{x} (|\s_n - n| \geq K)$ is the probability that the number of faces with ancestral face in $\St{i}{x}$ chosen to be subdivided up to time $n$ exceeds $K$. Let $1 \leq \tau_1 < \tau_2 < \ldots$ be the instances, when such faces are chosen. Then, the sought after quantity equals $\mathbb P_x(\tau_K \leq n)$. 
Note that $\tau_K$ can be bounded from below stochastically by $X_1 + \cdots + X_K$ for independent summands, where $X_\ell$ follows the geometric distribution with success parameter $\min((d+1)\ell f_{\max} / F(x),1)$, which implies that $\E{X_1+\cdots +X_K}\geq F(x) \frac{\log K}{(d+1) f_{\max}}$.
Thus, if $F(x) \geq \lambda \eta n  / 2$, then, for a given $\eps' > 0$, for any $K$ large enough (depending on $\eta$), and all $n$ sufficiently large (depending on $\eps', \eta$ and $K$) we have $\mathbb P_x(\tau_K \leq n) \leq \eps'$ for all $n \geq 1$. This follows from a straightforward application of Chebychev's inequality, whose details we omit.  The fact that $F(\mathcal K_i) \geq \lambda \eta n  / 2$ (since $i \geq \eta n$) with high probability for sufficiently large $n$ (depending on $\eta$) concludes the proof of the lemma. 
\end{proof}

\begin{proof}[Proof of Lemma \ref{lem:help2}]
The proof is very similar to the previous. Let $\C{j\downarrow \mathcal X}{}{}$ be the sub-complex of $\mathcal K_j$ of faces whose ancestral face lies in $\mathcal X$. For $j \geq 1$, let $\s^{\mathcal X}_j$ be the $j$th time a face with ancestral face in $\mathcal X$ is subdivided. Set $\s_0^{\mathcal X}=0$. As before, we have $\s^{\mathcal X}_j \geq j$ and $\s^{\mathcal X}_j - j$ is non-decreasing. Define $\C{j\downarrow \mathcal Y}{}{}$ and $\s_j^{\mathcal Y}$ analogously. Thanks to (ii), under $\mathbb P_{\mathcal X}$, the sequence $\C{\s_j^{\mathcal Y}\downarrow \mathcal Y}{}{}, j \geq 0$ is distributed as $\C{\s_j^{\mathcal X}\downarrow \mathcal X}{}{}, j \geq 0$ under $\mathbb P_{\mathcal Y}$. Thus, it is enough to show that, 
under the conditions (i) - (iv), for sufficiently large $n$, we have
\[\mathbb P_{\mathcal Y}\left( \bigcap_{j=u+1}^m \mathcal G_{\eps_2}(j-u,j) \right) - \eps_3/2 \leq  \mathbb P_{ \mathcal Y} \left( \bigcap_{j=u+1}^m \{| F(\C{\s^{\mathcal X}_{j-u} \downarrow \mathcal X}{}{}) - \lambda j| < 3 \eps_2 j / 2 \} \right)\]
and 
\[\mathbb P_{\mathcal X} \left( \bigcap_{j=u+1}^m \{| F(\C{\s^{\mathcal Y}_{j-u} \downarrow \mathcal Y}{}{}) - \lambda j| < 3 \eps_2 j / 2\} \right)
\leq  \mathbb P_{\mathcal X}\left( \bigcap_{j=u+1}^m  \mathcal G_{2 \eps_2}(j-u,j) \right) + \eps_3 / 2.
\]
We only show the second statement, as the first can be proved by similar arguments. Note that, for any natural number $K$,  we have
\begin{linenomath}
\begin{align*}
& \mathbb P_{\mathcal X}\left( \bigcap_{j=u+1}^m \{|F(\C{\s^{\mathcal Y}_{j-u} \downarrow \mathcal Y}{}{}) - \lambda j| < 3 \eps_2 \lambda j / 2\} \right)  \\
& \leq \sum_{p=0}^K \mathbb P_{\mathcal X} \left( \bigcap_{j=u+1}^m \{|F(\C{\s^{\mathcal Y}_{j-u} \downarrow \mathcal Y}{}{}) - \lambda j| < 3 \eps_2 \lambda j / 2, \s^{\mathcal Y}_{n-u} = n -u +p\}  \right) + \mathbb P_{\mathcal X} (| \s^{\mathcal Y}_{n-u} - (n -u)| \geq K).
\end{align*}
\end{linenomath}
On $\s^{\mathcal Y}_{n-u} = n -u +p$, $0 \leq p \leq K$, 
we have, using (i) and (iii),
\[|F(\C{\s^{\mathcal Y}_{j-u} \downarrow \mathcal Y}{}{}) - F(\C{j-u}{}{})| \leq K (d+1)  f_{\max} + F\left(\mathcal X^{(d-1)} \triangle \mathcal Y^{(d-1)}\right) \leq K (d+1)  f_{\max} + C_1 C_2.\]
(Here, $F\left(\mathcal X^{(d-1)} \triangle \mathcal Y^{(d-1)}\right)$ denotes the sum of all fitneses of faces in $\mathcal X^{(d-1)} \triangle \mathcal Y^{(d-1)}$.) 
Thus, for all $n$ sufficiently large (depending on $\eta, \eps_2$ and $K$), we can bound the right hand side of the last display from above by
\begin{linenomath}
\begin{align*}
& \sum_{p=0}^K \mathbb P_{\mathcal X} \left( \bigcap_{j=u+1}^{m+p} \mathcal G_{2 \eps_2}(j-u, j) \cap \{\s^{\mathcal Y}_{n-u} = n - u +p\}  \right)
+ \mathbb P_{\mathcal X}( |\s^{\mathcal Y}_{n-u} - (n -i)| \geq K) \\
& \leq  \mathbb P_{\mathcal X} \left( \bigcap_{j=u+1}^{m} \mathcal G_{2 \eps_2}(j-u, j) \right)
+ \mathbb P_{\mathcal X}( |\s^{\mathcal Y}_{n-u} - (n -u)| \geq K).
\end{align*}
\end{linenomath}
Now, the same arguments relying on a stochastic bound involving sums of independent geometric random variables used in the previous proof show that the second summand can be made smaller than $\eps_3/2$ for sufficiently large (but fixed) $K$ and all $n$ sufficiently large (depending on $\eta$, $\eps_1,\eps_3, C_1$ and $C_2$). Here, one uses (iv) and the fact that
$F(\mathcal X^{(d-1)} \triangle \mathcal Y^{(d-1)}) \leq C_1 C_2$ to bound the success probabilities of the geometric random variables suitably.
\end{proof}

\section{Appendix}

\subsection{Proof of Lemma \ref{lem:couplingXN}}
\label{ap:couplingb2}
For brevity, we omit the superscript $\eps$ when referring to the process $N^\eps$, and in the notation of other parameters depending on $\eps$. 

Let $\varepsilon > 0$ be small enough such that $\varpi > \vartheta$ (this is possible because $\mu$ does not contain an atom at $1$). Then, $i \varpi + (d-i)\vartheta \leq 1$ for $i \in \{1, \ldots, d\}$. Let $\theta_i = 1 - i \varpi - (d-i)\vartheta, i \in \{0, \ldots, d\}$. The Markov chain $N$ has the following dynamics:
jump times are exponentially distributed with unit mean while the skeleton process performs a random walk on $\{0, \ldots, d\}$ according to the following rules: the process is absorbed
at $0$ and, given that its current state is $i \in \{1, \ldots, d\}$, it moves to $i+1$ with probability $(d-i)\vartheta$ and to $i-1$ with probability $i\varpi$, while it remains at $i$ with probability $\theta_i$. 

We construct the process $N$ from a realisation  of $X$. First, we use the jump times $\sigma_n, n \geq 1$ of the
$X$-process for the jump times of $N$. We define $N_{\sigma_n}$ by induction, starting with $N_{\sigma_0}=\mathscr C (X_{\sigma_0})$, where $\sigma_0 :=0$.
Let $n \geq 1$ and suppose $X_{\sigma_{n-1}} = \textbf{x}$ and $\mathscr C(X_{\sigma_{n-1}}) = \textbf{j}$ (recalling that $\mathscr C(\varnothing) = 0$).
 If $0 \leq \textbf{j} < N_{\sigma_{n-1}}$, then choose $N_{\sigma_n}$ arbitrarily obeying the dynamics of the random walk (for example \, by using additional external randomness).
 If $N_{\sigma_{n-1}} = 0$, set $N_{\sigma_n} = 0$. Finally, assume that $N_{\sigma_{n-1}} = \textbf{j} >0$. 
 Let \[s^\uparrow = \sum_{i=0}^{d-1-\textbf{j}} \frac{\E{f(\textbf{x}_{i \lf W})1_{W > 1 - \varepsilon}}}{M} \leq (d-\textbf{j})\vartheta, \quad s_\downarrow = \sum_{i=d-\textbf{j}}^{d-1} \frac{\E{f(\textbf{x}_{i \lf W})1_{W \leq 1 - \varepsilon}}}{M} \geq \textbf{j} \varpi.\]
 Let $A$ be an event that has probability
 $\textbf{j}\varpi /s_\downarrow \in [0,1]$ which is independent of the past of the process given $X_{\sigma_{n-1}}$.\footnote{For example $A = \left\{U \in [0, \textbf{j}\varpi /s_\downarrow ]\right\}$ for an independent uniformly distributed random variable $U$.}
  Let \[E = \{X_{\sigma_n} = \varnothing\} \cup (\{\mathscr C(X_{\sigma_{n}}) = \mathscr C(X_{\sigma_{n-1}}) - 1\}  \cap A^c) \cup \{\mathscr C(X_{\sigma_{n}}) = \mathscr C(X_{\sigma_{n-1}})\}.\]
 We first define $N(\sigma_n)$ on $E^c$ as follows: we set 
 \[
N_{\sigma_n}  = \begin{cases}  N_{\sigma_{n-1}}  + 1 & \text{on } \{\mathscr C(X_{\sigma_{n}}) = \mathscr C(X_{\sigma_{n-1}}) + 1\}, \\
N_{\sigma_{n-1}} - 1 & \text{on }  \{\mathscr C(X_{\sigma_{n}}) = \mathscr C(X_{\sigma_{n-1}}) - 1\} \cap \{X_{\sigma_n} \neq \varnothing\} \cap A.
 \end{cases}
 \]
Provided that $N_{\sigma_n} \in \{N_{\sigma_{n-1}}, N_{\sigma_{n-1}} + 1\}$ on $E$, this
guarantees that $\mathscr C(X_{\sigma_n}) \leq N_{\sigma_n}$. Finally, we ensure that the coupling respects the dynamics of the process $N$ by using additional randomness where required.
For example, we can proceed as follows: let $B$ be an event that has probability $((d-\textbf{j})\vartheta - s^\uparrow) / (1-s^\uparrow - \textbf{j}\varpi)$ which is independent of the past of the process given $X_{\sigma_{n-1}}$ (note that the denominator in the last expression is the probability of the event $E$ given  $X_{\sigma_{n-1}} = \textbf{x}$). Then, set $N_{\sigma_n} = N_{\sigma_{n-1}}+1$ on 
$B \cap E$ and $N_{\sigma_n} = N_{\sigma_{n-1}}$ on $B^c \cap E$.
 By construction, we have $\mathscr C(X_t) \leq N_t$ for all $t \leq \tau_L \wedge \tau_\varnothing$.

\subsection{Proof of Lemma \ref{lem:couplingb3}}
\label{ap:couplingb3}
First note that since both $X^{\sss (x)}$ and $X^{\sss (y)}$ jump at rate one, we can couple them so that they jump at the same times, which we denote by $(\sigma_i)_{i \in \mathbb{N}}$. 
At the first jump, for any measurable set $A \subseteq \cCd$ we should have
 \[\mathbb P(X^{\sss (x)}_{\sigma_1}\in A)
 = \frac1M \sum_{i=0}^{d-1} \mathbb E\big[f(x_{i\leftarrow W})\mathbf{1}_{A} (x_{i\leftarrow W})\big]; \; \;
 \mathbb P(X^{\sss (y)}_{\sigma_1}\in A)
 = \frac1M \sum_{i=0}^{d-1} \mathbb E\big[f(y_{i\leftarrow W})\mathbf{1}_{A} (y_{i\leftarrow W})\big],\]
 and both processes jump to $\{\varnothing\}$ with probability equal to the remaining mass.
We can interpret these measures as sums of $d+1$ measures given by
$\left(\frac1M \mathbb E\big[f(x_{i\leftarrow W})\delta_{x_{i\leftarrow W}}(\cdot)\big]\right)_{0\leq i\leq d-1}$
and $c(x)\delta_{\varnothing}(\cdot)$, where 
$c(x) := 1-\sum_{i=0}^{d-1}  
\mathbb E\big[f(x_{i\leftarrow W})\big]/M$,
for $X^{\sss (x)}$; similarly for $X^{\sss (y)}$.
On Figure~\ref{fig:coupling}, we draw the unit interval vertically and divide it in sub-intervals of respective lengths $\mathbb E\big[f(y_{i\leftarrow W})\big]/M$. On each of these intervals, we draw, from bottom to top as $i$ increases from 0 to $d-1$,
\[F_i^{\sss (x)} \colon u\mapsto b_{i} + \int_{[0,u]} f(x_{i\leftarrow v}) \mathrm d\mu(v)/M 
\quad\left(\text{ resp.\ }
F_i^{\sss (y)}\colon u\mapsto b_{i} + \int_{[0,u]} f(y_{i\leftarrow v}) \mathrm d\mu(v)/M\right)
\] in orange (resp.\ purple), where for $i\in\{0, \ldots, d-1\}, \, b_i = \sum_{j=0}^{i-1} \mathbb E\big[f(y_{j\leftarrow W})\big]/M.$
Note that, by monotonicity of $f$, both $F_i^{\sss (x)}$ and $F_i^{\sss (y)}$ are non-decreasing, and since $x\preccurlyeq y$, $F_i^{\sss (x)}\leq F_i^{\sss (y)}$ pointwise.

\begin{figure}
\begin{center}
\includegraphics[width=12cm]{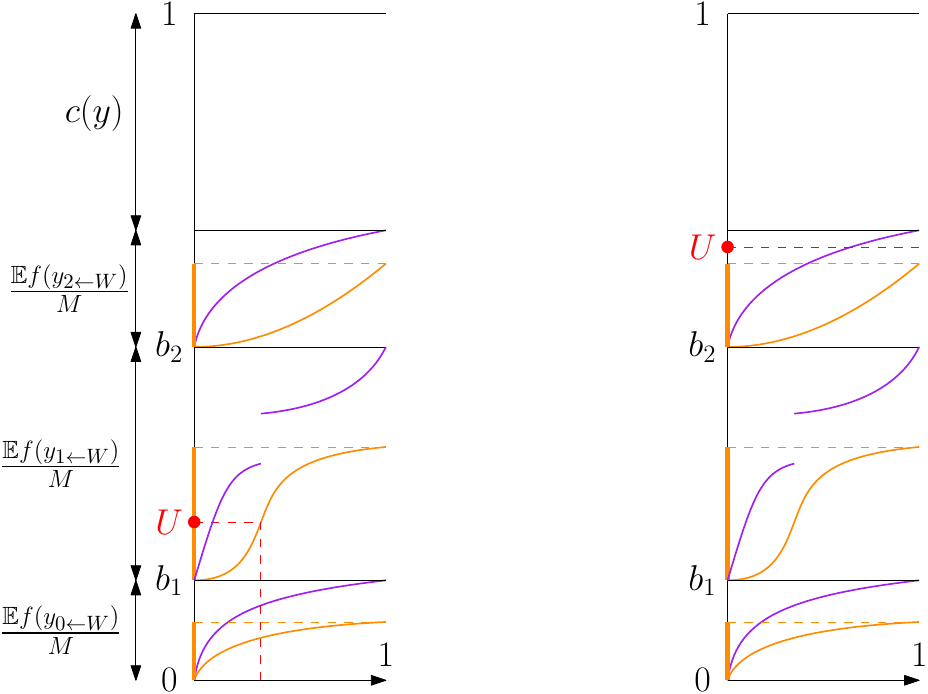}
\end{center}
\caption{Visual aid for the proof of Lemma~\ref{lem:couplingb3}. For the sake of presentation, we have chosen $d=3$.}
\label{fig:coupling}
\end{figure}

Now, consider a uniformly distributed random variable 
$U$ on $[0,1]$. 
If $U$ lands in the top-most interval (that is, if $U \geq \sum_{i=0}^{d-1} \E{f(y_{i \lf W})}$),
then we set $X^{\sss (x)}_{\sigma_1} = X^{\sss (y)}_{\sigma_1} = \varnothing$. If $U$ lands in the $i$-th interval (numbered from the bottom of the picture), we 
consider two cases:
\begin{itemize}
\item If $U$ lands into the orange part of the $i$-th interval (see left-hand-side of Figure~\ref{fig:coupling}), we set $X^{\sss (x)}_{\sigma_1} =x_{i\leftarrow(F_i^{\sss (x)})^{-1}(U)}$ 
and $X^{\sss (y)}_{\sigma_1}= y_{i\leftarrow (F_i^{\sss (x)})^{-1}(U)}$ (if $F_i^{\sss (x)}$ is not strictly increasing, we choose the left-continuous version of the inverse $(F_i^{\sss (x)})^{-1}(w)
:= \inf\{y\in[0,1]\colon F_i^{\sss (x)}(y)\geq w\}$).
\item If $U$ lands in the rest of the $i$-th interval (right-hand-side example on Figure~\ref{fig:coupling}), we set $X^{\sss (x)}_{\sigma_1}=\varnothing$. Set $G_i = F_i^{\sss (y)}-F_i^{\sss (x)}$ and note that this function is non-negative on $[0,1]$ and non-decreasing. Indeed, for all $u<v$, we have
\[G_i(v)-G_i(u) = \int_{(u,v]} \big(f(y_{i\leftarrow w}) - f(x_{i\leftarrow w})\big) \mathrm d\mu(w)/M \geq 0.\]
We can thus define the left-continuous inverse 
$G_i^{-1}(w) := \inf\{y\in[0,1]\colon G_i^{\sss (x)}(y)\geq w\}$, 
and set $X^{\sss (y)}_{\sigma_1} = y_{i\leftarrow G_i^{-1}(U - F_i^{\sss (x)}(1))}$.
\end{itemize}

Let us prove that, with these definition, $X_{\sigma_1}^{\sss (x)}$ and $X_{\sigma_1}^{\sss (y)}$ have the correct distributions and that $X_{\sigma_1}^{(x)} \preccurlyeq X_{\sigma_1}^{(y)}$. 
First note that, if $X_{\sigma_1}^{\sss (y)}=\varnothing$, then $U$ fell into the topmost interval and thus $X_{\sigma_1}^{\sss (x)}=\varnothing$, hence $X_{\sigma_1}^{(x)} \preccurlyeq X_{\sigma_1}^{(y)}$. If $X_{\sigma_1}^{\sss (x)}\neq \varnothing$, then $U$ fell in the orange part of an interval and thus $X_{\sigma_1}^{\sss (x)}=x_{i\leftarrow V}\preccurlyeq y_{i\leftarrow V}=X_{\sigma_1}^{\sss (y)}$ (where $V = (F_i^{\sss (x)})^{-1}(U)$), since $x \preccurlyeq y$.\\
Let us now check that $X_{\sigma_1}^{\sss (x)}$ defined in the coupling above has the right distribution. It is equal to $\varnothing$ if and only if $U$ landed in the topmost interval, or it did not land in an orange sub-interval, and thus 
\begin{linenomath}
\begin{align*}
&\mathbb P(X_{\sigma_1}^{\sss (x)} = \varnothing)
= c(y) + \sum_{i=0}^{d-1} \big(F_i^{\sss (y)}(1)-F_i^{\sss (x)}(1)\big)\\
& = 1- \frac1M \sum_{i=0}^{d-1} \mathbb E [f(y_{i\leftarrow W})]
+ \frac1M \sum_{i=0}^{d-1} \int_{[0,1]} f(y_{i\leftarrow v})\mathrm d\mu(v)
- \frac1M \sum_{i=0}^{d-1} \int_{[0,1]} f(x_{i\leftarrow v})\mathrm d\mu(v)\\
&= 1- \frac1M \sum_{i=0}^{d-1} \mathbb E [f(x_{i\leftarrow W})] = c(x).
\end{align*}
\end{linenomath}
For all Borel sets $A\subseteq \cCd$, we have
\begin{linenomath}
\begin{align*}
\mathbb P(X_{\sigma_1}^{\sss (x)}\in A)
&=\sum_{i=0}^{d-1} \mathbb P(X_{\sigma_1}^{\sss (x)}\in A \text{ and } F_i^{\sss (x)}(0) \leq U \leq F_i^{\sss (x)}(1))\\
&=\sum_{i=0}^{d-1} \int_{F_i^{\sss (x)}(0)}^{F_i^{\sss (x)}(1)} \bs 1_{A}\left(x_{i\leftarrow (F_i^{\sss (x)})^{-1}(u)}\right) \,\mathrm du\\
&= \sum_{i=0}^{d-1} \int_{[0,1]} \bs 1_{A}(x_{i\leftarrow v}) f(x_{i\leftarrow v}) \mathrm d\mu(v)/M,
\end{align*}
\end{linenomath}
by definition of $F_i^{\sss (x)}$ and by the change of variable $u=F_i^{\sss (x)}(v)$. This proves the claim.

Let us now check that $X^{\sss (y)}_{\sigma_1}$ also has the right distribution under the coupling. First note that $\mathbb P(X_{\sigma_1}^{\sss (y)} = \varnothing)$ is equal to the probability that $U$ lands in the topmost interval, which is of length $c(y)$, and thus $\mathbb P(X_{\sigma_1}^{\sss (y)} = \varnothing) = c(y)$. 

For all Borel sets $A\subseteq \cCd$, we have
\begin{linenomath}
\begin{align*}
\mathbb P(X_{\sigma_1}^{\sss (y)}\in A)
=& \sum_{i=0}^{d-1} \mathbb P(X_{\sigma_1}^{\sss (y)}\in A \text{ and } F_i^{\sss (x)}(0) \leq U \leq F_i^{\sss (x)}(1))\\
& + \sum_{i=0}^{d-1} \mathbb P(X_{\sigma_1}^{\sss (y)}\in A \text{ and }F_i^{\sss (x)}(1)<U\leq F_i^{\sss (y)}(1)).
\end{align*}
\end{linenomath}
The first sum is similar to the calculation above when checking the distribution of $X_{\sigma_1}^{\sss (x)}$:
\[\sum_{i=0}^{d-1} \mathbb P(X_{\sigma_1}^{\sss (y)}\in A \text{ and } F_i^{\sss (x)}(0) \leq U \leq F_i^{\sss (x)}(1))
=\frac1M\sum_{i=0}^{d-1}\mathbb E[f(x_{i\leftarrow W})\bs 1_{A} (y_{i\leftarrow W})].\]
For the second sum, we have
\begin{linenomath}
\begin{align*}
&\sum_{i=0}^{d-1} \mathbb P(X_{\sigma_1}^{\sss (y)}\in A \text{ and }F_i^{\sss (x)}(1)<U\leq F_i^{\sss (y)}(1))\\
&\hspace{1cm}= \sum_{i=0}^{d-1} \mathbb P(y_{i\leftarrow G_i^{-1}(U-F_i^{\sss (x)}(1))}\in A \text{ and }F_i^{\sss (x)}(1)<U\leq F_i^{\sss (y)}(1))\\
&\hspace{1cm}= \sum_{i=0}^{d-1} \int_{F_i^{\sss (x)}(1)}^{F_i^{\sss (y)}(1)} \bs 1_{A} \left(y_{i\leftarrow G_i^{-1}(u-F_i^{\sss (x)}(1))}\right)\,\mathrm du\\
&\hspace{1cm}=\sum_{i=0}^{d-1} \int_{[0,1]} \bs 1_{ A} \left(y_{i\leftarrow v}\right)\,(f(y_{i\leftarrow v})-f(x_{i\leftarrow v}))\mathrm d\mu(v)/M,
\end{align*}
\end{linenomath}
by definition of $G_i$ and by the change of variable $u=G_i(v)+F_i^{\sss (x)}(1)$.
We thus conclude that, in total, 
\[\mathbb P(X_{\sigma_1}^{\sss (y)}\in A)
=\frac1M\sum_{i=0}^{d-1}\mathbb E[f(y_{i\leftarrow W})\bs 1_{A}(y_{i\leftarrow W})],\]
as claimed. We can now iterate this coupling at each jump-time until $X^{\sss (x)}$ becomes absorbed. After $X^{\sss (x)}$ reaches $\varnothing$, we let $X^{\sss (y)}$ evolve independently according to its dynamics. This concludes the proof.

\subsection{Proof of Proposition \ref{help1}}
Let $\cC'_f \subseteq \cC'$ be the set of elements of the form $(z, \sum_{i=1}^{m} \delta_{y_i})$ for $z \geq 0, m \geq 1$ and $y_1, y_2, \ldots, y_m \in \cC_{d-2}$. Here, we view $\mathcal M(\cCdd)$ as a metric space under the Prokhorov metric, and view $\cC' = [0, \infty) \times \mathcal M(\cCdd)$ as a product metric space with $\infty$ product metric (where the distance is the maximum co-ordinate wise distance). 
First of all, we prove that there exists
a function $h : \cC'_f \times [0, 1] \times [0, \infty) \to \cC'_f $ such that, for independent and identically distributed random variables $(U_1, W_1), (W_2, U_2) \ldots$, where $U_i, W_i$ are independent, $U_i$ has the uniform distribution on $[0,1]$ and $W_i$ follows the distribution $\mu$  (as before),  we obtain a realisation of the Markov chain
starting at $x' \in \cC'_f$ by setting $S_0 = x'$ and, recursively, $S_{n+1} = h(S_n, U_{n+1}, W_{n+1})$ for $n \geq 0$. We then couple the two Markov chains started at $\varphi(w, x_n)$ and $\varphi(w, x)$ using the same sequence $(U_1,W_1),  (U_2, W_2), \ldots$, and write 
$S^{(n)}_0, S^{(n)}_1, \ldots$ and $S_0, S_1, \ldots$ for these chains. The construction of $h$ is straightforward. Let $x' = (z, \nu) \in \cC'_f$ with $\nu = \sum_{i=1}^{m} \delta_{y_i} \in \cC'_f$ and $u \in [0,1]$, $w' \geq 0$.  Order $y_1, \ldots, y_m$ lexicographically and define 
\begin{equation} \label{def:s} s_0 = 0 \text{ and } s_i = \sum_{j=1}^i f(y_j \cup z), 1 \leq i \leq m. \end{equation} Then, let $1 \leq p \leq m$ be such that
$s_{p-1} < u s_m \leq s_{p}$. We now set 
\[h((z, \nu), u, w') 
= \begin{cases} (z, \nu+ \sum_{i=0}^{d-2} \delta_{(y_p)_{i \lf w'}}), & \text{in Model }\textbf{A}, \\  
(z, \nu+ \sum_{i=0}^{d-2} \delta_{(y_p)_{i \lf w'}} - 
\delta_{y_p}), & \text{in Model }\textbf{B}.\end{cases}\]
It follows immediately from the dynamics of the Markov chain, that the function $h$ has the desired properties. 
Next, we show that, for the coupled Markov chains:
\begin{equation} \label{as:c}
\text{for any } k \geq 0, \text{ we have } S_k^{(n)} \to S_k \text{ almost surely.}
\end{equation}
By continuity of $f$, this implies that $F(S_k^{(n)}) \to F(S_k)$ almost surely, which concludes the proof. To prove \eqref{as:c}, we proceed by induction. The case $k=0$ is trivial as the function $\varphi$ is continuous. Assume that we have already proved the statement for all $j \in \{0, \ldots, k-1\}$. Recall that $S_k = h(S_{k-1}, U_k, W_k)$ and $S_k^{(n)} = h(S_{k-1}^{(n)}, U_k, W_k)$.  Conditioning on $S_{k-1}, S_{k-1}^{(0)}, S_{k-1}^{(1)}, \ldots$ shows that 
 \begin{linenomath}\begin{align*}
 &\Prob{S_k^{(n)} \nrightarrow S_k} \leq \mathbb E [\text{Leb}(\{u \in [0,1]: \text{ there exist } v_1, v_2, \ldots \in \cC'_f \text{ and } w' \geq 0 \\ 
 & \hspace{4cm} \text{ such that } \lim_{\ell\to\infty} v_\ell = S_{k-1} \text{ but } h(v_\ell, u, z) \nrightarrow h(S_{k-1}, u,z) \})] \end{align*}\end{linenomath}
We conclude the proof by showing that, almost surely, the set $u \in [0,1]$ for which  $v_\ell, \ell \geq 1$ and $w' \geq 0$ exist satisfying $v_\ell \to S_{k-1}$ as $\ell\to\infty$ and $h(v_\ell, u, w') \nrightarrow h(S_{k-1}, u, w')$ is a Lebesgue null set. To this end, we prove the following stronger statement: for $x' = (z, \sum_{i=1}^{m} \delta_{y_i}) \in \cC_f'$, we have that, for all $u \notin \{s_1/s_m, \ldots, 1\}$, where $s_1, \ldots, s_m$ are as in \eqref{def:s} for this particular $x'$, it holds that, for any sequence $x'_\ell \to x'$ and $w' \geq 0$, we have $h(x'_\ell, u, w') \to h(x', u, w')$. To see this, let $x_\ell' = (z_\ell, \sum_{i=1}^{m_\ell} \delta_{y_i^{(\ell)}})$ be a sequence with $x'_\ell \to x'$. This implies that $m_n = m$ for all sufficiently large $n$ and that $y_i^{(\ell)} \to y_i$ for all $1 \leq i \leq m$ as $\ell \to \infty$. By continuity of $f$, for the values $s_i^{(\ell)}$ defined in \eqref{def:s} for $x_\ell'$, we have $s_i^{(\ell)} \to s_i$ for all $1 \leq i \leq m$. Hence, if $u \notin \{s_1/s_m, \ldots, 1\}$, again using continuity, we have that $p^{(\ell)} = p$ for all $\ell$ sufficiently large and the desired result follows.

\subsection{Proof of Lemma \ref{lem:prob-sum}}\label{sub:proof_lem_prob_sum}
To prepare the proof of the lemma, we rewrite the relevant sums using probabilistic language. 
Let $U_0, \ldots, U_{k}$ be $k+1$ independent random variables uniformly distributed on $[0,1]$. We write $U_{(0)} \leq \ldots \leq U_{(k)}$ for their order statistics. Let
$I_j = \lceil U_{(j)} n \rceil, j\in \{0, \ldots, k\}$. Then, $I_n= (I_0, \ldots, I_k)$ is the vector of order statistics of $k+1$ independent random variables with uniform distribution on $\{1, \ldots, n\}$. Let $A_n$ be the event that these random variables are distinct. Then, for $\alpha_0, \ldots, \alpha_k \geq 0, 0 < \eta \leq 1/2$, we have 
\[
\Gamma_n(\alpha_0, \ldots, \alpha_k, \eta) 
= \frac{1}{(k+1)!} \cdot \E{ \prod_{j=0}^{k-1}\left( \left(\frac{I_{j}}{I_{j+1}} \right)^{\alpha_j} \cdot \frac{n}{I_{j+1} -1} \right) \left(\frac{I_{k}}{n} \right)^{\alpha_k}\mathbf 1_{A_n}\mathbf 1_{I_0 > \eta n}}. \]
Note that, given $U_{(i)}, U_{(i+1)}, \ldots, U_{(k)},$ the random variables $U_{(0)}, \ldots, U_{(i-1)}$ are distributed like the order statistics of $i$ independent random variables with the uniform distribution on $[0, U_{(i)}]$. Further, $U_{(k)}$ is distributed like $U^{1/(k+1)}$, where $U$ follows the uniform distribution on $[0,1]$. Thus,  setting $V_{i} = U_i^{1/(i+1)} U_{i+1}^{1/(i+2)}  \cdots U_k^{1/(k+1)},$ for $i \in \{0, \ldots, k\}$, the random vectors $(U_{(0)}, \ldots, U_{(k)})$ and  $(V_{0}, \ldots, V_{k})$ are equal in distribution. Therefore, by applying the dominated convergence theorem, for $\eta = 0$ we have
\[ \lim_{n \to \infty} \Gamma_n(\alpha_0, \ldots, \alpha_k, 0) = \frac{1}{(k+1)!} \cdot \E{ \prod_{j=0}^{k-1}\left( \left(\frac{U_{(j)}}{U_{(j+1)}} \right)^{\alpha_j} \cdot \frac{1}{U_{(j+1)}} \right) U_{(k)} ^{\alpha_k}}.\]
The last term is equal to
\begin{linenomath}\begin{align*}  
\frac{1}{(k+1)!} \cdot \E{\prod_{j=0}^{k-1} \left(\frac{V_{j}}{V_{j+1}} \right)^{\alpha_j} \cdot V_{k} ^{\alpha_k} \prod_{j=0}^{k-1}  \frac{1}{V_{j+1}}} & = \frac{1}{(k+1)!} \cdot \E{\prod_{j=0}^{k} U_j^{\alpha_{j}/(j+1)} \prod_{j=0}^k U_j^{-j/(j+1)} } \\ & =  \prod_{j=0}^{k} \frac{1}{\alpha_j + 1}.
 \end{align*}
\end{linenomath}
\begin{proof}[Proof of Lemma \ref{lem:prob-sum}]
We start with the term involving $\eta$. Note that 
$ \prod_{j=0}^{k-1} \frac{n}{I_{j+1} -1} \mathbf 1_{A_n} \leq 2  \prod_{j=0}^{k-1}  U_{(j+1)}^{-1}$, since on the event $A_n$, we have $I_1 \geq 2$. 
Thus, 
\begin{linenomath}
\begin{align*} & \E{ \prod_{j=0}^{k-1}\left( \left(\frac{I_{j}}{I_{j+1}} \right)^{\alpha_j} \cdot \frac{n}{I_{j+1} -1} \right) \left(\frac{I_{k}}{n} \right)^{\alpha_k}\mathbf 1_{A_n}\mathbf 1_{I_0 \leq \eta n}} \\
& \leq 2  \E{ \prod_{j=0}^{k-1} U_{(j+1)}^{-1}   \mathbf{1}_{I_{0} \leq \eta n}} \leq 2  \E{ \prod_{j=0}^{k-1} U_{(j+1)}^{-(k+2)/(k+1)}}^{(k+1)/(k+2)}   \Prob{I_{0} \leq \eta n}^{1/(k+2)} \\
& \leq 2 \left(k+1\right)^{(1 + k(k+1))/(k+2)} \eta^{1/(k+2)}.
\end{align*}
\end{linenomath}
Here, in the last step, we have used $\Prob{I_{0} \leq \eta n} \leq \Prob{U_{(0)} \leq \eta} = 1 - (1-\eta)^{k+1} \leq (k+1)\eta$.
Next, let  $\Delta_{j+1} = \frac{n}{I_{j+1} -1} - \frac{1}{U_{(j+1)}}$. 
In the computation of 
\[\E{ \prod_{j=0}^{k-1}\left( \left(\frac{I_{j}}{I_{j+1}} \right)^{\alpha_j} \cdot \frac{n}{I_{j+1} -1} \right) \left(\frac{I_{k}}{n} \right)^{\alpha_k}\mathbf 1_{A_n}},\]
we can now successively replace $\frac{n}{I_{j+1} -1}$ by $\frac{1}{U_{(j+1)}} + \Delta_{j+1}$ for $j \in \{0, \ldots, k-1\}$. As $\Delta_{j+1} \to 0$ almost surely, it follows from the dominated convergence theorem, that
\begin{linenomath}
\begin{align*} & \E{ \prod_{j=0}^{k-1}\left( \left(\frac{I_{j}}{I_{j+1}} \right)^{\alpha_j} \cdot \left( \frac{1}{U_{(j+1)}} + \Delta_{j+1}\right) \right) \left(\frac{I_{k}}{n}  \right)^{\alpha_k}\mathbf 1_{A_n}} \\ & =   \E{ \prod_{j=0}^{k-1}\left( \left(\frac{I_{j}}{I_{j+1}} \right)^{\alpha_j} \cdot \left( \frac{1}{U_{(j+1)}} \right) \right) \left(\frac{I_{k}}{n}  \right)^{\alpha_k}\mathbf 1_{A_n}} + o(1). \end{align*}
\end{linenomath}
As $\E{|\Delta_{j+1}| \mathbf 1_{U_{(0)} > 1/n}} = O(\log n / n)$, it follows easily that the convergence rate in the last display is $O(\log n / n)$.
Next, let $\Delta'_j = \frac{I_{j}}{I_{j+1}} -\frac{U_{(j)}}{U_{(j+1)}}$.
Note that, for any positive real numbers $x,y$, we have
\[
\frac{-y}{(x+1)x} \leq \frac{\lceil y \rceil}{\lceil x \rceil} - \frac{y}{x} \leq \frac{1}{x},
\]
and thus, on $A_{n}$
\[\Delta'_{j} \in [-(nU_{(j+1)})^{-1}, (nU_{(j+1)})^{-1}].\] 
Hence, by the mean value theorem, if $\alpha \geq 1$, for $j \in \{0, \ldots, k-1\}$, $\left| \left(\frac{I_{j}}{I_{j+1}}\right)^\alpha - \left(\frac{U_{(j)}}{U_{(j+1)}}\right)^\alpha \right|\leq \alpha / (n U_{(j+1)})$. 
 In the case that $\alpha < 1$, observe that \[\min\left( \frac{I_j}{I_{j+1}}, \frac{U_{(j)}}{U_{(j+1)}}\right) \geq \frac{n U_{(j)}}{nU_{(j+1)} + 1} \geq \frac{ U_{(j)}}{2U_{(j+1)}},\]
since $I_1 > 1$, and thus,
\[\max\left( \left(\frac{I_j}{I_{j+1}}\right)^{\alpha-1}, \left(\frac{U_{(j)}}{U_{(j+1)}}\right)^{\alpha-1}\right) \leq \left(\frac{ U_{(j)}}{2U_{(j+1)}}\right)^{\alpha -1} \leq  \frac{2U_{(j+1)}}{U_{(j)}}.\]
Thus, by a similar application of the mean value theorem, if $0 \leq \alpha \leq 1$, then, \[\left|\left(\frac{I_{j}}{I_{j+1}}\right)^\alpha - \left(\frac{U_{(j)}}{U_{(j+1)}}\right)^\alpha\right| \leq  2\alpha / (n U_{(j)}).\]

Now, for  $j \in \{0, \ldots, k\}$, we have 

 \[\E{U_{(j)}^{-1} \prod_{i=0}^{k-1}U_{(i+1)}^{-1}  \mathbf 1_{A_n} \mathbf 1_{I_0 > 1}} \leq 
\E{\prod_{i=0}^{k}U_{i}^{-1}  \mathbf 1_{U_{i} > n^{-i} }} = O(\log^{k+1}(n)).\]
Note that we only need $I_0 > 1$ when $\alpha < 1$, in order to ensure that  $U_{(0)} > 1/n$. 

Thus, successively replacing $\frac{I_{j}}{I_{j+1}}$ by $\frac{U_{(j)}}{U_{(j+1)}} + \Delta'_{j}$ shows
\begin{linenomath}
\begin{align*} & \E{ \prod_{j=0}^{k-1}\left( \left(\frac{I_{j}}{I_{j+1}} \right)^{\alpha_j} \cdot \left( \frac{1}{U_{(j+1)}} \right) \right) \left(\frac{I_{k}}{n}  \right)^{\alpha_k}\mathbf 1_{A_n} \mathbf 1_{I_0 > 1}} \\ & =   \E{ \prod_{j=0}^{k-1}  \left(\frac{U_{(j)}}{U_{(j+1)}} \right)^{\alpha_j} \cdot  \prod_{j=0}^{k-1} \frac{1}{U_{(j+1)}}   \left(\frac{I_{k}}{n}  \right)^{\alpha_k}\mathbf 1_{A_n} \mathbf{1}_{I_0 > 1}} + O\left( \frac{\sum_{j =0}^{k-1} \alpha_j \log^{k+1}(n)}{n}\right).
\end{align*}
\end{linenomath}
Replacing $I_k/n$ by $U_{(k)}$ gives rise to an error term of order at most $\alpha_k \log^{k+1}(n) / n$. As $\Prob{A_n^{c}} = O(1/n)$ and $\Prob{I_0 = 1} = O(1/n)$, an application of H{\"o}lder's inequality shows that we may drop the indicators $\mathbf 1_{A_n}$ and $\mathbf{1}_{I_0 > 1}$ at the cost of an error term of order $n^{-1/(k+2)}$.
\end{proof}

\bibliographystyle{imsart-nameyear}
\bibliography{simplices_large} 
\end{document}